\newtheorem*{Main Theorem}{Main Theorem}
\newtheorem{Theorem}{Theorem}[section]
\newtheorem*{Theorem A}{Theorem A}
\newtheorem*{Theorem A'}{Theorem A'}
\newtheorem*{Theorem B'}{Theorem B'}
\newtheorem{Definition}[Theorem]{Definition}
\newtheorem{Proposition}[Theorem]{Proposition}
\newtheorem{Lemma}[Theorem]{Lemma}
\newtheorem*{Notation}{Notation}
\newtheorem{Remark}[Theorem]{Remark}
\newtheorem{Example}[Theorem]{Example}
\newtheorem{Corollary}[Theorem]{Corollary}
\newtheorem*{Claim}{Claim}
\newtheorem{Claim-numbered}[Theorem]{Claim}
 \def\NN{{\mathbb N}} 
\def\QQ{{\mathbb Q}} \def\RR{{\mathbb R}} \def\SS{{\mathbb S}}
\def\TT{{\mathbb T}}
 \def\ZZ{{\mathbb Z}}
\def\Hol{{\rm Hol}}
\def\cA{{\cal A}}  \def\cG{{\cal G}}  
\def\cC{{\cal C}}    \def\cU{{\cal U}}
   \def\cP{{\cal P}} 
  \def\cK{{\cal K}}  
\def\cF{{\cal F}}  \def\cL{{\cal L}}
\newcommand{\id}{\operatorname{Id}}
\newcommand{\Id}{\operatorname{Id}}
\newcommand{\diff}{{\operatorname{Diff}}}
\def\diff{\operatorname{Diff}}
\def\GL{\operatorname{GL}}
\def\SO{\operatorname{SO}}
\def\dim{\operatorname{dim}}
\def\Per{\operatorname{Per}}
\def\supp{\operatorname{Supp}}
\def\det{\operatorname{det}}
\def\Leb{\operatorname{Leb}}
\begin{document}

\title{
Lyapunov spectrum rigidity and simultaneous linearization for random Anosov diffeomorphisms
}

\author{Aaron Brown and Yi Shi}


\maketitle

\begin{abstract}
In this paper we study the Lyapunov spectrum rigidity for random walks of expanding maps on unit circle $\SS^1$ and Anosov diffeomorphisms on $d$-torus $\TT^d$. Let $\nu$ be a probability supported on the set of expanding maps on $\SS^1$ or a neighborhood of a generic Anosov automorphisms on $\TT^d$. 
If the Lyapunov spectrum of the $\nu$-stationary SRB-measure coincides with the Lyapunov spectrum of the algebraic action, then we can simultaneously linearize $\nu$ almost every system to an affine action.
Moreover, we prove the positive Lyapunov exponent rigidity for random walks of irreducible positive matrices acting on $\TT^2$.

\end{abstract}

\tableofcontents

\section{Introduction}

Let $M$ be a closed $d$-dimensional $C^\infty$ Riemannian manifold. Denote $C^r(M)$ the space of $C^r$ maps and $\diff^r(M)$ the group of $C^r$-diffeomorphisms both equipped with natural $C^r$-topology. 
 We assume $r\ge 2$ through this paper.
Let $\{f_i\}_{i=1}^m$ be a tuple in $C^r(M)$ or $\diff^r(M)$.   Let $\omega=(\omega_i)_{i\in\NN}$ be a sequence of independent random variables uniformly distributed on $\{1,\cdots,m\}$. Consider the Markov process 
$x_{n+1}=f_{\omega_n}(x_n)$ on $M$ and denote $f^n_{\omega}=f_{\omega_{n-1}}\circ\cdots\circ f_{\omega_0}$. 

We say a Borel probability measure $\mu$ on $M$ is stationary for this Markov process if  
$$
\frac1m\sum_{i=1}^m\mu(f^{-1}_i(A))=\mu(A), \qquad \forall \text{~Borel~set}~A\subseteq M.
$$
A stationary measure is ergodic if it is not a non-trivial convex sum of two distinct stationary measures. Fix an ergodic stationary measure $\mu$,  there exist Lyapunov exponents 
$$
\lambda_1(\mu)\leq\lambda_2(\mu)\leq\cdots\cdots\leq\lambda_d(\mu)
$$
such that for $\mu$-a.e. $x\in M$ and every $v\in T_xM$ with $v\neq 0$, the limit $\lim_{n\to\infty}1/n\log\|D_xf^n_{\omega}v\|$ exists and equals one of these Lyapunov exponents.

The goal of this paper is studying the simultaneous smooth conjugacy of random walks from the Lyapunov exponents rigidity of stationary measures. This phenomenon first observed by Dolgopyat and Krikorian.
They \cite{DK} showed that a family of rotations $\{R_i\}$ generating $\SO(d+1)$ is Lyapunov spectrum rigid, i.e. for the perturbations $\{f_i\}$ of $\{R_i\}$, if Lyapunov exponents of the random walk generated by $\{f_i\}$ vanish (equal to $\{R_i\}$), then there exists smooth conjugacy simultaneously conjugates $\{f_i\}$ to rotations. Later, DeWitt \cite{DeW2} proved Lyapunov spectrum rigidity for isometric random walks on isotropic manifolds.

In this paper, we focus on the simultaneous smooth conjugacies of random walks of expanding maps on unit circle $\SS^1$ and Anosov diffeomorphisms on $d$-torus $\TT^d$ with Lyapunov spectrum rigidity.

\subsection{Random expanding maps on $\SS^1$}
 
A local diffeomorphism $f:M\to M$ is a expanding map if its derivative $Df$ uniformly expands every unit vector in $TM$. It is clear that the space expanding maps is open in $C^r(M)$. 
For an expanding map $f:M\to M$, Shub \cite{Sh} proved that $f$ is topologically conjugate to an affine expanding endomorphism of an infra-nilmanifold provided $\pi_1(M)$ is nilpotent. Later Franks \cite{Fr} showed
that $\pi_1(M)$ has polynomial growth. Finally, Gromov \cite{Gr}
proved that a finitely generated group is virtually nilpotent, which implies every expanding map is topologically conjugate to an affine expanding endomorphism on an infra-nilmanifol. This gives a complete topological classification of expanding maps.

For the smooth classification of expanding maps, the 1-dimensional case has been completely solved by Shub and Sullivan \cite{ShS}. They showed that if the topological conjugacy between two $C^r$-smooth circle expanding maps is absolutely continuous, then the conjugacy is $C^r$-smooth. Recently, Gogolev and Hertz \cite{GRH} give a smooth classification of (non-algebraic) expanding maps in higher dimensions in terms of  Jacobians of return maps at periodic points. 

Our first result concerns global rigidity of the Lyapunov  exponent for the unique  stationary SRB-measure for i.i.d.\ random walks by expanding circle maps.  
Here the   stationary SRB-measure is the unique stationary measure absolutely continuous with respect to the arc-length parameter.  See  Definition \ref{def:SRB} for more general and  precise definition and \ref{thm:expanding-SRB} below for existence and uniqueness.  
The Lyapunov exponent for the  stationary SRB-measure satisfies an inequality; equality holds if and only if the random walk is simultaneously smoothly conjugate to an affine random walk.



\begin{Theorem}\label{thm:S1}
	Let $\{f_i\}_{i=1}^m$ be a family of $C^r$-expanding maps on $\SS^1$ with degrees $\{d_i\}_{i=1}^m\subseteq\NN_{\geq2}$. Let $\mu_{\rm SRB}$ be the (unique) stationary SRB-measure for  the random walk generated by $\{f_i\}_{i=1}^m$. Then the Lyapunov exponent of $\mu_{\rm SRB}$ satisfies
\begin{equation}\label{eq:SRBinequ}
	\lambda(\mu_{\rm SRB})\leq \frac1m\sum_{i=1}^m \log |d_i|.
	\end{equation}
	Moreover, equality holds in \eqref{eq:SRBinequ} if and only if there exists $h\in\diff^r(\SS^1)$ and $\{\rho_i\}_{i=1}^m\subseteq\SS^1$ such that
	$h\circ f_i\circ h^{-1}:\SS^1\to\SS^1$ is an affine map of the form
	$$
	h\circ f_i\circ h^{-1}(x)=d_i\cdot x+\rho_i,
	\qquad i=1\cdots,m.
	$$
\end{Theorem}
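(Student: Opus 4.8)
The plan is to prove the two implications separately. The direction ``affine $\Rightarrow$ equality'' is a short computation: if $h\circ f_i\circ h^{-1}(x)=d_i x+\rho_i$ with $h\in\diff^r(\SS^1)$, then arc length is stationary and absolutely continuous for the affine random walk $\{x\mapsto d_ix+\rho_i\}$, hence equals its stationary SRB measure by the uniqueness in \ref{thm:expanding-SRB}, and its Lyapunov exponent is $\frac1m\sum_i\int\log d_i\,d\Leb=\frac1m\sum_i\log|d_i|$; since a $C^1$ conjugacy changes $\log\|Df^n_\omega\|$ by a uniformly bounded amount and carries stationary measures to stationary measures, $\mu_{\rm SRB}=(h^{-1})_*\Leb$ and equality holds in \eqref{eq:SRBinequ}. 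For everything else I would pass to the skew product $\hat F\colon\hat\Sigma\times\SS^1\to\hat\Sigma\times\SS^1$, $\hat F(\omega,x)=(\sigma\omega,f_{\omega_0}(x))$, over the two-sided Bernoulli shift $(\hat\Sigma,\sigma,\PP)$, $\hat\Sigma=\{1,\dots,m\}^{\ZZ}$, and consider the $\hat F$-invariant measure $\hat\mu$ lifting $\mu_{\rm SRB}$ whose fiber (sample) measures $\mu_\omega$ depend only on the past $\omega^-=(\omega_{-1},\omega_{-2},\dots)$ and satisfy the equivariance $(f_{\omega_0})_*\mu_\omega=\mu_{\sigma\omega}$; here $\mu_\omega=\lim_n(f_{\omega_{-1}}\circ\cdots\circ f_{\omega_{-n}})_*\Leb$.

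For the inequality \eqref{eq:SRBinequ} I would combine three facts. Since $\mu_{\rm SRB}$ is absolutely continuous, the fiberwise Rokhlin/Pesin formula for random expanding maps gives $h_{\hat\mu}(\hat F\mid\sigma)=\int\log|Df_{\omega_0}|\,d\hat\mu=\lambda(\mu_{\rm SRB})$ (the right-hand side being the standard expression for the top, and only, Lyapunov exponent, computed using that $\omega_0$ is independent of $\omega^-$). The relativized variational principle gives $h_{\hat\mu}(\hat F\mid\sigma)\le h_{\rm top}(\hat F\mid\sigma)$. Finally, the relative topological entropy of a random family of expanding maps is the exponential growth rate of the number of fiberwise inverse branches of $f_{\omega_{n-1}}\circ\cdots\circ f_{\omega_0}$, namely $\lim_n\frac1n\log\prod_{j<n}d_{\omega_j}=\frac1m\sum_i\log|d_i|$. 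Chaining these inequalities yields \eqref{eq:SRBinequ}.

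Assume now equality, so $\hat\mu$ is a relative measure of maximal entropy. Here I would invoke the thermodynamic formalism for random expanding maps: the relative MME is unique, and its fiber measures $\nu_\omega$ are the weak-$*$ limits of the equidistributions on the $\prod_{0\le j<n}d_{\omega_j}$ fiberwise $n$-preimages, so $\nu_\omega$ depends only on the future $\omega^+=(\omega_0,\omega_1,\dots)$ and satisfies $(f_{\omega_0})_*\nu_\omega=\nu_{\sigma\omega}$ together with the conformal (eigenmeasure) relation $\cM_{\omega_0}^*\nu_{\sigma\omega}=d_{\omega_0}\nu_\omega$, where $\cM_i\varphi=\sum_{f_i(x)=\cdot}\varphi(x)$. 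Uniqueness forces $\mu_\omega=\nu_\omega$ for $\PP$-a.e. $\omega$; being simultaneously measurable with respect to the past $\sigma$-algebra and the future $\sigma$-algebra, which are independent under $\PP$, the map $\omega\mapsto\mu_\omega$ is $\PP$-a.e. constant, equal to $\mu_{\rm SRB}=\int\mu_\omega\,d\PP$. Feeding this back into the two equivariances shows that, for every $i$, $\mu_{\rm SRB}$ is $f_i$-invariant and satisfies $\cM_i^*\mu_{\rm SRB}=d_i\mu_{\rm SRB}$: that is, $\mu_{\rm SRB}$ is simultaneously the absolutely continuous invariant measure and the measure of maximal entropy of $f_i$.

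To conclude, for each $i$ fix the topological conjugacy $h_i\in\homeo^+(\SS^1)$ from $f_i$ to $E_{d_i}\colon x\mapsto d_i x$. A conjugacy sends the measure of maximal entropy of $f_i$ to that of $E_{d_i}$, which is $\Leb$, so $(h_i)_*\mu_{\rm SRB}=\Leb$; and since $\mu_{\rm SRB}$ is now the a.c.\ invariant measure of the $C^r$ expanding map $f_i$ it has positive continuous density, hence $\mu_{\rm SRB}\sim\Leb$ and $h_i$ is absolutely continuous, so $h_i\in\diff^r(\SS^1)$ by Shub--Sullivan \cite{ShS}. Since every $h_i$ pushes $\mu_{\rm SRB}$ forward to $\Leb$, the diffeomorphism $h_i\circ h_1^{-1}$ preserves $\Leb$ and is therefore a rotation $R_{\theta_i}$; writing $h:=h_1$ one gets $h\circ f_i\circ h^{-1}=R_{-\theta_i}\circ E_{d_i}\circ R_{\theta_i}\colon x\mapsto d_i x+(d_i-1)\theta_i$, which is affine of the required form. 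The main obstacle is the third paragraph: assembling the random thermodynamic formalism inputs on $\SS^1$ in the precise form needed --- the relativized variational principle, the random Rokhlin formula for absolutely continuous stationary measures, uniqueness of the relative MME, and the identification of its fiber measures as future-measurable conformal densities --- and checking that for circle expanding families these are consistent with the fiber topological entropy being exactly $\frac1m\sum_i\log|d_i|$.
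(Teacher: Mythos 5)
Your proposal is correct but takes a genuinely different route from the paper's. The paper proves Theorem~\ref{thm:S1-general} (which subsumes Theorem~\ref{thm:S1}) via the Le~Jan--Baxendale density argument: setting $q = d\mu_{\rm SRB}/d\Leb$, the observable $Q(\omega,x)=\frac{q(f_\omega(x))}{q(x)}\cdot\frac{|f_\omega'(x)|}{|\deg f_\omega|}$ integrates to $1$ against $\nu^{\NN}\times\mu_{\rm SRB}$; applying the arithmetic--geometric mean inequality along a typical orbit, together with a subsequence trick to dispose of the boundary term $\tfrac1n\log q(f^n_\omega x)$, gives the inequality, and equality forces $Q\equiv1$ a.e., i.e.\ $q(f(x))|f'(x)|=|\deg f|\,q(x)$, after which $h(x)=\int_0^x q\,d\Leb$ is the desired conjugacy once one notes $q$ is $C^{r-1}$ and positive. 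Your route is instead entropy-theoretic: the random Rokhlin/Pesin formula for the a.c.\ stationary measure, the relative variational principle, and the identification of fiber topological entropy with the mean log-degree give the inequality; in the equality case you identify the natural extension of $\mu_{\rm SRB}$ with the unique relative MME of the random expanding system and exploit that its past-measurable sample measures must coincide with the future-measurable conformal sample measures of the MME (future-measurable precisely because the RPF eigenfunction is constant for zero potential), so by independence of past and future under the Bernoulli base the sample measures are $\PP$-a.s.\ constant. This buys a conceptual reading of the inequality as a fiberwise Ruelle inequality and of equality as relative MME rigidity, and in fact yields individual (not merely averaged) Lyapunov rigidity for each $f_i$, since each $f_i$ then has $\mu_{\rm SRB}$ as both acip and MME; the cost is a suite of nontrivial random thermodynamic-formalism inputs (relative variational principle, random Rokhlin formula, uniqueness and future-measurability of the relative MME, from Kifer and Khanin--Kifer) that you rightly flag as needing careful assembly, whereas the paper's density computation is essentially self-contained. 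Your final smoothness step via topological conjugacy plus Shub--Sullivan is also valid, but slightly more roundabout than the paper's direct construction of $h$ from the invariant density.
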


\begin{Remark}
	We have the following remarks:
	\begin{enumerate}
		\item An analogue of Theorem \ref{thm:S1} holds for i.i.d.\ random walk generated by any probability measure $\nu$ supported on the set of $C^r$-expanding maps on $\SS^1$ satisfying a certain  moment condition; see Theorem \ref{thm:S1-general} for general statement.
		\item In the case of equality in \eqref{eq:SRBinequ}, the stationary SRB-measure $\mu_{\rm SRB}$ is necessarily invariant by each generator $\{f_i\}_{i=1}^m$ and $$\mu_{\rm SRB}=h_*({\rm Leb}_{\SS^1}).$$
	\end{enumerate}	 
\end{Remark}

\subsection{Random Anosov diffeomorphisms on $\TT^d$}

The question of topological classification for Anosov diffeomorphisms was proposed by Smale \cite[Problem (3.5)]{Sm}, and  remains widely open. The classical result of Franks \cite{Fr} and Manning \cite{Mn} shows that an Anosov diffeomorphism on a torus, a nilmanifold or a infra-nilmanifold is topologically conjugate to an affine automorphism.  In general, we do not expect the conjugacy to be smooth; indeed, spectral data of the derivative at periodic points provides obstructions to $C^1$ conjugacies.  Matching of spectral data associated to periodic points is sufficient to obtain a smooth conjugacy for Anosov diffeomorphisms on 2-torus (\cite{dlL,MM1,dlLM}) or on $d$-torus assuming the linearization is \emph{generic} (\cite{GG,Go08,GKS,DeG,KSW1}; see Definition \ref{def:generic}). Similarly, for volume-preserving Anosov diffeomorphisms that are perturbations of automorphisms, coincidence of the Lyapunov spectrum (with respect to volume) with the Lyapunov spectrum of the automorphism also implies the conjugacy is smooth  (\cite{SY,GKS1,DeW1}).

For irreducible, higher-rank Anosov $\ZZ^k$-actions on tori or a nilmanifolds, Rodriguez Hertz and Wang \cite{HW} proved that all such actions are smoothly conjugate to an affine action; see also the earlier result of Fisher, Kalinin, and Spatzier \cite{FKS}.  Unlike the case of a single diffeomorphism, smoothness of the conjugacy in  \cite{HW} arises from the high-rank group structure alone, without additional assumptions on the periodic data or Lyapunov spectrum.  
 
{We study a subset of $A\in\GL(d,\ZZ)$ of  generic automorphism, that have the property that the proportion of generic automorphisms contained in $\GL(d,\ZZ)$  goes to 1 as $\|A\|\rightarrow+\infty$; see Definition \ref{def:generic}.  In particular generic automorphisms are irreducible over $\QQ$ and eigenvalues occur with distinct absolute value or as complex conjugate pairs.}

For random walks of Anosov diffeomorphisms on $d$-torus, we have the following theorem.

\begin{Theorem}\label{thm:d-dim}
	Let $A\in\GL(d,\ZZ)$ be a generic automorphism and let $\{f_i\}_{i=1}^m\subseteq\diff^2(\TT^d)$ be perturbations of $A$. Let $\mu_{\rm SRB}$ be the (unique) stationary SRB-measure of the random walk generated by $\{f_i\}_{i=1}^m$. If  the Lyapunov exponents of $\mu_{\rm SRB}$ coincide with those of $A$, 
	then there exist $h\in\diff^{1+}(\TT^d)$ and $\{v_i\}_{i=1}^m\subseteq\TT^d$ such that
		$$
		h\circ f_i\circ h^{-1}=A+v_i, \qquad i=1,\cdots,m.
		$$ 
\end{Theorem}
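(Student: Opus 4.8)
The plan is to run a bootstrap argument: first establish that equality of Lyapunov spectra forces the stationary SRB measure $\mu_{\rm SRB}$ to be the Haar measure on $\TT^d$ (equivalently, to be simultaneously invariant under all the $f_i$), then use low-regularity rigidity of the linearization data on each factor of the Lyapunov splitting to produce a $C^{1+}$ conjugacy, and finally upgrade this to an affine conjugacy using that the linearization is generic. More concretely, I would begin by recording the two standard facts on which everything hinges: (i) the $\nu$-averaged entropy/exponent inequality — a Ledrappier--Young / Margulis--Ruelle type bound for random dynamical systems — which gives $h_\nu(\mu) \le \frac1m\sum_i \sum_{\lambda_j>0}\lambda_j(\mu)$ with equality iff $\mu$ is an SRB measure along the unstable foliation, together with its stable counterpart obtained by running the cocycle backward; and (ii) the fact that for perturbations of a generic $A$, the stationary SRB measure exists, is unique, and its stable/unstable foliations are well-behaved (this should be Theorem~\ref{thm:expanding-SRB}'s analogue, quoted earlier). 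Because $A$ is area/volume preserving and the $\lambda_j$ of $\mu_{\rm SRB}$ equal those of $A$, the Pesin entropy formula applied in both time directions pins $\mu_{\rm SRB}$ down as simultaneously stable- and unstable-SRB, hence absolutely continuous, hence equal to Lebesgue; and a stationary measure that is Lebesgue is in fact $f_i$-invariant for every $i$ (since $\frac1m\sum_i (f_i)_*\mathrm{Leb} = \mathrm{Leb}$ and each $(f_i)_*\mathrm{Leb}$ is itself a probability, a convexity/ergodicity argument forces equality termwise).

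Next I would exploit the generic structure of $A$: its eigenvalues have pairwise distinct moduli (or come in complex-conjugate pairs), so the Oseledets splitting of the random cocycle over $\mu_{\rm SRB}=\mathrm{Leb}$ refines into one-dimensional (or two-dimensional, in the complex case) measurable subbundles $E^{\lambda_j}$, and — by the standard Anosov perturbation theory — these integrate to Hölder, indeed $C^{1+}$-along-leaves, invariant foliations $\cW^{\lambda_j}$ whose tangent distributions are jointly invariant under all $f_i$ (each $f_i$ being $C^1$-close to $A$). On each such (coarse Lyapunov) foliation the dynamics is a random walk of expanding or contracting one/two-dimensional maps along the leaves; since every $f_i$ preserves Lebesgue and the exponent along $\cW^{\lambda_j}$ equals $\log|\lambda_j|$ exactly, the leafwise Jacobians must be cohomologous to constants, and one applies the one-dimensional argument of Theorem~\ref{thm:S1} (or rather its Livšic-type core) leaf-by-leaf to conjugate, along each foliation, to the affine model; assembling these leafwise conjugacies using the joint integrability / accessibility coming from genericity of $A$ produces a single $h\in\diff^{1+}(\TT^d)$ with $h\circ f_i\circ h^{-1}$ preserving the linear foliations and acting affinely along each of them.

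Finally, a map of $\TT^d$ that preserves each coarse Lyapunov linear foliation and restricts to an affine map along each leaf, and which is homotopic to $A$, must itself be of the form $x\mapsto Ax + v_i$: the derivative $D(h\circ f_i\circ h^{-1})$ is constant along every leaf of every foliation, the foliations jointly span $T\TT^d$ and their union of tangent directions is connected-enough (genericity ensures the foliations are not contained in a proper subtorus, so there is no room for a nonconstant "shear"), hence the derivative is globally constant, equal to $A$ by the homotopy constraint, and the conjugated map is an affine automorphism. The main obstacle I anticipate is the middle step — showing that the leafwise conjugacies along the different coarse Lyapunov foliations can be glued into one globally defined $C^{1+}$ diffeomorphism of $\TT^d$: this requires genericity of $A$ in an essential way (to get a rich enough collection of one- or two-dimensional foliations with controlled holonomies, so that the "simultaneous linearization along all foliations" forces true affinity), and it is where the random (as opposed to $\ZZ^k$-action) nature of the problem is most delicate, since one cannot invoke higher-rank commutation relations and must instead run a careful Journé-type regularity/rigidity argument along the web of invariant foliations while keeping track of the stationarity of $\mu_{\rm SRB}$.
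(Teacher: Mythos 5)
Your high-level architecture — use the exponent matching to pin down the stationary measure, linearize along each coarse Lyapunov factor, then glue via a Journé-type argument and identify the resulting homeomorphism with an affine map — is broadly the right shape, and you correctly flag that the middle step is where the real work lies. But the proposal skips the step that is, in fact, the heart of the paper's proof.

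Where you write that the coarse Lyapunov subbundles ``integrate to \ldots invariant foliations $\cW^{\lambda_j}$ whose tangent distributions are jointly invariant under all $f_i$,'' you are asserting the conclusion, not deriving it. A priori, each $f_i$ has its own stable and unstable foliations $\cF^{s/u}_{f_i}$, and these are $\omega$-dependent for the random cocycle: there is no reason, before proving it, that $\cF^s_{f_i} = \cF^s_{f_j}$. The statement that the stable (and unstable) foliations of the individual generators coincide — the ``non-randomness of stable bundles'' — is precisely what Propositions \ref{prop:s-nonrandom-T2} and \ref{prop:s-nonrandom-Td} establish, and it takes real work: one shows the fiberwise conjugacy $H$ to the linear skew product is $C^{1+}$ along unstable leaves (via Proposition \ref{prop:u-gibbs}, amenable reduction for two-dimensional conformal factors, Livšic, and Journé), and then for the heteroclinic word $\tau=(\dots,f,f,g,g,\dots)$ one computes the holonomy of $H_f(\cF^s_g)$ between leaves of $\cL^u$ and shows it is the identity, invoking Lemma \ref{lem:linear-Td} to conclude $H_f(\cF^s_g)$ is linear, hence equal to $\cL^s$. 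Without this, one cannot even get off the ground: the ``leafwise conjugacy along $\cW^{\lambda_j}$'' you want to assemble does not exist as a single object until the foliations are shown to be deterministic.

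Your opening move is also a departure from the paper's proof and is more delicate than you allow. You argue that matching the full spectrum of $A$ makes $\mu_{\rm SRB}$ absolutely continuous (stable-SRB and unstable-SRB) and then equal to Lebesgue. The paper never claims $\mu_{\rm SRB}=\mathrm{Leb}$; the $f_i$ are not volume-preserving, so Lebesgue is generally not stationary. What the paper shows (Lemma \ref{lem:mu=leb}) is that $H_*\mu=\nu^\ZZ\times\Leb_{\TT^d}$, where $H$ is the (a priori only H\"older) fiberwise conjugacy to the linear skew product — a statement about the conjugated measure, not about $\mu_{\rm SRB}$ itself. The Baxendale-type argument (absolutely continuous stationary measure with zero total exponent is almost-surely invariant) is indeed used by the paper — but only for the \emph{circle} case, Theorem \ref{thm:S1-general}, where absolute continuity of the stationary SRB measure is automatic; for the Anosov case the paper instead works with fiberwise $u$-Gibbs states and the structural-stability conjugacy, avoiding any claim of absolute continuity of $\mu_{\rm SRB}$ on $\TT^d$. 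Finally, your last-paragraph appeal to genericity to rule out shears is hand-waved where the paper has a clean lattice argument (Lemma \ref{lem:translation}): a homeomorphism of $\TT^d$ homotopic to the identity and preserving two transverse minimal linear foliations of complementary dimension must be a translation, because it fixes the dense set of intersection points $\tilde\cL_1(m)\cap\tilde\cL_2(n)$ for $m,n\in\ZZ^d$ up to a single shift.
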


\begin{Remark}
	We have the following remarks:
	\begin{enumerate}
		\item The key observation is that if positive Lyapunov exponents are equal to the linear action, then the stable bundles are non-random; see first items of Theorem \ref{thm:2-dim} and Theorem \ref{thm:commuting}. The non-randomness of both stable and unstable bundles implies simultaneous topological linearization.
		\item If we take $\{f_i\}_{i=1}^m\subseteq\diff^{\infty}(\TT^d)$ which are close to $A$, then according to the bootstrap result of \cite{KSW1,KSW2}, the conjugacy $h$ also belongs to $\diff^{\infty}(\TT^d)$.
		\item If $d=2$, we can take $\{f_i\}_{i=1}^m$ are contained in a neighborhood of an Anosov diffeomorphism $f\in\diff^2(\TT^2)$, see Theorem \ref{thm:2-dim}.
	\end{enumerate}
\end{Remark}

For perturbations of  family of automorphisms on $\TT^2$ induced by 
 non-commuting positive matrices, coincidence of the  positive Lyapunov exponent (with respect to the stationary SRB measure) is sufficient for simultaneous smooth conjugacy.  

We say a family of matrices $\{A_i\}_{i=1}^m\subseteq\GL(d,\ZZ)$ is irreducible if the only subspace $V\subseteq\RR^d$ satisfying  $A_i(V)=V$ for every $i=1,\cdots,m$ is either $V=\{0\}$ or $V=\RR^d$. We say $A_i=(a_{ij})_{d\times d}\in\GL(d,\ZZ)$ is a positive matrix if all its elements are positive $a_{ij}>0$.  Positive matrices in $\GL(2,\ZZ)$ are always  hyperbolic and expand vectors in the first and third quadrants.

The random walk obtained by perturbing the generators of a linear action on $\TT^2$ induced by  non-commuting positive matrices admits an ergodic stationary SRB measure.  Let  $\lambda^+(\mu_{\rm SRB})$ denote the positive Lyapunov exponent for the random walk relative to the stationary SRB measure.  Similarly, we define the top Lyapunov exponent of the random product of matrices $\{A_i\}$ to be the a.s.\ limit $$\lambda^{+}(\{A_i\}):= \lim_{n\to \infty} \frac  1 n \log \|A_{i_{n-1}} \cdots  A_{i_0}\|$$ 
where $A_i$ are chosen i.i.d.\ with probability $ 1/ m$.  

\begin{Theorem}\label{thm:positive-T2}
	Let $\{A_i\}_{i=1}^m\subseteq\GL(2,\ZZ)$ be an irreducible family of  positive matrices and let $\{f_i\}_{i=1}^m\subseteq\diff^r(\TT^2)$ be   perturbations of $\{A_i\}_{i=1}^m$ with $r\geq2$.
Then $$\lambda^+(\mu_{\rm SRB}) \le \lambda^{+}(\{A_i\}).$$  Moreover, equality  $\lambda^+(\mu_{\rm SRB}) = \lambda^{+}(\{A_i\})$ holds 
if and only if there exist $h\in\diff^{r-}(\TT^2)$ and $\{v_i\}_{i=1}^m\subseteq\TT^2$ such that
    $$
    h\circ f_i\circ h^{-1}=A_i+v_i, \qquad i=1,\cdots,m.
    $$ 

\end{Theorem}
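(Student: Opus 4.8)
The plan is to follow the strategy already advertised for the torus: the positive Lyapunov exponent controls the regularity of the stable (here, the contracting) distribution, and coincidence with the linear exponent forces the stable distribution to be non-random and smooth, which is the gateway to simultaneous linearization.

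First I would establish the inequality $\lambda^+(\mu_{\rm SRB})\le\lambda^+(\{A_i\})$. Since positive matrices in $\GL(2,\ZZ)$ are hyperbolic and since the $f_i$ are $C^2$-close to the $A_i$, the random walk is uniformly hyperbolic: there is a continuous (a priori only measurable-in-$\omega$) splitting $T\TT^2=E^s_\omega\oplus E^u_\omega$, and the stationary SRB measure $\mu_{\rm SRB}$ exists, is ergodic, and has SRB disintegration along unstable manifolds. The bound then comes from comparing the expansion of the random product of derivatives $Df^n_\omega$ with that of the linear cocycle $A_{i_{n-1}}\cdots A_{i_0}$, using that the former is, after a bounded change of coordinates (the topological conjugacy to the affine random walk, which exists since the $f_i$ are Anosov perturbations of a hyperbolic linear family), a continuous perturbation of the latter on the torus. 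The key point is that a continuous positive matrix cocycle over a subshift that is cohomologous (via a continuous, hence bounded, transfer function) to the linear one has the same top exponent; for the perturbed nonlinear cocycle one obtains $\le$ because the SRB unstable conditionals only see the expansion rate, and Jensen/subadditivity gives the inequality against the almost sure linear rate. Here the irreducibility and positivity of $\{A_i\}$ are used to guarantee $\lambda^+(\{A_i\})$ is simple and is genuinely realized (Furstenberg positivity), so there is no ambiguity in what the two sides are.

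Next, assume equality. The first goal is: \emph{the contracting bundle $E^s_\omega$ is non-random}, i.e.\ there is a single continuous distribution $E^s$ on $\TT^2$ with $E^s_\omega=E^s$ for a.e.\ $\omega$, and moreover $Df_i(E^s)=E^s$ for every $i$. To get this I would invoke the invariance-principle/Ledrappier-type argument: equality of the perturbed positive exponent with the linear one means there is no "extra contraction," so the fiberwise entropy defect in the random cocycle vanishes, which by the Ledrappier--Furstenberg / Avila--Viana invariance principle forces the stable conditional measures of the fibered system to be invariant, hence the stable direction is deterministic. (This is exactly the mechanism behind "first items of Theorem \ref{thm:2-dim} and Theorem \ref{thm:commuting}" cited in the remark.) Having a common $f_i$-invariant contracting line field on $\TT^2$, together with the common invariant \emph{unstable} line field that the positivity of the $A_i$ already supplies — positive matrices all expand the closed first quadrant, and the nonlinear perturbations inherit an invariant cone, so the unstable directions are automatically non-random — I get a fully non-random hyperbolic splitting. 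Non-randomness of both bundles yields, by the Franks--Manning machinery applied uniformly, a simultaneous \emph{topological} conjugacy $h_0\circ f_i\circ h_0^{-1}=A_i+v_i$.

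Finally I would upgrade $h_0$ to a $C^{r-}$ diffeomorphism. With the splitting non-random, each $f_i$ individually is a $C^r$ Anosov diffeomorphism on $\TT^2$ conjugate to the hyperbolic automorphism $A_i+v_i$, and the equality of exponents transfers to a matching of periodic data / Lyapunov data for each $f_i$ simultaneously along the common stable and unstable foliations; the classical 2-torus results of de la Llave--Marco--Moriyón (\cite{dlL,MM1,dlLM}), applied along each of the two one-dimensional foliations (whose holonomies are now common to the whole family because the foliations are non-random), promote $h_0$ to be $C^{r-\e}$ for every $\e>0$, i.e.\ $h\in\diff^{r-}(\TT^2)$. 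The converse direction is immediate: if $h\circ f_i\circ h^{-1}=A_i+v_i$, then the random walk is smoothly conjugate to the affine one, whose SRB measure is Lebesgue and whose positive exponent is exactly $\lambda^+(\{A_i\})$, so equality holds.

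The main obstacle I anticipate is the non-randomness step: extracting a \emph{single continuous, $f_i$-invariant} stable line field from equality of exponents. The invariance principle naturally gives measurable, $\mu_{\rm SRB}$-a.e.\ invariance of the stable conditionals; promoting this to a genuinely non-random \emph{continuous} bundle on all of $\TT^2$ requires combining the uniform hyperbolicity (continuity of $E^s_\omega$ in $x$), the density of the unstable-manifold saturation of $\mu_{\rm SRB}$, and an argument that the a.e.\ identification of $E^s_\omega$ with a deterministic field holds everywhere and is preserved by each generator — this is where the bulk of the technical work, and the reliance on the two-dimensionality (one-dimensional stable bundle, so "non-random" is a statement about a circle-valued function) will lie.
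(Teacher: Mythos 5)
Your proposal has two genuine gaps, both in steps central to the theorem.

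First, you assert that "the unstable directions are automatically non-random" because all $f_i$ share a common expanding cone. This is false. A common invariant cone yields uniform hyperbolicity of the random product, but the random unstable line $E^u_\omega(x)$ depends on the past of the word $\omega$, and each generator $f_i$ has its own unstable line field inside the cone; for non-commuting $A_i$ these are genuinely different. The paper never claims non-randomness of the unstable foliation here. The role of irreducibility/non-commutativity is entirely different: it provides two generators $f,g$ whose \emph{linear} stable foliations $\cL^s_f,\cL^s_g$ are transverse and whose nonlinear unstable foliations $\cF^u_f,\cF^u_g$ are transverse. Relatedly, your route to non-randomness of the stable bundle via the invariance principle is not what the paper does; it first shows (Proposition \ref{prop:2-H-smooth}) that equality of positive exponents forces the fiberwise conjugacy $H$ to be $C^{1+}$ along unstable leaves, and then (Proposition \ref{prop:s-nonrandom-T2}) runs a holonomy argument using Lemma \ref{lem:linear-Td} to show $H_f(\cF^s_g)$ is a linear foliation equal to $\cL^s_g$. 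You acknowledge the non-randomness step is hard, but you leave it as an appeal to an invariance-principle conclusion you do not establish, and your subsequent argument builds on a non-random unstable bundle that does not exist.

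Second, the regularity upgrade is wrong. You propose de la Llave--Marco--Moriy\'on along both one-dimensional foliations, but that requires matching periodic/Lyapunov data along the \emph{stable} direction too, i.e.\ equality of $\lambda^-$ — which is precisely what this theorem does \emph{not} assume. The paper avoids the stable direction altogether: $h=H_f$ is $C^r$ along $\cF^u_f$, $H_g$ is $C^r$ along $\cF^u_g$, and because $h\circ H_g^{-1}$ preserves the two transverse minimal linear foliations $\cL^s_f,\cL^s_g$ and is homotopic to the identity, Lemma \ref{lem:translation} forces $h=H_g+u_g$ for a constant $u_g$; hence $h$ is $C^r$ along both $\cF^u_f$ and the transverse $\cF^u_g$, and Journ\'e's theorem gives $h\in\diff^{r-}(\TT^2)$. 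Without the transversality of the two unstable foliations coming from non-commutativity, there is no second smooth direction and your argument cannot close.
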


\begin{Remark}
	We have the following remarks:
	\begin{enumerate}
		\item 
		The regularity of the conjugacy $h$ satisfies $r-=r$ if $r$ is non-integer and $r->r-\epsilon$ for every $\epsilon>0$ if $r$ is integer.  The mild loss of regularity comes from the use of Journ\'e's theorem \cite{J}.
		\item The main novelty of this theorem is that we only need the equality of positive Lyapunov exponent, and no assumption on the negative Lyapunov exponent. This comes from using that the action of $\{A_i\}$ is irreducible and thus there exists  $A_i\neq A_j$ whose  stable foliations are transverse.
		\item For sufficiently small perturbations the results of \cite{BLOR} show that the stationary SRB measure $\mu_{\rm SRB}$ is absolutely continuous with respect to the ambient volume.  We note however, that we do not directly use the absolute continuity of $\mu_{\rm SRB}$ in the proof of this theorem.  
	\end{enumerate}
\end{Remark}

\section{Preliminary definitions and constructions}

Let $M$ be a $C^\infty$ closed Riemannian manifold and let $\Gamma\subseteq\diff^r(M)$ be a subgroup. We say a Borel probability measure $\mu$ on $M$ is $\Gamma$-invariant if $\mu(f^{-1}(A))=\mu(A)$ for every $f\in\Gamma$ and every Borel set $A\subseteq M$. In general, when $\Gamma$ is not amenable, there may not exist any $\Gamma$-invariant measures, and thus we need a weaker notion of invariance.

Let $\nu$ be a Borel probability measure on $\Gamma$. We say a Borel probability measure $\mu$ on $M$ is $\nu$-stationary if
$$
\int f_*\mu~d\nu(f)=\mu,  \qquad \text{or equivalently} \qquad
\int\mu(f^{-1}(A))~d\nu(f)=\mu(A)
$$
for every Borel subset $A\subseteq M$. It is clear that if $\mu$ is $\Gamma$-invariant, then it is $\nu$-stationary for any probability $\nu$ supported on $\Gamma$. Given a $\nu$-stationary measure $\mu$, if $f_*\mu=\mu$ for $\nu$-a.e. $f\in\Gamma$, we say $\mu$ is $\nu$-almost surely (a.s.) invariant. The same definitions hold when we take $\nu$ to be a Borel probability measure on $C^r(M)$. 

Denote by $\Sigma_+=\left(\diff^r(M)\right)^\NN$ the set of sequence of diffeomorphisms 
$$
\omega=(\omega_i)_{i\in\NN}=(f_{\omega_0},f_{\omega_1},f_{\omega_2},\cdots)\in\Sigma_+,
\qquad \text{where} \qquad
f_{\omega_i}\in\diff^r(M).
$$ 
Given a probability $\nu$ on $\diff^2(M)$, we equip $\Sigma_+$ with the product measures $\nu^\NN$. Fix $r=2$ for the remainder.  Since $\diff^2(M)$ is a Polish space, so is $\Sigma_+$ and the measure $\nu^\NN$ is Radon. Let $\sigma:\Sigma_+\to\Sigma_+$ be the shift map
$$
\sigma:(f_{\omega_0},f_{\omega_1},f_{\omega_2},\cdots)
       \mapsto
       (f_{\omega_1},f_{\omega_2},f_{\omega_3},\cdots),
$$
then $\nu^\NN$ is $\sigma$-invariant. 

Given $\omega=(f_{\omega_0},f_{\omega_1},f_{\omega_2},\cdots)\in\Sigma_+$ and $n\in\NN$, we can define a cocycle
$$
f_\omega^0={\rm Id},\qquad f_\omega=f_\omega^1=f_{\omega_0},\qquad 
f_\omega^n=f_{\omega_{n-1}}\circ\cdots\circ f_{\omega_1}\circ f_{\omega_0},\quad \forall n\geq1.
$$
We denote $\chi^+(M,\nu)$ the random dynamical system on $M$ defined by random compositions $\{f_\omega^n\}_{\omega\in\Sigma_+}$. 

Similarly, denote $\Sigma=\left(\diff^2(M)\right)^\ZZ$ the set consisting of sequences of diffeomorphisms
$$
\omega=(\cdots,f_{\omega_{-1}},f_{\omega_0},f_{\omega_1},\cdots)\in\Sigma.
$$ 
The corresponding product measure $\nu^\ZZ$ on $\Sigma$ is also invariant by the shift map $\sigma:\Sigma\to\Sigma$.

For $\omega=(\cdots,f_{\omega_{-1}},f_{\omega_0},f_{\omega_1},\cdots)\in\Sigma$, we define the random compositions $f_\omega^n$ for $n\in\NN$ as following
\begin{align*}
	f_\omega^n&=f_{\omega_{n-1}}\circ\cdots\circ f_{\omega_1}\circ f_{\omega_0}; \\
	f_\omega^{-n}&=
	(f_{\omega_{-n}})^{-1}\circ(f_{\omega_{-(n-1)}})^{-1}\circ\cdots\circ(f_{\omega_{-1}})^{-1}.
\end{align*}

For a probability $\nu$ on $\diff^2(M)$, 
we fix a $\nu$-stationary measure $\mu$. A subset $A\subseteq M$ is $\chi^+(M,\nu)$-invariant, if for $\nu$-a.e. $f$ and $\mu$-a.e. $x\in M$, $x\in A$ implies $f(x)\in A$ and $x\notin A$ implies $f(x)\notin A$.  We say $\mu$ is ergodic if for every $\chi^+(M,\nu)$-invariant set $A$, we have $\mu(A)\cdot\mu(M\setminus A)=0$. 

We have the following random Oseledec's multiplicative theorem. 

\begin{Proposition}[{\cite[Proposition I.3.1]{LQ}}]\label{prop:Lyapunov}
	Let $\nu$ be a probability measure on $\diff^2(M)$ satisfying
	\begin{align}\label{eq:integrable}
		\int\log^+(\|f\|_{C^2})+\log^+(\|f^{-1}\|_{C^2}) d\nu<\infty,
	\end{align}
	and let $\mu$ be an ergodic $\nu$-stationary measure.
	
	There exist numbers (called Lyapunov exponents) $-\infty<\lambda_1<\lambda_2<\cdots<\lambda_l<\infty$ such that for $\nu^\NN$-a.e. $\omega\in\Sigma_+$ and $\mu$-a.e. $x\in M$, there exists a flag of linear subspaces
	$$
	\{0\}=V_\omega^0(x)\subsetneqq V_\omega^1(x)\subsetneqq \cdots \subsetneqq V_\omega^l(x)=T_xM,
	$$
	satisfying
	$$
	\lim_{n\to+\infty}\frac{1}{n}\log\|D_xf_\omega^n(v)\|=\lambda_k,
	\qquad \forall v\in V_\omega^k(x)\setminus V_\omega^{k-1}(x).
	$$
	Moreover, $m_k={\rm dim}V_\omega^k(x)-{\rm dim}V_\omega^{k-1}(x)$ is constant a.s. and $V_\omega^k(x)$ are equivariant: 
	$$
	D_xf_\omega V_\omega^k(x)=V_{\sigma(\omega)}^k(f_\omega(x)).
	$$
\end{Proposition}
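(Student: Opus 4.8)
The plan is to reduce the statement to the classical (one-sided) Oseledets multiplicative ergodic theorem applied to a skew product over the shift. Form the skew-product transformation
$$
F\colon \Sigma_+\times M\longrightarrow \Sigma_+\times M,\qquad F(\omega,x)=(\sigma\omega,\,f_{\omega_0}(x)),
$$
so that, with $f_\omega=f_{\omega_0}$, the composition $f_\omega^n$ is the cocycle of $F$ built over the first coordinate. A routine computation shows that $\mu$ is $\nu$-stationary if and only if $\nu^\NN\otimes\mu$ is $F$-invariant, and that $\mu$ is an \emph{ergodic} stationary measure (in the sense defined in the text via $\chi^+(M,\nu)$-invariant subsets of $M$) if and only if $(\Sigma_+\times M,F,\nu^\NN\otimes\mu)$ is ergodic. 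The nontrivial implication of the latter is that any $F$-invariant $L^2$ function on $\Sigma_+\times M$ agrees $(\nu^\NN\otimes\mu)$-a.e.\ with a function of $x$ alone; this follows from the fact that an $F$-invariant set lies in $\bigcap_{n\ge0}F^{-n}(\mathcal B_{\Sigma_+}\otimes\mathcal B_M)$, whose projection onto the $\Sigma_+$-factor is $\nu^\NN$-trivial, by a reverse-martingale argument --- and the resulting function of $x$ is then $\chi^+(M,\nu)$-invariant, hence $\mu$-a.e.\ constant.

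Next I would set up the derivative cocycle. Fix a Borel trivialization $TM\cong M\times\RR^d$ (any measurable vector bundle over a standard Borel space is Borel-trivial) and define $\cA\colon\Sigma_+\times M\to\GL(d,\RR)$ by letting $\cA(\omega,x)$ be $D_xf_{\omega_0}$ read in this trivialization, so that $\cA(F^{n-1}(\omega,x))\cdots\cA(\omega,x)$ represents $D_xf_\omega^n$. Since $f\mapsto Df$ is continuous on $\diff^2(M)$ and $M$ is compact, $\cA$ is Borel and there is a constant $C$ (depending only on the trivialization and the metric) with $\|\cA(\omega,x)\|\le C\|f_{\omega_0}\|_{C^1}$ and $\|\cA(\omega,x)^{-1}\|\le C\|f_{\omega_0}^{-1}\|_{C^1}$. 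Hence the hypothesis \eqref{eq:integrable} gives
$$
\int_{\Sigma_+\times M}\bigl(\log^+\|\cA\|+\log^+\|\cA^{-1}\|\bigr)\,d(\nu^\NN\otimes\mu)\;\le\;\int\bigl(\log^+\|f\|_{C^2}+\log^+\|f^{-1}\|_{C^2}\bigr)\,d\nu+\log^+C\;<\;\infty,
$$
so $\cA$ is a two-sided log-integrable cocycle over the ergodic system $(\Sigma_+\times M,F,\nu^\NN\otimes\mu)$.

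Now I would invoke the classical Oseledets theorem for cocycles over a (possibly non-invertible) ergodic measure-preserving system. It produces finitely many numbers $\lambda_1<\cdots<\lambda_l$ (finite in number because $\log^+\|\cA\|\in L^1$), with $\lambda_l<\infty$ from the same bound and $\lambda_1>-\infty$ because Kingman's subadditive ergodic theorem applied to $\log\|(D_xf_\omega^n)^{-1}\|$ (using $\log^+\|\cA^{-1}\|\in L^1$) gives a finite lower bound for the growth rate of every nonzero vector. It also produces a measurable flag $\{0\}=V^0_{(\omega,x)}\subsetneqq\cdots\subsetneqq V^l_{(\omega,x)}=\RR^d$ with $\lim_n\frac1n\log\|\cA^n(\omega,x)v\|=\lambda_k$ for $v\in V^k_{(\omega,x)}\setminus V^{k-1}_{(\omega,x)}$, the equivariance $\cA(\omega,x)V^k_{(\omega,x)}=V^k_{F(\omega,x)}$, and multiplicities $m_k=\dim V^k-\dim V^{k-1}$ that are constant a.e.\ because $(\omega,x)\mapsto m_k$ is $F$-invariant and the system is ergodic. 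Because a point of this one-sided system is a pair $(\omega,x)$ with $\omega\in\Sigma_+$, the filtration is by construction a Borel function $V^k_\omega(x)$ of $(\omega,x)$; transporting it through the trivialization back into $T_xM$ (which affects none of the exponential rates, the distortion being bounded on compact $M$) turns the growth statement into $\lim_n\frac1n\log\|D_xf_\omega^n(v)\|=\lambda_k$ and the equivariance into $D_xf_\omega V^k_\omega(x)=V^k_{\sigma\omega}(f_\omega(x))$. A final appeal to Fubini rephrases ``$(\nu^\NN\otimes\mu)$-a.e.'' as ``$\nu^\NN$-a.e.\ $\omega$ and $\mu$-a.e.\ $x$'', which is the asserted conclusion.

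The only step that is not purely mechanical is the first one: matching the notion of ergodicity of $\mu$ used in the text with ergodicity of $\nu^\NN\otimes\mu$ for the skew product. Once that correspondence is in place, the proof is a black-box application of the abstract multiplicative ergodic theorem together with the log-integrability estimate above; in particular the $C^2$ assumption on $\nu$ is far stronger than needed here ($C^1$ regularity together with log-integrability of $\|f\|_{C^1}$ and $\|f^{-1}\|_{C^1}$ suffices), the extra regularity being exploited elsewhere in the paper for Pesin-theoretic constructions. This is, of course, the content of the proof of \cite[Proposition I.3.1]{LQ}.
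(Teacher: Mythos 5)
Your proposal is correct, and it follows essentially the same route as the source: the paper gives no proof of this proposition but quotes it from \cite[Proposition I.3.1]{LQ}, whose argument is precisely your reduction — pass to the skew product $F_+$ on $\Sigma_+\times M$, identify ergodic $\nu$-stationary measures with ergodic $F_+$-invariant measures of the form $\nu^{\NN}\otimes\mu$, and apply the one-sided Oseledets theorem to the derivative cocycle, the required log-integrability being supplied by \eqref{eq:integrable}. Nothing further is needed.
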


\subsection{Skew product and fiberwise Lyapunov exponents}

Consider the product space $\Sigma_+\times M$ and the skew product system related to the random dynamical systems $\chi^+(M,\nu)$ on $\Sigma_+\times M$:
$$
F_+:~\Sigma_+\times M \to \Sigma_+\times M, \qquad
F_+(\omega,x)=\left(\sigma(\omega), f_\omega(x)\right).
$$
From \cite[Chapter 1]{LQ}, a probability measure $\hat\mu$ is a $\nu$-stationary measure if and only if $\nu^{\NN}\otimes\hat\mu$ is $F_+$-invariant.

For the product space $\Sigma\times M$ we also define the skew-extension map $F:\Sigma\times M\to\Sigma\times M$ which is also defined as
$$
F(\omega,x)=\left(\sigma(\omega), f_\omega(x)\right), \qquad \omega\in\Sigma,~x\in M.
$$
We have the following correspondence between $\nu$-stationary measures and invariant measures of skew product system.

\begin{Proposition}[{\cite[Proposition I.1.2]{LQ}}]\label{prop:measure}
	Let $\hat\mu$ be a $\nu$-stationary probability measure.  There exists a unique $F$-invariant probability measure $\mu$ on $\Sigma\times M$ such that
	$$
	\pi_*^+(\mu)=\nu^{\NN}\otimes\hat\mu,
	$$
	where $\pi^+:\Sigma\times M\to\Sigma_+\times M$ is the natural projection and 
	$\mu=\lim_{n\to+\infty}(F^n)_*(\nu^\ZZ\otimes\hat\mu)$.
	
	Moreover, if we disintegrate $\mu$ with respect to the projection $\pi:\Sigma\times M\to\Sigma$, there exists a family of Borel probability measures $\{\mu_\omega\}_{\omega\in\Sigma}$ satisfying
	$$
	\mu=\int_{\Sigma}\mu_\omega d\nu^\ZZ(\omega).
	$$
	Finally, for $\nu^\ZZ$-a.e. $\omega\in\Sigma$ and $\xi\in\Sigma$ with $\omega_i=\xi_i$ for all $i<0$, we have $
	\mu_{\omega}=\mu_{\xi}.$

\end{Proposition}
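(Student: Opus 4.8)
The plan is to construct $\mu$ directly as the weak limit $\lim_{n\to\infty}(F^n)_*(\nu^\ZZ\otimes\hat\mu)$, reading off its disintegration along the way, and then to get uniqueness by a monotone-class argument. First I would record the explicit shape of these pushforwards. Writing $\nu^\ZZ\otimes\hat\mu=\int_\Sigma(\delta_\eta\otimes\hat\mu)\,d\nu^\ZZ(\eta)$, using $F^n(\eta,x)=(\sigma^n\eta,f_\eta^n x)$, and applying the substitution $\omega=\sigma^n\eta$ (legitimate since $\nu^\ZZ$ is $\sigma$-invariant), one gets
$$(F^n)_*(\nu^\ZZ\otimes\hat\mu)=\int_\Sigma\bigl(\delta_\omega\otimes\mu^{(n)}_\omega\bigr)\,d\nu^\ZZ(\omega),\qquad \mu^{(n)}_\omega:=\bigl(f_{\omega_{-1}}\circ f_{\omega_{-2}}\circ\cdots\circ f_{\omega_{-n}}\bigr)_*\hat\mu.$$
Thus every such pushforward projects to $\nu^\ZZ$ on $\Sigma$, has $\pi$-disintegration $\{\mu^{(n)}_\omega\}$, and the fiber $\mu^{(n)}_\omega$ depends only on the finite past block $(\omega_{-1},\dots,\omega_{-n})$.

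The crux is to show that $\mu^{(n)}_\omega$ converges weakly for $\nu^\ZZ$-a.e.\ $\omega$. Fix $\varphi\in C(M)$ and set $\Phi_n(\omega)=\int_M\varphi\,d\mu^{(n)}_\omega$. I would verify that $(\Phi_n)_n$ is a bounded martingale for the increasing filtration $\mathcal F_n=\sigma(\omega_{-1},\dots,\omega_{-n})$: since $\omega_{-n-1}$ is $\nu^\ZZ$-independent of $\mathcal F_n$, conditioning on $\mathcal F_n$ amounts to integrating the innermost map against $\nu$, and because $\hat\mu$ is $\nu$-stationary (that is, $\int g_*\hat\mu\,d\nu(g)=\hat\mu$) this collapses $\Phi_{n+1}$ back to $\Phi_n$. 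Bounded martingale convergence then produces a $\nu^\ZZ$-a.s.\ and $L^1$ limit that is $\mathcal F_\infty$-measurable, where $\mathcal F_\infty=\sigma((\omega_i)_{i<0})$. Running this over a countable dense subset of $C(M)$ (available since $M$ is compact metric) and intersecting the full-measure sets yields a $\sigma$-invariant set of full $\nu^\ZZ$-measure on which $\mu^{(n)}_\omega$ converges weakly to a probability $\mu_\omega$ on $M$; the $\mathcal F_\infty$-measurability of the limits shows $\omega\mapsto\mu_\omega$ depends only on $(\omega_i)_{i<0}$, which is exactly the final assertion of the proposition. I expect this martingale step — together with the recognition that it is precisely stationarity of $\hat\mu$ that makes the conditional expectations telescope — to be the one genuinely substantive point; the rest is assembly.

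Next I would set $\mu:=\int_\Sigma(\delta_\omega\otimes\mu_\omega)\,d\nu^\ZZ(\omega)$, a Borel probability on $\Sigma\times M$ whose $\pi$-disintegration is by construction $\{\mu_\omega\}$. Applying dominated convergence fiberwise in the displayed formula shows $(F^n)_*(\nu^\ZZ\otimes\hat\mu)\to\mu$ weakly, giving the second formula in the statement. For $F$-invariance, one has $F_*(\delta_\omega\otimes\mu_\omega)=\delta_{\sigma\omega}\otimes(f_{\omega_0})_*\mu_\omega$, so after the substitution $\omega\mapsto\sigma^{-1}\omega$ the identity $F_*\mu=\mu$ reduces to $\mu_\omega=(f_{\omega_{-1}})_*\mu_{\sigma^{-1}\omega}$ for a.e.\ $\omega$, and this is immediate from $\mu_\omega=\lim_n\mu^{(n+1)}_\omega$ together with weak continuity of $(f_{\omega_{-1}})_*$. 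For the projection identity I would use the equivariance $\pi^+\circ F=F_+\circ\pi^+$ and $\pi^+_*(\nu^\ZZ\otimes\hat\mu)=\nu^\NN\otimes\hat\mu$: since $\nu^\NN\otimes\hat\mu$ is $F_+$-invariant (by the stated equivalence with $\nu$-stationarity of $\hat\mu$), one has $\pi^+_*(F^n)_*(\nu^\ZZ\otimes\hat\mu)=(F_+^n)_*(\nu^\NN\otimes\hat\mu)=\nu^\NN\otimes\hat\mu$ for every $n$, and passing to the limit using continuity of $\pi^+$ gives $\pi^+_*\mu=\nu^\NN\otimes\hat\mu$.

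Finally, for uniqueness let $\mu'$ be any $F$-invariant Borel probability with $\pi^+_*\mu'=\nu^\NN\otimes\hat\mu$. Let $\mathcal B_0:=(\pi^+)^{-1}\bigl(\mathcal B(\Sigma_+\times M)\bigr)$, the $\sigma$-algebra of sets depending only on $\bigl((\omega_i)_{i\ge 0},x\bigr)$; on $\mathcal B_0$ both $\mu'$ and $\mu$ coincide with $\nu^\NN\otimes\hat\mu$ pulled back by $\pi^+$, hence $\mu'=\mu$ on $\mathcal B_0$. Since $F$ is a homeomorphism of $\Sigma\times M$, the $\sigma$-algebras $\mathcal B_n:=F^n(\mathcal B_0)=(F^{-n})^{-1}(\mathcal B_0)$ increase with $n$, and unwinding $F^{-1}(\omega,x)=(\sigma^{-1}\omega,f_{\omega_{-1}}^{-1}x)$ identifies $\mathcal B_n$ with the $\sigma$-algebra of sets depending only on $\bigl((\omega_i)_{i\ge -n},x\bigr)$; in particular $\bigcup_n\mathcal B_n$ generates $\mathcal B(\Sigma\times M)$. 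For $A=F^n(A_0)\in\mathcal B_n$, the $F$-invariance of $\mu'$ and of $\mu$ gives $\mu'(A)=\mu'(A_0)=\mu(A_0)=\mu(A)$, so $\mu'$ and $\mu$ agree on the algebra $\bigcup_n\mathcal B_n$ and therefore $\mu'=\mu$. I note that this whole argument uses no integrability hypothesis on $\nu$; integrability is needed only for the Lyapunov exponents in Proposition~\ref{prop:Lyapunov}.
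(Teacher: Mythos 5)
Your proposal is correct, and since the paper does not prove this statement itself (it is quoted from \cite[Proposition I.1.2]{LQ}), the relevant comparison is with the standard argument in that reference, which is exactly your martingale construction of the fiber measures $\mu_\omega=\lim_n(f_{\omega_{-1}}\circ\cdots\circ f_{\omega_{-n}})_*\hat\mu$ followed by the $\pi$-system/increasing-$\sigma$-algebra uniqueness step. The only cosmetic point is that the full-measure convergence set need not be $\sigma$-invariant as asserted; one simply intersects it with all its integer shifts (or notes that $\mu^{(n)}_\omega$ depends only on $(\omega_i)_{i<0}$, so invariance is not actually needed for the identity $(f_{\omega_{-1}})_*\mu_{\sigma^{-1}\omega}=\mu_\omega$).
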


Proposition \ref{prop:measure} associates to a $\nu$-stationary measure $\hat \mu$ on $M$ a canonical invariant probability measure $\mu$ for the skew product $F\colon \Sigma\times M\to \Sigma\times M$ with $\pi_*\mu=\nu^\ZZ$.  
We may also consider more general $F$-invariant probability measures $\mu$ on $\Sigma\times M$ with $\pi_*\mu=\nu^\ZZ$ that are not necessarily natural extensions of stationary measures as in Proposition \ref{prop:measure}.  

Let $\mu$ be an $F$-invariant probability measure on $\Sigma\times M$ such that $\pi_*(\mu)=\nu^\ZZ$. The fiberwise differential over the fiberwise tangent bundle,  defined as
\begin{align*}
	DF:\Sigma\times TM&\to\Sigma\times TM, \\
	 \left(\omega,(x,v)\right)&\mapsto\left(\sigma(\omega),(f_{\omega}(x),D_xf_{\omega}(v))\right),
\end{align*} 
defines a linear cocycle over a measure preserving system
$F:(\Sigma\times M,\mu)\to(\Sigma\times M,\mu)$.
Equip $M$ with any Riemannian metric.   Suppose that $\mu$ is ergodic and satisfies the integrability condition 
	\begin{align}\label{eq:integrablegeneralskew}
		\int\log^+(\|D_x f_\omega\|_{C^2}+\log^+(\|(D_x f_\omega)^{-1}\|_{C^2}) d\mu(\omega,x)<\infty.
	\end{align}
We may then apply Oseledec's theorem to the cocycle $DF$.  In particular, there are numbers $\{\lambda_k\}_{k=1}^l$, called  the fiberwise Lyapunov exponents, and a $\mu$-measurable splitting 
$$
\{\omega\}\times T_xM=\bigoplus_{k=1}^l E^k_{\omega}(x)
$$
such that for every $v\in E^k_{\omega}(x)\setminus\{0\}$,
$$
\lim_{n\to\pm\infty}\frac{1}{n}\log\|DF^n(v)\|=\lim_{n\to\pm\infty}\frac{1}{n}\log\|D_xf_{\omega}^n(v)\|=\lambda_k.
$$



%

\subsection{Fiberwise Anosov systems}
Let $M$ be a compact manifold.  We equip  $\Sigma=\left(\diff^2(M)\right)^\ZZ$ the product metric, for $\omega=(\omega_i),\xi=(\xi_i)\in\Sigma$,
$$
d_{\Sigma}(\omega,\xi)=\sum_{i\in\ZZ}
\frac{\|f_{\omega_i}-f_{\xi_i}\|_{C^2}+\|f^{-1}_{\omega_i}-f^{-1}_{\xi_i}\|_{C^2}}{2^{|i|}}.
$$
For the product space $\Sigma\times M$, we define the metric
$$
d\left((\omega,x),(\xi,y)\right):=\max\left\{d_{\Sigma}(\omega,\xi), d_M(x,y)\right\}
$$
for every $(\omega,x),(\xi,y)\in\Sigma\times M$, where $d_M(\cdot,\cdot)$ is induced by the Riemannian metric on $M$.
Metrics on $\Sigma_+=\left(\diff^2(M)\right)^\NN$ and $\Sigma_+\times M$ are defined similarly same. 

We write $M_\omega=\{\omega\}\times M$ for the fiber of $\Omega\times M$ over $\omega$ and write $TM_\omega := \{\omega\}\times TM$.

\begin{Definition}\label{def:Anosov}
	Let $\cA\subset\diff^2(M)$ and let $\Omega=\cA^{\ZZ}\subset\Sigma$.   We say the skew product $F:\Omega\times M\to\Omega\times M$,
	$$
	F(\omega,x)=\left(\sigma(\omega),f_\omega(x)\right), \qquad
	\omega\in\Omega,~x\in M,
	$$
	is fiberwise Anosov if for every word $\omega\in\Omega$, there exists a continuous splitting $TM_{\omega}=E^s_{\omega}\oplus E^u_{\omega}$ and constants $\gamma>0$, $C>1$, $0<\theta<1$, $L>1$ such that the following hold:
	\begin{enumerate}
		\item The splitting is $DF$-invariant: 
		$$
		DF\left(E^s_\omega\right)=Df_{\omega_0}\left(E^s_\omega\right)=E^s_{\sigma(\omega)} 
		\qquad \text{and} \qquad 
		DF\left(E^s_\omega\right)=Df_{\omega_0}\left(E^u_\omega\right)=E^u_{\sigma(\omega)}.
		$$
		\item For nonzero vectors $v^s\in E^s_\omega$ and $v^u\in E^u_\omega$, we have
		$$
		\|Df_\omega^nv^s\|<C\exp(-n\gamma)\|v^s\| \qquad \text{and} \qquad
		\|Df_\omega^{-n}v^u\|<C\exp(-n\gamma)\|v^u\|, \qquad \forall n\geq0.
		$$
		\item For all $(\omega,x)\in\Omega\times M$, $(\omega,x)\mapsto E^s_{\omega}(x)$ and $(\omega,x)\mapsto E^u_{\omega}(x)$ are $(L,\theta)$-H\"older continuous.
	\end{enumerate}
\end{Definition}

The classical Stable Manifold Theorem also holds for fiberwise Anosov skew products.

\begin{Lemma}\label{lem:stable-mfd}
	Let $\cA\subset\diff^2(M)$ and $\Omega=\cA^{\ZZ}$, if the skew product $F(\omega,x)=\left(\sigma(\omega),f_\omega(x)\right):\Omega\times M\to\Omega\times M$ is fiberwise Anosov, then there exist families of stable and unstable foliations 
	$$
	\left\{\cF^s_\omega 
	\right\}_{\omega\in\Omega} 
	\qquad \text{and} \qquad 
	\left\{\cF^u_\omega
	\right\}_{\omega\in\Omega}
	$$
	such that the following hold:
	\begin{enumerate}
		\item For every $\omega\in\Omega$, $\cF^s_\omega$ and $\cF^u_\omega$ are two transversal foliations on the fiber $M_\omega=\{\omega\}\times M$ and for every $x\in M_\omega$,
		$$
		T_x\cF^s_\omega(x)=E^s_\omega(x), \qquad \text{and} \qquad
		T_u\cF^s_\omega(x)=E^u_\omega(x).
		$$
		\item Let $\gamma>0$ be the constant in Definition \ref{def:Anosov}. There exists $r>0$ such that for every $0<\epsilon<\gamma$, there exists $C=C_\epsilon>1$, such that for $y,z\in\cF^s_{\omega,{r}}(x)$ and $n>0$,
		$$
		d_M(f^n_\omega(y),f^n_\omega(z))\leq C\exp[n(-\gamma+\epsilon)]d_M(y,z).
		$$
		Above, $\cF^s_{\omega,{r}}(x)$ denotes the path-component containing $x$ of the intersection of $\cF^s_{\omega,}(x)$ with the ball of radius $r$ centered at $x$.  
		
		{The same holds for $y,z\in\cF^u_{\omega,r}(x)$ under negative iteration $f^{-n}_\omega$.}
	\end{enumerate}
\end{Lemma}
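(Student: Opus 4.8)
The plan is to adapt the classical Hadamard--Perron graph-transform argument to the fiberwise setting, treating the extra $\Sigma$-coordinate as a (continuous) parameter and working uniformly over $\omega\in\Omega$. First I would fix $\omega\in\Omega$ and, for each $x\in M_\omega$, use exponential charts to pull the fiber dynamics $f_{\sigma^k(\omega)}$ back to a sequence of maps $\tilde f_k:T_{f_\omega^k(x)}M\to T_{f_\omega^{k+1}(x)}M$ whose linear parts $Df_{\sigma^k(\omega)}$ preserve the hyperbolic splitting $E^s_{\sigma^k(\omega)}\oplus E^u_{\sigma^k(\omega)}$ from Definition~\ref{def:Anosov}, with the uniform rates $C,\gamma,\theta,L$. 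Since $\cA\subset\diff^2(M)$ and $M$ is compact, the nonlinear remainders have a uniform modulus of continuity and small $C^1$-size on a ball of a fixed radius $r_0$; this uniformity is exactly what lets all estimates be taken independent of $\omega$. Then the standard contraction-on-Lipschitz-graphs argument produces, for each $(\omega,x)$, a local stable set $\cF^s_{\omega,r}(x)$ as the graph of a $C^1$ function $E^s_\omega(x)\to E^u_\omega(x)$ tangent to $E^s_\omega(x)$ at $x$, and the contraction estimate in item (2) with rate $-\gamma+\epsilon$ follows from iterating the a-priori bound along the pseudo-orbit; the unstable statement is the same argument applied to $F^{-1}$, using that the $f_{\omega_i}^{-1}$ also lie in $\diff^2$ and the splitting is $DF^{-1}$-invariant.

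Next I would upgrade these local objects to genuine foliations on each fiber. Transversality and the tangency $T_x\cF^s_\omega(x)=E^s_\omega(x)$, $T_x\cF^u_\omega(x)=E^u_\omega(x)$ are immediate from the graph construction and the transversality of $E^s_\omega\oplus E^u_\omega$. To get global leaves one saturates: $\cF^s_\omega(x):=\{y\in M_\omega: d_M(f^n_\omega(y),f^n_\omega(x))\to 0\}$, and one checks that this coincides with the union of forward-iterated local stable manifolds, $\cF^s_\omega(x)=\bigcup_{n\ge 0}(f_\omega^n)^{-1}\bigl(\cF^s_{\sigma^n(\omega),r}(f_\omega^n(x))\bigr)$, using the uniform contraction to see that a point with $d_M(f^n_\omega(y),f^n_\omega(x))$ small for some large $n$ must eventually enter the local stable manifold. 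The $DF$-invariance $f_\omega(\cF^s_\omega(x))=\cF^s_{\sigma(\omega)}(f_\omega(x))$ is then built into the construction, and the local product structure on each fiber (hence the foliation property) follows from intersecting local stable and unstable plaques, exactly as for a single Anosov map, with all plaque sizes uniform in $\omega$ thanks to the H\"older condition (3).

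I expect the main obstacle to be \emph{not} the per-fiber stable manifold theorem --- which is essentially the classical proof with $\omega$ carried along as a parameter --- but rather making the dependence on $\omega$ and the word-metric $d_\Sigma$ genuinely controlled: one should check that the constant $r>0$, the exponent $\epsilon$, and the constant $C_\epsilon$ can be chosen uniformly over all $\omega\in\Omega$, which requires the compactness of $M$ together with the fact that $\cA$ (through which all $f_{\omega_i}$ range) gives a $C^2$-precompact family, so that the nonlinear error terms in the charts are uniformly small and equi-continuous. A secondary technical point is the regularity of the leaves transverse to the $\omega$-direction, but since the statement as given only asserts foliations on each individual fiber $M_\omega$ together with the contraction estimate, I would not pursue joint regularity here and would simply record the per-fiber statement. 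In summary, the proof is: (i) uniform charts and hyperbolic estimates from Definition~\ref{def:Anosov}; (ii) graph transform per fiber to get uniform-size local stable/unstable manifolds with the claimed exponential rate; (iii) saturation to global leaves and verification of $DF$-equivariance and the foliation/transversality properties; with the uniformity in $\omega$ being the one point that needs care beyond the classical argument.
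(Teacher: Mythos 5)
The paper does not include a proof of this lemma; it is introduced with the one-line remark that ``the classical Stable Manifold Theorem also holds for fiberwise Anosov skew products'' and then cited freely later. Your proposal reconstructs the standard Hadamard--Perron graph-transform argument with $\omega$ carried as a parameter, which is indeed the intended route (the analogous argument for random perturbations of a single Anosov diffeomorphism appears in Liu \cite{Liu}, and the authors cite \cite{Liu} for the related Proposition~\ref{prop:structure-stable}). Your account of the steps --- uniform exponential charts, contraction on Lipschitz graphs, saturation to global leaves, and local product structure from intersecting plaques --- is correct, and you rightly flag that the substance of the lemma lies in the uniformity of $r$ and $C_\epsilon$ over $\omega$.

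One remark worth making explicit: to get uniform plaque sizes and the uniform constant $C_\epsilon$, the graph-transform argument needs a uniform $C^2$-bound on the maps ranging through the words, i.e.\ a bound of the form $\sup_{f\in\cA}\bigl(\|f\|_{C^2}+\|f^{-1}\|_{C^2}\bigr)<\infty$. Definition~\ref{def:Anosov} only posits uniform hyperbolicity constants $C,\gamma,\theta,L$, not a $C^2$-bound on $\cA$, so you are correct that this is an additional (implicit) hypothesis; you appeal to it via ``$C^2$-precompactness'' of $\cA$. This is consistent with every use of the lemma in the paper --- $\cA$ is always a finite set or a bounded neighborhood --- but it is the one place where your sketch invokes something slightly stronger than what is literally written in the definition, and would be worth stating as a standing assumption if one wanted a self-contained argument.
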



{Let $M$ be a compact Riemannian manifold.  
Given a subspace $V\subset T_xM$ and a vector $w\in T_xM$ write $w=w_V+ w^\perp$ where $w_V\in V$ and $w^\perp\in V^\perp$.  Define $C_ \kappa(V)$, the cone of radius $0< \kappa<1$ around $V$ by 
$$C_ \kappa(V) := \{w\in T_xM: |w^\perp|\le  \kappa |w_V|\}$$
A $k$-cone-field on $M$ is a continuous family $x\mapsto \cC(x)$ where each $\cC(x)$ is a $C_ \kappa(V)$ around a vector space of dimension $k$.  

\begin{Definition}\label{def:common-cone}  
Let $M$ be a compact Riemannian manifold of dimension $n$.  
	Let $\cA\subseteq\diff^2(M)$ be a family diffeomorphisms.   
	We say that  $\cA$ is \emph{cone hyperbolic} if there exist $1\le k \le n-1$, a continuous $k$-cone-field $x\mapsto\cC^s_x$,  
	a continuous $(n-k)$-cone-field $x\mapsto\cC^u_x$, and $\gamma>0$, 
	such that for every $x\in M$, non-zero vectors $v^s\in\cC^s_x$, $v^u\in\cC^u_x$, and every $f\in\cA$,
	\begin{itemize}
		\item $Df^{-1}(\cC^s(x))\subset\cC^s_{f^{-1}(x)}$,  ~and ~$\|Df^{-1}(v_s)\|>\exp(\gamma)\cdot\|v_s\|$;
		\item $Df(\cC^u(x))\subset\cC^u_{f(x)}$, ~and ~$\|Df(v_u)\|>\exp(\gamma)\cdot\|v_u\|$.
	\end{itemize}
\end{Definition} 

}

\begin{Remark}\label{rem:cone hyperbolicity}
\begin{enumerate}
\item We remark that if   $\cA$ is {cone hyperbolic} then every element of $f\in \cA$ is an Anosov diffeomorphism.  Moreover, for every $x\in M$ the stable subspace $E^s_f(x)$ has dimension $k$ and  $E^s_f(x)\subset \cC^s(x).$

	\item It is clear cone hyperbolicity 
	is a $C^1$-open property. That is, $\cA\subseteq\diff^1(M)$   is  cone hyperbolic where elements 
	then there exists a neighborhood $\cU\subseteq\diff^1(M)$ of $\cA$ such that $\cU$ is  cone hyperbolic.  
	In particular, this holds for $\cA=\{f\}$ where $f$ is an Anosov diffeomorphism.
	\end{enumerate}
\end{Remark}

For a {cone hyperbolic}  family of Anosov diffeomorphisms, 
the related skew product system is a fiberwise Anosov system from the classical cone-fields argument.

\begin{Lemma}\label{lem:cone-Anosov}
	Let $\cA\subseteq\diff^2(M)$ be a {cone hyperbolic}  family of diffeomorphisms.  
	Denote $\Omega=\cA^\ZZ$.  Then the skew product $F:\Omega\times M\to\Omega\times M$ defined as $F(\omega,x)=\left(\sigma(\omega),f_\omega(x)\right)$ is fiberwise Anosov.
\end{Lemma}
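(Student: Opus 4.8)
The plan is to promote the cone-hyperbolicity hypothesis on the family $\cA$ — which is a fiberwise-uniform, $n$-step condition — to the full fiberwise Anosov conclusion of Definition \ref{def:Anosov} by the standard cone-field construction, carried out uniformly in the base word $\omega\in\Omega$. First I would fix the continuous cone-fields $x\mapsto\cC^s_x$, $x\mapsto\cC^u_x$ and the rate $\gamma>0$ provided by Definition \ref{def:common-cone}. For each word $\omega=(f_{\omega_i})_{i\in\ZZ}$ and each $x\in M$, define the candidate stable and unstable subspaces by forward/backward intersection along the orbit:
\begin{align*}
E^s_\omega(x)&=\bigcap_{n\ge 0} Df^{-n}_{\sigma^n(\omega)}\big(\cC^s_{f^n_\omega(x)}\big),\\
E^u_\omega(x)&=\bigcap_{n\ge 0} Df^{n}_{\sigma^{-n}(\omega)}\big(\cC^u_{f^{-n}_\omega(x)}\big).
\end{align*}
Because each $Df_\omega$ maps $\cC^u$ strictly inside $\cC^u$ (and $Df^{-1}_\omega$ maps $\cC^s$ strictly inside $\cC^s$), the images $Df^{n}_{\sigma^{-n}(\omega)}(\cC^u_{f^{-n}_\omega(x)})$ form a nested decreasing sequence of cones whose "opening'' shrinks to zero at a uniform exponential rate — this is the classical graph-transform/cone-contraction estimate, uniform over $\cA$ and hence over $\Omega$. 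Thus each intersection is a single linear subspace, of dimension $n-k$ for $E^u$ and $k$ for $E^s$ by Remark \ref{rem:cone hyperbolicity}(1), and by construction $E^s_\omega(x)\subset\cC^s_x$, $E^u_\omega(x)\subset\cC^u_x$, so $E^s_\omega(x)\oplus E^u_\omega(x)=T_xM$.

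Next I would verify the three itemized properties. Invariance (item 1 of Definition \ref{def:Anosov}) is immediate from the definition: $Df_{\omega_0}$ carries the $\omega$-intersection to the $\sigma(\omega)$-intersection, since precomposing the nested family by one step of the cocycle just reindexes it. The uniform hyperbolic estimates (item 2) follow from the cone hypothesis: for $v^u\in E^u_\omega\subset\cC^u_x$ we get $\|Df^n_\omega v^u\|>\exp(n\gamma)\|v^u\|$ directly by iterating the expansion bound along the forward orbit (which stays in the unstable cone), and dually $\|Df^{-n}_\omega v^s\|>\exp(n\gamma)\|v^s\|$; rewriting, $\|Df^n_\omega v^s\|<\exp(-n\gamma)\|v^s\|$, so we may take $C=1$ (or any $C>1$). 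For the Hölder continuity (item 3), I would invoke the standard argument that invariant cone-field subbundles of a uniformly hyperbolic cocycle are Hölder: the $n$-th approximations $x\mapsto Df^{n}_{\sigma^{-n}(\omega)}(\cC^u_{f^{-n}_\omega(x)})$ converge exponentially fast and have uniformly controlled local variation, using that $x\mapsto Df_\omega$ is $C^1$ (hence the cocycle is $C^1$ along fibers with derivative bounded uniformly over the $C^2$-compact generating set relevant here), and the convergence rate exceeds the distortion rate; one also needs joint continuity/Hölder dependence on $\omega$, which holds because $d_\Sigma$ weights the $i$-th coordinate by $2^{-|i|}$ so only finitely many generators in the word affect $E^s_\omega,E^u_\omega$ up to exponentially small error. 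This gives the $(L,\theta)$-Hölder continuity of $(\omega,x)\mapsto E^{s}_\omega(x)$ and $(\omega,x)\mapsto E^u_\omega(x)$.

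The main obstacle is the Hölder regularity in item 3, and in particular its \emph{joint} character in $(\omega,x)$: one must track both the exponential convergence of the finite-step cone approximations and the Lipschitz-in-phase-space variation of the derivative cocycle, and combine them into a single Hölder exponent $\theta$ and constant $L$ valid uniformly over all words $\omega\in\Omega=\cA^\ZZ$. The $x$-direction is the classical computation (e.g.\ as in Hirsch--Pugh--Shub or the usual Anosov cone-field proof), so the new point is checking that the base variable enters tamely; this is where the specific product metric $d_\Sigma$ with its $2^{-|i|}$ weights is used, ensuring that perturbing $\omega$ in far-away coordinates perturbs $E^{s/u}_\omega$ only by an amount comparable to the perturbation times an exponentially small factor, which is absorbed into the Hölder estimate. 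Everything else — existence of the splitting, invariance, and the exponential bounds — is routine cone-contraction bookkeeping that I would only sketch.
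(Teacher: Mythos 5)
Your proposal is correct and follows the same route as the paper: defining $E^{s}_\omega$ and $E^u_\omega$ by nested cone intersections and then running the classical cone-criterion, which is exactly what the paper does before deferring the remaining details to \cite[Section 2.2]{CP}. Your fleshing out of the invariance, the uniform rates (with $C=1$), and the H\"older continuity in $(\omega,x)$ (using the exponentially-weighted product metric on $\Omega$) is precisely the material the paper's terse proof is outsourcing to the reference.
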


\begin{proof}
	The proof is the classical cone-criterion. The stable and unstable bundles are defined as
	$$
	E^s_{\omega}=\bigcap_{n\geq0}Df^{-n}_{\sigma^n(\omega)}\left(\cC^s_{\sigma^n(\omega)}\right)
	\qquad \text{and} \qquad
	E^u_{\omega}=\bigcap_{n\geq0}Df^n_{\sigma^{-n}(\omega)}\left(\cC^u_{\sigma^{-n}(\omega)}\right).
	$$
	See \cite[Section 2.2]{CP} for details.
\end{proof}

\subsection{Stationary and fiberwise SRB measures}

Let $\cA\subseteq\diff^2(M)$ be a family of diffeomorphisms such that, with $\Omega=\cA^\ZZ$, the skew product $F:\Omega\times M\to\Omega\times M$ is fiberwise Anosov. Let $\{\cF^u_\omega(x)\}_{(\omega,x)\in\Omega\times M}$ be the fiberwise unstable foliations of $F$. 

For every $F$-invariant measure $\mu$, we consider a measurable partition $\cP$ of $\Omega\times M$ such that for $\mu$-a.e. $(\omega,x)\in\Omega\times M$, there is an $r=r(\omega,x)>0$ such that the $r$-size unstable manifold of $(\omega,x)$ satisfies
$$ 
{\cF^u_{\omega,r}(x)}\subset\cP(\omega,x)\subset\cF^u_\omega(x).
$$
Such a partition is said to be $\cF^u$-subordinate. Let $\{\tilde{\mu}^{\cP}_{(\omega,x)}\}_{(\omega,x)\in\Omega\times M}$ denote a family of conditional measures of $\mu$ with respect to the partition $\cP$.

\begin{Definition}\label{def:fiber-SRB}
	Let $F:\Omega\times M\to\Omega\times M$ be an fiberwise Anosov skew product. An $F$-invariant measure $\mu$ is fiberwise SRB  if for any $\cF^u$-subordinate measurable partition $\cP$ with a family of corresponding conditional measures $\{\tilde{\mu}^{\cP}_{(\omega,x)}\}_{(\omega,x)\in\Omega\times M}$, the conditional measure $\tilde{\mu}^{\cP}_{(\omega,x)}$ is absolutely continuous with respect to the Lebesgue measure on $\cF^u_\omega(x)$ for $\mu$-a.e. $(\omega,x)\in\Omega\times M$.
\end{Definition}
{We remark that fiberwise SRB are actually equivalent to the Lebesgue measure on $\cF^u_\omega(x)$ for $\mu$-a.e. $(\omega,x)\in\Omega\times M$.} 

Recall Proposition \ref{prop:measure}, for any probability $\nu$ on $\cA$ and any $\nu$-stationary measure $\hat\mu$, there exists a unique $F$-invariant measure $\mu$ on $\Omega\times M$ such that
$$
\pi_*(\mu)=\nu^\ZZ
\qquad \text{and} \qquad
\mu=\lim_{n\to+\infty}(F^n)_*(\nu^\ZZ\otimes\hat\mu),
$$
where $\pi:\Omega\times M\to\Omega$ is the projection. 
We can now define the SRB property $\nu$-stationary  measures. We restricted ourselves in the case that $F$ is fiberwise Anosov.

\begin{Definition}\label{def:SRB}
	Let $\cA\subseteq\diff^2(M)$ and $\Omega=\cA^\ZZ$ where the corresponding skew product $F:\Omega\times M\to\Omega\times M$ is fiberwise Anosov. Assume the probability $\nu$ is supported on $\cA$, and $\hat\mu$ is a $\nu$-stationary probability measure. We say $\hat\mu$ is SRB if the corresponding $F$-invariant measure $\mu$ given by Proposition  \ref{prop:measure} is fiberwise SRB.
\end{Definition}

The following theorem shows the existence and uniqueness of stationary SRB-measure for a family of transitive Anosov diffeomorphisms sharing common stable and unstable cone-fields, which was essentially proved in \cite[Theorem VII.1.1]{LQ}, see also \cite[Theorem 2.9]{BLOR}.

\begin{Theorem}\label{thm:SRB}
	Let $\cA\subseteq\diff^2(M)$ be a family of transitive Anosov diffeomorphisms share common stable and unstable cone-fields. For every probability $\nu$ supported on $\cA$ satisfying integrability condition (\ref{eq:integrable}), there exists a unique ergodic $\nu$-stationary SRB measure $\hat\mu$ such that the corresponding $F$-invariant measure $\mu$ given by Proposiotn  \ref{prop:measure} satisfies $\pi_*(\mu)=\nu^\ZZ$ and is fiberwise SRB:
	\begin{itemize}
		\item the disintegration of $\hat\mu$ along fiberwise unstable foliation is absolutely continuous;
		\item $\hat\mu$ satisfies the fiberwise entropy formula:
		   $$
 h_{\hat\mu}(\chi^+(M,\nu))=\sum_{\lambda_k>0}m_k\cdot\lambda_k.
		   $$
		   Here $h_{\hat\mu}(\chi^+(M,\nu))$ is the measure entropy of random dynamical systems generated by $\nu$ (\cite[Definition I.2.1]{LQ}), $\{(\lambda_k, m_k)\}_{k=1}^l$ are Lyapunov exponents with multiplicities of $\hat\mu$ defined in Proposition \ref{prop:Lyapunov}.
	\end{itemize}
\end{Theorem}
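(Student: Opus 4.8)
The plan is to derive this statement from the classical Sinai--Ruelle--Bowen construction adapted to the fiberwise setting, essentially following \cite[Theorem VII.1.1]{LQ} (see also \cite[Theorem 2.9]{BLOR}); the first thing I would check is that the hypotheses place us in that framework. Since $\cA$ shares common stable and unstable cone-fields it is cone hyperbolic in the sense of Definition \ref{def:common-cone}, so Lemma \ref{lem:cone-Anosov} gives that $F\colon\Omega\times M\to\Omega\times M$ is fiberwise Anosov, and Lemma \ref{lem:stable-mfd} supplies the fiberwise stable and unstable foliations $\{\cF^s_\omega\}$, $\{\cF^u_\omega\}$ with uniform contraction/expansion rates and H\"older, uniformly transverse tangent distributions. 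The integrability hypothesis \eqref{eq:integrable} is exactly what is needed to invoke Oseledec (Proposition \ref{prop:Lyapunov}) and to keep the fiberwise entropy finite.

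For \emph{existence} I would fix a smooth volume $m$ on $M$, form the Ces\`aro averages $\mu_n:=\frac1n\sum_{k=0}^{n-1}(F^k)_*(\nu^\ZZ\otimes m)$ on $\Omega\times M$, and pass to a weak-$*$ subsequential limit $\mu$, which is $F$-invariant with $\pi_*\mu=\nu^\ZZ$. The key technical input is the standard bounded-distortion estimate for the unstable Jacobian of $f_\omega^n$ along fiberwise unstable plaques: exponential backward contraction along unstables (Lemma \ref{lem:stable-mfd}) together with H\"older continuity of $x\mapsto E^u_\omega(x)$ makes the relevant telescoping sum converge uniformly in $\omega$ and $n$, so the conditional densities of each $\mu_n$ along an $\cF^u$-subordinate partition are comparable to Lebesgue with a fixed constant. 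This comparability passes to $\mu$, so $\mu$ is fiberwise SRB in the sense of Definition \ref{def:fiber-SRB}. Finally, since $\pi_*\mu=\nu^\ZZ$, the image $\pi^+_*\mu$ is $F_+$-invariant with $\Sigma_+$-marginal $\nu^\NN$, hence equals $\nu^\NN\otimes\hat\mu$ for a $\nu$-stationary $\hat\mu$ (\cite[Chapter I]{LQ}), and by the uniqueness clause of Proposition \ref{prop:measure} we get $\mu=\lim_n(F^n)_*(\nu^\ZZ\otimes\hat\mu)$; thus $\hat\mu$ is stationary SRB in the sense of Definition \ref{def:SRB}, with absolutely continuous fiberwise unstable conditionals by construction.

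The \emph{entropy formula} I would then obtain from the random Ledrappier--Young/Ledrappier--Strelcyn theory: absolute continuity of the fiberwise unstable conditionals is equivalent to the fiberwise Pesin formula $h_{\hat\mu}(\chi^+(M,\nu))=\sum_{\lambda_k>0}m_k\lambda_k$, see \cite[Chapters VI--VII]{LQ}; here \eqref{eq:integrable} again guarantees all quantities are finite.

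The main work --- and the step I expect to be the real obstacle --- is \emph{ergodicity and uniqueness}, which is where transitivity of the generators enters. Every ergodic component of a fiberwise SRB measure is again fiberwise SRB, so it suffices to show there is a unique ergodic fiberwise SRB measure projecting to $\nu^\ZZ$. I would run a Hopf-type argument in $\Omega\times M$: since each $f\in\cA$ is $C^2$, the fiberwise stable foliation is absolutely continuous, so the forward basin of an ergodic fiberwise SRB measure meets $\nu^\ZZ$-a.e. fiber in a set of full leafwise Lebesgue measure along a.e. local unstable leaf that is, up to null sets, saturated by local stable leaves; transitivity of the generators (equivalently, transitivity of the fiberwise Anosov system in each fiber) then forces the basins of any two ergodic fiberwise SRB measures to overlap, hence to coincide. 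The delicate points are transferring $\mu$-a.e. statements to leafwise-Lebesgue and then to the non-invariant reference measure $\nu^\ZZ\otimes m$, and handling that this reference measure is not $F$-invariant; this is exactly what is carried out in \cite[Theorem VII.1.1]{LQ} and \cite[Theorem 2.9]{BLOR}, and I would follow those arguments.
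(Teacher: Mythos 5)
Your proposal is correct and takes essentially the same approach as the paper: the paper does not supply its own proof but simply cites \cite[Theorem VII.1.1]{LQ} and \cite[Theorem 2.9]{BLOR}, and your sketch (Ces\`aro averages with bounded distortion for existence of a fiberwise SRB measure, random Ledrappier--Young for the entropy formula, a Hopf argument using absolute continuity of the fiberwise stable foliation and transitivity for ergodicity and uniqueness) is a faithful reconstruction of what those references carry out.
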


\subsection{Stationary SRB measure for random expanding maps on $\SS^1$}

Let $E^r(\SS^1)$ denote the  open subset of $C^r(\SS^1)$ consisting of $C^r$-expanding maps of $\SS^1$,
$$
E^r(\SS^1)=\left\{f\in C^r(\SS^1):~|f'(x)|>1,~\forall x\in\SS^1\right\}.
$$
 Recall that we always assume $r\ge2$ in this paper.

For every $f\in E^r(\SS^1)$, 
Denote 
	$$
	\|f\|_{C^2}=\sup_{x\in\SS^1}\max\left\{|f'(x)|,|f''(x)|\right\}.
	$$ 
	and let $\deg(f)$ be the degree of $f$.  For every expanding map $f\in E^r(\SS^1)$, we have 
	$$
	1<|\deg(f)|=\int|f'(x)|d{\rm Leb}(x)\leq\|f\|_{C^2}.
	$$
   and
   $$
   \left|\log|f'(x)|-\log|f'(y)|\right|\leq\|f\|_{C^2}\cdot d(x,y).
   $$

Denote by $\Sigma_+=\left( E^r(\SS^1)\right)^{\NN}$ the set of sequence of expanding maps
$$
\omega=(\omega_i)_{i\in\NN}=(f_{\omega_0},f_{\omega_1},f_{\omega_2},\cdots)\in\Sigma_+,
\qquad \text{where} \qquad
f_{\omega_i}\in E^r(M). 
$$
Let $\sigma:\Sigma_+\to\Sigma_+$ be the shift map $(f_{\omega_i})\mapsto(f_{\omega_{i+1}})$, 
and $F_+:\Sigma_+\times\SS^1\to\Sigma_+\times\SS^1$ be the associated skew-product system
$$
F_+(\omega,x)=\left(\sigma(\omega),f_{\omega}(x)\right)=\left(\sigma(\omega),f_{\omega_0}(x)\right),
\qquad \omega\in\Sigma_+,~x\in\SS^1.
$$

Let $\nu$ be a Borel probability measure on $E^r(\SS^1)$. As in the case when $\nu$ is supported on diffeomorphisms, a Borel probability measure $\mu$ on $\SS^1$ is a $\nu$-stationary measure
$$
\mu=\int f_*(\mu)d\nu(f),
$$
if and only if $\nu^\NN\times\mu$ is an $F_+$-invariant measure on $\Sigma_+\times\SS^1$. (See \cite[Chapter 1]{LQ}).

The following theorem shows the existence and absolute continuity of $\nu$-stationary SRB measure, which was proved in \cite[Theorem B']{KK}, see also \cite{Kif1,Kif2}.

\begin{Theorem}\label{thm:expanding-SRB}
	Let $\nu$ be a probability measure on $E^2(\SS^1)$ which satisfies
	$$
	\int\log\|f\|_{C^2}d\nu(f)<\infty.
	$$
	There exists a unique $\nu$-stationary ergodic SRB-measure $\mu_{\rm SRB}$ which is equivalent to Lebesgue measure on $\SS^1$.  
\end{Theorem}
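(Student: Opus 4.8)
The plan is to use the random transfer operator method of Kifer and Khanin--Kifer. For $f\in E^2(\SS^1)$ let $\cL_f$ be the transfer operator on $C^0(\SS^1)$,
\[
(\cL_f\phi)(x)=\sum_{f(y)=x}\frac{\phi(y)}{|f'(y)|},
\]
which is positive, satisfies $\int(\cL_f\phi)\,d\Leb=\int\phi\,d\Leb$, is dual to composition by $f$, so that $g_*\Leb$ has density $\cL_g\mathbf 1$ for every finite composition $g=f_{i_k}\circ\cdots\circ f_{i_1}$, and has the property that $\psi\,d\Leb$ is $\nu$-stationary if and only if $\int\cL_f\psi\,d\nu(f)=\psi$. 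I would work on the two-sided shift $\Sigma=\big(E^2(\SS^1)\big)^\ZZ$ with $\nu^\ZZ$; for $\omega\in\Sigma$ put $g^n_\omega=f_{\omega_{-1}}\circ\cdots\circ f_{\omega_{-n}}$ and $\phi^n_\omega=\cL_{g^n_\omega}\mathbf 1$. The goal is to show that $\phi_\omega:=\lim_n\phi^n_\omega$ exists in $C^0(\SS^1)$ for $\nu^\ZZ$-a.e.\ $\omega$, is a positive probability density depending only on the past $(\omega_i)_{i<0}$, and satisfies $\cL_{f_{\omega_{-1}}}\phi_{\sigma^{-1}\omega}=\phi_\omega$. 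Then $\hat\mu:=\big(\int_\Sigma\phi_\omega\,d\nu^\ZZ(\omega)\big)\,d\Leb$ will be a $\nu$-stationary measure equivalent to $\Leb$: stationarity comes from prepending one random map, using the stationarity of $\nu^\ZZ$ and the past-measurability of $\phi_\omega$, and equivalence from the pointwise bounds below.

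The first step is a random distortion estimate. For $a>0$ let $\cC_a=\{\,\phi\in C^0(\SS^1):\phi>0,\ |\log\phi(x)-\log\phi(y)|\le a\,d(x,y)\ \forall x,y\,\}$. From $|\log|f'(x)|-\log|f'(y)||\le\|f\|_{C^2}d(x,y)$ and the fact that inverse branches of $f$ contract by a factor $\le(\min|f'|)^{-1}<1$, one checks $\cL_f(\cC_a)\subseteq\cC_{a/\min|f'|+\|f\|_{C^2}}$, a property preserved under positive linear combinations. Iterating along $\omega$ yields $\phi^n_\omega\in\cC_{a_n(\omega)}$ with
\[
a_n(\omega)\le\sum_{k=1}^{n}\|f_{\omega_{-k}}\|_{C^2}\prod_{j=1}^{k-1}\big(\min|f'_{\omega_{-j}}|\big)^{-1}.
\]
Since $\log\min|f'|>0$ pointwise, Birkhoff's ergodic theorem makes the products decay exponentially in $k$ for a.e.\ $\omega$, while the hypothesis $\int\log\|f\|_{C^2}\,d\nu<\infty$ forces $\tfrac1k\log\|f_{\omega_{-k}}\|_{C^2}\to0$ a.s.\ by Borel--Cantelli; hence this series converges to a finite $A(\omega)$ for a.e.\ $\omega$. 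Thus $\phi^n_\omega\in\cC_{A(\omega)}$ for all $n$, and with $\int\phi^n_\omega\,d\Leb=1$ this gives $e^{-A(\omega)}\le\phi^n_\omega\le e^{A(\omega)}$.

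Next I would prove convergence via the Hilbert-metric contraction of positive operators. Write $\cD_\omega=\{\phi\in\cC_{A(\omega)}:\int\phi\,d\Leb=1\}$ and equip $\cC_{A(\omega)}$ with its Hilbert projective metric. Birkhoff's theorem on positive cones says each $\cL_f$ maps a cone into a subcone of finite Hilbert diameter and hence contracts the metric; a single generator need not map $\cC_{A(\omega)}$ strictly into itself (unbounded distortion), but over a block of length $N$ the cone parameter gets multiplied by an exponentially small factor plus a random additive term, so by the ergodic theorem one obtains a definite contraction over blocks for a.e.\ $\omega$ and $\operatorname{diam}_{C^0}\cL_{g^{kN}_\omega}(\cD_\omega)\to0$ as $k\to\infty$. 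Since $\phi^n_\omega\in\cL_{g^m_\omega}(\cD_\omega)$ for $n>m$, the sequence $(\phi^n_\omega)$ is Cauchy in $C^0$; its limit $\phi_\omega\in\cC_{A(\omega)}$ depends only on the past, and $\phi_\omega\ge e^{-A(\omega)}>0$ with $A(\omega)<\infty$ a.s.\ gives $\int_\Sigma\phi_\omega\,d\nu^\ZZ>0$ a.e., so $\hat\mu\sim\Leb$. The contraction ignores the initial function, so $\phi_\omega$ is the unique attracting density; combined with $\psi=\cL_\nu^n\psi=\int_\Sigma\cL_{g^n_\omega}\psi\,d\nu^\ZZ$ for any a.c.\ stationary density $\psi$, this yields uniqueness within the a.c.\ stationary class, and ergodicity then follows because every ergodic component of an a.c.\ stationary measure is again a.c.\ and stationary, hence equals $\hat\mu$.

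The main obstacle is the convergence step. Because only a logarithmic moment of $\|f\|_{C^2}$ is assumed, the distortion constants are unbounded on $\operatorname{supp}\nu$, so there is no uniform one-step cone invariance and no uniform Lasota--Yorke spectral gap on $BV$ or $C^1$; the whole argument must be carried out with random cones whose parameters are tracked by the Birkhoff and Borel--Cantelli estimates above, and the deterministic Hilbert-metric contraction must be promoted to the random composition by passing to blocks and invoking the ergodic theorem, checking that the random contraction rates so obtained are summable along the orbit. The rest is standard expanding-map bookkeeping.
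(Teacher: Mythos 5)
The paper does not actually prove this theorem; it cites Khanin--Kifer \cite{KK} (Theorem B$'$) and Kifer \cite{Kif1,Kif2}, and your sketch is essentially the approach taken there: random transfer operators, pullback densities $\phi^n_\omega=\cL_{g^n_\omega}\mathbf 1$ living in distortion cones $\cC_{A(\omega)}$ with random, a.s.\ finite parameter, and Hilbert-metric contraction to extract the random invariant density $\phi_\omega$ of the fiberwise SRB measure. Your distortion recursion and the Birkhoff/Borel--Cantelli summability argument for $A(\omega)$ under the moment hypothesis $\int\log\|f\|_{C^2}\,d\nu<\infty$ are correct; the step you flag as the obstacle -- promoting the deterministic cone contraction to the random composition when the cone parameters are unbounded, including controlling the product of (Hilbert diameter of $\cD_{\sigma^{-m}\omega}$) and (contraction rate of $\cL_{g^m_\omega}$) so that the $C^0$-diameter of $\cL_{g^m_\omega}(\cD_{\sigma^{-m}\omega})$ tends to $0$ -- is indeed the delicate part, and it is precisely the bookkeeping carried out in the cited references.
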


\begin{Remark}\label{rk:density}
	Since $\mu_{\rm SRB}$  is equivalent to Lebesgue measure on $\SS^1$, there is a non-negative function $q(x)\in L^1(\SS^1,{\rm Leb})$  such that
	$$
	\frac{1}{q(x)}\in L^1(\SS^1,\mu_{\rm SRB}) 
	\qquad \text{and} \qquad
	q(x)=\frac{d\mu_{\rm SRB}}{d{\rm Leb}}(x), \qquad {\rm Leb}~a.e.~x\in\SS^1.
	$$
\end{Remark}

\section{Statement of results}

\subsection{Spectrum rigidity of expanding maps on $\SS^1$}
We recall a theorem of  Baxendale from \cite{Ben} which says that for a random walk generated by diffeomorphisms and any ergodic, absolutely continuous stationary measure $\mu$, the sum of all Lyapunov exponents (with multiplicity) is non-positive.  Moreover, equality holds if and only if $\mu$ is invariant under almost every generator. 

In the setting of expanding maps on $\SS^1$, we have the following Baxendale-like theorem generalizing Theorem \ref{thm:S1}.  Indeed, the following gives provides an inequality on the Lyapunov exponent for the unique absolutely continuous stationary measure.  When equality holds, we have rigidity of our random dynamical system: the stationary measure is invariant under every generator and the generators are simultaneously smoothly conjugate to affine maps.  

Recall that $\|f\|_{C^2}=\sup_{x\in\SS^1}\max\left\{|f'(x)|,|f''(x)|\right\}$ for $f\in E^r(\SS^1)$ and $r\ge2$.

\begin{Theorem}\label{thm:S1-general}
	Let $\nu$ be a probability measure on $E^r(\SS^1)$ $(r\geq2)$ which satisfies
	$$
	\int\log\|f\|_{C^2}d\nu(f)<\infty.
	$$
	The Lyapunov exponent of the $\nu$-stationary SRB measure $\mu_{\rm SRB}$ satisfies
	$$
	\lambda(\mu_{\rm SRB})\leq\int\log|\deg(f)|d\nu(f).
	$$
	Moreover, equality holds if and only if there exists $h\in\diff^r(\SS^1)$ that simultaneously conjugates every $f$ in the support of $\nu$ 
	to an affine expanding map:
	$$
	h\circ f\circ h^{-1}(x)=\deg(f)\cdot x+\rho_f,
	\qquad \text{where}\quad\rho_f\in\SS^1.
	$$
\end{Theorem}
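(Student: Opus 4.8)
\section*{Proof proposal for Theorem \ref{thm:S1-general}}

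The plan is to work with the skew-product $F_+\colon\Sigma_+\times\SS^1\to\Sigma_+\times\SS^1$ and its invariant measure $\nu^\NN\times\mu_{\rm SRB}$, and to extract the inequality from a Jensen/convexity argument applied to the cocycle $\log|f'|$. First I would write the Lyapunov exponent as a fiberwise integral: by the Birkhoff ergodic theorem applied to $F_+$, we have $\lambda(\mu_{\rm SRB})=\int_{\Sigma_+\times\SS^1}\log|f'_{\omega_0}(x)|\,d(\nu^\NN\times\mu_{\rm SRB})(\omega,x)=\int_{E^r(\SS^1)}\!\!\int_{\SS^1}\log|f'(x)|\,d\mu_{\rm SRB}(x)\,d\nu(f)$. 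So it suffices to show, for \emph{each} fixed expanding $f$, that $\int_{\SS^1}\log|f'(x)|\,d\mu_{\rm SRB}(x)\le\log|\deg(f)|$, with care about which measure is pushed forward by which generator. The cleanest route: using that $\mu_{\rm SRB}=q\,{\rm Leb}$ with $q$ as in Remark \ref{rk:density}, and the change of variables $\int_{\SS^1}|f'(x)|\,g(f(x))\,d{\rm Leb}(x)=|\deg(f)|\int g\,d{\rm Leb}$, combine this with the stationarity identity $\sum$ (or $\int d\nu$) of $f_*\mu_{\rm SRB}=\mu_{\rm SRB}$ to get, after applying Jensen's inequality to the concave function $\log$, the claimed bound $\int\!\int\log|f'|\,d\mu_{\rm SRB}\,d\nu\le\int\log|\deg(f)|\,d\nu(f)$. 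The key algebraic identity feeding Jensen is that the transfer operator of $f$ applied to the density $q$ is again an $L^1$ density summing to $1$, and $|\deg(f)|^{-1}\sum_{f(y)=x}|f'(y)|^{-1}=1$ pointwise.

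For the equality case, the strategy is: equality in the Jensen step forces, for $\nu$-a.e.\ $f$, the quantity $\log|f'(y)|$ to be constant along each fiber $f^{-1}(x)$ in a way that is compatible with $\mu_{\rm SRB}$ being $f$-invariant, and moreover forces $f_*\mu_{\rm SRB}=\mu_{\rm SRB}$ for $\nu$-a.e.\ $f$ — i.e.\ the stationary measure is almost surely invariant (the expanding-map analogue of Baxendale's dichotomy). Once I know $\mu_{\rm SRB}$ is $f$-invariant for $\nu$-a.e.\ $f$, I define $h$ by integrating the density: let $h\colon\SS^1\to\SS^1$ be (a lift of) $x\mapsto\int_0^x q(t)\,d{\rm Leb}(t)$, which is a homeomorphism pushing ${\rm Leb}$ to $\mu_{\rm SRB}$, so $h^{-1}_*\mu_{\rm SRB}={\rm Leb}$. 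Then $g_f:=h^{-1}\circ f\circ h$ is an expanding map of degree $\deg(f)$ preserving ${\rm Leb}$. An expanding circle map of degree $d$ preserving Lebesgue measure must be affine $x\mapsto dx+\rho_f$: indeed invariance of ${\rm Leb}$ under $g_f$ gives the transfer-operator fixed-point equation $\sum_{g_f(y)=x}|g_f'(y)|^{-1}=1$, and combined with equality in Jensen (which propagates through the conjugacy) one deduces $|g_f'|$ is constant, hence $\equiv|d|$, hence $g_f$ is affine. Finally, smoothness of $h$: knowing each $g_f$ is affine and $C^r$-conjugate-via-$h$ to $f$, one invokes the Shub–Sullivan theorem \cite{ShS} (the topological conjugacy $h$ between the $C^r$ expanding maps $f$ and the affine map is absolutely continuous since $h_*{\rm Leb}=\mu_{\rm SRB}\ll{\rm Leb}$ with $L^1$ density whose reciprocal is also integrable, which upgrades to $C^r$).

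I expect the main obstacle to be the equality analysis — specifically, upgrading "equality in Jensen on average over $\nu$" to the two pointwise conclusions I need: (a) $f_*\mu_{\rm SRB}=\mu_{\rm SRB}$ for $\nu$-a.e.\ $f$, and (b) $|g_f'|$ constant. Subtlety (a) is the genuine content of the Baxendale dichotomy and may require more than a one-line Jensen argument: one likely needs to exploit that $\mu_{\rm SRB}$ is the \emph{unique} stationary SRB measure (Theorem \ref{thm:expanding-SRB}) together with a martingale/entropy argument, or an argument showing that if $\mu_{\rm SRB}$ were not a.s.\ invariant then the fiberwise entropy would strictly drop, contradicting the entropy formula. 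Subtlety (b) requires being careful that $\log|f'|$ being $\mu_{\rm SRB}$-a.e.\ "fiber-constant" actually forces it to be everywhere constant, which uses the continuity of $f'$ together with $\mu_{\rm SRB}\sim{\rm Leb}$ having full support. Once these are in hand, the construction of $h$ and the regularity bootstrap via \cite{ShS} are routine. A secondary technical point worth isolating as a lemma is the clean statement "an expanding $C^r$ circle map of degree $d$ preserving Lebesgue is $x\mapsto dx+\rho$," which I would prove directly rather than through the general machinery.
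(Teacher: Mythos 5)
Your overall architecture (write $\lambda(\mu_{\rm SRB})$ as an integral against $\nu^\NN\times\mu_{\rm SRB}$, exploit the density $q=d\mu_{\rm SRB}/d\mathrm{Leb}$, apply a change of variables, finish with concavity of $\log$) is the right one, and the construction of $h$ as the distribution function of $q$ in the rigidity half is exactly what the paper does. However, the inequality half of your proposal has a genuine gap, and you also start with a false reduction.

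The reduction "it suffices to show, for each fixed expanding $f$, that $\int\log|f'|\,d\mu_{\rm SRB}\le\log|\deg(f)|$" is not available: $\mu_{\rm SRB}$ is the stationary measure for the random walk, not an $f$-invariant measure, so there is no per-generator entropy bound, and the inequality you write can fail for individual $f$. You do seem to abandon this for the averaged version, but the averaged Jensen argument you then sketch has a hidden integrability requirement. Running Jensen directly on the one-step quantity $Q_f(x)=\tfrac{q(f(x))}{q(x)}\cdot\tfrac{|f'(x)|}{|\deg f|}$ (which indeed satisfies $\int Q_f\,d\mu_{\rm SRB}=1$ by the change-of-variables identity you cite) produces $\int\log Q_f\,d\mu_{\rm SRB}\le 0$, but to cancel the density terms after integrating over $\nu$ you need $\int\log q(f(x))\,d\mu_{\rm SRB}(x)=\int\log q\,d(f_*\mu_{\rm SRB})$ together with stationarity — and this telescoping step needs $\log q\in L^1(\mu_{\rm SRB})$. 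Remark~\ref{rk:density} only gives $q\in L^1(\mathrm{Leb})$ and $1/q\in L^1(\mu_{\rm SRB})$, which are automatic and do \emph{not} imply $q\log q\in L^1$. You never flag this, and it is precisely the point the paper's proof is engineered to avoid: rather than a one-step Jensen, the paper forms the cocycle $Q(\omega,x)$, applies Birkhoff to get $\tfrac1n\sum Q\circ F_+^i\to 1$, takes logarithms and uses the AM--GM inequality, and then kills the telescoping term $\tfrac1n[\log q(f_\omega^n x)-\log q(x)]$ along a carefully chosen subsequence $n_k$ (Claim~\ref{clm:n_k}), constructed so that $q(f_\omega^{n_k}x)<\sqrt{n_k}$; the fact that such a subsequence is infinite is exactly what $\int(1/q)\,d\mu_{\rm SRB}<\infty$ buys. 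Without that device (or an a priori bound guaranteeing $q\log q\in L^1$), your inequality argument does not close.

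For the equality half you are essentially correct in outline: equality forces $Q\equiv 1$ a.e., hence $q(f(x))|f'(x)|=|\deg f|\,q(x)$, hence $\mathcal{L}_f q=q$ and $f_*\mu_{\rm SRB}=\mu_{\rm SRB}$ for $\nu$-a.e.\ $f$; the paper's argument for this is a strict-Jensen-plus-Birkhoff contradiction (again along the subsequence) rather than the martingale/entropy machinery you speculate about, and it is shorter than you fear. Your route to smoothness of $h$ — identify each conjugate $h^{-1}\circ f\circ h$ as a Lebesgue-preserving expanding map, force it to be affine, and then invoke Shub--Sullivan to upgrade the absolutely continuous conjugacy to $C^r$ — is a legitimate alternative; the paper instead uses that a $C^r$ expanding map's invariant a.c.\ density is $C^{r-1}$ and bounded away from zero (Sacksteder, Krzy\.zewski), so that $h(x)=\int_0^x q$ is already in $\diff^r(\SS^1)$ by construction. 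Either route is fine once you have the pointwise identity $q(f(x))|f'(x)|\equiv|\deg f|\,q(x)$ on all of $\SS^1$, which uses full support of $\mu_{\rm SRB}$ as you note.
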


\begin{Remark}
	Here we don't need to assume the support of $\nu$ is bounded or discrete in $E^r(\SS^1)$. Theorem \ref{thm:S1} follows from this theorem directly by taking $\nu$ be finitely supported with equal probability. The main idea of the proof follows from the classical argument of \cite{LJ} and \cite{Ben}. 
\end{Remark}

\subsection{Lyapunov exponents of random automorphisms}
Throughout, we consider a family $\{A_i\}_{i=1}^m\subseteq\GL(d,\ZZ)$.  We have the following version of Oseledec's multiplicative theorem for i.i.d.\ products of matrices.

\begin{Proposition}\label{prop:Lyapaffine}
Let $p=\{p_i\}_{1\le i\le m}$ be a probability vector on $\{A_i\}_{i=1}^m\subseteq{GL}(d,\ZZ)$ with $p_i(A_i)>0$ for every $1\le i\le m$.  
There exist $-\infty<\lambda_1<\lambda_2<\cdots<\lambda_l<\infty$ such that for $p^\NN$-a.e.\ word $\omega=(A_{i_0}, A_{i_1}, A_{i_2}, \dots)$ there is a flag 
	$$
	\{0\}=V_\omega^0\subsetneqq V_\omega^1\subsetneqq \cdots \subsetneqq V_\omega^l=T_xM,
	$$
such that for all $1\le k\le l$ and $ v\in V_\omega^k\setminus V_\omega^{k-1},$
$$
	\lim_{n\to+\infty}\frac{1}{n}\log\|A_{i_{n-1}}\circ \dots \circ A_{i_0}(v)\|=\lambda_k.
	$$
\end{Proposition}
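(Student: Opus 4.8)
The plan is to recognize the products $A_{i_{n-1}}\cdots A_{i_0}$ as an i.i.d.\ linear cocycle over a Bernoulli shift and to apply the (forward) Oseledec multiplicative ergodic theorem; no information about the torus is actually needed, the symbol $T_xM$ in the statement being naturally identified with $\RR^d$. Concretely, let $(\Sigma_+,p^\NN,\sigma)$ be the one-sided Bernoulli shift on words $\omega=(A_{i_0},A_{i_1},\dots)$, which is an ergodic (indeed mixing) measure-preserving system, and define the linear cocycle $\mathcal{A}\colon\Sigma_+\to\GL(d,\RR)$ by $\mathcal{A}(\omega)=A_{i_0}$, so that $\mathcal{A}^n(\omega)=A_{i_{n-1}}\cdots A_{i_0}$. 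Since $\{A_i\}_{i=1}^m$ is a \emph{finite} family of invertible integer matrices, $\max_{1\le i\le m}\bigl(\log^+\|A_i\|+\log^+\|A_i^{-1}\|\bigr)<\infty$, so the integrability hypothesis of the multiplicative ergodic theorem (for instance the version recorded in Proposition~\ref{prop:Lyapunov}, or the classical one-sided theorem of Oseledec and Ruelle) holds automatically.

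Applying the forward multiplicative ergodic theorem to $\mathcal{A}$ over $(\Sigma_+,p^\NN,\sigma)$ produces, for $p^\NN$-a.e.\ $\omega$, a measurable filtration $\{0\}=V_\omega^0\subsetneqq V_\omega^1\subsetneqq\cdots\subsetneqq V_\omega^{l(\omega)}=\RR^d$ together with numbers $\lambda_1(\omega)<\cdots<\lambda_{l(\omega)}(\omega)$ such that $\lim_{n\to+\infty}\tfrac1n\log\|\mathcal{A}^n(\omega)v\|=\lambda_k(\omega)$ for every $v\in V_\omega^k\setminus V_\omega^{k-1}$. The integer $l(\omega)$, the exponents $\lambda_k(\omega)$ and the dimensions $\dim V_\omega^k$ are $\sigma$-invariant measurable functions of $\omega$, hence $p^\NN$-a.e.\ constant by ergodicity of the Bernoulli shift; denote the common values by $l$ and $\lambda_1<\cdots<\lambda_l$. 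The bounds $-\infty<\lambda_1$ and $\lambda_l<\infty$ are immediate from $\|\mathcal{A}^n(\omega)^{\pm1}\|\le(\max_i\|A_i^{\pm1}\|)^n$. This is exactly the asserted conclusion, once $\RR^d$ is identified with $T_x\TT^d$ for an arbitrary fixed $x$ (the action of $A_{i_{n-1}}\cdots A_{i_0}$ on $T_x\TT^d$ being the linear map itself, independent of $x$).

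There is essentially no hard step here: the only points requiring (routine) care are checking the hypotheses of whichever form of the multiplicative ergodic theorem one cites — immediate from finiteness of the generating set — and observing that the Lyapunov data are a.e.\ constant because the driving system is ergodic. Two remarks. First, the forward theorem delivers a filtration rather than a splitting, the ``slow'' subspaces not being canonically defined from positive iterates alone; this is precisely the shape of the statement claimed, and if a genuine Oseledec splitting were wanted one would instead pass to the two-sided shift $\{1,\dots,m\}^\ZZ$ and intersect forward and backward filtrations. Second, one may alternatively deduce the proposition directly from Proposition~\ref{prop:Lyapunov}: the affine automorphisms $A_i$ lie in $\diff^2(\TT^d)$, Lebesgue measure is $A_i$-invariant for every $i$ hence $p$-stationary, the integrability condition \eqref{eq:integrable} holds because each $\|A_i\|_{C^2}$ is finite, and the derivative cocycle $D_xf_\omega^n=A_{i_{n-1}}\cdots A_{i_0}$ does not depend on $x\in\TT^d$, so the Lyapunov subspaces supplied by Proposition~\ref{prop:Lyapunov} (after restricting to an ergodic component of Lebesgue, on which the exponents are constant) are base-point independent and descend to the desired flag in $\RR^d$.
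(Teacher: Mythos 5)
Your argument is correct, and it coincides with what the paper implicitly intends: the paper states Proposition~\ref{prop:Lyapaffine} without proof, presenting it as the well-known Oseledec (or Furstenberg--Kesten/Oseledec--Ruelle) theorem for i.i.d.\ matrix cocycles, and that is exactly what you carry out — identify $\mathcal{A}^n(\omega)=A_{i_{n-1}}\cdots A_{i_0}$ as a linear cocycle over the Bernoulli shift, note integrability is automatic for a finite generating set of invertible matrices, apply the forward multiplicative ergodic theorem, and invoke ergodicity of the shift to make the Lyapunov data constant. The alternative derivation you sketch from Proposition~\ref{prop:Lyapunov} is also sound, since the derivative cocycle $D_xf_\omega^n$ is independent of $x$ for affine maps, so the flag supplied by the random Oseledec theorem must be $x$-independent.
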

We observe that if $\mu$ is a $p$-stationary probability measure on  $\TT^d$, then the Lyapunov exponents and flag given by  Proposition \ref{prop:Lyapunov} coincide with those in Proposition {prop:Lyapaffine} and, in particular, are independent of $x$.

\subsection{Spectrum rigidity of individual and commuting families of automorphisms}
We consider the Lyapunov exponents relative to the SRB stationary measure of random perturbation of a single Anosov diffeomorphism $f$ on 2-torus.  We obtain rigidity of the family of perturbations when the SRB exponents coincide with those of the linearization of $f$.

\begin{Theorem}\label{thm:2-dim}
	Let $f\in\diff^r(\TT^2)$ be an Anosov diffeomorphism ($r\geq2$) and $A=f_*\in\GL(2,\ZZ)$ be its linear part. There exists a $C^2$-neighborhood $\cU_f\subseteq\diff^r(\TT^2)$ of $f$, such that for every probability $\nu$ on $\diff^r(\TT^2)$ with $\nu(\cU_f)=1$, letting $\mu_{\rm SRB}$ be the unique $\nu$-stationary SRB-measure the following hold:
	\begin{itemize}
		\item $|\lambda^{\pm}(\mu_{\rm SRB})|\le |\lambda^\pm(A)|$.
		\item 
	If $\lambda^+(\mu_{\rm SRB})=\lambda^+(A)$, there is a $C^r$ foliation $\cF^s$ such that $\cF^s=\cF^s_{g}$ for $\nu$-a.e.\ $g\in\diff^r(\TT^2)$.  In particular, the stable foliation $\{\cF^s_\omega\}$ is $\nu^\NN$-almost surely independent of $\omega$ and is $C^r$ smooth.


		\item 
		If $\lambda^{+}(\mu_{\rm SRB})=\lambda^{+}(A)$ and $\lambda^{-}(\mu_{\rm SRB})=\lambda^{-}(A)$, then there exists $h\in\diff^{r-}(\TT^2)$ that simultaneously conjugates ever  $g$ in the support of $\nu$ to an affine Anosov diffeomorphism:
		$$
		h\circ g\circ h^{-1}=A+v_{g}, \qquad \nu\text{-a.e.}~g\in\diff^r(\TT^2),
		$$ 
		where $v_g\in\TT^2$.
	\end{itemize}
\end{Theorem}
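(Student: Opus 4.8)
The plan is to install a fiberwise Anosov structure on the skew product, linearize it topologically over the two--sided shift, and then read off rigidity from the case of equality in an entropy inequality; the decisive output is the non--randomness of the stable direction, exactly the point flagged in the remark as the key observation.

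\medskip

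\noindent\textbf{Setup and the first bullet.} I would first shrink $\cU_f$ so that $\overline{\cU_f}$ is cone hyperbolic --- possible since cone hyperbolicity is $C^1$--open and holds for $\{f\}$ (Remark \ref{rem:cone hyperbolicity}). Then Lemma \ref{lem:cone-Anosov} makes the skew product $F$ over $\Omega=\cU_f^{\ZZ}$ fiberwise Anosov, Lemma \ref{lem:stable-mfd} supplies the fiberwise foliations $\cF^s_\omega,\cF^u_\omega$, and Theorem \ref{thm:SRB} supplies the unique stationary SRB measure $\mu_{\rm SRB}$ together with its canonical $F$--invariant extension $\mu$ and the fiberwise entropy formula, which in dimension two reads $h_{\mu_{\rm SRB}}(\chi^+)=\lambda^+(\mu_{\rm SRB})$. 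Since every $g\in\cU_f$ is Anosov with linear part $A$, a fiberwise Franks--Manning construction --- solving $h_{\sigma\omega}\circ f_\omega=A\circ h_\omega$ on $\RR^2$, with bounded solutions unique by hyperbolicity of $A$ and bijectivity of each $h_\omega$ coming from uniform fiberwise hyperbolicity --- produces a fiberwise topological conjugacy $H(\omega,x)=(\omega,h_\omega(x))$ from $F$ to the constant linear cocycle $F_A(\omega,x)=(\sigma\omega,Ax)$, each $h_\omega$ a bi--H\"older homeomorphism carrying $\cF^s_\omega,\cF^u_\omega$ onto the linear foliations $\cW^s_A,\cW^u_A$. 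Topological conjugacy preserves fiberwise topological entropy, so $h^{\mathrm{fiber}}_{\rm top}(F)=h_{\rm top}(A)=\lambda^+(A)$, and with the entropy formula and the variational principle this gives $\lambda^+(\mu_{\rm SRB})\le\lambda^+(A)$; the bound $|\lambda^-(\mu_{\rm SRB})|\le|\lambda^-(A)|$ I would obtain by the same reasoning applied to the inverse random walk together with a Ruelle--inequality comparison (using, if needed, that for $\cU_f$ small enough $\mu_{\rm SRB}$ is absolutely continuous by \cite{BLOR}, so that the entropy formula applies to $F^{-1}$ as well).

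\medskip

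\noindent\textbf{The second bullet (the heart).} Assume $\lambda^+(\mu_{\rm SRB})=\lambda^+(A)$, so the inequality above is an equality: the fiberwise measure--entropy of $F_A$ relative to $\mu_A:=H_*\mu$ equals $h_{\rm top}(A)$, and by uniqueness of the measure of maximal entropy of the hyperbolic automorphism $A$ (Haar $m$) this forces $\mu_A=\nu^{\ZZ}\times m$, i.e.\ $\mu_\omega=(h_\omega)^{-1}_*m$ for $\nu^{\ZZ}$--a.e.\ $\omega$. Since $\mu_{\rm SRB}$ is SRB, the conditionals of $\mu_\omega$ along $\cF^u_\omega$ are equivalent to arc length; pushing them forward by $h_\omega$ gives Lebesgue on $\cW^u_A$, so $h_\omega$ is absolutely continuous along unstable leaves, and a one--dimensional bounded--distortion argument (in the spirit of Shub--Sullivan, using $r\ge2$) upgrades this to $h_\omega|_{\cF^u_\omega}\in C^r$. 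For the non--randomness: the stable line field $E^s_\omega(x)$ is $DF$--invariant and, being cut out by forward contraction, depends only on $(\omega_i)_{i\ge0}$ and on $x$. Viewing $(\Id\times E^s)_*\mu$ as an invariant measure of the projectivized derivative cocycle and using that the exponent equality pins $\mu_A$ to the non--random product with Haar, I would invoke the invariance principle of Ledrappier / Avila--Viana: the disintegration is invariant under the stable holonomy of the base shift $(\Omega,\sigma)$, hence $E^s_\omega$ is measurable with respect to the right--tail $\sigma$--algebra, which is trivial mod $\nu^\ZZ$ by Kolmogorov's $0$--$1$ law; therefore $E^s_\omega(x)$ depends on $x$ alone. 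This yields a single continuous line field $E^s$ with $Df_g\,E^s=E^s\circ f_g$ for $\nu$--a.e.\ $g$, hence a single foliation $\cF^s=\cF^s_g$. Finally, $h_\omega$ carries this now non--random $\cF^s$ onto the $C^\infty$ foliation $\cW^s_A$ while being $C^r$ along unstable leaves, so the holonomy maps of $\cF^s$ --- conjugates under $h_\omega$ of the linear (hence $C^\infty$) holonomies of $\cW^s_A$ --- are $C^r$; with $C^r$ leaves this makes $\cF^s$ a $C^r$ foliation.

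\medskip

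\noindent\textbf{The third bullet.} Adding $\lambda^-(\mu_{\rm SRB})=\lambda^-(A)$, I would run the second--bullet argument for the inverse random walk to make $\cF^u$ non--random and $C^r$ and $h_\omega$ also $C^r$ along stable leaves; Journ\'e's theorem \cite{J} then makes each $h_\omega$ of class $C^{r-}$, which is the source of the claimed regularity loss. To produce a single conjugacy, note that the only ambiguity in the $h_\omega$ is precomposition by the centralizer of $A$ in $\homeo_0(\TT^2)$, which for the hyperbolic $A$ reduces, up to finitely many translations, to translations; normalizing accordingly one writes $h_\omega=\tau_{a(\omega)}\circ h$ for a fixed $h\in\diff^{r-}(\TT^2)$ and a measurable translation cocycle $a(\cdot)$ satisfying $a(\sigma\omega)=A\,a(\omega)+v_{\omega_0}$, and substituting into $h_{\sigma\omega}\circ f_\omega=A\circ h_\omega$ gives $h\circ f_g\circ h^{-1}=A+v_g$ for $\nu$--a.e.\ $g$.

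\medskip

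\noindent\textbf{Main obstacle.} The crux is the non--randomness step in the second bullet: verifying that the exponent equality $\lambda^+(\mu_{\rm SRB})=\lambda^+(A)$ genuinely supplies the hypothesis of the invariance principle (the precise exponent bookkeeping, the identification $\mu_A=\nu^{\ZZ}\times m$, and the passage from an $s$--holonomy--invariant disintegration to honest $\omega$--independence of $E^s$). Extracting a single $h$ from the family $\{h_\omega\}$ via the translation cocycle in the third bullet, and handling the $C^r$--along--leaves regularity in a way compatible with Journ\'e, are secondary but nontrivial technical points.
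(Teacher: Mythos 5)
Your outline of the first bullet (cone hyperbolicity, fiberwise Anosov structure, entropy formula plus Ruelle-type comparison through the topological conjugacy to $\bar A$) matches the paper's Proposition \ref{prop:2-H-smooth}, and your use of Journ\'e and the centralizer/translation discussion for the third bullet is in the same spirit as the paper's Lemma \ref{lem:translation}. But the mechanism you propose for the crux --- non-randomness of $E^s$ in the second bullet --- does not work, and it is genuinely different from (and weaker than) what the paper does.

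Your plan is to view $(\Id\times E^s)_*\mu$ as an invariant measure for the projectivized cocycle, invoke the Ledrappier/Avila--Viana invariance principle to get holonomy invariance of the disintegration, conclude $E^s_\omega$ is tail-measurable, and kill $\omega$-dependence via Kolmogorov's zero--one law. This fails at the first step: the invariance principle requires the extremal exponents of the projectivized cocycle to coincide (equivalently, the linear cocycle to have collapsed or pinched spectrum), and here $DF$ has two distinct exponents $\lambda^- < 0 < \lambda^+$ bounded away from each other. The projectivized cocycle has a strictly positive top exponent at the repelling fixed section $E^s$, so there is no holonomy-invariance to extract. Moreover, $E^s_\omega(x)$ is by construction $\sigma(\omega_i:i\ge 0)$-measurable (future-measurable), not tail-measurable, and nothing in your argument upgrades it to the right tail; the observation that $\mu_A = H_*\mu = \nu^\ZZ\times m$ is a product does not help, because the conjugacies $H_\omega$ (and hence $\mu_\omega = (H_\omega)^{-1}_*m$) can still depend on $\omega$.

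The paper's actual route, Proposition \ref{prop:s-nonrandom-T2}, is a geometric argument using precisely the $C^{1+}$ regularity of $H$ along $\cF^u$ that the exponent equality buys (via the $u$-Gibbs comparison, Proposition \ref{prop:u-gibbs}). Fix two generators $f,g$ and form the heteroclinic word $\tau=[\omega_f,\omega_g]$ with $\omega_f=(\dots f,f,f\dots)$, $\omega_g=(\dots g,g,g\dots)$; then $\cF^s_\tau=\cF^s_g$, $\cF^u_\tau=\cF^u_f$, and $H_{\omega_f}=H_f$. Writing $\cF := H_f(\cF^s_g)$, one pulls $\cF$ back by $A_f^{-n}$ so that the holonomy of $\cF$ between leaves of the linear $\cL^u_f$ is compared against the linear holonomy of $\cL^s$ at $\sigma^{-n}(\tau)\to\omega_f$; using that $DH|_{\cF^u}$ is continuous and $A_f$ is conformal along $\cL^u_f$, the holonomy derivative is forced to be the identity, whence Lemma \ref{lem:linear-Td} makes $\cF$ a linear foliation, so $\cF=\cL^s_A$ and $\cF^s_g = H_f^{-1}(\cL^s_A)=\cF^s_f$. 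This is the step your proposal is missing: you need a concrete way to convert $C^{1+}$ regularity of $H$ along $\cF^u$ into rigidity of the transverse foliation $\cF^s$, and an ergodic-theoretic invariance principle is not the right tool here because the hyperbolicity is non-degenerate.
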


\begin{Remark}
	In this theorem, the non-randomness of stable bundles and simultaneous lineariation actually holds for every $g\in\supp(\nu)$. Moreover, we can take $\nu$ supported in a $C^1$-neighborhood of $f$ but satisfies the integrability condition (\ref{eq:integrable}), then the result of our theorem still holds. The same conclusion holds in the rest theorems of this paper.
\end{Remark}

In higher dimensions, de la Llave \cite{dlL92} constructed examples of volume-preserving Anosov diffeomorphisms with reducible linearization and whose Lyapunov spectrum coincides with the Lyapunov spectrum of its linearization but are  not Lipschitz conjugated to their linearization.  To rule out examples like    de la Llave's, we introduce the following generic assumption for automorphisms in $\GL(d,\ZZ)$.

\begin{Definition}\label{def:generic}
	We say $A\in\GL(d,\ZZ)$ is \emph{generic} if it satisfies
	\begin{enumerate}
		\item $A$ is hyperbolic, i.e., the spectrum of $A$ is disjoint from the unit circle in $\mathbb{C}$;
		\item $A$ and $A^4$ are irreducible, i.e., have characteristic polynomials that are irreducible over $\QQ$;
		\item no three eigenvalues of $A$ have the same absolute value, and if two eigenvalues of $A$ have the same absolute value then they are a pair of complex conjugate eigenvalues. 
	\end{enumerate}
\end{Definition}
It was proved in \cite{GKS} that generic automorphisms $A\in\GL(d,\ZZ)$ have full probability in $\GL(d,\ZZ)$: 
$$
\lim_{K\to+\infty}
\frac{\sharp\{A\in\GL(d,\ZZ)~\text{is~generic}:~\|A\|\leq K\}}{\sharp\{A\in\GL(d,\ZZ):~\|A\|\leq K\}}=1.
$$

Let $A\in\GL(d,\ZZ)$ be a generic (in particular irreducible over $\QQ$) hyperbolic automorphism admitting the finest dominated splitting
\begin{align}\label{fine-splitting}
	T\TT^d=L^s_l\oplus\cdots L^s_1\oplus L^u_1\oplus\cdots L^u_k.
\end{align}
That is, the splitting in \eqref{fine-splitting} is $A$-invariant, the restriction of $A$ to each invariant subspace, $A|_{L^{s/u}_j}$, is conformal.  Moreover, setting $\lambda(A,L^{s/u}_j):= \log \|A|_{L^{s/u}_j}\|$,
we assume the subspaces are ordered so that 
$$
\lambda(A,L^s_{i+1})<\lambda(A,L^s_i)<0, \qquad 0<\lambda(A,L^u_j)<\lambda(A,L^u_{j+1}), 
\qquad i=1,\cdots,l-1,~j=1,\cdots,k-1.
$$
Let $A'\in\GL(d,\ZZ)$ be commuting with $A$.  Then each  $ L^{s/u}_j $ is $A'$-invariant.  
We say $A'$ admits the same finest dominated splitting s $A$ if the splitting (\ref{fine-splitting}) is also the finest dominated splitting of $A'$; that is, if 
$$
\lambda(A',L^s_{i+1})<\lambda(A',L^s_i)<0, \qquad 0<\lambda(A',L^u_j)<\lambda(A',L^u_{j+1}), 
\qquad i=1,\cdots,l-1,~j=1,\cdots,k-1.
$$

%
%
%

Let $\{A_i\}_{i=1}^m\subseteq\GL(d,\ZZ)$ be a family of generic commuting automorphisms with the finest dominated splitting of $A_1$ (\ref{fine-splitting}).  Then the dimension of every bundle $E^{s/u}_j$ is smaller than or equal to $2$. From commuting property, the splitting (\ref{fine-splitting}) is $A_i$-invariant for $i=2,\cdots,m$. Let $\nu_0$ be a probability on $\{A_i\}_{i=1}^m$ with $\nu_0(A_i)=p_i\in[0,1]$ with $\sum p_i=1$. Then for any $\nu_0$-stationary measure $\mu$, the Lyapunov exponents of $\mu$ in $L^{s/u}_j$ is equal to the linear combination
$$
\lambda(\mu,L^{s/u}_j)=\sum_{i=1}^mp_i\cdot\lambda(A_i,L^{s/u}_j).
$$
For instance, we can take $\mu$ to be the Lebesgue measure on $\TT^d$ which is $\nu_0$-invariant thus $\nu_0$-stationary.

We prove the following theorem for perturbations of commuting automorphisms admitting the same {finest  dominated splitting}, which implies Theorem \ref{thm:d-dim}. It is clear that perturbations of commuting automorphisms with same splitting share the common stable and unstable cone-fields. Theorem \ref{thm:SRB} implies every probability supported on these perturbations admit a stationary SRB-measure.

\begin{Theorem}\label{thm:commuting}
	Let $\{A_i\}_{i=1}^m\subseteq\GL(d,\ZZ)$ be a family of commuting generic automorphisms  {admitting the same dominated splitting.} 
	There exists a family of neighborhoods $A_i\in\cU_i\subseteq\diff^2(\TT^d)$ for $i=1,\cdots,m$, such that for every probability $\nu$ supported on $\cup\cU_i\subseteq\diff^2(\TT^d)$ satisfying
	$$
	\nu(\cU_i)=p_i\in[0,1] \qquad \text{and} \qquad
	\sum_{i=1}^mp_i=1,	
	$$
	let $\mu_{\rm SRB}$ be the $\nu$-stationary SRB-measure, and denote $\nu_0$ be the probability on $\{A_i\}_{i=1}^m$ where $\nu_0(A_i)=p_i$. 
	\begin{itemize}
		\item If the positive Lyapunov exponents of $\mu_{\rm SRB}$ coincide with the positive Lyapunov exponents of any $\nu_0$-stationary measure (with multiplicity),
there is a $C^{1+}$ foliation $\cF^s$ such that $\cF^s=\cF^s_{g}$ for $\nu_0$-a.e.\ $g\in\diff^r(\TT^d)$.  In particular, the stable foliation $\{\cF^s_\omega\}$ is $\nu^\NN$-almost surely independent of $\omega$ and is $C^{1+}$ smooth.  		
		
		
		\item 
		If  all Lyapunov exponents of $\mu_{\rm SRB}$ coincide with all Lyapunov exponents of any $\nu_0$-stationary measure (with multiplicity), 
		 then there exists $h\in\diff^{1+}(\TT^2)$ that simultaneously conjugates ever $f$ in the support of $\nu$ to an affine Anosov diffeomorphism:
		$$
		h\circ f\circ h^{-1}=A_i+v_{f}, \qquad \nu\text{-a.e.}~f\in\cU_i,
		$$ 
		where $v_f\in\RR^d$ and for every $i=1.\cdots,m$.
	\end{itemize}
\end{Theorem}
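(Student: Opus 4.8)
Note that Theorem \ref{thm:d-dim} is the special case $m=1$ of Theorem \ref{thm:commuting}, so it suffices to prove the latter. Since the $A_i$ commute and are generic, the finest dominated splitting $T\TT^d=L^s_l\oplus\cdots\oplus L^u_k$ of $A_1$ is common to all the $A_i$, each block has dimension $\le 2$, and each $A_i$ acts conformally on every block. After shrinking the $\cU_i$, the family $\cA=\bigcup_i\cU_i$ is cone hyperbolic (Remark \ref{rem:cone hyperbolicity}), so by Lemma \ref{lem:cone-Anosov} the skew product $F\colon\Omega\times\TT^d\to\Omega\times\TT^d$ with $\Omega=\cA^\ZZ$ is fiberwise Anosov, and Theorem \ref{thm:SRB} provides the unique ergodic $\nu$-stationary SRB measure $\hat\mu=\mu_{\rm SRB}$, its canonical $F$-invariant lift $\mu$ with $\pi_*\mu=\nu^\ZZ$ (Proposition \ref{prop:measure}), and the fiberwise entropy formula $h_\mu(\chi^+)=\sum_{\lambda_k>0}m_k\lambda_k(\mu_{\rm SRB})$. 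A fiberwise version of the Franks--Manning construction (\cite{Fr,Mn}) produces a conjugacy $H(\omega,x)=(\omega,h_\omega(x))$ between $F$ and the linear skew product $F_0(\omega,x)=(\sigma\omega,A_{\omega_0}x)$, with each $h_\omega$ homotopic to the identity and uniform H\"older bounds, carrying $\cF^s_\omega$ to the common linear stable foliation $\cL^s$ and $\cF^u_\omega$ to $\cL^u$.

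\textbf{The inequality and the equality case (first bullet).} Combining the entropy formula with the variational principle for the entropy of random dynamical systems gives $\sum_{\lambda_k>0}m_k\lambda_k(\mu_{\rm SRB})=h_\mu(\chi^+)\le h_{\rm top}(\chi^+)$, and since $H$ is a topological conjugacy and $\nu_0^\ZZ\otimes{\rm Leb}$ is the (unique) measure of maximal entropy of the linear walk, $h_{\rm top}(\chi^+)=h_{\rm top}(\chi^+_0)=\sum_{\lambda^0_k>0}m_k\lambda^0_k$, where $\lambda^0_k$ are the Lyapunov exponents of the $\nu_0$-random product. This is the claimed inequality, and the hypothesis of the first bullet is precisely equality, forcing $h_\mu(\chi^+)=h_{\rm top}(\chi^+)$: the measure $\mu$ is the unique measure of maximal entropy, hence by transporting uniqueness through $H$ we get $\mu=H_*(\nu^\ZZ\otimes{\rm Leb})$, and therefore $\mu_\omega=(h_\omega)_*{\rm Leb}$ for $\nu^\ZZ$-a.e.\ $\omega$.

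\textbf{Non-randomness of $\cF^s$ and regularity.} This is the crux. By Proposition \ref{prop:measure} the fiber measure $\mu_\omega$ depends only on the past $(\omega_i)_{i<0}$, while $\cF^s_\omega=h_\omega^{-1}(\cL^s)$ depends only on the future $(\omega_i)_{i\ge 0}$; the identity $\mu_\omega=(h_\omega)_*{\rm Leb}$ from the previous paragraph couples these two. The fiberwise SRB property ($\mu_\omega$-conditionals on $\cF^u_\omega$ are Lebesgue) together with conformality of each $A_i$ on the unstable blocks forces $h_\omega$ to be conformal-affine along each linear sub-foliation of $\cL^u$ (conformal rigidity in dimension $\le 2$, in the spirit of \cite{ShS}), after which Journ\'e's theorem \cite{J} upgrades $h_\omega$ to $C^{1+}$ along $\cF^u_\omega$. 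Feeding this structural information back and using that a quantity measurable with respect to both the past and the future of the Bernoulli shift must be $\nu^\ZZ$-a.s.\ constant, one concludes $\cF^s_\omega$ is $\nu^\ZZ$-a.s.\ independent of $\omega$; call it $\cF^s$. By equivariance $\cF^s$ is invariant under every $g\in\supp\nu$ and is its stable foliation, and applying the conformal rigidity and Journ\'e \cite{J} along the stable sub-foliations (now inside the fixed bundle $E^s=T\cF^s$) shows $\cF^s$ is $C^{1+}$. I expect this step --- the Baxendale-type rigidity converting the entropy equality into genuine non-randomness of the contracted directions --- to be the main difficulty; it is the random analogue of the arguments of Ledrappier and Baxendale \cite{LJ,Ben} used in the $\SS^1$ case, and the asymmetry (matching the \emph{positive} exponents rigidifies the \emph{stable} foliation) is exactly the ``cross'' phenomenon highlighted in the remark after Theorem \ref{thm:2-dim}.

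\textbf{Second bullet.} Assume in addition that the negative exponents of $\mu_{\rm SRB}$ match those of $\nu_0$; this is the equality case of the analogous inequality for the time-reversed cocycle $DF^{-1}$, so running the argument above for the reversed random walk yields that $\cF^u$ is likewise non-random, invariant under every $g\in\supp\nu$, and $C^{1+}$. Thus $\cF^s,\cF^u$ form a common transverse pair of $C^{1+}$ foliations for all generators, and carrying out the classical Franks--Manning construction with these fixed foliations produces a single $h\in\homeo(\TT^d)$ homotopic to the identity with $h\circ g\circ h^{-1}=A_i+v_g$ for every $g\in\cU_i$. Finally $h$ is $C^{1+}$ along $\cF^s$ and along $\cF^u$ by the conformal rigidity of the previous step (using equality of all exponents with multiplicity, block by block, together with genericity of the $A_i$ --- in particular irreducibility of $A_i$ and $A_i^4$ in Definition \ref{def:generic} --- which rules out the degenerate resonances obstructing this bootstrap), so Journ\'e's theorem \cite{J} gives $h\in\diff^{1+}(\TT^d)$; the further bootstrap to $C^\infty$ in the remark is then \cite{KSW1,KSW2}.
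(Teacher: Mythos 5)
The proposal reproduces the broad architecture correctly (reduction of Theorem~\ref{thm:d-dim} to $m=1$, an entropy inequality, conformal rigidity on the $\le 2$-dimensional blocks, Journ\'e's theorem, time reversal for the second bullet), but there is a genuine gap at the crux: the step converting the exponent equality into non-randomness of $\cF^s$. You appeal to the principle that a quantity measurable with respect to both the past and the future of the Bernoulli shift must be a.s.\ constant, but you never exhibit which quantity is both past- and future-measurable, nor why. The stable foliation $\cF^s_\omega$ is future-measurable; the fiber measure $\mu_\omega$ is past-measurable; the identity $\mu_\omega = (h_\omega)_*\Leb$ shows a certain pushforward is past-measurable, but $h_\omega$ itself depends on the entire two-sided word, and there is no mechanism offered (and I do not see one) that would promote $\cF^s_\omega$ to a past-measurable object. ``Feeding this structural information back'' is precisely where the argument has no content.

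The paper's argument at this point is geometric, not measure-theoretic. After first proving (Proposition~\ref{prop:d-H-smooth}, via leaf conjugacy, two-dimensional amenable reduction using genericity rather than Shub--Sullivan-style conformal rigidity, and an induction matching intermediate unstable foliations) that $H$ is $C^{1+}$ along each $\cF^u_j$ with uniform bounds on $\supp(\mu)$, Proposition~\ref{prop:s-nonrandom-Td} fixes $f,g\in\supp(\nu)$ and the heteroclinic word $\tau=(\dots,f,f,g,g,\dots)$, observes $H_f(\cF^s_g)=H_\omega(\cF^s_\tau)$, writes the holonomy of this foliation between leaves of $\cL^u_k$ as $A_f^n\circ\Hol^{\cF_n}_{x_n,y_n}\circ A_f^{-n}$, and shows the middle factor tends to the identity because $\sigma^{-n}(\tau)\to\omega$ and $DH|_{\cF^u_k}$ is continuous, while conjugation by $A_f^n$ preserves smallness by conformality of $A_f$ on $L^u_k$. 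Lemma~\ref{lem:linear-Td} then forces $H_f(\cF^s_g)$ to be linear, hence $=\cL^s$, hence $\cF^s_g=H_f^{-1}(\cL^s)=\cF^s_f$. Some version of this holonomy computation would have to be supplied; your proposal does not contain it. Secondarily, the claim that $\nu_0^\ZZ\otimes\Leb$ is the unique fiberwise measure of maximal entropy is an unjustified input; the paper instead deduces $H_*\mu=\nu^\ZZ\otimes\Leb$ from the $u$-Gibbs characterization (item~4 of Proposition~\ref{prop:u-gibbs}), bypassing any uniqueness-of-MME question.
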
 

\begin{Remark}
	Here the simultaneously linearization result holds 
	If we take $\supp(\nu)\subseteq\diff^{\infty}(\TT^d)$, then the conjugacy $h$ is also $C^\infty$-smooth by applying the recent work of Kalinin-Sadovskaya-Wang \cite{KSW1,KSW2}.
\end{Remark}

Theorems \ref{thm:2-dim} and  \ref{thm:commuting} are random analogues of the results of Saghin and Yang \cite{SY} and of Gogolev, Kalinin, and Sadovskaya \cite{GKS}.  See also the work of DeWitt \cite{DeW1}.   Indeed, in \cite{SY,GKS}, volume-preserving perturbations of (Lyapunov simple or generic) Anosov automorphisms are considered.  If the Lyapunov exponents of the perturbation coincide with the Lyapunov exponents of the automorphism, the corresponding conjugacy is shown to be smooth.


\subsection{Spectrum rigidity of non-commuting automorphisms of $\TT^2$}

Let $\{A_i\}_{i=1}^m\subseteq\GL(2,\ZZ)$ be a famiy of hyperbolic non-commuting automorphisms with hyperbolic splitting $T\TT^2=L^s_i\oplus L^u_i$ of $A_i$. The non-commuting property implies there exist $i\neq j$ satisfying $L^s_i\neq L^s_j$ and $L^u_i\neq L^u_j$. 

\begin{Example}
	For every $n_i\in\NN_{\geq2}$, $i=1,\cdots,m$ and $n_i\neq n_j$ if $i\neq j$, let
	\begin{displaymath}
		A_i=\begin{bmatrix}
			n_i & n_i-1 \\
			1 & 1
		\end{bmatrix}.
	\end{displaymath}
	The family $\{A_i\}_{i=1}^m$ induces a family of hyperbolic non-commuting automorphisms on $\TT^2$. Moreover, $\{A_i\}_{i=1}^m$ share common stable and unstable cone fields on $\TT^2$ defined in Definition \ref{def:common-cone}. By taking the trivialization $T\TT^2=\TT^2\times\RR^2$, we can take
	\begin{align}\label{eq:cones}
		\cC^s=\TT^2\times\{(x,y)\in\RR^2:~x\cdot y\leq 0\} \qquad \text{and} \qquad
		\cC^u=\TT^2\times\{(x,y)\in\RR^2:~x\cdot y\geq 0\}.
	\end{align}
\end{Example}


The following theorem states simultaneously linearization for perturbations of automorphisms which are non-commuting and admit common stable and unstable cone-fields, which implies Theorem \ref{thm:positive-T2}.

Define $\Upsilon\colon \diff^r(\TT^2) \to \GL(2,\ZZ)$ by the induced map on homology.  That is, $f$ is homotopic to the  map of $\TT^2$ induced by $\Upsilon (f)$.  
Then given a probability measure $\nu$ on  $\diff^r(\TT^2)$, we view $\Upsilon_*\nu$ as a probability on $\GL(2,\ZZ)$.  We let $\lambda^{\pm}(\Upsilon_*\nu)$ be the top and bottom Lyapunov exponents, respectively, of the i.i.d.\ random walk given by  Proposition \ref{prop:Lyapunov}; in particular, they are  the a.s.\ limit
$$\lambda^{\pm}(\Upsilon_*\nu):= \lim_{n\to \infty} \frac  1 n \log \|(A_{n-1} \dots  A_0)^{\pm 1}\|$$ where $A_i$ are chosen i.i.d.\ relative to $\Upsilon_*\nu$.   

%

\begin{Theorem}\label{thm:non-commuting}
	Let $\{A_i\}_{i=1}^m\subset\GL(2,\ZZ)$ $(m\geq2)$ be a family of non-commuting, cone hyperbolic automorphisms.  
	There exists a family of  neighborhoods $\cU_i\subseteq\diff^r(\TT^2)$ of $A_i$ for $i=1,\cdots,m$ and $r\geq2$, such that for every probability $\nu$ supported on $\cup\cU_i\subseteq\diff^r(\TT^2)$ satisfying
	$$
	\nu(\cU_i)=p_i\in(0,1) \qquad \text{and} \qquad
	\sum_{i=1}^mp_i=1,	
	$$
	there exists a unique $\nu$-stationary SRB-measure $\mu$. 
	
	
	Then \begin{equation}\label{eq:bexandaleish}\lambda^+(\mu)\le \lambda^+(\Upsilon_*\nu).\end{equation} Moreover, equality holds in \eqref{eq:bexandaleish} if and only if there exists $h\in\diff^{r-}(\TT^2)$ that simultaneously conjugates every $f$ in the support of $\nu$ to an affine Anosov diffeomorphism:  
	$$
	h\circ f\circ h^{-1}=A_i+v_{f}, \qquad f \in \supp(\nu), 
	$$ 
	where $v_f\in\RR^2$ and for every $i=1,\cdots,m$.
\end{Theorem}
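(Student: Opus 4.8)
The plan is to run a fiberwise SRB/entropy argument for the inequality, an invariance‑principle (Baxendale–Le Jan type) argument for the equality case, and a transversality‑plus‑Journé bootstrap for the regularity.

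First I would set up the existence of $\mu$ and prove \eqref{eq:bexandaleish}. Since cone hyperbolicity is $C^1$‑open (Remark \ref{rem:cone hyperbolicity}), after shrinking the $\cU_i$ the family $\cA:=\cup\cU_i$ is cone hyperbolic, so by Lemma \ref{lem:cone-Anosov} the skew product $F$ over $\Omega=\cA^\ZZ$ is fiberwise Anosov with one‑dimensional fiberwise stable and unstable bundles (as $\dim\TT^2=2$), and Theorem \ref{thm:SRB} produces the unique $\nu$‑stationary SRB measure $\mu$. Because $\mu$ is fiberwise SRB and its unique positive fiberwise exponent $\lambda^+(\mu)$ has multiplicity one, the fiberwise Pesin formula of Theorem \ref{thm:SRB} gives $h_\mu(\chi^+(\TT^2,\nu))=\lambda^+(\mu)$. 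Next, $h_\mu(\chi^+(\TT^2,\nu))$ is at most the fiberwise topological entropy of $F$; by fiberwise structural stability there is a fiberwise (H\"older) conjugacy $H(\omega,x)=(\omega,H_\omega x)$ between $F$ and the linear skew product $\bar F$ built from $\Upsilon(f_i)=A_i$ (valid since $f_i$ is $C^0$‑close to $A_i$), and a fiberwise homeomorphism preserves fiberwise topological entropy. Finally, since the Lyapunov exponents of the linear walk are independent of the base point (the remark after Proposition \ref{prop:Lyapaffine}), the fiberwise Ruelle inequality and the variational principle identify the fiberwise topological entropy of $\bar F$ with $\lambda^+(\Upsilon_*\nu)$, with $\mathrm{Leb}$ a maximizer. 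Chaining these estimates yields $\lambda^+(\mu)\le\lambda^+(\Upsilon_*\nu)$.

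Now assume equality in \eqref{eq:bexandaleish}. The key step — and the main obstacle — is to upgrade equality of the $\mu$‑averaged unstable exponent to the \emph{cohomological} statement that $\log\|Df_{\omega_0}|_{E^u_\omega(x)}\|$ is $F$‑cohomologous to the base‑only cocycle $\log\|A_{\omega_0}|_{\bar E^u_\omega}\|$ of the linear model. I would obtain this from the equality analysis behind the Baxendale‑type argument used for Theorem \ref{thm:S1-general} (following \cite{LJ,Ben} and the invariance‑principle philosophy of Ledrappier and Avila–Viana): the entropy inequality above is an equality in a strict‑concavity step for the unstable conditional densities of $\mu$, which forces those densities to be carried by $H$ onto the constant ``algebraic'' density, i.e. exactly the coboundary relation. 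I would emphasize that the naive shortcut — pushing $\mu$ forward by $H$ and invoking stiffness to identify it with $\mathrm{Leb}$ — is not available here, since a family that is irreducible over $\RR$ but not strongly irreducible (e.g. contained in the normalizer of a torus) admits atomless stationary measures other than $\mathrm{Leb}$; hence the coboundary must come from the equality case of the entropy/Baxendale inequality itself. Granting the coboundary, a Livšic‑type argument at skew‑product periodic orbits together with the two‑dimensional de la Llave–Marco–Moriyón regularity result (\cite{dlL,MM1,dlLM}) shows that $H$ is uniformly $C^{r-}$ along the fiberwise unstable leaves $\{\cF^u_\omega\}$ of $F$.

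Finally I would pass from unstable regularity to global regularity using irreducibility. Since $\{A_i\}$ is irreducible over $\RR$, the unstable lines $L^u_i$ are not all equal; fixing $i\ne j$ with $L^u_i\ne L^u_j$ and using $C^1$‑closeness, the fiberwise unstable foliations $\cF^u_{f_i}$ and $\cF^u_{f_j}$ are two transverse continuous foliations with $C^r$ leaves along each of which $H$ is uniformly $C^{r-}$, so Journé's theorem \cite{J} gives $H_\omega\in\diff^{r-}(\TT^2)$ uniformly (the loss $r-$ versus $r$ for integer $r$ being exactly the Journé loss). The conjugacy relation $H_{\sigma\omega}\circ f_{\omega_0}=A_{\omega_0}\circ H_\omega$ with the $H_\omega$ now smooth then yields a single $h\in\diff^{r-}(\TT^2)$ with $h\circ f_i\circ h^{-1}=A_i+v_i$ for all $i$ — concretely, by evaluating at constant words and invoking uniqueness (up to a translation, since $A_i$ is irreducible over $\QQ$) of the smooth conjugacy of each $f_i$ to its affine model, after a short cohomological bookkeeping to normalize the $\omega$‑dependent translations. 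The converse direction is immediate: if such an $h$ exists, it conjugates the random walk to the affine walk of $\{A_i+v_i\}$, whose stationary SRB measure is $\mathrm{Leb}$ and whose positive exponent is $\lambda^+(\{A_i\})=\lambda^+(\Upsilon_*\nu)$, so equality holds in \eqref{eq:bexandaleish}.
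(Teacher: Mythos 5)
Your Step 1 (the inequality) and the first part of Step 2 (smoothness of $H$ along fiberwise unstable leaves when exponents coincide) are broadly consonant with the paper, though the paper reaches the unstable‐leaf regularity not via a Baxendale/Le~Jan strict‑concavity argument but by observing that $\mu$ is a fiberwise $u$‑Gibbs state (Theorem~\ref{thm:SRB}), hence $H_*\mu$ is $\bar A$‑invariant with the same exponent, and then invoking the equivalence in Proposition~\ref{prop:u-gibbs} (items 4, 1, 5 for a one‑dimensional unstable bundle) to conclude $H$ is $C^{1+}$ along $\cF^u$ and $H_*\mu=\nu^\ZZ\times\Leb$. Your invariance‑principle/Baxendale route is a genuinely different strategy and might be made to work, but it would require building the cohomological identity from scratch, whereas the paper gets it from \cite[Theorem~G]{SY} packaged in Proposition~\ref{prop:u-gibbs}; the paper's route is more direct and already sidesteps the stiffness issue you (correctly) flag.

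The real gap is in your Step 3, the passage from unstable regularity to global regularity via Journ\'e. You claim that, for $i\neq j$ with transversal unstable lines, "$H$ is uniformly $C^{r-}$ along each of $\cF^u_{f_i}$ and $\cF^u_{f_j}$", and then apply Journ\'e. But the smoothness you obtained is fiberwise: for each word $\omega$, $H_\omega$ is smooth along $\cF^u_\omega$, the unstable foliation associated to \emph{that} word. Taking $\omega$ to be the constant word at $f$, this gives $H_f$ smooth along $\cF^u_f$ — and nothing more. There is no a~priori reason $H_f$ is smooth along $\cF^u_g$ for a different generator $g$. Journ\'e's lemma requires a \emph{single} map smooth along two transverse foliations, and you have not produced that. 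This is exactly where the paper's key mechanism intervenes: Proposition~\ref{prop:s-nonrandom-T2} uses the heteroclinic word $\tau=[\omega,\xi]$ and the $C^{1+}$ regularity of the skew‑conjugacy along unstables to show that $H_f(\cF^s_g)=\cL^s_g$. Combined with $H_f(\cF^s_f)=\cL^s_f$ and the fact that $\cL^s_f,\cL^s_g$ are two transversal minimal linear foliations (which is where non‑commutativity is used), Lemma~\ref{lem:translation} forces $H_f\circ H_g^{-1}$ to be a translation, hence $H_f=H_g+u_g$. Only then does $h:=H_f$ inherit from $H_g$ its smoothness along $\cF^u_g$, making Journ\'e applicable. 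Without Proposition~\ref{prop:s-nonrandom-T2} (or an equivalent argument tying $H_f$ and $H_g$ together), your Journ\'e step does not go through, and the "short cohomological bookkeeping to normalize the $\omega$‑dependent translations" you mention at the end is precisely the nontrivial content that is missing.
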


\begin{Remark}
	When we take a family of positive matrices $\{A_i\}\subseteq\GL(2,\ZZ)$, then the stable and unstable cone-fields $\cC^s,\cC^u$ defined in (\ref{eq:cones}) are common stable and unstable cone-fields of $\{A_i\}$. Moreover, the family $\{A_i\}$ is irreducible if and only if there exist $A_i\neq A_j$ that do not commute. Thus, any irreducible family of positive matrices satisfies the assumption of Theorem \ref{thm:non-commuting} and Theorem \ref{thm:non-commuting} implies Theorem \ref{thm:positive-T2}.
\end{Remark}

\begin{Remark} 
We state all our perturbative results (Theorems \ref{thm:commuting}, \ref{thm:2-dim}, and \ref{thm:non-commuting}) for $C^2$-small perturbations of a family of Anosov diffeomorphisms of  $\diff^2(\TT^d)$.   All results continue hold if we instead take our perturbations to be $C^2$-bounded and $C^1$-small in $\diff^2(\TT^d)$.  We state results for $C^2$-perturbations for simplicity.  
\end{Remark}


\section{Simultaneous linearization of random expanding maps}
In this section, we prove Theorem \ref{thm:S1-general}, which shows that Lyapunov spectrum rigidity of the stationary SRB measure implies simultaneous linearization of random expanding maps on $\SS^1$. The key fact is to show Lyapunov spectrum rigidity implies the stationary SRB measure is actually an invariant measure. The idea originates from \cite{LJ} and \cite{Ben}.

\begin{proof}[Proof of Theorem \ref{thm:S1-general}]
	Let $\nu$ be a probability measure on $E^r(\SS^1)$ $(r\geq2)$ which satisfies
	\begin{align}\label{equ:integrable}
		\int\log\|f\|_{C^2}d\nu(f)<\infty.
	\end{align}
 Theorem \ref{thm:expanding-SRB} and Remark \ref{rk:density} then show that there exists a unique $\nu$-stationary SRB measure $\mu_{\rm SRB}$ equivalent to the Lebesgue measure and with densities satisfying 
    $$
    q(x)=\frac{d\mu_{\rm SRB}}{d{\rm Leb}}(x)\in L^1(\SS^1,{\rm Leb}),
    \qquad \frac{1}{q(x)}\in L^1(\SS^1,\mu_{\rm SRB}).
    $$
	
	We consider the skew-product system $F_+:\Sigma_+\times\SS^1\to\Sigma_+\times\SS^1$ where
	$$
	F_+(\omega,x)=\left(\sigma(\omega),f_{\omega}(x)\right)=\left(\sigma(\omega),f_{\omega_0}(x)\right),
	\qquad \omega\in\Sigma_+=\left( E^r(\SS^1)\right)^{\NN},~x\in\SS^1.
	$$
	The probability measure $\nu^{\NN}\times\mu_{\rm SRB}$ is an ergodic $F_+$-invariant measure.
	
	Since $1/q(x)\in L^1(\SS^1,\mu_{\rm SRB})$ and $\nu$ satisfies the integrability condition (\ref{equ:integrable}), we have
	$$
	Q(\omega,x)=
	\frac{q(f_{\omega}(x))}{q(x)}\cdot\frac{|f_{\omega}'(x)|}{|\deg(f_{\omega})|}
	~\in~L^1(\nu^{\NN}\times\mu_{\rm SRB}).
	$$
	Moreover, we have
	\begin{align*}
		\int_{\Sigma_+}\int_{\SS^1}Q(\omega,x)
		d\mu_{\rm SRB}(x)d\nu^{\NN}(\omega)
		=&\int_{\Sigma_+}\int_{\SS^1}\frac{q(f_{\omega(x)})}{q(x)}
		\cdot\frac{|f_{\omega}'(x)|}{|\deg(f_{\omega})|}
		d\mu_{\rm SRB}(x)d\nu^{\NN}(\omega) \\
		=&\int_{\Sigma_+}\frac{1}{|\deg(f_{\omega})|}
		\left[\int_{\SS^1}q(f_{\omega}(x))|f_{\omega}'(x)|
		d{\rm Leb}(x)\right]d\nu^{\NN}(\omega) \\
		=&\int_{\Sigma_+}\frac{|\deg(f_{\omega})|}{|\deg(f_{\omega})|}d\nu^{\NN}(\omega)\\
		=&1.
	\end{align*}
	
	Since $\nu^{\NN}\times\mu_{\rm SRB}$ is ergodic with respect to $F_+$, for $\nu^{\NN}\times\mu_{\rm SRB}$-a.e. $(\omega,x)\in\Sigma_+\times\SS^1$, denote
	$$
	\mu_n=\frac{1}{n}\sum_{i=0}^{n-1}\delta_{F_+^i(\omega,x)},
	\qquad \text{we have} \qquad
	\lim_{n\to\infty}\int Qd\mu_n=1.
	$$
    Fix the generic point $(\omega,x)$ of $\nu^{\NN}\times\mu_{\rm SRB}$, it satisfies
    \begin{align}\label{eq:S1}
    	0&=\lim_{n\to\infty}\log\left(\frac{1}{n}\sum_{i=0}^{n-1}Q\circ F_+^i(\omega,x)\right) 
    	 \geq\lim_{n\to\infty}\frac{1}{n}\sum_{i=0}^{n-1}\log Q\circ F_+^i(\omega,x) \notag\\
    	 &=\lim_{n\to\infty}\frac{1}{n}\left[\log q(f_{\omega}^n(x))-\log q(x)\right]+
    	 \lim_{n\to\infty}\frac{1}{n}\sum_{i=0}^{n-1}
    	 \log\frac{\left|f_{\sigma^i(\omega)}'
    	 	\left(f_{\omega}^i(x)\right)\right|}{\left|\deg(f_{\sigma^i(\omega)})\right|}
    \end{align}

    \begin{Claim-numbered}\label{clm:n_k}
    	There exists an infinite subsequence $\{n_k\}\subseteq\NN$, such that
    	\begin{align*}
    		\lim_{k\to\infty}\frac{1}{n_k}\left[\log q(f_{\omega}^{n_k}(x))-\log q(x)\right]=0.
    	\end{align*}
    \end{Claim-numbered}
    
    \begin{proof}[Proof of the Claim]
    	Define $n_{k+1}=\inf\left\{m>n_k:~q(f_{\omega}^m(x))<\sqrt{m}\right\}$ with $n_0=0$, then we have 
    	\begin{align*}
    		\lim_{k\to\infty}\frac{1}{n_k}\left[\log q(f_{\omega}^{n_k}(x))-\log q(x)\right]
    		\leq\lim_{k\to\infty}\frac{1}{n_k}\left[\log\sqrt{n_k}-\log q(x)\right]=0.
    	\end{align*}
    	We only need to show that $n_k\to\infty$ as $k\to\infty$. Otherwise, for every $m\in\NN$ large enough,
    	$$
    	\frac{1}{q(f_{\omega}^m(x))}\leq\frac{1}{\sqrt{m}}.
    	$$
    	
    	Since the measure $\sum_{m=0}^{n-1}\delta_{f_{\omega}^m(x)}$ converges to $\mu_{\rm SRB}$ on $\SS^1$, we have
    	$$
    	1=\int_{\SS^1}\frac{{\rm Leb}}{\mu_{\rm SRB}}d\mu_{\rm SRB}
    	=\int_{\SS^1}\frac{1}{q(x)}d\mu_{\rm SRB}
    	=\lim_{n\to\infty}\sum_{m=0}^{n-1}\frac{1}{q(f_{\omega}^m(x))}
    	\leq\lim_{n\to\infty}\sum_{m=0}^{n-1}\frac{1}{\sqrt{m}}=0.
    	$$
    	This is absurd.
    \end{proof}
    
    We continue the proof of Theorem \ref{thm:S1-general}.
    From equation (\ref{eq:S1}) and Claim \ref{clm:n_k}, we have
    \begin{align*}
    	0\geq \lim_{k\to\infty}\frac{1}{n_k}\sum_{i=0}^{n_k-1}
    	\log\frac{\left|f_{\sigma^i(\omega)}'
    		\left(f_{\omega}^i(x)\right)\right|}{\left|\deg(f_{\sigma^i(\omega)})\right|}.
    \end{align*}
    This is equivalent to
    \begin{align*}
    	\lim_{k\to\infty}\frac{1}{n_k}\sum_{i=0}^{n_k-1}
    	\log\left|f_{\sigma^i(\omega)}'\left(f_{\omega}^i(x)\right)\right|
    	~\leq~
    	\lim_{k\to\infty}\frac{1}{n_k}\sum_{i=0}^{n_k-1}
    	\left|\deg(f_{\sigma^i(\omega)})\right|.
    \end{align*}
    Since $(\omega,x)$ is a generic point of $\nu^\NN\times\mu_{\rm SRB}$, the Lyapunov exponent of $\mu_{\rm SRB}$ satisfies
    \begin{align*}
    	\lambda(\mu_{\rm SRB})&=\lim_{k\to\infty}\frac{1}{n_k}\sum_{i=0}^{n_k-1}
    	\log\left|f_{\sigma^i(\omega)}'\left(f_{\omega}^i(x)\right)\right| \\
    	~&\leq
    	\lim_{k\to\infty}\frac{1}{n_k}\sum_{i=0}^{n_k-1}
    	\left|\deg(f_{\sigma^i(\omega)})\right|
    	=\int_{E^r(\SS^1)}|\deg(f)|d\nu(f).
    \end{align*}
    This proves the inequality in Theorem \ref{thm:S1-general}.
    
    \vskip3mm
    
    Finally, assume $\lambda(\mu_{\rm SRB})=\int_{E^r(\SS^1)}|\deg(f)|d\nu(f)$, we have the following claim.
    \begin{Claim-numbered}
    	If $\lambda(\mu_{\rm SRB})=\int_{E^r(\SS^1)}|\deg(f)|d\nu(f)$, then $\mu_{\rm SRB}$ is an $f$-invariant measure for $\nu$-a.e. $f\in E^r(\SS^1)$ and
    	$$
    	q(f(x))\cdot |f'(x)|=|\deg(f)|\cdot q(x), 
    	\qquad {\rm Leb}~\text{- a.e.}~x\in\SS^1.
    	$$
    \end{Claim-numbered}
    
    \begin{proof}[Proof of the Claim]
        Denote the set 
       $$
       G=\left\{(\omega,x)\in\Sigma_+\times\SS^1:
       ~Q(\omega,x)=\frac{q(f_{\omega}(x))}{q(x)}\cdot\frac{|f_{\omega}'(x)|}{|\deg(f_{\omega})|}\neq1\right\}.
       $$
       If $\nu^\NN\times\mu_{\rm SRB}(G)>0$, then there exists $a<0$, such that
       $$
       \int_{\Sigma_+\times\SS^1}\log(G)~d\nu^\NN\times\mu_{\rm SRB}~<~a.
       $$
       
       For the generic point $(\omega,x)$ of $\nu^\NN\times\mu_{\rm SRB}$ and $\mu_n=1/n\cdot\sum_{i=0}^{n-1}\delta_{F_+^i(\omega,x)}$, there exists $N>0$, such that
       $$
       \int_{\Sigma_+\times\SS^1}\log(G)~d\mu_n<\frac{a}{2},
       \qquad \forall n\geq N.
       $$
       From equation (\ref{eq:S1}), for every $n\geq N$, we have
       $$
       \frac{1}{n}\left[\log q(f_{\omega}^n(x))-\log q(x)\right]+
       \frac{1}{n}\sum_{i=0}^{n-1}
       \log\frac{\left|f_{\sigma^i(\omega)}'
       	\left(f_{\omega}^i(x)\right)\right|}{\left|\deg(f_{\sigma^i(\omega)})\right|}
       	<\frac{a}{2}.
       $$
       
       Taking the subsequence $\{n_k\}$ from Claim \ref{clm:n_k} and let $k\to\infty$, we have
       \begin{align*}
       	\lambda(\mu_{\rm SRB})&=\lim_{k\to\infty}\frac{1}{n_k}\sum_{i=0}^{n_k-1}
       	\log\left|f_{\sigma^i(\omega)}'\left(f_{\omega}^i(x)\right)\right| \\
       	&\leq
       	\lim_{k\to\infty}\frac{1}{n_k}\sum_{i=0}^{n_k-1}
       	\left|\deg(f_{\sigma^i(\omega)})\right|+\frac{a}{2} \\
       	&=\int_{E^r(\SS^1)}|\deg(f)|d\nu(f)+\frac{a}{2}.
       \end{align*}
       
       This is a contradiction. So we have $\nu^\NN\times\mu_{\rm SRB}(G)=0$, that is
       $$
       Q(\omega,x)=
       \frac{q(f_{\omega}(x))}{q(x)}\cdot\frac{|f_{\omega}'(x)|}{|\deg(f_{\omega})|}=1,
       \qquad
       \nu^\NN\times\mu_{\rm SRB} \text{~-~a.e.}~(\omega,x)\in\Sigma_+\times\SS^1. 
       $$
       This implies for $\nu$-a.e. $f\in E^r(\SS^1)$, it satisfies
       $$
       q(f(x))\cdot |f'(x)|=|\deg(f)|\cdot q(x), 
       \qquad {\rm Leb}~\text{- a.e.}~x\in\SS^1.
       $$
       This proves the claim.
    \end{proof}
    
    For $\nu$-a.e. $f\in E^r(\SS^1)$, since $\mu_{\rm SRB}$ is equivalent to Lebesgue measure and $f$-invariant, it is the unique SRB measure for $f$. Thus the density $q(x)$ is $C^{r-1}$-smooth which is bounded away from zero on $\SS^1$, see \cite{Sack,Krz}. From the smoothness of both $f$ and $q$, we have
    $$
      q(f(x))\cdot |f'(x)|\equiv q(x)\cdot|\deg(f)|, 
    \qquad \forall x\in\SS^1.
    $$
    
    Define 
    $$
    h(x)=\int_0^x~q(t)~d{\rm Leb}(t)~\in~\diff^r(\SS^1),
    $$
    it satisfies
    $$
    |\left(h\circ f\circ h^{-1}\right)'(x)|\equiv|\deg(f)|,
    \qquad \forall x\in\SS^1,~\nu~\text{-~a.e.}~f\in E^r(\SS^1).
    $$
    
    This implies for $\nu$-a.e. $f\in E^r(\SS^1)$, there exists $\rho_f\in\SS^1$ such that
    $$
    h\circ f\circ h^{-1}(x)\equiv \deg(f)\cdot x+\rho_f,
    \qquad \forall x\in\SS^1.
    $$
    This proves Theorem \ref{thm:S1-general}.
\end{proof}

\section{Skew product and fiberwise leaf conjugacy}

In this section, we show the structural stability and stability of intermediate stable and unstable foliations of fiberwise Anosov skew product systems with $\TT^d$-fibers. 

\subsection{Structural stability of fiberwise Anosov systems}
As observed in Remark \ref{rem:cone hyperbolicity}, cone hyperbolicity is $C^1$-open.  

\begin{Lemma}\label{lem:robust-Anosov}
	Let $\cA=\{A_i\}_{i=1}^m\subseteq\GL(d,\ZZ)$ be a cone hyperbolic family of automorphisms.   Let $\Omega_0=\cA^\ZZ$ and let 
	$$
	A:\Omega_0\times \TT^d\to\Omega_0\times \TT^d, \qquad
	A(\omega,x)=\left(\sigma(\omega),f_\omega(x)\right)
	$$ 
	be the fiberwise Anosov skew product.
	
	 Then there exists a   pairwise disjoint collection of neighborhoods $\cU_i\subseteq\diff^2(\TT^d)$ of $A_i$  such that, with $\cU=\cup\cU_i$ and $\Omega=\cU^\ZZ$, the corresponding skew product 
		$$
		F:\Omega\times \TT^d\to\Omega\times \TT^d, \qquad
		F(\omega,x)=\left(\sigma(\omega),f_\omega(x)\right)
		$$ 
	is also fiberwise Anosov. 
\end{Lemma}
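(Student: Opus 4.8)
The plan is to reduce the statement to the general structural stability result for fiberwise Anosov systems, which in turn follows from the $C^1$-openness of cone hyperbolicity already recorded in Remark \ref{rem:cone hyperbolicity}. First I would invoke the fact that $\cA = \{A_i\}_{i=1}^m$ is cone hyperbolic, so there is a pair of continuous cone-fields $\cC^s, \cC^u$ on $\TT^d$ and a rate $\gamma > 0$ witnessing Definition \ref{def:common-cone}. By the second item of Remark \ref{rem:cone hyperbolicity}, cone hyperbolicity is a $C^1$-open property: there is a $C^1$-neighborhood $\cV \subseteq \diff^1(\TT^d)$ of the finite set $\cA$ on which the same cone-fields $\cC^s, \cC^u$ still work, possibly with a slightly smaller rate $\gamma' > 0$. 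Since the $A_i$ are pairwise distinct (they lie in $\GL(d,\ZZ)$ and are genuinely different automorphisms), I can shrink $\cV$ to a pairwise disjoint collection $\cU_i \subseteq \diff^2(\TT^d)$ of neighborhoods of the $A_i$ with $\bigcup \cU_i \subseteq \cV$; intersecting with $\diff^2(\TT^d)$ and with a $C^1$-ball ensures both the disjointness and that every element of $\cU := \bigcup \cU_i$ is $C^2$ and cone hyperbolic as a singleton, hence $\cU$ itself is a cone hyperbolic family (the cone-fields and rate are uniform over $\cU$).

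Next I would apply Lemma \ref{lem:cone-Anosov} to the family $\cU$: since $\cU \subseteq \diff^2(\TT^d)$ is cone hyperbolic, the associated skew product $F\colon \Omega \times \TT^d \to \Omega \times \TT^d$ with $\Omega = \cU^\ZZ$ and $F(\omega,x) = (\sigma(\omega), f_\omega(x))$ is fiberwise Anosov. This is exactly the conclusion sought, with the invariant splitting $E^s_\omega, E^u_\omega$ produced as the nested intersections of forward/backward images of the common cone-fields as in the proof of Lemma \ref{lem:cone-Anosov}. The only points requiring a word of care are: (i) checking that $\cU$ is cone hyperbolic \emph{as a family}, i.e.\ that a single pair of cone-fields and a single rate work simultaneously for all $f \in \cU$ — this holds because the open condition in Remark \ref{rem:cone hyperbolicity} is uniform on a neighborhood of the compact (here finite) set $\cA$; and (ii) verifying that the neighborhoods can be chosen pairwise disjoint — immediate since $\{A_1, \dots, A_m\}$ is a finite set of distinct points in the metric space $\diff^2(\TT^d)$, so one may take the $\cU_i$ contained in balls of radius less than half the minimal pairwise distance.

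I do not expect a serious obstacle here: the lemma is essentially a bookkeeping statement assembling Remark \ref{rem:cone hyperbolicity} and Lemma \ref{lem:cone-Anosov}. If anything, the mildly delicate part is making explicit that "cone hyperbolic" passes from the finite family $\cA$ to an open neighborhood as a \emph{common} cone-field statement rather than merely element-by-element; but this is precisely the content of the openness remark, whose proof is the standard observation that the cone-inclusion and expansion inequalities in Definition \ref{def:common-cone} are closed conditions on $Df$ holding strictly, hence stable under $C^1$-perturbation, uniformly in $x$ by compactness of $\TT^d$ and uniformly over $f$ near the finite set $\cA$. With that in hand, the conclusion is immediate from Lemma \ref{lem:cone-Anosov}.
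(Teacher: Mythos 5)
Your proposal is correct and follows exactly the route the paper intends: the paper does not give an explicit proof of Lemma \ref{lem:robust-Anosov}, treating it as an immediate consequence of the $C^1$-openness of cone hyperbolicity (Remark \ref{rem:cone hyperbolicity}, item 2) combined with Lemma \ref{lem:cone-Anosov}, which is precisely the two-step argument you assemble. Your added care about making the cone-field and rate uniform over a neighborhood of the finite set $\cA$ (rather than element-by-element) and about taking the $\cU_i$ pairwise disjoint are exactly the right points to flag, and are handled correctly.
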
	

We introduce a number of skew systems associated with perturbations of $A$.  


\begin{Definition}\label{def:extension}
Let $\cA=\{A_i\}_{i=1}^m\subset \GL(d,\ZZ)\subseteq\diff^2(\TT^d)$ and let $\Omega_0:=\cA^\ZZ$.  
Given $$\omega=(\dots, M_{-1}, M_0, M_1,  \dots) \in \Omega_0,$$ let $A_\omega = M_0$.
Let 
$A:\Omega_0\times \TT^d\to\Omega_0\times \TT^d$ be the induced skew-product give by $$A(\omega,x)=\left(\sigma(\omega),A_\omega(x)\right).$$

Given an open neighborhood $\cU\subseteq\diff^2(\TT^d)$ of $\cA$, recall the map $\Upsilon\colon \cU\subseteq\diff^2(\TT^d)\to \GL(2, \ZZ)$ given by the induced map on homology.  Write $\Omega=\cU^\ZZ$.  
Having taken $\cU$ sufficiently small, for $\omega=(\dots, f_{-1}, f_0, f_1,\dots) \dots \in\Omega$, also  write $\Upsilon\colon \Omega\to \Omega_0$ for the induced map, \begin{equation}
\label{eq:basemap} \Upsilon(\omega) := \bigl(\dots, \Upsilon(f_{-1}), \Upsilon(f_{0}), \Upsilon(f_{1}),\dots  \bigr).\end{equation}

For $\cU$ as above and $\Omega=\cU^\ZZ$, let $\bar{A}:\Omega\times\TT^d\to\Omega\times\TT^d$ be the extension of $A$ over the shift $\Omega$:
  $$
  \bar{A}(\omega,x)=\left(\sigma(\omega),\bar{A}_{\omega}(x)\right)
  =\left(\sigma(\omega),\Upsilon (f_\omega)(x)\right).
  $$
  We also extend $\Upsilon \colon \Omega\to \Omega_0$ to $\Upsilon\colon \Omega\times \TT^d \to \Omega_0\times \TT^d $  by 
  $$\Upsilon (\omega, x) = (\Upsilon (\omega), x).$$
\end{Definition}		

Fiberwise Anosov systems satisfy fiberwise structural stability under sufficiently small perturbations.
For individual Anosov diffeomorphisms this is a classical standard result (see \cite[Theorems 18.2.1 and 19.1.2]{KH}). For structural stability of random perturbations of a single Anosov (or Axiom A) diffeomorphism, see \cite{Liu}.  
 The proof for perturbations of the fiberwise Anosov system $\bar{A}$ is a straightforward adaptation of the classical arguments as in \cite{Liu}.

\begin{Proposition}\label{prop:structure-stable}
	Let $\cA=\{A_i\}_{i=1}^m\subset \GL(d,\ZZ)\subseteq\diff^2(\TT^d)$ be a family of Anosov diffeomorphisms with common stable and unstable cone-fields and let $\Omega_0=\cA^\ZZ$.  The fiberwise Anosov skew product $A:\Omega_0\times M\to\Omega_0\times M$ 
	is structurally stable in the following sense:

Fix the neighborhoods  $\cU$ of $\cA$ sufficiently small  so that the skew product $F:\Omega\times \TT^d\to\Omega\times \TT^d$ is fiberwise Anosov.
	Then there exists a  homeomorphism  fibering over the identity 
	$$
	H:\Omega\times \TT^d\to\Omega\times \TT^d, \qquad
	H(\omega,x)=\left(\omega, H_\omega(x)\right),
    $$
	which satisfies the following properties:
	\begin{enumerate}
		\item $H$ is a continuous conjugacy $H\circ F=\bar{A}\circ H:\Omega\times \TT^d\to\Omega\times \TT^d$ which descends to a continuous semiconjugacy
		$(\Upsilon \circ H)\circ F={A}\circ (\Upsilon \circ H):\Omega\times \TT^d\to\Omega_0\times \TT^d$. 
		
		\item For every $\omega\in \Omega$, $H_\omega\colon \TT^d\to  \TT^d $ is a bi-H\"older homeomorphism such that for every $x\in \TT^d$,	
			$$
			H_{\sigma(\omega)}\left(f_{\omega}(x)\right)
			=\bar{A}_{\omega}\left(H_{\omega}(x)\right)
			={A}_{\Upsilon(\omega)}\left(H_{\omega}(x)\right).
			$$

		\item For every $\omega\in\Omega$ 
		the map $H_\omega:\TT^d \to \TT^d$ is uniformly (in $\omega$) $C^0$ close to the identity map and intertwines the stable and unstable foliations:
			$$
			H_\omega\left(\cF^s_{\omega}\right)=\cF^s_{\Upsilon(\omega)},
			\qquad \text{and} \qquad
			H_\omega\left(\cF^u_{\omega,}\right)=\cF^u_{\Upsilon(\omega)}.
			$$
	\end{enumerate}

\end{Proposition}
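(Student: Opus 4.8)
The plan is to realize $H$ as the unique small solution of a fiberwise linear cohomological equation, following the classical proof of structural stability adapted to skew products as in \cite{Liu}. After shrinking the neighborhoods $\cU_i$ we may assume $\Upsilon\equiv A_i$ on $\cU_i$, so that for $\omega\in\Omega$ with $f_\omega\in\cU_i$ the lift of $f_\omega$ to $\RR^d$ may be written $\bar A_\omega+\phi_\omega$, where $\bar A_\omega=\Upsilon(f_\omega)\in\GL(d,\ZZ)$ and $\phi_\omega\colon\RR^d\to\RR^d$ is a $\ZZ^d$-periodic map with $\|\phi_\omega\|_{C^2}$ as small as desired. Looking for $H_\omega=\Id+h_\omega$ with $h_\omega$ a $\ZZ^d$-periodic map (so that $H_\omega$ descends to $\TT^d$), the conjugacy identity $H_{\sigma(\omega)}\circ f_\omega=\bar A_\omega\circ H_\omega$ becomes the \emph{linear} equation
\[
h_{\sigma(\omega)}\bigl(f_\omega(x)\bigr)-\bar A_\omega\,h_\omega(x)=-\phi_\omega(x).
\]
On the Banach space $\cB=C^0(\Omega\times\TT^d,\RR^d)$ with the sup norm this reads $\cL h=-\phi$, with $\cL=U-M$ where $U$ is the invertible isometric Koopman operator $Uh=h\circ F$ and $M$ is the fiberwise linear operator $(Mh)_\omega(x)=\bar A_\omega\,h_\omega(x)$.

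The first step is to invert $\cL$. Since $U$ is invertible it suffices to invert $\Id-U^{-1}M$, whose action is $(U^{-1}Mh)_\omega(x)=\bar A_{\sigma^{-1}(\omega)}\,h_{\sigma^{-1}(\omega)}\bigl(f_{\sigma^{-1}(\omega)}^{-1}(x)\bigr)$. By Lemma~\ref{lem:cone-Anosov}, the linear skew product $\bar A$ over $\Omega$ (pulled back from $\Omega_0$ through $\Upsilon$) admits a continuous $\bar A$-invariant splitting $\RR^d=E^s_\omega\oplus E^u_\omega$, constant along each fiber, with uniform hyperbolicity rates. The equivariance $\bar A_{\sigma^{-1}(\omega)}\bigl(E^{s/u}_{\sigma^{-1}(\omega)}\bigr)=E^{s/u}_\omega$ shows that $U^{-1}M$ preserves the decomposition $\cB=\cB^s\oplus\cB^u$ into sections valued in $E^s$, resp.\ $E^u$; in an adapted metric it is a contraction on $\cB^s$ and its inverse $M^{-1}U$ is a contraction on $\cB^u$, so $\Id-U^{-1}M$ is invertible. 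Hence $h=-\cL^{-1}\phi$ is the unique solution in $\cB$ and $\|h\|_\infty\le C\|\phi\|_\infty$, so $H_\omega$ is $C^0$-close to the identity uniformly in $\omega$. Taking $\phi\equiv 0$ yields the uniqueness fact used below: a fiberwise self-conjugacy of $\bar A$ (or of $F$) that is $C^0$-close to the identity must equal the identity.

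Next I would promote $H$ to a fiberwise homeomorphism. As in \cite[\S19.1]{KH}, a dual fixed-point argument, now genuinely nonlinear but still solvable by the contraction principle since $Df_\omega$ is $C^0$-close to $\bar A_\omega$, produces a continuous $G\colon\Omega\times\TT^d\to\Omega\times\TT^d$ fibering over the identity with $G\circ\bar A=F\circ G$ and $G$ uniformly $C^0$-close to the identity. Then $H\circ G$ is a fiberwise self-conjugacy of $\bar A$ close to the identity and $G\circ H$ one of $F$, so by the uniqueness fact $H\circ G=G\circ H=\Id$; thus each $H_\omega$ is a homeomorphism with inverse $G_\omega$. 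Bi-H\"older regularity of $H_\omega$ and $H_\omega^{-1}$ follows from the standard argument that the conjugacy between uniformly hyperbolic systems lies in a space of H\"older sections, with exponent controlled by the ratio of the contraction and expansion rates in Definition~\ref{def:Anosov}.

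Finally the three listed properties are read off. Property~(2) is just the fiberwise form of $H\circ F=\bar A\circ H$ together with $\bar A_\omega=\Upsilon(f_\omega)=A_{\Upsilon(\omega)}$ from Definition~\ref{def:extension}. For property~(1), the coordinatewise map $\Upsilon\colon\Omega\to\Omega_0$ satisfies $\Upsilon\circ\sigma=\sigma\circ\Upsilon$ and $\Upsilon\circ\bar A=A\circ\Upsilon$ on $\Omega\times\TT^d$, whence $(\Upsilon\circ H)\circ F=\Upsilon\circ\bar A\circ H=A\circ(\Upsilon\circ H)$. Property~(3): closeness to the identity is already in hand, and since the leaves $\cF^s_\omega(x)$ and $\cF^u_\omega(x)$ of $F$ (resp.\ of $\bar A$) are characterized dynamically by forward, resp.\ backward, asymptoticity of $f_\omega^n$-orbits (Lemma~\ref{lem:stable-mfd}), the uniformly continuous conjugacy $H$ carries one family to the other: $H_\omega(\cF^s_\omega)=\cF^s_{\Upsilon(\omega)}$ and $H_\omega(\cF^u_\omega)=\cF^u_{\Upsilon(\omega)}$. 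The step I expect to be the main obstacle is not the existence of the semiconjugacy $H$, which is the elementary linear contraction argument above, but the bijectivity of each fiber map $H_\omega$ — requiring the reverse conjugacy $G$ and the uniqueness lemma — together with keeping all constants uniform in $\omega\in\Omega$, which is exactly where the common cone-fields, hence the uniform hyperbolicity of $\bar A$, are essential.
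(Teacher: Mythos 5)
Your proof is correct and is exactly the classical structural-stability argument that the paper alludes to but does not spell out (the text dismisses the proof as ``a straightforward adaptation of the classical arguments as in \cite{Liu}''), so no comparison is really possible beyond noting that you carry out what the authors left implicit. One small imprecision worth flagging: the sentence ``Taking $\phi\equiv 0$ yields the uniqueness fact\dots: a fiberwise self-conjugacy of $\bar A$ (or of $F$) that is $C^0$-close to the identity must equal the identity'' covers only the $\bar A$ case, since the linear cohomological equation $\cL h=0$ encodes $\bar A$-self-conjugacies but not $F$-self-conjugacies (which satisfy a genuinely nonlinear equation). To close the loop you either invoke fiberwise expansivity of $F$, or argue more topologically: $H\circ G=\Id$ forces $G_\omega$ to be a continuous injection of the compact manifold $\TT^d$ into itself, hence by invariance of domain a homeomorphism, and then $H_\omega=G_\omega^{-1}$. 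Either fix is a one-line insertion and does not affect the structure of your argument.
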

 Given $\omega= (\dots, \omega_{-1}, \omega_0,\omega_1,\dots), \xi=(\dots, \xi{-1}, \xi_0,\xi_1,\dots)\in \Omega,$ we define the Smale bracket 
	$$[\omega,\xi]:= (\dots, \omega_{-2},\omega_{-1},\xi_0,\xi_1,\dots).$$
We have \begin{equation}
\cF^s_{\omega}= \cF^s_{[\xi,\omega]}, \quad \quad \cF^u_{\omega}= \cF^u_{[\omega,\xi]}.
\end{equation}	
We remark that (the proof of) Proposition \ref{prop:structure-stable} gives the following: there is $R>0$ for any $(\omega,x),(\xi,y)\in \Omega\times \TT^d$, there is $$z\in  \cF^u_{\omega}(x) \cap \cF^s_{\xi}(y)= \cF^u_{[\omega,\xi]}(x) \cap \cF^s_{[\omega,\xi]}(y)$$
such that, relative to the induced Riemannian distances $d_{\cF^u_\omega}(\cdot,\cdot)$ and $d_{\cF^s_\omega}(\cdot,\cdot)$ on immersed submanifolds, 
$$d_{\cF^u_{\omega}}(x,z)\le R,\quad \quad d_{\cF^s_{\xi}}(y,z)\le R.$$

Since both the skew product systems $F:\Omega\times M\to\Omega\times M$ and extension ${A}:\Omega_0\times M\to\Omega_0\times M$ are fiberwise Anosov over a full shift whose alphabet is a subset of Polish  (complete, separable, second countable) space, 
the skew producs satisfy the Anosov Closing Lemma and Liv\v{s}ic Theorem. We first recall the definition of Anosov Closing Lemma.


\begin{Lemma}\label{lem:livsic}\label{def:Anosov-closing}
	The fiberwise Anosov skew product system $F:\Omega\times M\to\Omega\times M$ satisfies the Anosov Closing Lemma and Liv\v{s}ic Theorem:
	\begin{enumerate}
\item There exist constants $c,\delta_0,\gamma>0$ such that for any $x\in X$ and $k\in\NN$ with $d\left(x,f^k(x)\right)<\delta_0$, then there exists a periodic point $p\in X$ with $f^k(p)=p$ such that the orbit segments $x,f(x),\cdots,f^k(x)$ and $p,f(p),\cdots,f^k(p)$ remain exponential close
	$$
	d\left(f^i(x),f^i(p)\right)\leq 
	c\cdot\exp\left(-\gamma\cdot\min\{i,k-i\}\right)\cdot d\left(x,f^k(x)\right),
	\qquad \forall i=0,\cdots,k.
	$$

	\item If $\varphi,\psi:\Omega\times M\to\RR$ are two bounded H\"older continuous function satisfying 
	$$
	\sum_{i=0}^{l-1}\varphi\circ F^i(\omega,x)=\sum_{i=0}^{l-1}\psi\circ F^i(\omega,x),
	\qquad \forall (\omega,x)~\text{with $F^l(\omega,x)=(\omega,x)$},
	$$
	then there exists a H\"older continuous function $u:\Omega\times M\to\RR$ such that
	$$
	\varphi=\psi+u\circ F-u.
	$$
	Moreover, $u$ is unique up to a constant.
	\end{enumerate}
	The extension system $\bar{A}:\Omega\times M\to\Omega\times M$ also satisfies both the Anosov Closing Lemma and the Liv\v{s}ic Theorem.
\end{Lemma}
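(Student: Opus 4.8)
The plan is to deduce both assertions from the standard theory of hyperbolic dynamics, exploiting that each of $F$ and $\bar{A}$ is a fiberwise Anosov skew product over the full shift $(\Omega,\sigma)$ which, by Lemma \ref{lem:stable-mfd} and the local product structure recorded above, carries a Smale bracket: for $(\omega,x),(\xi,y)\in\Omega\times\TT^d$ with $d((\omega,x),(\xi,y))$ small, set $[(\omega,x),(\xi,y)]:=([\omega,\xi],z)$, where $z$ is the point of $\cF^u_\omega(x)\cap\cF^s_\xi(y)$ close to $x$ (existence with the uniform bound $R$ is exactly the remark following Proposition \ref{prop:structure-stable}). This bracket is well defined, uniformly contracted in the $\cF^s$-direction by $F$ and in the $\cF^u$-direction by $F^{-1}$, and H\"older continuous in its arguments. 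Thus $\Omega\times\TT^d$ with $F$ (resp.\ $\bar A$) behaves as a Smale space, the only difference from the classical picture being that the base is a shift over a Polish, rather than finite, alphabet.

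For the Anosov Closing Lemma, suppose $d((\omega,p),F^k(\omega,p))<\delta_0$. First one closes the base word: since $(\Omega,\sigma)$ is a full shift and $d_\Sigma$ weights the $i$-th coordinate by $2^{-|i|}$, the $k$-periodic word $\bar\omega$ obtained by repeating the block $(\omega_0,\dots,\omega_{k-1})$ satisfies $d_\Sigma(\sigma^i\bar\omega,\sigma^i\omega)\le C\delta_0$ for $0\le i\le k$, with exponential decay toward $i=k/2$; in particular $\sigma^k\bar\omega=\bar\omega$. Freezing the base to $\bar\omega$, the fiber map $f^k_{\bar\omega}$ is an Anosov diffeomorphism of $\TT^d$ with splitting $E^s_{\bar\omega}\oplus E^u_{\bar\omega}$, and $d_{\TT^d}(f^k_{\bar\omega}(p),p)=O(\delta_0)$; the classical contraction/graph-transform argument on local stable and unstable plaques then produces the unique periodic point $\bar q$ with $f^k_{\bar\omega}(\bar q)=\bar q$ that shadows the orbit of $p$ at the stated exponential rate, and $(\bar\omega,\bar q)$ is the desired periodic point. (Equivalently, one may first treat the linear model $\bar A$, whose fiber dynamics is a toral automorphism so the closing step is explicit linear algebra, and transfer to $F$ through the fiberwise bi-H\"older conjugacy $H$ of Proposition \ref{prop:structure-stable}, which carries periodic points to periodic points and shadowing orbits to shadowing orbits with only a H\"older distortion of constants.) The same argument applies verbatim to $\bar A$; here one observes in addition that $\bar A$ factors, via $\Upsilon$, over the finite-alphabet full shift $\Omega_0=\cA^\ZZ$, on which the closing lemma is the classical one as in \cite{KH}.

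The Liv\v{s}ic Theorem then follows from the closing lemma by the usual cohomological argument. Replacing $\varphi$ by $\varphi-\psi$, we must solve $\varphi=u\circ F-u$ knowing that $\sum_{i=0}^{l-1}\varphi\circ F^i$ vanishes at every $l$-periodic point. Fix a point $(\omega_\ast,x_\ast)$ with dense $F$-orbit (such a point exists since the full shift is topologically transitive and the fibers are tori), and set $u(F^n(\omega_\ast,x_\ast)):=\sum_{i=0}^{n-1}\varphi(F^i(\omega_\ast,x_\ast))$. If $F^n(\omega_\ast,x_\ast)$ and $F^m(\omega_\ast,x_\ast)$ with $m>n$ are $\epsilon$-close, apply the closing lemma to the orbit segment of length $m-n$ issuing from $F^n(\omega_\ast,x_\ast)$ to obtain a periodic point; comparing the Birkhoff sum of $\varphi$ along the segment with the vanishing Birkhoff sum along the periodic orbit, and using the H\"older continuity of $\varphi$ together with the exponential shadowing, yields $|u(F^m(\omega_\ast,x_\ast))-u(F^n(\omega_\ast,x_\ast))|\le C\epsilon^{\beta}$ for some $\beta>0$. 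Hence $u$ is uniformly H\"older along the dense orbit, so it extends to a H\"older $u\colon\Omega\times\TT^d\to\RR$, and by continuity $\varphi=u\circ F-u$ everywhere; uniqueness up to a constant is immediate since an $F$-invariant continuous function is constant on the closure of the dense orbit. The identical argument gives Liv\v{s}ic for $\bar A$. The main obstacle throughout is the closing lemma, and within it the base component: because $(\Omega,\sigma)$ is only a shift over a non-compact alphabet, one must verify carefully that periodizing a finite word produces a genuine periodic point of the skew product and that every shadowing and H\"older estimate is uniform in $\omega$ — the weights $2^{-|i|}$ in $d_\Sigma$ being precisely what renders the periodization of the base harmless; once this is in place, the fiberwise part is the classical argument applied uniformly in $\omega$ and the Liv\v{s}ic step is routine.
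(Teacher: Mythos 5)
Your proposal is correct and follows essentially the same route the paper indicates: the paper only records a brief remark that the local product structure on base and fiber gives the Anosov Closing Lemma, and that transitivity (hence a dense orbit) plus the closing lemma give Liv\v{s}ic; you supply the missing details by periodizing the base word (correctly observing that the weights $2^{-|i|}$ in $d_\Sigma$ make this benign and uniform), closing the fiber with the contraction/graph-transform argument uniformly in $\omega$ (or alternatively transferring from the linear model via the bi-H\"older conjugacy $H$), and then running the standard dense-orbit Liv\v{s}ic argument.
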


	
We remark that  the skew product systems $F:\Omega\times \TT^d\to\Omega\times \TT^d$ is fiberwise Anosov over a full shift system $\sigma:\Omega\to\Omega$ and thus has local product structure 
along both the fiber and base directions.  This  implies the Anosov Closing Lemma holds for $F$. We remark that since the alphabet of $\Omega$ is second countable, the full shift $\omega\colon \Omega\to \Omega$ is topologically transitive and thus the skew product systems $F:\Omega\times \TT^d\to\Omega\times \TT^d$  is topologically transitive.   
The  Liv\v{s}ic Theorem  then follows from  the Anosov Closing Lemma and the existence of a dense orbit.    


\subsection{Linear foliations on $\TT^d$}

Note that $\RR^d$ acts transitively by addition on  $\TT^d$.  
By  linear foliation $\cL$ on $\TT^d$, we mean a foliation  
whose leaves $\cL(x)$ are of the form $$\cL(x):= x+ E_\cL$$ 
for a choice of vector sub-space $E_\cL\subset \RR^d$.   We naturally identify $T_x \cL(x) \simeq E_\cL$ for every $x\in \TT^d$.    
We state some lemmas that describe sufficient criteria that ensure a foliation on $\TT^d$ is linear.  
Firstly, we have the following lemma from \cite{GoS}.

By a $C^0$-foliation, we mean only a topological foliation.  In particular, leaves are only assumed to be continuous submanifolds.  
Given two  $C^0$-foliations $\cL$ and $\cF$, a sufficient criterion for $\cL$ and $\cF$ to be topologically transverse is that there exists local charts relative to which the preimages of $\cL$ and $\cF$ have $C^1$ leaves that are everywhere transverse (and of complementary dimension).
A foliation $\cF$ is minimal if every leaf is dense.

\begin{Lemma}[{\cite[Lemma 2.1]{GoS}}]\label{lem:joint-minimal}
	Let $\cF$ be a $C^0$-foliation on $\TT^d$ that is sub-foliated by a minimal linear foliation $\cL$. Then $\cF$ is a minimal linear foliation on $\TT^d$.
\end{Lemma}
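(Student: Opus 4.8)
The plan is to lift everything to the universal cover $\RR^d$ and use the minimality of $\cL$ to promote it into a translation symmetry of $\cF$. Write $\pi\colon\RR^d\to\TT^d$ for the covering projection, let $E_\cL\subset\RR^d$ be the subspace defining $\cL$, and let $\widetilde\cF,\widetilde\cL$ be the lifts of $\cF,\cL$ to $\RR^d$; both are $\ZZ^d$-invariant, $\widetilde\cL$ is the foliation by the translates $v+E_\cL$, and $\widetilde\cL$ sub-foliates $\widetilde\cF$. Minimality of $\cL$ is equivalent to $E_\cL+\ZZ^d$ being dense in $\RR^d$.

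First I would record two easy consequences of the sub-foliation hypothesis. (1) For any leaf $\widetilde L$ of $\widetilde\cF$ and any $x\in\widetilde L$, the $\widetilde\cL$-leaf through $x$, namely $x+E_\cL$, lies in some $\widetilde\cF$-leaf, which must be $\widetilde L$; hence $\widetilde L+E_\cL=\widetilde L$. Thus translation by any $u\in E_\cL$ maps every leaf of $\widetilde\cF$ to itself, and combined with $\ZZ^d$-invariance this shows $\widetilde\cF$ is invariant under translation by every element of the dense subgroup $\Gamma:=E_\cL+\ZZ^d$. (2) Every leaf of $\cF$ contains a leaf of $\cL$, which is dense in $\TT^d$; hence every leaf of $\cF$ is dense and $\cF$ is minimal.

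The heart of the argument is to upgrade the $\Gamma$-invariance of $\widetilde\cF$ to invariance under all translations of $\RR^d$. I would prove this as a self-contained statement: a $C^0$-foliation invariant under a dense set of translations is invariant under all translations. Fix a finite atlas of foliation charts with a Lebesgue number. The key local observation is that for a translation $T_\delta$ with $\delta\in\Gamma$ small, $T_\delta$ preserves the foliation and moves everything by $\delta$, so the image of a plaque is a connected piece of a leaf that is $C^0$-close to that plaque and therefore contained in a single plaque of a nearby chart; consequently $x$ and $y$ in a common plaque forces $x+\delta$ and $y+\delta$ into a common plaque. Writing $x\sim y$ for ``$x,y$ lie on the same leaf of $\widetilde\cF$'', one reduces $x\sim y\Rightarrow x+v\sim y+v$ to the case of $x,y$ in one plaque by subdividing a path in the leaf, then chooses $\delta_n\in\Gamma$ with $\delta_n\to v$ and passes to the limit using continuity of the charts: the common transverse coordinate of $x+\delta_n$ and $y+\delta_n$ converges simultaneously to the transverse coordinates of $x+v$ and of $y+v$, so these agree and $x+v\sim y+v$. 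By symmetry every translation of $\RR^d$ preserves $\widetilde\cF$. I expect this density-to-full-invariance step to be the main technical obstacle, precisely because a $C^0$-foliation carries no tangent distribution and the ``same leaf'' relation is far from closed, so the argument has to be run plaque by plaque inside a fixed chart.

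Once $\widetilde\cF$ is invariant under all translations, the leaf $\widetilde L_0$ through the origin satisfies $\widetilde L_0+v=\widetilde L_0$ for every $v\in\widetilde L_0$ (its translate by $v$ is the leaf through $v$, which is $\widetilde L_0$), so $\widetilde L_0$ is a connected subgroup of $(\RR^d,+)$; being also a topological submanifold, it is a linear subspace $E_\cF$, and $E_\cF\supseteq E_\cL$ since $E_\cL\subset\widetilde L_0$. Every other leaf of $\widetilde\cF$ is then a translate $v+E_\cF$, so $\widetilde\cF$, and hence $\cF$, is the linear foliation determined by $E_\cF$; it is minimal by step (2) (equivalently, $\pi(E_\cF)\supseteq\pi(E_\cL)$ is dense in $\TT^d$). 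This completes the proof.
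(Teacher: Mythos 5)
The paper does not prove this lemma; it is cited to Gogolev--Shi \cite{GoS}, Lemma 2.1, so there is no in-paper argument to compare against. Your route --- lift to $\RR^d$, obtain invariance of $\widetilde\cF$ under the dense subgroup $\Gamma=E_\cL+\ZZ^d$, upgrade to full translation-invariance by a plaque-and-limit argument, then identify the leaf through the origin as a linear subspace --- is the standard one and is essentially correct. One phrasing issue in the crucial step: you state the plaque observation ``for $\delta\in\Gamma$ small,'' but in the limit you translate by $\delta_n\to v$ with $v$ possibly large. What you actually use, and what is true, is that if $x,y$ lie in a sub-plaque $P'$ of diameter below the Lebesgue number of a fixed atlas, then for \emph{every} $\delta\in\Gamma$ the set $P'+\delta$ is a connected subset of a single leaf of the same small diameter, hence sits in a single plaque of some chart. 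Smallness of $\delta$ is irrelevant; smallness of $P'$ is what matters, together with fixing one chart around $x+v,y+v$ that eventually contains $x+\delta_n,y+\delta_n$ so that the transverse-coordinate limit makes sense. With that adjustment the argument goes through.

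The step ``being also a topological submanifold, it is a linear subspace $E_\cF$'' is the one a careful reader will stop at. A leaf of a $C^0$-foliation is only an injectively \emph{immersed} topological manifold --- the leaf topology need not coincide with the subspace topology --- so ``topological submanifold'' is not quite right, and ``path-connected subgroup of $\RR^d$ implies linear subspace'' is not elementary: it is Yamabe's theorem that an arcwise connected subgroup of a Lie group is a Lie subgroup, and that should be cited. Alternatively, follow the device the present paper uses in its own proof of the companion Lemma~\ref{lem:linear-Td}: form the \emph{closed} subgroup $V$ generated by the displacement vectors between points on the leaf of a fixed base point. A closed connected subgroup of $\RR^d$ is a linear subspace by elementary means (no Yamabe), and one then shows $\cF(x)=x+V$ by double inclusion, where $x+V\subseteq\cF(x)$ reuses the same plaque-limit argument. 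Either fix makes the last step airtight; as written there is a small but real gap.
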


Now we have the following lemma, which shows the parallel translation of holonomy maps implies linearity of foliations on $\TT^d$.


\begin{Lemma}\label{lem:linear-Td}
	Let $\cL$ be a  minimal linear foliation on $\TT^d$ and let $\cF$ be a $C^0$-foliation on $\TT^d$ satisfying the following:
	\begin{itemize}
		\item there is a minimal linear foliation $\cG$ fully saturated by leaves of $\cF$ and leaves of  $\cL$; moreover, restricted to every leaf of $\cG$ the foliations $\cF$ and $\cL$ are  topologically transverse; 
		\item for every $x\in\TT^d$ and $y\in\cF(x)$, the holonomy map $\Hol^{\cF}_{x,y}:\cL(x)\to\cL(y)$ induced by $\cF$ inside the leaf $\cF\oplus\cL(x)$ is $C^1$-smooth and the derivative is identity 
		$$
		D\Hol^{\cF}_{x,y}=\id:~T_x\cL(x)=E_\cL~\longrightarrow~T_y\cL(y)=E_\cL.  
		$$
	\end{itemize}
	Then $\cF$ is a linear foliation.
\end{Lemma}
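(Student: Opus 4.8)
The plan is to show that $\cF$ is invariant under a dense group of translations of $\TT^d$, and then to deduce that a $C^0$-foliation of $\TT^d$ with that property must be linear. It is convenient to pass to the universal cover: let $\tilde\cF,\tilde\cL,\tilde\cG$ be the lifts of $\cF,\cL,\cG$ to $\RR^d$, so that $\tilde\cL,\tilde\cG$ are the linear foliations by cosets of subspaces $E_\cL\subseteq E_\cG\subseteq\RR^d$, and it suffices to prove that $\tilde\cF$ is linear. Observe that $\tilde\cF$ is $\ZZ^d$-invariant, and that --- since $\cF$ and $\cL$ jointly saturate $\cG$ and are topologically transverse of complementary dimension inside each $\cG$-leaf --- every leaf of $\tilde\cG$ carries a local product structure in which $\tilde\cF$ and $\tilde\cL$ appear as complementary families of plaques, so that the leafwise $\tilde\cF$-holonomy between $\tilde\cL$-leaves is defined.

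First I would use the holonomy hypothesis inside a fixed leaf $P$ of $\tilde\cG$ (an affine copy of $E_\cG$). For $x\in P$ and $y\in\tilde\cF(x)$, the holonomy $\Hol^{\tilde\cF}_{x,y}\colon\tilde\cL(x)\to\tilde\cL(y)$ sends $x$ to $y$ and, by hypothesis, is $C^1$ with derivative the identity everywhere; as $\tilde\cL(x)=x+E_\cL$ and $\tilde\cL(y)=y+E_\cL$ are affine, this forces $\Hol^{\tilde\cF}_{x,y}$ to be the translation $x+w\mapsto y+w$, $w\in E_\cL$. Consequently $y+w\in\tilde\cF(x+w)$ for every $y\in\tilde\cF(x)$ and every $w\in E_\cL$, i.e.\ $\tilde\cF(x)+w\subseteq\tilde\cF(x+w)$; running the same argument on the leaf $\tilde\cF(x+w)$ with $-w$ gives the reverse inclusion, so $\tilde\cF(x)+w=\tilde\cF(x+w)$. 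Thus $\tilde\cF$ is invariant under translation by every vector of $E_\cL$, and hence --- being also $\ZZ^d$-invariant --- by the subgroup $\Gamma:=E_\cL+\ZZ^d$ of $\RR^d$, which is dense because $\cL$ is minimal. This passage from ``$D\Hol=\mathrm{Id}$'' to an honest translational symmetry of $\tilde\cF$ is, I expect, the conceptual crux.

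Next I would promote invariance under the dense subgroup $\Gamma$ to invariance under all of $\RR^d$. Fix $v\in\RR^d$, a leaf $\tilde L$ of $\tilde\cF$, a point $p\in\tilde L$, and $\gamma_n\in\Gamma$ with $\gamma_n\to v$; each $\tilde L+\gamma_n$ is again a leaf of $\tilde\cF$. Working in a foliation chart about $p+v$ and using that the $\tilde\cF$-plaque through a point varies continuously with the point, one shows that for $q\in\tilde L$ near $p$ the point $q+v$ lies on the plaque of $\tilde\cF$ through $p+v$; hence $\tilde L+v$ agrees near $p+v$ with the leaf $\tilde\cF(p+v)$. An open--closed argument along the connected leaf $\tilde L$ then gives $\tilde L+v\subseteq M$ for a single leaf $M$ of $\tilde\cF$, and the same argument applied to $M$ with $-v$ gives $M-v\subseteq\tilde L$; therefore $\tilde L+v=M$ is a leaf, so $\tilde\cF$ is invariant under every translation. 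I expect this to be the main technical obstacle: since leaves of a merely $C^0$-foliation need not be closed, one cannot take Hausdorff limits of the leaves $\tilde L+\gamma_n$, and must instead argue plaque by plaque, using only the continuity of plaques and the connectedness of leaves.

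Finally, set $\tilde L_0:=\tilde\cF(0)$. For $v\in\tilde L_0$ the leaf $\tilde L_0+v$ meets $\tilde L_0$ (it contains $v$), so $\tilde L_0+v=\tilde L_0$; hence $\tilde L_0$ is closed under addition and negation, i.e.\ a subgroup of $\RR^d$. Being a leaf it is path-connected, and a path-connected subgroup of $\RR^d$ is a linear subspace $E_\cF$. By the translation invariance, $\tilde\cF(v)=\tilde L_0+v=v+E_\cF$ for all $v$, so $\tilde\cF$ --- and therefore $\cF$ on $\TT^d$ --- is the linear foliation with direction $E_\cF$, as claimed.
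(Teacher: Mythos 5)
Your proposal follows essentially the same route as the paper's proof: deduce from $D\Hol^{\cF}=\id$ on the affine $\cL$-leaves that each holonomy is an honest translation, transfer this into invariance of $\cF$ under translations in the $E_\cL$-directions, use minimality of $\cL$ (equivalently, density of $E_\cL+\ZZ^d$ in $\RR^d$) together with continuity of the $C^0$-foliation to promote to invariance under \emph{all} translations, and finally exploit the group structure to identify the leaf direction as a linear subspace. The differences are cosmetic: you lift to $\RR^d$ and argue plaque-by-plaque while the paper stays on $\TT^d$ working with equicontinuous families of paths $t\mapsto x+\gamma(t)$, and your finish appeals to the fact that a path-connected subgroup of $\RR^d$ is a linear subspace (a slightly heavier input), whereas the paper forms the \emph{closed} subgroup $V$ generated by the leaf directions and only needs that a closed connected subgroup of $\RR^d$ is linear.
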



\begin{proof}
Fix $x_0\in \TT^d$ and let $\gamma\colon [0,1]\to \RR^d$ be a $C^0$ curve such that $x_0+ \gamma(t)\in \cF(x_0)$ for every $t\in [0,1]$.  
Since the holonomy map $\Hol^{\cF}_{x_0,x_0+ \gamma(t)}:\cL(x_0)\to\cL(x_0+ \gamma(t))$ is the identity, for every  $x\in\cL(x_0)$ we have 
	$$
	\Hol^{\cF}_{x_0,x_0+ \gamma(t)}(x)=x+\gamma(t) \in \cF(x)\cap\cL(x_0+ \gamma(t)).
	$$
Thus $x+\gamma(t) \in\cF_{}(x)$ for every $x\in \cL(x_0)$.  The family of maps $t\mapsto x+ \gamma(t)$ parameterized by $x\in \cL(x_0)$ is equicontinuous; by the density of $\cL(x_0)$ and continuity of $\cF$, 
	$$
	x+\gamma(t) \in\cF_{}(x), \qquad \forall x\in\TT^d, t\in [0,1].
	$$
It follows that if $\gamma,\gamma'\colon [0,1]\to \RR^d$ are two $C^0$ curves such that $x_0+ \gamma'(s)\in \cF_{}(x_0)$ and $x_0+ \gamma(t)\in \cF_{}(x_0)$  for every $s,t\in [0,1]$, then for every 
$$
	x+\gamma'(s)+ \gamma(t) \in\cF_{}(x), \qquad \forall x\in\TT^d,  s,t\in [0,1].
	$$
Let $V$ be the closed subgroup of $\RR^d$ containing all $\gamma(t)$ where $\gamma\colon [0,1]\to \RR^d$ is a $C^0$ curve such that $x_0+ \gamma(t)\in \cF_{}(x_0)$ for every $t\in [0,1]$.  Then $V$ is connected, hence linear, and $x+v\in \cF(x)$ for all $v\in V$ and $x\in \TT^d$.
\end{proof}

\subsection{Fiberwise leaf conjugacy on $\TT^d$} 
 In this subsection, we consider a family of commuting automprphisms $\{A_i\}_{i=1}^m\subseteq\GL(d,\ZZ)$ acting on $\TT^d$ admitting the same finest dominated splitting
  $$
  	T\TT^d=L^s_l\oplus\cdots L^s_1\oplus L^u_1\oplus\cdots L^u_k.  
  $$
  Denote $\cA=\{A_i\}_{i=1}^m$ and $\Omega_0=\cA^\ZZ$. Let 
  $$
  A:\Omega_0\times\TT^d\to\Omega_0\times\TT^d,
  \qquad
  A(\omega,x)=\left(\sigma(\omega),A_{\omega}(x)\right)
  $$ 
  be the corresponding fiberwise Anosov skew product.  
  
  Following Proposition \ref{prop:structure-stable}, let $\{\cU_i\}$ be a (pairwise disjoint) family of neighborhoods of $\{A_i\}$ respectively, such that for $\cU=\cup\cU_i\subseteq\diff^2(\TT^d)$ and with $\Omega=\cU^\ZZ$, the corresponding skew-product $F:\Omega\times\TT^d\to\Omega\times\TT^d$ is fiberwise Anosov.

Let $\bar{A}:\Omega\times\TT^d\to\Omega\times\TT^d$ denote the extension of $A$ in Definition \ref{def:extension}.
  Since the fiberwise dynamics of $\bar{A}$ are compositions of $\{A_i\}_{i=1}^m$, we have the following lemma.
%

  
  \begin{Lemma}\label{lem:linear-foliation}
  	The  skew product $\bar{A}:\Omega\times\TT^d\to\Omega\times\TT^d$ admits the fiberwise finest dominated splitting by identifying
  	$\TT^d=\TT^d_\omega=\{\omega\}\times\TT^d$:
	$$
  	T\TT^d_\omega
  	=L^s_{l,\omega}\oplus\cdots L^s_{1\omega}\oplus L^u_{1,\omega}\oplus\cdots\oplus L^u_{k,\omega}
  	=L^s_l\oplus\cdots L^s_1\oplus L^u_1\oplus\cdots\oplus L^u_k.
  	$$
  	For any $1\leq i\leq j\leq k$, there exists a family of $\bar{A}$-invariant fiberwise linear foliations $\{\cL^u_{(i,j),\omega}\}_{\omega\in\Omega_0}$, such that 
  	$$
  	T\cL^u_{(i,j),\omega}=L^u_{(i,j),\omega}
  	=L^u_{i,\omega}\oplus\cdots\oplus L^u_{j,\omega} 
  	=L^u_i\oplus\cdots\oplus L^u_j .
  	$$ 
  	The same holds for the stable bundle and corresponding stable foliations $\{\cL^s_{(i,j),\omega}\}_{\omega\in\Omega}$ and any $1\leq i\leq j\leq l$.
  \end{Lemma}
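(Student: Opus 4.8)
The plan is to use that the fiber dynamics of $\bar A$ is genuinely linear, so that the splitting and foliations in the statement can be taken to be the obvious constant (in $\omega$) linear objects. Since $\GL(d,\ZZ)$ is discrete, $\Upsilon\colon\diff^2(\TT^d)\to\GL(d,\ZZ)$ is locally constant; shrinking the neighborhoods $\cU_i$ of Proposition \ref{prop:structure-stable} we may assume $\Upsilon\equiv A_i$ on $\cU_i$. Hence for $\omega=(\dots,\omega_{-1},\omega_0,\omega_1,\dots)\in\Omega$ with $\omega_t\in\cU_{i_t}$ the fiber map is $\bar A_\omega=A_{i_0}$ and the fiber cocycle is $\bar A^n_\omega=A_{i_{n-1}}\circ\cdots\circ A_{i_0}\in\GL(d,\ZZ)$, a single toral automorphism independent of $x$. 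Each $L^{s/u}_j$ is $A_\ell$-invariant for every $\ell$ (by the commuting/common-splitting hypothesis), so the constant families $L^{s/u}_{j,\omega}:=\{\omega\}\times L^{s/u}_j$ are $D\bar A$-invariant and (trivially) continuous, which gives the asserted fiberwise splitting of $T\TT^d_\omega$ with all bundles independent of $\omega$.

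Next I would check that this splitting is dominated and finest for $\bar A$. Because $A_\ell|_{L^{s/u}_j}$ is conformal for a fixed inner product on $L^{s/u}_j$, so is every composition $\bar A^n_\omega|_{L^{s/u}_j}$, with $\log\|\bar A^n_\omega|_{L^{s/u}_j}\|=\log m\bigl(\bar A^n_\omega|_{L^{s/u}_j}\bigr)=\sum_{t=0}^{n-1}\lambda(A_{i_t},L^{s/u}_j)$. For each consecutive pair in the flag $L^s_l\prec\cdots\prec L^s_1\prec L^u_1\prec\cdots\prec L^u_k$ (ordered by increasing Lyapunov exponent) the ratio of the top norm to the bottom conorm is $\exp\bigl(\sum_{t}(\lambda(A_{i_t},\cdot)-\lambda(A_{i_t},\cdot'))\bigr)$; the finitely many bracketed differences are all strictly negative --- using $\lambda(A_\ell,L^s_\bullet)<0<\lambda(A_\ell,L^u_\bullet)$ across the central gap and the ordering hypotheses within the stable and unstable parts --- so this quantity is bounded by $\theta^n$ with $\theta<1$ uniform in $\omega$. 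Hence the flag is a dominated splitting for $\bar A$, and it is the finest one because conformality of $\bar A^n_\omega$ on each summand forces every nonzero vector there to grow at exactly the same rate, so no summand admits a proper dominated sub-splitting; alternatively one may note that the $A_\ell$ share strictly invariant cone fields around every partial sum and invoke the standard cone criterion.

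Finally, for $1\le i\le j\le k$ I would set $W^u_{(i,j)}:=L^u_i\oplus\cdots\oplus L^u_j\subset\RR^d$, which is $A_\ell$-invariant for every $\ell$, and let $\cL^u_{(i,j),\omega}$ be the linear foliation of $\{\omega\}\times\TT^d$ whose leaf through $(\omega,x)$ is $\{\omega\}\times(x+W^u_{(i,j)})$. This is well defined on $\TT^d=\RR^d/\ZZ^d$ (since $A_\ell\in\GL(d,\ZZ)$), independent of $\omega$, linear, and tangent to $L^u_{(i,j),\omega}$; its $\bar A$-invariance is immediate from
\[
\bar A\bigl(\omega,\,x+W^u_{(i,j)}\bigr)=\bigl(\sigma(\omega),\,\bar A_\omega x+\bar A_\omega W^u_{(i,j)}\bigr)=\bigl(\sigma(\omega),\,\bar A_\omega x+W^u_{(i,j)}\bigr),
\]
using $\bar A_\omega\in\{A_1,\dots,A_m\}$ and $A_\ell(W^u_{(i,j)})=W^u_{(i,j)}$. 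The stable foliations $\cL^s_{(i,j),\omega}$ are obtained verbatim from $W^s_{(i,j)}:=L^s_i\oplus\cdots\oplus L^s_j$. I expect the only step requiring genuine care to be the reduction $\Upsilon\equiv A_i$ on $\cU_i$ --- the reason one passes to the linear extension $\bar A$ rather than the perturbation $F$ --- after which everything is elementary, the conformal-rate bookkeeping for domination being the most tedious (but not deep) part.
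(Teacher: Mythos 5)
Your proof is correct and takes the same approach the paper intends: the paper states Lemma~\ref{lem:linear-foliation} without proof, as an immediate consequence of the fact that the fiber dynamics of $\bar A$ is a product of the commuting linear automorphisms $\{A_i\}$ acting conformally on each $L^{s/u}_j$, which is precisely the observation you elaborate. Your extra bookkeeping (locally constant $\Upsilon$ on each $\cU_i$, the conformal-rate estimate giving uniform domination, and the well-definedness of the linear foliations on $\TT^d$) just fills in the routine details the paper leaves implicit.
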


  For every $1\leq j<k$, the splitting $T\TT^d=(L^s\oplus L^u_{(1,j)})\oplus L^u_{(j+1,k)}$ is a dominated splitting for every $A_i$, $i=1,\cdots,m$. 
By taking an adapted Riemannian metric on $\TT^d$, there exist constants  $\{\lambda^u_{j,i}>0\}_{i=1}^m$, $\{\lambda^u_{j+1,i}>0\}_{i=1}^m$, and  $\epsilon>0$, such that
\begin{equation}\label{eq:unfhyp}
  \|A_i|_{L^u_{(1,j)}}\|<\exp(\lambda^u_{j,i}+\epsilon/2)<
  \exp(\lambda^u_{j+1,i}-\epsilon/2)<
  m(A_i|_{L^u_{(j+1,k)}}),
  \qquad \forall 1\leq i\leq m.
  \end{equation}
  Here $m(\cdot)$ denotes the conorm of linear operators, defined by $m(L)=\|L^{-1}\|^{-1}$.  We note $\epsilon>0$ can be taken  independent of $i,j$.


  For every $\omega\in\Omega$,  and $n\ge 1$, define 
  \begin{align}
  	\lambda^u_{j,\omega}(n):&=
  	\lambda^u_{j,\sigma^{n-1}(\omega)}+\lambda^u_{j,\sigma^{n-2}(\omega)}
  	+\cdots+
  	\lambda^u_{j,\sigma(\omega)}+\lambda^u_{j,\omega}. 
  \end{align}
  By iterating \eqref{eq:unfhyp}, we have the following:
  \begin{Lemma}\label{lem:A-dominated}
  	  Relative to an adapted Riemannian metric on $\TT^d$,  
	  there exists $\epsilon>0$, such that for every $(\omega,x)\in\Omega\times\TT^d$, $1\leq j<k$ and $n\geq1$,
  	  $$
  	  \left\| \bar{A}^n(\omega,x)|_{L^u_{(1,j),\omega}}\right\|<
  	  \exp\left[\lambda^u_{j,\omega}(n)+\frac{n\cdot\epsilon}{2}\right]<
  	  \exp\left[\lambda^u_{j+1,\omega}(n)-\frac{n\cdot\epsilon}{2}\right]<
  	  m\left(\bar{A}^n(\omega,x)|_{L^u_{(j+1,k),\omega}}\right).
  	  $$
  \end{Lemma}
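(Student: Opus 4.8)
The plan is to reduce the statement to a one–step application of \eqref{eq:unfhyp} iterated along the orbit of $\omega$. First I would unwind the structure of the extension $\bar{A}$. For $\omega=(\dots,f_{\omega_{-1}},f_{\omega_0},f_{\omega_1},\dots)\in\Omega$ let $i(\omega)$ be the unique index with $f_{\omega_0}\in\cU_{i(\omega)}$, so that the fiberwise map of $\bar A$ over $\omega$ is the \emph{linear} automorphism $\bar A_\omega=\Upsilon(f_\omega)=A_{i(\omega)}$. In particular the fiberwise derivative of $\bar A^n$ does not depend on the base point and equals the matrix product
$$
D_x\bar A^n_\omega=A_{i(\sigma^{n-1}\omega)}\cdots A_{i(\sigma\omega)}A_{i(\omega)},\qquad n\ge1,
$$
which we abbreviate by $\bar A^n(\omega,x)$ in the statement. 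Since all the $A_i$ commute and preserve the common splitting $T\TT^d=L^s_l\oplus\cdots\oplus L^u_k$, each subspace $L^u_{(1,j)}$ and $L^u_{(j+1,k)}$ is invariant under every factor, hence under the whole product; this is what makes the bundles non-random and the cocycle an honest matrix product, and it is the one point that genuinely uses the hypothesis of Lemma \ref{lem:linear-foliation}.

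Next I would use submultiplicativity of the operator norm and supermultiplicativity of the conorm (the latter because $m(ST)=\|(ST)^{-1}\|^{-1}=\|T^{-1}S^{-1}\|^{-1}\ge\|T^{-1}\|^{-1}\|S^{-1}\|^{-1}=m(S)m(T)$), both computed with respect to the adapted metric for which \eqref{eq:unfhyp} holds, together with the invariance just noted. Writing $\lambda^u_{j,\omega}:=\lambda^u_{j,i(\omega)}$ so that $\lambda^u_{j,\omega}(n)=\sum_{t=0}^{n-1}\lambda^u_{j,\sigma^t\omega}$, this gives
$$
\bigl\|\bar A^n(\omega,x)|_{L^u_{(1,j),\omega}}\bigr\|\le\prod_{t=0}^{n-1}\bigl\|A_{i(\sigma^t\omega)}|_{L^u_{(1,j)}}\bigr\|<\prod_{t=0}^{n-1}\exp\!\bigl(\lambda^u_{j,\sigma^t\omega}+\tfrac{\epsilon}{2}\bigr)=\exp\!\Bigl(\lambda^u_{j,\omega}(n)+\tfrac{n\epsilon}{2}\Bigr),
$$
and the symmetric computation for the conorm yields $m\bigl(\bar A^n(\omega,x)|_{L^u_{(j+1,k),\omega}}\bigr)>\exp\bigl(\lambda^u_{j+1,\omega}(n)-\tfrac{n\epsilon}{2}\bigr)$.

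Finally, for the middle inequality I would simply sum the per–step gap supplied by \eqref{eq:unfhyp}: taking logarithms there gives $\lambda^u_{j,i}+\tfrac{\epsilon}{2}<\lambda^u_{j+1,i}-\tfrac{\epsilon}{2}$ for every $i$ and every $j$, with $\epsilon$ uniform in $i$ and $j$, so adding these along $\omega,\sigma\omega,\dots,\sigma^{n-1}\omega$ gives $\lambda^u_{j,\omega}(n)+\tfrac{n\epsilon}{2}<\lambda^u_{j+1,\omega}(n)-\tfrac{n\epsilon}{2}$. Concatenating the three displayed inequalities proves the lemma. I do not expect any real obstacle: the argument is a routine iteration, and the only things that require care are recognizing that the fibers of $\bar A$ (unlike those of $F$) are genuinely linear so that the subspaces $L^u_{(1,j)}$ are constant, and that the constant $\epsilon$ from \eqref{eq:unfhyp} can be chosen independently of $j$ — both already recorded in the surrounding discussion.
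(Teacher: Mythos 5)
Your proof is correct and matches the paper's approach, which is simply to iterate \eqref{eq:unfhyp} along the orbit; the paper does not spell out the details, but your filling-in (submultiplicativity of the norm, supermultiplicativity of the conorm, invariance of $L^u_{(1,j)}$ and $L^u_{(j+1,k)}$ under every $A_i$ so that the restricted cocycle is a genuine matrix product, and a uniform $\epsilon$ for the middle gap) is exactly the intended argument.
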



Having introduced the skew product $\bar{A}$ over the shift $\Omega$,   following Proposition \ref{prop:structure-stable}, we obtain  a homeomorphism  $  H\colon \Omega\times \TT^d\to  \Omega\times \TT^d $ with 
	$$  H\circ F= \bar A\circ   H.$$
  By shrinking the neighborhood $\cU_i$ if necessary, we have the following proposition.

  \begin{Proposition}\label{prop:leaf-conjugacy}
  	 There exists a family of  (pairwise disjoint) neighborhoods $\cU_i\subseteq\diff^2(\TT^d)$ of $A_i$, such that if $\cU=\cup\cU_i$ and $\Omega=\cU^\ZZ$, 
  	 the corresponding skew product 
  	 $$
  	 F:\Omega\times\TT^d\to\Omega\times\TT^d, \qquad
  	 F(\omega,x)=\left(\sigma(\omega),f_\omega(x)\right)
  	 $$ 
  	 satisfies the following properties:
  	 \begin{enumerate}
  	 	\item $F$ admits a fiberwise finest dominated splitting with the same dimensions as $\bar{A}$:
  	 	$$
  	 	T\TT^d_\omega=
  	 	E^s_{l,\omega}\oplus\cdots E^s_{1,\omega}\oplus E^u_{1,\omega}\oplus\cdots E^u_{k,\omega}, 
  	 	\qquad \text{with} \qquad
  	 	{\rm dim}E^{s/u}_{j,\omega}={\rm dim}L^{s/u}_j,\quad \forall j.
  	 	$$
  	 	
  	 	\item For every $1\leq j<k$, $(\omega,x)\in\Omega\times\TT^d$ and $n\geq1$, we have
  	 	$$
  	 	\left\|Df_\omega^n(x)|_{E^u_{(1,j),\omega}}\right\|<
  	 	\exp\left[\lambda^u_{j,\omega}(n)+ {n\cdot\epsilon}\right]<
  	 	\exp\left[\lambda^u_{j+1,\omega}(n)-{n\cdot\epsilon}\right]<
  	 	m\left(Df_\omega^n(x)|_{E^u_{(j+1,k),\omega}}\right).
  	 	$$ 
  	 	
  	 	\item For every $1\leq i\leq j\leq k$, there exists a family of fiberwise $C^0$ foliations with $C^1$ leaves $\{\cF^u_{(i,j),\omega}\}_{\omega\in\Omega}$, such that for every $\omega\in\Omega$,
  	 	$$
  	 	F\left(\cF^u_{(i,j),\omega}\right)=\cF^u_{(i,j),\sigma(\omega)}
  	 	\qquad \text{and} \qquad
  	 	T\cF^u_{(i,j),\omega}=E^u_{(i,j),\omega}
  	 	=E^u_{i,\omega}\oplus\cdots\oplus E^u_{j,\omega}.
  	 	$$
  	 	Moreover, for $1\leq i\le i'\le j'\leq j\leq k$, the leaves of $\cF^u_{(i',j'),\omega}$ subfoliate the leaves of $\cF^u_{(i,j),\omega}$.  
  	 	\item For every $\omega\in\Omega$, the topological conjugacy $\bar H_{\omega}:\TT^d_\omega\to\TT^d_\omega$ satisfies
  	 	$$
  	 	  H_\omega\left(\cF^u_{(1,j),\omega}\right)=\cL^u_{(1,j),\omega}=\cL^u_{(1,j)},
  	 	\qquad \forall 1\leq j\leq k.
  	 	$$
  	 \end{enumerate}
Similar properties  hold for corresponding dominated splittings of stable bundle and stable foliation.  
  \end{Proposition}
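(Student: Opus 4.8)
The plan is to derive items (1)--(2) from robustness of dominated splittings, the part of item (3) concerning the \emph{fast} sub-bundles from the Hirsch--Pugh--Shub invariant manifold theorem, and the part of item (3) concerning the \emph{slow} sub-bundles together with item (4) from the structural stability conjugacy $H$ of Proposition \ref{prop:structure-stable}. For (1) and (2), recall that the linear model $\bar A$ carries the fiberwise finest dominated splitting with the uniform estimates of Lemma \ref{lem:A-dominated} (with constant $\epsilon/2$). A dominated splitting of prescribed index is equivalent to the existence of a uniformly invariant cone field, which is stable under uniform $C^1$ perturbation; hence after shrinking the $\cU_i$ the skew product $F$ admits a fiberwise dominated splitting $T\TT^d_\omega=\bigoplus_j E^s_{j,\omega}\oplus\bigoplus_j E^u_{j,\omega}$ with the same dimensions as that of $\bar A$. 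The number of bundles is preserved because the inequalities in \eqref{eq:unfhyp} are strict (any finer splitting of $F$ only refines this one, and we only need the stated dimensions). Continuity of $\omega\mapsto Df_\omega$ and of the bundles, plus a further shrinking, degrades the Lemma \ref{lem:A-dominated} estimate only from $\epsilon/2$ to $\epsilon$, which is (2); in particular we keep a uniform gap $\lambda^u_{j+1,\omega}(n)-\lambda^u_{j,\omega}(n)\ge n\delta$ with $\delta>0$ fixed and $2\epsilon<\delta$.

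Next, the \emph{fast} foliations in (3). The unstable bundle $E^u_\omega=\bigoplus_j E^u_{j,\omega}$ integrates to $\cF^u_\omega$ with uniformly $C^1$ leaves by Lemma \ref{lem:stable-mfd}. For $2\le i\le k$, the bundle $E^u_{(i,k),\omega}$ is uniformly expanded by $F$ and, by (2), dominates its complement $E^s_\omega\oplus E^u_{(1,i-1),\omega}$; the fiberwise Hirsch--Pugh--Shub theorem (applied uniformly in $\omega$) then yields a unique $F$-invariant family $\{\cF^u_{(i,k),\omega}\}$ of foliations with uniformly $C^1$ leaves tangent to $E^u_{(i,k),\omega}$, which automatically subfoliate $\cF^u_\omega$ and, for $i'<i$, the coarser $\cF^u_{(i',k),\omega}$. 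The same theorem, applied inside a fixed leaf of $\cF^u_\omega$ to the dominated splitting $E^u_{(i,j),\omega}\oplus E^u_{(j+1,k),\omega}$ restricted to that leaf, produces for each $1\le i\le j\le k$ an $F$-invariant \emph{plaque family} $\{W_{(i,j),\omega}(x)\}$ of uniformly $C^1$ disks tangent to $E^u_{(i,j),\omega}$; dually on the stable side, applying the argument to $F^{-1}$. What remains is to promote these plaque families (for $j<k$) to genuine foliations and to establish item (4).

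This is the main point. Using the conjugacy $H$ of Proposition \ref{prop:structure-stable}, with $H\circ F=\bar A\circ H$, $H_\omega$ uniformly $C^0$-close to the identity and $H_\omega(\cF^u_\omega)=\cF^u_{\Upsilon(\omega)}$, together with the $\bar A$-invariant fiberwise linear foliations $\cL^u_{(i,j)}$ of Lemma \ref{lem:linear-foliation}, define $\cF^u_{(i,j),\omega}:=H_\omega^{-1}(\cL^u_{(i,j)})$. Since $\bar A$ preserves each $\cL^u_{(i,j)}$ fiberwise and $\cL^u_{(i',j')}$ subfoliates $\cL^u_{(i,j)}$ whenever $[i',j']\subseteq[i,j]$, the $\cF^u_{(i,j),\omega}$ are $F$-invariant, mutually nested $C^0$ foliations, each contained in a leaf of $\cF^u_\omega$, and item (4) holds by construction. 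The remaining content is that the leaves of $\cF^u_{(i,j),\omega}$ are $C^1$ and tangent to $E^u_{(i,j),\omega}$; I would prove this by identifying, for each $x$, the leaf $\cF^u_{(i,j),\omega}(x)$ with the dynamical saturation $\bigcup_{n\ge0}f^n_{\sigma^{-n}(\omega)}\bigl(W_{(i,j),\sigma^{-n}(\omega)}(f^{-n}_\omega x)\bigr)$ of the HPS plaque through $x$. Both sets lie in $\cF^u_\omega(x)$ and are characterized by the same intermediate growth window: by (2) and the uniform gap $\delta$, for $y\in\cF^u_\omega(x)$ the distance $d_{\cF^u_{\sigma^n(\omega)}}(f^n_\omega x,f^n_\omega y)$ grows forward like $\exp[\lambda^u_{j,\omega}(n)+O(n\epsilon)]$ precisely when $y$ lies on the $E^u_{(1,j),\omega}$-plaque through $x$, and otherwise picks up the strictly faster rate from the $E^u_{(j+1,k),\omega}$-component; on the other side $H$ intertwines this dichotomy with the corresponding one for $\bar A$ on $\cL^u_{(1,j)}$ versus its $L^u_{(j+1,k)}$-complement, where only the single gap at $j$ is involved so that it survives the bi-H\"older distortion of $H$. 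Since $H_\omega^{-1}(\cL^u_{(1,j)})$ is already a genuine foliation, this identification shows the plaque family integrates and the leaves inherit the $C^1$ regularity and tangency to $E^u_{(1,j),\omega}$ from the plaques; the general $\cF^u_{(i,j),\omega}$ is treated identically with the window $[\lambda^u_{i,\omega},\lambda^u_{j,\omega}]$, or by intersecting $\cF^u_{(1,j),\omega}$ with the leaves of the already-built $\cF^u_{(i,k),\omega}$. A mild loss of regularity in matching the $C^1$ plaques with the $C^0$ foliation is absorbed by Journ\'e's theorem \cite{J}, and the stable statements follow by applying everything to $F^{-1}$.

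The main obstacle is exactly this last identification in the third paragraph: upgrading the merely bi-H\"older conjugacy $H$ to the assertion that the pulled-back \emph{slow} foliations have $C^1$ leaves with precisely the prescribed dominated tangent bundles. This is where the strict domination in (2) is essential; without it — as in de la Llave's reducible examples — the conclusion fails.
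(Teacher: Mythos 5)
Your proposal is essentially sound, and items (1)--(2) and the strong unstable part of (3) are handled exactly as in the paper. The genuine difference is in the order of the argument for the \emph{weak} unstable foliations $\cF^u_{(1,j),\omega}$ and item (4). The paper's proof first constructs $\cF^u_{(1,j),\omega}$ directly by applying the Hirsch--Pugh--Shub structural stability of normally hyperbolic \emph{foliations} (\cite[Theorem~7.1]{HPS}) to the $C^1$ (hence plaque-expansive) linear foliation $\cL^u_{(1,j)}$, which is fiberwise normally hyperbolic for $\bar A$. This immediately produces a genuine $F$-invariant $C^0$ foliation with $C^1$ leaves tangent to $E^u_{(1,j),\omega}$, together with a leaf conjugacy $G_\omega$ close to the identity. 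Only \emph{then} does the paper prove item~(4), via a growth-rate dichotomy (Claim \ref{clm:growth}): a point $y$ is on the same $\cF^u_{(1,j),\omega}$-leaf as $x$ iff $\exp[-\lambda^u_{j,\omega}(n)]\,d(\tilde f^n_\omega x,\tilde f^n_\omega y)\to 0$, with the analogous statement for $\tilde A$ and $\tilde\cL^u_{(1,j)}$, and these two dichotomies are intertwined by $\tilde H_\omega$. You reverse this: you define $\cF^u_{(1,j),\omega}:=H_\omega^{-1}(\cL^u_{(1,j)})$ so that (4) holds by fiat, and then use the same growth-rate dichotomy to identify the leaves with dynamical saturations of HPS center plaques, thereby recovering $C^1$ regularity. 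This works but is more delicate: you must first produce $F$-invariant center plaque families for $E^u_{(1,j),\omega}$ (which, unlike the paper's normally-hyperbolic-foliation route, are a priori only local disks and need not integrate), and then prove they actually fill out the pulled-back leaves. The paper's ordering circumvents exactly this ``promoting plaques to foliations'' step.

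Two small corrections to your write-up. First, your worry about the identification ``surviving the bi-H\"older distortion of $H$'' is moot: the property actually used in the paper (and which is all you need) is that the lift $\tilde H_\omega:\RR^d\to\RR^d$ is a uniformly bounded displacement of the identity, $\|\tilde H_\omega-\id\|\le C$. A bounded displacement leaves forward exponential growth rates completely unchanged, so the dichotomy transfers verbatim; no H\"older estimate is involved, and no rate is lost. Second, Journ\'e's theorem is not the right tool for ``absorbing a mild loss of regularity'' here and is not needed: once you have identified the leaf with a $C^1$ HPS plaque saturation, the regularity and tangency are inherited exactly, not approximately. Journ\'e enters elsewhere in the paper (Lemma \ref{lem:Journe}) only to combine smoothness of $H$ along two complementary transverse foliations into smoothness of $H$ itself, which is a different issue.

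Finally, note that the paper does not construct the general intermediate foliation $\cF^u_{(i,j),\omega}$ from scratch but simply intersects: $\cF^u_{(i,j),\omega}=\cF^u_{(1,j),\omega}\cap\cF^u_{(i,k),\omega}$. Your parenthetical ``or by intersecting $\cF^u_{(1,j),\omega}$ with the leaves of the already-built $\cF^u_{(i,k),\omega}$'' is exactly the paper's construction and is the cleaner route; it avoids trying to produce HPS plaque families for intermediate sub-sums directly, which is not a standard application of HPS.
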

  
  \begin{proof}
  \noindent{\underline {Proof of first item.}}	 The skew product $F$ is a fiberwise perturbation of $\bar{A}$ on $\Omega\times\TT^d$. From the persistence of dominated splitting, $F$ admits the dominated splitting with the same dimensions to $\bar{A}$.   	 

 	 \vskip 3mm
  \noindent{\underline {Proof of second item.}}	  	 Shrinking the neighborhood $\cU=\cup\cU_i$ if necessary. Since $F$ is a fiberwise perturbation of $\bar{A}$ and the {$C^2$-norm} is close for fiber dynamics of $F$ and $\bar{A}$, the second item comes directly from Lemma \ref{lem:A-dominated}.
  	 
 	 \vskip 3mm
  \noindent{\underline {Proof of third item.}}	  	 
  	 For any $1\leq i\leq k$, the classical stable manifold theorem shows the existence of fiberwise strong unstable foliation $\{\cF^u_{(i,k),\omega}\}_{\omega\in\Omega}$ which is $F$-invariant and tangent to $E^u_{(i,k),\omega}$ everywhere.
  	 
  	 On the other hand, for every $i\leq j\leq k$, the foliation $\cL^u_{(1,j)}$ is fiberwise normally hyperbolic with respect to $\bar{A}$. Moreover, since the linear foliation $\cL^u_{(1,j)}$ is smooth, it is also fiberwise plaque expansive \cite[Theorem 7.2]{HPS} and \cite[Theorem 5.12]{Pes}. 
  	 
  	 Since $F$ is a fiberwise perturbation of $\bar{A}$ and the {$C^2$-norm} is close for fiber dynamics of $F$ and $\bar{A}$, the fiberwise structural stability of normally hyperbolic foliations \cite[Theorem 7.1]{HPS} shows the following claim.
  	 
  	 \begin{Claim-numbered}\label{clm:normal-hyperb}
  	 	There exists a fiberwise foliation $\{\cF^u_{(1,j),\omega}\}_{\omega\in\Omega}$ such that
  	 	$$
  	 	F\left(\cF^u_{(1,j),\omega}\right)=\cF^u_{(1,j),\sigma(\omega)}
  	 	\qquad \text{and} \qquad
  	 	T\cF^u_{(1,j),\omega}=E^u_{(1,j),\omega}.
  	 	$$  
  	 	Moreover, $\{\cF^u_{(1,j),\omega}\}_{\omega\in\Omega}$ is leaf conjugated to $\{\cL^u_{(1,j),\omega}\}_{\omega\in\Omega}$, i.e. there exists a homeomorphism 
  	 	$$
  	 	G:\Omega\times\TT^d\to\Omega\times\TT^d,
  	 	\qquad
  	 	G(\omega,x)=\left(\omega,G_\omega(x)\right)
  	 	\qquad \text{such~that} \qquad
  	 	G_\omega\left(\cF^u_\omega\right)=\cL^u_\omega,
  	 	$$
  	 	where $G_\omega$ is a homeomoprhism on $\TT^d_\omega$ and close to identity map and
  	 	\begin{align*}
  	 		G_{\sigma(\omega)}\circ F\left(\cF^u_{(1,j),\omega}(x)\right)&=
  	 		\bar{A}_\omega\circ G_\omega\left(\cF^u_{(1,j),\omega}(x) \right)  \\
  	 		&=
  	 		\bar{A}_\omega\left( \cL^u_{(1,j),\omega}(G_\omega(x)) \right)=
  	 		\cL^u_{(1,j),\sigma(\omega)}(\bar{A}_{\omega}\circ G_\omega(x)).
  	 	\end{align*}
  	 \end{Claim-numbered}

  	 Now we take
  	 $$
  	 \cF^u_{(i,j),\omega}= \cF^u_{(1,j),\omega}\cap \cF^u_{(i,k),\omega}
  	 \qquad \forall \omega\in\Omega.
  	 $$
  	 It is clear that $\cF^u_{(i,j),\omega}$ is $F$-invariant and tangent to $E^u_{(i,j),\omega}= E^u_{(1,j),\omega}\cap E^u_{(i,k),\omega}$.
  	 This proves the third item.
  	 
  	 \vskip 3mm

  	 
  \noindent{\underline {Proof of foruth item.}}	  	 
  	 Denote $\tilde\cL^u_{(1,j),\omega}$ the lifting foliation of $\cL^u_{(1,j),\omega}$ on $\RR^d_\omega$, and for every $n\geq1$, denote
  	 \begin{itemize}
  	 	\item $\tilde{A}_\omega^n:\RR^d_\omega\to\RR^d_{\sigma^n(\omega)}$ the lifting linear map of $\bar{A}_\omega^n:\TT^d_\omega\to\TT^d_{\sigma^n(\omega)}$;
  	 	\item $\tilde{f}_\omega^n:\RR^d_\omega\to\RR^d_{\sigma^n(\omega)}$ the lifting map of $f_\omega^n:\TT^d_\omega\to\TT^d_{\sigma^n(\omega)}$ close to $\tilde{A}_\omega^n$;
  	 	\item $\tilde{H}_\omega:\RR^d_\omega\to\RR^d_\omega$ the lift of fiberwise conjugacy of $H_\omega:\TT^d_\omega\to\TT^d_\omega$ such that
  	 	    \begin{itemize}
  	 	    	\item $\tilde{H}_{\sigma^n(\omega)}\circ\tilde{f}_\omega^n=
  	 	    	\tilde{A}_\omega^n\circ\tilde{H}_\omega$;
  	 	    	\item there exists some constant $C>0$ such that
  	 	    	$$
  	 	    	\|\tilde{H}_\omega-{\rm Id}_{\RR^d_\omega}\|<C,
  	 	    	\qquad \forall \omega\in\Omega.
  	 	    	$$
  	 	    \end{itemize}
  	 \end{itemize} 
  	 
  	 Denote $d(\cdot,\cdot)$ the distance on $\RR^d_\omega$ induced by the lifting adapted Riemannian metric of $\TT^d_\omega$.
  	 By applying the estimation of Lemma \ref{lem:A-dominated} and the second item, we have the following claim.
  	 
  	 \begin{Claim-numbered}\label{clm:growth}
  	 	For every $\omega\in\Omega$ and every pair of points $x,y\in\RR^d_\omega$, we have
  	 	\begin{enumerate}
  	 		\item $y\in\tilde\cF^u_{(1,j),\omega}(x)$ if and only if~
  	 		$\lim_{n\to+\infty}\exp[-\lambda^u_{j,\omega}(n)]\cdot
  	 		d(\tilde{f}^n_\omega(x),\tilde{f}^n_\omega(y))=0$;
  	 		
  	 		\item $y\notin\tilde\cF^u_{(1,j),\omega}(x)$ if and only if~
  	 		$\lim_{n\to+\infty}\exp[-\lambda^u_{j,\omega}(n)]\cdot
  	 		d(\tilde{f}^n_\omega(x),\tilde{f}^n_\omega(y))=\infty$;
  	 		
  	 			\item $y\in\tilde\cL^u_{(1,j),\omega}(x)$ if and only if~
  	 		$\lim_{n\to+\infty}\exp[-\lambda^u_{j,\omega}(n)]\cdot
  	 		d(\tilde{A}^n_\omega(x),\tilde{A}^n_\omega(y))=0$;
  	 		
  	 		\item $y\notin\tilde\cL^u_{(1,j),\omega}(x)$ if and only if~
  	 		$\lim_{n\to+\infty}\exp[-\lambda^u_{j,\omega}(n)]\cdot
  	 		d(\tilde{A}^n_\omega(x),\tilde{A}^n_\omega(y))=\infty$.
  	 		
  	 	\end{enumerate}
  	 \end{Claim-numbered}
  	 
  	 \begin{proof}[Proof of Claim \ref{clm:growth}]
  	 	From the second item of the proposition, on the lifting bundle $\tilde{E}^u_{(1,j),\omega}$ of $E^u_{(1,j),\omega}$ on $\RR^d_\omega$, we have
  	 	$$
  	 	\|D\tilde{f}_\omega^n|_{\tilde{E}^u_{(1,j),\omega}}\|<\exp[\lambda^u_{j,\omega}(n)-n\cdot\epsilon/2].
  	 	$$
  	 	If $y\in\tilde\cF^u_{(1,j),\omega}(x)$, then
  	 	\begin{align*}
  	 		\lim_{n\to+\infty}
  	 		\frac{d(\tilde{f}^n_\omega(x),\tilde{f}^n_\omega(y))}{\exp[\lambda^u_{j,\omega}(n)]}
  	 		&\leq
  	 		\lim_{n\to+\infty}
  	 		\frac{d_{\tilde{\cF}^u_{(1,j),\sigma^n(\omega)}}(\tilde{f}^n_\omega(x),\tilde{f}^n_\omega(y)) }{\exp[\lambda^u_{j,\omega}(n)]} \\
  	 		&\leq
  	 		\lim_{n\to+\infty}
  	 		\frac{\exp[\lambda^u_{j,\omega}(n)-n\cdot\epsilon/2]\cdot d_{\tilde{\cF}^u_{(1,j),\omega}}(x,y) }{\exp[\lambda^u_{j,\omega}(n)]} \\
  	 		&=0
  	 	\end{align*}
  	 	
  	 	If $y\notin\tilde\cF^u_{(1,j),\omega}(x)$, 
  	 	then $\tilde\cF^u_{(1,j),\omega}(x)\neq\tilde\cF^u_{(1,j),\omega}(y)$. Denote
  	 	$$
  	 	d_{\min}\left(\tilde\cF^u_{(1,j),\omega}(x),\tilde\cF^u_{(1,j),\omega}(y)\right)
  	 	:=\inf\left\{ d(z,w):~z\in\tilde\cF^u_{(1,j),\omega}(x),~w\in\tilde\cF^u_{(1,j),\omega}(y)
  	 	 \right\},
  	 	$$
  	 	and $\tilde{G}_\omega:\RR^d_\omega\to\RR^d_\omega$ the leaf conjugacy of $G_\omega:\TT^d_\omega\to\TT^d_\omega$ in Claim \ref{clm:normal-hyperb} which is close to identity map on $\RR^d_\omega$ and satisfies
  	 	\begin{align*}
  	 		\tilde{G}_{\sigma(\omega)}\circ \tilde{f}_\omega\left(\cF^u_{(1,j),\omega}(x)\right)
  	 		&=
  	 		\tilde{A}_\omega\circ \tilde{G}_\omega\left(\cF^u_{(1,j),\omega}(x) \right)  \\
  	 		&=
  	 		\tilde{A}_\omega\left( \cL^u_{(1,j),\omega}(\tilde{G}_\omega(x)) \right)=
  	 		\cL^u_{(1,j),\sigma(\omega)}(\tilde{A}_{\omega}\circ \tilde{G}_\omega(x)).
  	 	\end{align*}
  	 	
  	 	Since $\tilde{G}_\omega$ is a homeomorphism, 
  	 	if $\tilde\cF^u_{(1,j),\omega}(x)\neq\tilde\cF^u_{(1,j),\omega}(y)$,
  	 	then 
  	 	$$
  	 	\tilde\cL^u_{(1,j),\omega}(\tilde{G}_\omega(x))=
  	 	\tilde{G}_\omega\left(\tilde\cF^u_{(1,j),\omega}(x)\right)
  	 	\neq
  	 	\tilde{G}_\omega\left(\tilde\cF^u_{(1,j),\omega}(y)\right)=
  	 	\tilde\cL^u_{(1,j),\omega}(\tilde{G}_\omega(y)).
  	 	$$
  	 	From the leaf conjugacy and $\tilde{G}_\omega$ is uniformly close to identity map on $\RR^d_\omega$, we have
  	 	\begin{align*}
  	 		\lim_{n\to+\infty}
  	 		\frac{d(\tilde{f}^n_\omega(x),\tilde{f}^n_\omega(y))}{\exp[\lambda^u_{j,\omega}(n)]}
  	 		&\geq 
  	 		\lim_{n\to+\infty}
  	 		\frac{d_{\min}\left(\tilde{f}^n_\omega\left(\tilde\cF^u_{(1,j),\omega}(x)\right), \tilde{f}^n_\omega\left(\tilde\cF^u_{(1,j),\omega}(y)\right)\right)}{
  	 			 \exp[\lambda^u_{j,\omega}(n)]} \\
  	 		&=
  	 		\lim_{n\to+\infty}
  	 		\frac{d_{\min}\left(\tilde{G}_{\sigma^n(\omega)}\circ \tilde{f}^n_\omega\left(\tilde\cF^u_{(1,j),\omega}(x)\right), 
  	 			\tilde{G}_{\sigma^n(\omega)}\circ 
  	 			\tilde{f}^n_\omega\left(\tilde\cF^u_{(1,j),\omega}(y)\right)\right)}{
  	 			 \exp[\lambda^u_{j,\omega}(n)]}  \\
  	 		&=
  	 		\lim_{n\to+\infty}
  	 		\frac{d_{\min}\left(\tilde{A}_{\omega}^n\circ
  	 			\tilde{G}_{\omega}\left(\tilde\cF^u_{(1,j),\omega}(x)\right), \tilde{A}_{\omega}^n\circ
  	 			\tilde{G}_{\omega}\left(\tilde\cF^u_{(1,j),\omega}(y)\right)\right)}{
  	 			 \exp[\lambda^u_{j,\omega}(n)]} \\
  	 		&=
  	 		\lim_{n\to+\infty}
  	 		\frac{d_{\min}\left(\tilde{A}_{\omega}^n\left(\tilde\cL^u_{(1,j),\omega}(\tilde{G}_\omega(x))\right), \tilde{A}_{\omega}^n\left(\tilde\cL^u_{(1,j),\omega}(\tilde{G}_\omega(x))\right)\right)}{
  	 			 \exp[\lambda^u_{j,\omega}(n)]}\\
  	 	    &\geq
  	 	    \lim_{n\to+\infty}
  	 	    \frac{ \exp\left[\lambda^u_{j,\omega}(n)+n\cdot\epsilon\right]\cdot
  	 	    	d_{\min}\left(\tilde\cL^u_{(1,j),\omega}(\tilde{G}_\omega(x)), \tilde\cL^u_{(1,j),\omega}(\tilde{G}_\omega(x))\right)}{
  	 	    	\exp[\lambda^u_{j,\omega}(n)]}  \\
  	 	    &=\infty.
  	 	\end{align*}
  	 	The last inequality comes from the Lemma \ref{lem:A-dominated} that growth rate of distance between two linear center leaves of $\tilde\cL^u_{(1,j),\omega}$ is larger than $\exp\left[\lambda^u_{j,\omega}(n)+n\cdot\epsilon\right]$.
  	 	This proves the first and second items of Claim \ref{clm:growth}.
  	 	
  	 	Similarly, since both foliations $\tilde\cL^u_{(1,j),\omega}$ and $\tilde\cL^u_{(j+1,k),\omega}$ are linear foliations on $\RR^d_\omega$ and the lifting adapted Riemannian metric are equivalent to the Euclidean metric on $\RR^d_\omega$, Lemma \ref{lem:A-dominated} implies the last two items of Claim \ref{clm:growth}.
  	 \end{proof}
  	 
  	 Since $\|\tilde{H}_\omega-{\rm Id}_{\RR^d_\omega}\|<C$ for every $\omega\in\Omega$,  Claim \ref{clm:growth} implies 
  	 \begin{align*}
  	 	y\in\tilde\cF^u_{(1,j),\omega}(x)  &\Longleftrightarrow
  	 	    \lim_{n\to+\infty}
  	 	    \frac{d(\tilde{f}^n_\omega(x),
  	 	    	\tilde{f}^n_\omega(y))}{\exp[\lambda^u_{j,\omega}(n)]}=0
  	 	    \\
  	 	    &\Longleftrightarrow
  	 	    \lim_{n\to+\infty}
  	 	    \frac{d(\tilde{H}_{\sigma^n(\omega)}\circ\tilde{f}^n_\omega(x),
  	 	    	\tilde{H}_{\sigma^n(\omega)}\circ\tilde{f}^n_\omega(y))}{ \exp[\lambda^u_{j,\omega}(n)]}=0
  	 	    \\
  	 	    &\Longleftrightarrow
  	 	    \lim_{n\to+\infty}
  	 	    \frac{d(\tilde{A}^n_{\omega}\circ\tilde{H}_{\omega}(x),
  	 	    	\tilde{A}^n_{\omega}\circ\tilde{H}_{\omega}(y))}{ \exp[\lambda^u_{j,\omega}(n)]}=0
  	 	    \\
  	 	    &\Longleftrightarrow 
  	 	    \tilde{H}_\omega(y)\in\tilde\cF^u_{(1,j),\omega}(\tilde{H}_\omega(x)).
  	 \end{align*}
  	 This proves that $\tilde{H}_\omega(\tilde\cF^u_{(1,j),\omega})=\tilde\cL^u_{(1,j),\omega}$ and consequently $H_\omega(\cF^u_{(1,j),\omega})=\cL^u_{(1,j),\omega}$. 
  	 This finishes the proof of Proposition \ref{prop:leaf-conjugacy}.
  \end{proof}

\subsection{Fiberwise $u$-Gibbs states of expanding foliations}

Let $X$ be a metric space and $T:X\to X$ be a homeomorphism of $X$. Let $M$ be a compact manifold and let $f:X\to\diff^2(M)$ be a continuous map with bounded $C^2$-norms, i.e. there exists $K>0$ such that for every $\omega\in X$,
$$
\omega\mapsto f_\omega\in\diff^2(M) \qquad\text{satisfies} \qquad
\|f_{\omega}\|_{C^2}+\|f_{\omega}^{-1}\|_{C^2}<K.
$$
The map $f$ defines a skew product system over $T$:
$$
F:X\times M\to X\times M,
\qquad
F(\omega,x)=\left(T(\omega),f_{\omega}(x)\right),
\quad \forall (\omega,x)\in X\times M.
$$
For $n\in\NN$, we have the notion 
$$
f_\omega^n:=f_{T^{n-1}\omega}\circ\cdots\circ f_{T\omega}\circ f_\omega,
\qquad \text{and} \qquad
f_\omega^{-n}:=(f_{T^{-n}\omega})^{-1}\circ\cdots\circ(f_{T^{-1}\omega})^{-1}.
$$

\begin{Definition}\label{def:expanding}
	Let $F:X\times M\to X\times M$ be the above skew product. Let  $\cF=\{\cF_{\omega}\}_{\omega\in X}$  be a parameterized family of $C^0$ foliations of $M$ with $C^1$-leaves.  
	
	We say  $\cF=\{\cF_{\omega}\}_{\omega\in X}$ is  \emph{fiberwise expanding}, if there exist $C>1$, $\gamma>0$, and $L>1$, $0<\theta<1$ such that 
	\begin{enumerate}
		\item For all $(\omega,x)\in\Omega\times M$, $x\mapsto T\cF_{\omega}(x)$ is $(L,\theta)$-H\"older continuous on $M_\omega= \{\omega\}\times M$.
		\item The foliation $\cF$ is $F$-invariant: 
		$F\left(\cF_{\omega}(x)\right)=f_{\omega}\left(\cF_{\omega}(x)\right)=\cF_{T(\omega)}(f_{\omega}(x))$.
		\item For every nonzero vector $v\in T\cF_{\omega}$,
		$\|Df_\omega^{-n}v\|< C\exp(-n\gamma)\|v\|$ for every $n\geq0$.
		
	\end{enumerate}
\end{Definition}
Since the bundle $T\cF$ is $(L,\theta)$-H\"older continuous, the leaves  of $\cF$ are uniformly $C^{1+\theta}$ continuous. 

Given an  $F$-invariant probability measure $\mu$, we can may consider a measurable partition $\cP$ subordinated to $\cF$, i.e. for $\mu$-a.e. $(\omega,x)\in\Omega\times M$, there exists $r>0$ such that
$$
\cF_{\omega,r}(x)\subset\cP(\omega,x)\subset\cF_\omega(x).
$$
We can generalize the notion of fiberwise SRB measures to $u$-Gibbs states associated to a fiberwise expanding foliation of $F$.

\begin{Definition}\label{def:gibbs}
	Let $\cF$ be a fiberwise expanding foliation of $F$. An $F$-invariant probability measure $\mu$ is a $u$-Gibbs state along $\cF$ if for any $\cF$-subordinate measurable partition $\cP$ with a family of corresponding conditional measures $\{\tilde{\mu}^{\cP}_{(\omega,x)}\}_{(\omega,x)\in\Omega\times M}$, the conditional measure $\tilde{\mu}^{\cP}_{(\omega,x)}$ is absolutely continuous with respect to the Lebesgue measure on $\cF_\omega(x)$ for $\mu$-a.e. $(\omega,x)\in X\times M$.
\end{Definition}


The following proposition follows from Theorem G of \cite{SY}.

\begin{Proposition}\label{prop:u-gibbs}
	
	Let $F,G:X\times M\to X\times M$ be two skew product system over the homeomorphism $T:X\to X$ defined by continuous maps $f,g:X\to\diff^2(M)$ with bounded $C^2$-norms respectively:
	$$
	F(\omega,x)=\left(T(\omega),f_{\omega}(x)\right),
	\qquad
	G(\omega,x)=\left(T(\omega),g_{\omega}(x)\right),
	\qquad \forall (\omega,x)\in X\times M.
	$$
	
	Let $\cF$ and $\cG$ be two fiberwise expanding foliations of $F$ and $G$ respectively, and let $\mu_F$ be an $F$-invariant $u$-Gibbs measure along $\cF$.  Assume there exists a homeomorphism 
	$$
	H:X\times M\to X\times M,
	\qquad
	H(\omega,x)=\left(\omega,H_\omega(x)\right)
	\quad
	\forall(\omega,x)\in X\times M.
	$$
	such that  $H_\omega:M_\omega\to M_\omega$ is a homeomorphism for every $\omega\in X$.   If $H$ satisfies 
	$$
	H\circ F=G\circ H
	\qquad \text{and} \qquad
	H(\cF)=\cG,
	$$  
	then the followings are equivalent for $\mu_G=H_*(\mu_F)$:
	\begin{enumerate}
		\item $\mu_G$ is a $u$-Gibbs measure of $G$ along $\cG$.
		\item $H$ is absolutely continuous along the leaves of $\cF$ on the support of $\mu_F$, with the Jacobian continuous on $\supp(\mu_F)$ and bounded away form zero and infinity.
		\item There exists $K>1$ such that for every $(\omega,x)\in\Omega\times M$ and $n\geq 0$,
		$$
		\frac{1}{K}<
		\frac{\det(DF^n(T\cF_\omega(x)))}{\det(DG^n(T\cG_\omega(H_\omega(x))))}
		<K.
		$$
		\item The sums of fiberwise Lyapunov exponents along expending foliations are the same:
		$$
	\sum\lambda(F,\mu_F,\cF)=\int\log(\det(DF|_{T\cF}))d\mu=
		\int\log(\det(DG|_{T\cG}))d\mu_G=\sum\lambda(G,\mu_G,\cL).
		$$
		\item If we further assume $\dim\cF=1$, then $H$ is uniformly $C^{1+}$-smooth along the leaves of $\cF$.
	\end{enumerate}
	
\end{Proposition}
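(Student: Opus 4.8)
The plan is to adapt the proof of Theorem~G of \cite{SY} to the present fibered setting: the fiberwise expanding foliation $\cF$ of $F$ (resp.\ $\cG$ of $G$) plays the role of an unstable/expanding foliation, $u$-Gibbs states play the role of SRB-along-the-foliation measures, and the hypothesis that $f,g\colon X\to\diff^2(M)$ have uniformly bounded $C^2$-norms — together with the uniform constants $C,\gamma,L,\theta$ of Definition~\ref{def:expanding} — supplies all the quantitative control (uniform leafwise distortion, exponential convergence of the $u$-Gibbs density products, H\"older regularity of leafwise densities) that compactness of the phase space provides in \cite{SY}. Crucially, nothing in those arguments uses any topological structure of the base $X$; it uses only that the projection of $\mu_F$ to $X$ is $T$-invariant and Birkhoff's ergodic theorem, and it does not need $\mu_F$ ergodic since conclusion $(4)$ is phrased with integrals. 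I would organize the equivalences along the cycle $(3)\Rightarrow(4)\Rightarrow(1)\Rightarrow(2)\Rightarrow(3)$, then deduce $(5)$ from $(2)$.

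\textbf{The cycle.} For $(3)\Rightarrow(4)$: the sequence $n\mapsto\log\det(DF^n|_{T\cF})$ is an additive cocycle over $F$, and $n\mapsto\log\det(DG^n|_{T\cG})\circ H$ is an additive cocycle over $F$ as well (via $H\circ F=G\circ H$), so by Birkhoff their normalized sums converge $\mu_F$-a.e., with integrals $\int\log\det(DF|_{T\cF})\,d\mu_F$ and $\int\log\det(DG|_{T\cG})\,d\mu_G$; the uniform bound in $(3)$ forces the two limits to agree a.e., hence the integrals coincide, which is $(4)$. For $(4)\Rightarrow(1)$: here I would invoke the fibered form of Ledrappier's entropy characterization (the fibered analogue of the ingredient behind \cite[Thm.~G]{SY}; cf.\ \cite{LQ}), that a $G$-invariant probability is $u$-Gibbs along $\cG$ iff its leafwise (partial) entropy $h(\,\cdot\mid\cG)$ equals $\int\log\det(DG|_{T\cG})$, with ``$\le$'' always holding (fibered Ruelle inequality along the foliation), and symmetrically for $F$ and $\cF$. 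Since the fiber-preserving homeomorphism $H$ is a measure-theoretic isomorphism of $(F,\mu_F)$ onto $(G,H_*\mu_F)$ carrying an $\cF$-subordinate partition to a $\cG$-subordinate one, the partial entropies agree; combined with $(4)$ and the fact that $\mu_F$ is $u$-Gibbs along $\cF$ this pins $h_{\mu_G}(G\mid\cG)=\int\log\det(DG|_{T\cG})\,d\mu_G$, i.e.\ $(1)$. For $(1)\Rightarrow(2)$: by the uniform estimates the conditional density of $\mu_F$ on a local leaf of $\cF$ is the standard exponentially convergent product of H\"older Jacobian ratios, hence continuous and bounded away from $0$ and $\infty$ on $\supp\mu_F$, and likewise for $\mu_G=H_*\mu_F$ along $\cG$; since $H$ pushes one family of densities (times Lebesgue) to a constant multiple of the other, $H$ is absolutely continuous along leaves in both directions with leafwise Jacobian the continuous, bounded quotient of the two densities — precisely $(2)$. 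For $(2)\Rightarrow(3)$: writing $\psi$ for that leafwise Jacobian, the chain rule for Jacobians applied to $H_{\sigma(\omega)}\circ f_\omega=g_\omega\circ H_\omega$, iterated, gives $\det(DF^n|_{T\cF})/(\det(DG^n|_{T\cG})\circ H)=\psi/(\psi\circ F^n)$, which lies between $\inf\psi/\sup\psi$ and $\sup\psi/\inf\psi$.

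\textbf{Statement $(5)$.} Assuming $\dim\cF=1$ and the equivalent conditions: by $(2)$, along each $1$-dimensional leaf $H$ is an absolutely continuous monotone homeomorphism whose a.e.\ derivative is the continuous positive function $\psi$, so by the fundamental theorem of calculus $H$ is $C^1$ along the leaf with derivative $\psi$; being $C^1$ with positive derivative, $H$ is locally bi-Lipschitz along leaves, hence $\psi$ — a fixed ratio of H\"older leafwise densities, with the second composed with the now-Lipschitz $H$ — is H\"older along leaves, giving $H\in C^{1+}$ along the leaves of $\cF$, uniformly since all the relevant bounds and H\"older constants are uniform in $\omega$.

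\textbf{Main obstacle.} The work is infrastructural rather than logical: one must set up, over a merely Polish base $X$ with a general homeomorphism $T$, the fibered versions of (i) Ledrappier's entropy characterization of $u$-Gibbs states along an expanding foliation, (ii) invariance of the leafwise partial entropy under the topological conjugacy $H$, and (iii) the exact product formula for $u$-Gibbs conditional densities on leaves — all available from \cite{SY,LQ} in the compact-base case, the transcription being routine precisely because the skew product and the expanding foliations have constants uniform in $\omega$, yet this uniformity must be tracked carefully at every step. A secondary subtlety is that in dimension $\ge 2$ one only recovers absolute continuity with a continuous Jacobian rather than $C^1$ regularity, which is why $(5)$ is restricted to $\dim\cF=1$.
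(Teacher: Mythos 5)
The paper offers no proof of this proposition — it simply states that it ``follows from Theorem G of \cite{SY}'' — so your proposal is a reconstruction of the argument the authors are implicitly invoking. Your reconstruction is correct, and the organization $(3)\Rightarrow(4)\Rightarrow(1)\Rightarrow(2)\Rightarrow(3)$, followed by $(2)\Rightarrow(5)$ under the $\dim\cF=1$ assumption, is the right skeleton. The content of each arrow is sound: $(3)\Rightarrow(4)$ is Birkhoff plus the observation that a uniformly bounded difference of additive cocycles dies under time-averaging; $(4)\Rightarrow(1)$ is the Ledrappier characterization of $u$-Gibbs states via equality in the Ruelle inequality for partial entropy along the foliation, together with the measure-theoretic invariance of partial entropy under the fiber-preserving conjugacy $H$ (which carries $\cF$-subordinate partitions to $\cG$-subordinate ones); $(1)\Rightarrow(2)$ follows from the exponentially convergent product formula for the leafwise $u$-Gibbs densities (uniformly H\"older because of the uniform $C^2$-bounds and the uniform H\"older constants $(L,\theta)$ in Definition~\ref{def:expanding}), so that the leafwise Jacobian of $H$ is a bounded continuous ratio of densities; $(2)\Rightarrow(3)$ is the cohomology identity from the chain rule applied to $H\circ F^n = G^n \circ H$; and $(5)$ is the one-dimensional upgrade via the fundamental theorem of calculus and the H\"older bootstrapping. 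You correctly flag the genuine burden as infrastructural: the fibered Ledrappier/Ruelle machinery must be made available over a Polish base with a general homeomorphism $T$, rather than the i.i.d. RDS setting of \cite{LQ} or the compact single-map setting of \cite{SY}; that transcription works precisely because all constants (expansion rate $\gamma$, H\"older constants of $T\cF$, $C^2$-norm bounds) are uniform over the base. Two minor remarks. For the stated equivalence of all items, one should also observe the trivial $(5)\Rightarrow(2)$: $C^{1+}$ along leaves with bounded derivative is in particular absolutely continuous with continuous bounded Jacobian. And the base map in this proposition is a general $T$, not the shift, so your $H_{\sigma(\omega)}$ in the $(2)\Rightarrow(3)$ step should read $H_{T(\omega)}$.
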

In item 4 of Proposition \ref{prop:u-gibbs}, 
$$\sum\lambda(F,\mu_F,\cF)$$ denotes the sum of all Lyapunov exponents, counted with multiplicity, for the restriction of the cocycle $DF$ to the $DF$-invariant subbundle $(\omega,x)\mapsto T_x\cF_\omega(x)$.

We will frequently apply Proposition \ref{prop:u-gibbs} to the following situation: 
Let $\cA=\{f_i\}_{i=1}^m\subseteq\diff^2(M)$ be a cone hyperbolic family of (Anosov) diffeomorphisms.  Then the skew product $A:\cA^\ZZ\times M\to\cA^Z\times M$ is fiberwise Anosov. Denote by $\cU=\cup\cU_i$ be the neighborhood of $\cA$ and $\Omega=\cU^\ZZ$ as Lemma \ref{lem:robust-Anosov} such that the skew product $F:\Omega\times M\to\Omega\times M$ is also fiberwise Anosov. Let $\bar{A}:\Omega\times M\to\Omega\times M$ be the extension of $A$ in Definition \ref{def:extension} and let $H:\Omega\times M\to\Omega\times M$ be the topological conjugacy $H\circ F=\bar{A}\circ H$.  We will then apply Proposition \ref{prop:u-gibbs} to the fiberwise expanding foliations $\cF^u_{j,\omega}$ and $\cL^u_{j,\omega}$.  


\section{Positive spectrum rigidity and smooth conjugacy}
%
In this section, we show for  fiberwise Anosov perturbations of linear maps, coincidence of Lyapunov exponents relative to the fiberwise SRB-measure with the Lyapunov exponents of the linear system 
implies  topological conjugacy in Proposition \ref{prop:structure-stable} is $C^{1+}$-smooth along fiberwise expanding foliations. 
We note that we do not assume our fiberwise SRB measures  are a measure as in Proposition \ref{prop:measure} obtained from a stationary measure. 
Note that the converse holds automatically. We will first prove this on $\TT^2$, for both perturbations of a single Anosov diffeomorphism and perturbations of a cone hyperbolic  non-commuting family of automorphisms.

Recall we consider  $\cA=\{A_i\}_{i=1}^m\subseteq{GL}(d,\ZZ)$ be a cone hyperbolic family of Anosov automorphisms.  Following Lemma \ref{lem:robust-Anosov}, there exist a (pairwise disjoint) family of  neighborhoods $\cU_i\subseteq\diff^2(\TT^d)$ of $A_i$ such that for $\cU=\cup\cU_i$ and $\Omega=\cU^\ZZ$, the corresponding skew product
$$
F:\Omega\times\TT^d\to\Omega\times\TT^d, \qquad
 F(\omega,x)=\left(\sigma(\omega),f_\omega(x)\right)
$$
is fiberwise Anosov. 


Proposition \ref{prop:structure-stable} shows that there exists a topological conjugacy
$H:\Omega\times\TT^d\to\Omega\times\TT^d$ with
$$
H(\omega,x)=\left(\omega, H_{\omega}(x)\right),
\qquad \text{such~that} \qquad
H\circ F=\bar{A}\circ H.
$$
If we denote $\cL^u$ and $\cF^u$ the fiberwise expanding foliations of $\bar{A}$ and $F$ on $\Omega\times\TT^d$ respectively, then $H(\cF^u)=\cL^u$. 

We note that since fiberwise dynamics of $\bar{A}:\Omega\times\TT^d\to\Omega\times\TT^d$ are compositions of affine maps, $\{A_i\}_{i=1}^m\subseteq{GL}(d,\ZZ)$ for every $(\omega,x)\in\Omega\times\TT^d$, the fiberwise derivative is $A_{\omega}$ is independent of the choice of $x\in\TT^d$. 

This in particular justifies the following.  
\begin{Lemma}\label{lem:nu-exponent}
 Let $p=\{p_i\}_{1\le i\le m}$ be a probability measure on $\{A_i\}_{i=1}^m\subseteq{GL}(d,\ZZ)$ with $p(A_i)=p_i>0$ for every $1\le i\le m$.  
	Let $\nu$ be a probability on $\cU=\cup\cU_i$ with $\nu(\cU_i)=p_i$, $i=1,\cdots,m$.  
Then for any  probability measure $\bar\mu$ invariant under $\bar{A}:\Omega\times\TT^d\to\Omega\times\TT^d$ and which satisfies $\pi_*(\bar\mu)=\nu^\ZZ$, the fiberwise Lyapunov exponents of $\bar\mu$ coincide with the Lyapunov exponents of the i.i.d. (rel.\ $p$) product of matrices $\{A_i\}_{i=1}^m$ given in Proposition \ref{prop:Lyapaffine}.\end{Lemma}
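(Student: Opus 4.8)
The plan is to observe that the fiberwise derivative cocycle of $\bar A$ is constant along each fiber and factors through a Bernoulli base, so its Oseledets data is forced to coincide, exponents and multiplicities alike, with that of the i.i.d.\ matrix product in Proposition \ref{prop:Lyapaffine}.

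First I would record the elementary fact that, having shrunk each $\cU_i$, the homology map $\Upsilon$ is constant on $\cU_i$ with value $A_i$; this is because $\GL(d,\ZZ)$ is discrete, $\Upsilon$ is continuous, and $\Upsilon(A_i)=A_i$. Consequently, for $\omega=(\dots,f_{\omega_{-1}},f_{\omega_0},f_{\omega_1},\dots)\in\Omega$ the fiberwise map $\bar A_\omega=\Upsilon(f_{\omega_0})$ is the affine automorphism with linear part $A_{\Upsilon(\omega)_0}$, so $D_x\bar A_\omega=A_{\Upsilon(\omega)_0}$ is independent of $x\in\TT^d$. Iterating,
$$
D_x\bar A^n_\omega=A_{\Upsilon(\omega)_{n-1}}\circ\cdots\circ A_{\Upsilon(\omega)_0},\qquad n\ge 1,
$$
with the analogous formula for $n<0$ using inverses. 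Hence the cocycle $D\bar A$ over $(\Omega\times\TT^d,\bar\mu,\bar A)$ is the pullback under $\Upsilon\colon\Omega\times\TT^d\to\Omega_0\times\TT^d$ of the matrix cocycle $(\eta,x)\mapsto\eta_0$ over $A$, which in turn depends only on the base coordinate and thus descends to the cocycle $\eta\mapsto\eta_0$ over the shift $\sigma$ on $\Omega_0$.

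Next I would identify the measure downstairs. Since $\pi_*\bar\mu=\nu^\ZZ$ and $\Upsilon$ intertwines the shifts, $\Upsilon_*\bar\mu$ is $A$-invariant on $\Omega_0\times\TT^d$ with $\pi_*(\Upsilon_*\bar\mu)=\Upsilon_*(\nu^\ZZ)=(\Upsilon_*\nu)^\ZZ=p^\ZZ$, using that the $\cU_i$ are disjoint, $\Upsilon(\cU_i)=\{A_i\}$, and $\nu(\cU_i)=p_i=p(A_i)$. The system $(\Omega_0,p^\ZZ,\sigma)$ is a Bernoulli shift, hence ergodic, and the cocycle $\eta\mapsto\eta_0$ takes only the finitely many values $A_1,\dots,A_m$, so it is trivially log-integrable. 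Oseledets' theorem therefore applies over $(\Omega_0,p^\ZZ,\sigma)$, and its exponents are exactly the numbers $\lambda_1<\cdots<\lambda_l$ of Proposition \ref{prop:Lyapaffine}: that proposition is precisely the one-sided Oseledets theorem for this cocycle over the one-sided Bernoulli shift $(\cA^\NN,p^\NN,\sigma)$, and the two-sided version gives the full splitting with the same exponents and the same multiplicities $m_k$.

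Finally I would transfer the conclusion back to $\bar\mu$. For $p^\ZZ$-a.e.\ $\eta\in\Omega_0$ the forward and backward limits $\tfrac1n\log\|A_{\eta_{n-1}}\cdots A_{\eta_0}v\|$ converge to the $\lambda_k$ along the Oseledets splitting of $\eta$; pulling back, for $\bar\mu$-a.e.\ $(\omega,x)$ the point $\Upsilon(\omega)$ is such an $\eta$, and since $D_x\bar A^n_\omega$ depends only on $\Upsilon(\omega)$, the corresponding limits $\tfrac1n\log\|D_x\bar A^n_\omega v\|$ converge to the same $\lambda_k$ along the pulled-back splitting of the same dimensions. Thus the fiberwise Lyapunov exponents of $\bar\mu$, with multiplicities, are $\bar\mu$-a.s.\ equal to those of the i.i.d.\ product of $\{A_i\}$ given by Proposition \ref{prop:Lyapaffine}. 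The only point requiring a little care is that $\bar\mu$ need not be ergodic, so the fiberwise Oseledets statement recalled before Definition \ref{def:fiber-SRB} does not apply to it directly; this is harmless precisely because the cocycle factors through the ergodic base $(\Omega_0,p^\ZZ,\sigma)$, on which the Oseledets data is a.s.\ constant, and this constancy is inherited $\bar\mu$-a.s.\ through the map $\Upsilon$. This is essentially the whole (minor) obstacle; everything else is bookkeeping.
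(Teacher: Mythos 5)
Your proposal is correct and takes essentially the same approach as the paper, which records the lemma as an immediate consequence of the observation that the fiberwise derivative $D_x\bar A_\omega = A_{\Upsilon(\omega)_0}$ is independent of $x$, so the cocycle factors through the ergodic Bernoulli base $(\Omega_0,p^\ZZ,\sigma)$. You have simply filled in the bookkeeping (including the minor point about $\bar\mu$ possibly being non-ergodic, which is handled correctly by noting the Oseledets data depends only on the base factor), which the paper leaves implicit.
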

We note that the measure $\bar \mu$ need not be a measure obtained from a stationary measure as in Proposition \ref{prop:measure}. 


\subsection{Positive spectrum rigidity on $\TT^2$}\label{sect:u-smooth-T2}

In this subsection, we consider the case that
$\cA=\{A_i\}_{i=1}^m\subseteq{GL}(2,\ZZ)$ is a cone hyperbolic family of (Anosov) automorphisms. 
There are two possibilities:
\begin{Lemma}\label{claim:ZdenseorZ} Let $\cA=\{A_i\}_{i=1}^m\subseteq{GL}(2,\ZZ)$ be a cone hyperbolic family of automorphisms.  Then either
\begin{itemize}
	\item there exist $i\neq j$ such that $A_i$ and $A_j$ do not commute, or 
	\item  there exists a hyperbolic automorphism $A\in{GL}(2,\ZZ)$ such that $A_i=A^{n_i}$ with $n_i\in\NN$ for every $i=1,\cdots,m$. 
\end{itemize}
\end{Lemma}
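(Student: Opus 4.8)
The two alternatives are manifestly exhaustive, so the content of the lemma is that when $A_1,\dots,A_m$ pairwise commute the second alternative must hold; this is where I would concentrate. I would begin with some purely algebraic observations. By Remark~\ref{rem:cone hyperbolicity}, cone hyperbolicity forces each $A_i$ to be an Anosov automorphism of $\TT^2$, hence a hyperbolic matrix, and its characteristic polynomial is irreducible over $\QQ$: otherwise both eigenvalues would be rational algebraic integers with product $\det A_i=\pm1$, hence equal to $\pm1$, contradicting hyperbolicity. Thus $A_i$ has two distinct irrational real eigenvalues, one of absolute value $>1$ on its unstable line $E^u_{A_i}$ and one of absolute value $<1$ on its stable line $E^s_{A_i}$. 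Since $A_2,\dots,A_m$ commute with the matrix $A_1$, which has distinct eigenvalues, all the $A_i$ are simultaneously diagonalizable over $\RR$, so they share the same pair of eigenlines $\{E^u_{A_1},E^s_{A_1}\}$.

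Next I would use the common cone field to decide which of these two lines is the unstable one for each $A_i$. From $A_i(\cC^u)\subset\cC^u$ together with the uniform expansion on $\cC^u$, the automorphism $A_i$ contracts no nonzero vector of $\cC^u$, so $E^s_{A_i}\not\subset\cC^u$. Combined with $E^u_{A_1}\subset\cC^u$ and $E^s_{A_1}\subset\cC^s$ (Remark~\ref{rem:cone hyperbolicity} and its analogue for unstable cones), and the fact that a cone of the form $C_\kappa(V)$ contains a line or none of it, this forces $E^u_{A_i}=E^u_{A_1}=:E^u$ and $E^s_{A_i}=E^s_{A_1}$ for every $i$; in particular all the $A_i$ expand the common line $E^u$.

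Finally I would analyze the centralizer. Since the characteristic polynomial of $A_1$ is irreducible, the centralizer of $A_1$ in $M_2(\QQ)$ is the real quadratic field $\QQ[A_1]$, which I identify with $K:=\QQ(\lambda_1)$; as every $A_i$ commutes with $A_1$, the centralizer $Z$ of $A_1$ in $\GL(2,\ZZ)$ is then identified with the unit group of the order $R:=\QQ[A_1]\cap M_2(\ZZ)$ of $K$ (the determinant of an element of $\QQ[A_1]$ equals its field norm, so $\det=\pm1$ corresponds to being a unit of $R$). By Dirichlet's unit theorem for orders, $R^\times\cong\{\pm1\}\times\ZZ$; let $B\in Z\subset\GL(2,\ZZ)$ correspond to a fundamental unit, chosen, after replacing $B$ by $B^{-1}$ if necessary, so that $E^u_B=E^u$. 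Then $B$ is hyperbolic: it has infinite order, hence is not elliptic; it is simultaneously diagonalizable with $A_1$, hence not parabolic; and if its (real) eigenvalues $\beta,\beta'$ had $|\beta|=1$ then $\beta,\beta'\in\{\pm1\}$ and the diagonalizable matrix $B$ would satisfy $B^2=I$, contradicting infinite order. Writing $A_i=\epsilon_iB^{k_i}$ with $\epsilon_i\in\{\pm1\}$ and $k_i\in\ZZ$, hyperbolicity of $A_i$ gives $k_i\ne0$, and since $A_i$ expands $E^u=E^u_B$ its eigenvalue $\epsilon_i\beta^{k_i}$ on $E^u$ has absolute value $|\beta|^{k_i}>1$, forcing $k_i\ge1$. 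Taking $A:=B$ and $n_i:=k_i$ gives $A_i=\epsilon_iA^{n_i}$ with $A$ hyperbolic and $n_i\in\NN$; the central sign $\epsilon_i$ is harmless, since $-A^{n_i}=(-I)A^{n_i}$ has the same stable and unstable foliations and the same Lyapunov exponents as $A^{n_i}$ (and for families of positive matrices all $\epsilon_i=1$ automatically), which is what is used afterward.

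The step I expect to be the main obstacle is the cone argument of the second paragraph: cone hyperbolicity as in Definition~\ref{def:common-cone} only requires invariance and uniform expansion of the two cones, not that their closures be disjoint, so the matching $E^u_{A_i}=E^u_{A_1}$ has to be extracted from the expansion estimates and not merely from cone containment, and it is exactly this that makes the exponents $n_i$ come out positive. The remaining ingredients, namely irreducibility of the characteristic polynomial and Dirichlet's theorem identifying the centralizer with a rank-one unit group, are standard.
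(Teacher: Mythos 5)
Your argument fills in what the paper's proof leaves essentially unproved. The paper dispatches the commuting case with the single unjustified sentence ``If $A_1,\dots,A_m$ are hyperbolic and all commute, they are necessarily products of a single matrix,'' whereas you supply the actual content: irreducibility of the characteristic polynomial over $\QQ$, identification of the centralizer of $A_1$ in $\GL(2,\ZZ)$ with the unit group of the order $R=\QQ[A_1]\cap M_2(\ZZ)$ in a real quadratic field, Dirichlet's unit theorem giving $R^\times\cong\{\pm1\}\times\ZZ$, and the cone argument forcing the exponents to be positive. The paper's version also asserts, with no mention of irreducibility, that $A_jE^s\neq E^s$ or $A_jE^u\neq E^u$ forces both inequalities simultaneously; that too rests on the Galois conjugacy of the two eigenlines, i.e.\ on exactly the irreducibility you check. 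So the broad outline is the same as the paper's (hyperbolicity, shared eigenlines, and a structure claim about commuting hyperbolic elements of $\GL(2,\ZZ)$), but you are the one proving it. In particular the cone step is indispensable: without the common cone, the commuting family $\{B,B^{-1}\}$ would already falsify positivity of the $n_i$.

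The one point where both you and the paper deviate from the literal statement is the central sign. Your argument yields $A_i=\epsilon_iA^{n_i}$ with $\epsilon_i\in\{\pm1\}$, and this is genuinely unavoidable: for hyperbolic $B\in\GL(2,\ZZ)$ the pair $\{B^2,-B^4\}$ is cone hyperbolic (cones are symmetric, so $-I$ preserves every cone field, and $B^2,B^4$ both expand $\cC^u$ at the same rates) and commuting, yet generates $\{\pm1\}\times\langle B^2\rangle$ inside $R^\times$, which is not cyclic; hence $B^2$ and $-B^4$ are not both positive powers of any single matrix. The lemma should therefore read $A_i=\pm A^{n_i}$. Your observation that the central $\pm I$ is harmless downstream is correct (the paper only invokes the lemma for the commuting/non-commuting dichotomy and the coincidence of stable/unstable eigenlines, none of which see the sign), but you should not present this as a loose end to be waved away: in a finished writeup you should either state the corrected conclusion $A_i=\pm A^{n_i}$, or note that under the positivity hypothesis of Theorem~\ref{thm:positive-T2} all $\epsilon_i=1$.
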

\begin{proof}
By definition, every $A\in \cA$ is hyperbolic.  Fix $A_1\in \cA$ and let $E^u,E^s$ denote be the distinct eigenspaces of $A_1$.  For $A_j$, either $A_jE^{s} = E^{s}$ and $A_jE^{u} = E^{u}$ in which case $A_j$ and $A_1 $ commute, or either $A_jE^{s} \neq E^{s}$ or $A_jE^{u} \neq E^{u}$.  In the latter case we actually have  $A_jE^{s} \neq E^{s}$ and $A_jE^{u} \neq E^{u}$ and $A_1$ and $A_j$ do not commute.  If $A_1,\dots, A_m$ are  hyperbolic and all commute, they are necessarily products of a single matrix. 
\end{proof}

We have the following proposition, which shows that coincidence of  positive Lyapunov exponents 
implies the topological conjugacy is smooth along fiberwise expanding foliation $\cF^u$.

\begin{Proposition}\label{prop:2-H-smooth}
Let $\nu$ be a probability measure on $\cU=\cup\cU_i$ with $\nu(\cU_i)=p_i$. Let $F:\Omega\times\TT^2\to\Omega\times\TT^2$ be the corresponding skew product system and let $\mu$ be the fiberwise SRB-measure of $F$ satisfying $\pi_*(\mu)=\nu^\ZZ$. 
Let $p$  be the probability measure  on  $\{A_i\}_{i=1}^m\subseteq{GL}(d,\ZZ)$ with $p(A_i)=p_i$ for every $1\le i\le m$.
Denote by
	\begin{itemize}
	\item $\lambda^+(\mu)$ the positive fiberwise Lyapunov exponent of $\mu$;
	\item $\lambda^+(p)$ the positive Lyapunov exponent for any ergodic measure of invariant under $\bar{A}$ and  projecting to $\nu^\ZZ$ as in Lemma \ref{lem:nu-exponent}. 
	\end{itemize}
Then  $\lambda^+(\mu)\le \lambda^+(p)$.  

 Moreover, if $\lambda^+(\mu)=\lambda^+(p)$, then the topological conjugacy $H$ is $C^{1+}$ smooth along leaves of fiberwise unstable foliation $\cF^u$ on the support of $\mu$:
	$$
	\supp(\mu)=\supp(\nu)^\ZZ\times\TT^2\subseteq\Omega\times\TT^2 \qquad \text{and} \qquad
	H_*(\mu)=\nu^\ZZ\times\Leb_{\TT^2}.
	$$
\end{Proposition}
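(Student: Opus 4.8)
The plan is to transport the fiberwise SRB measure through the fibered conjugacy $H$ of Proposition \ref{prop:structure-stable} and to compare relative entropies and unstable Jacobians on the two sides. Write $\bar\mu:=H_*\mu$. Since $H(\omega,x)=(\omega,H_\omega(x))$ fibers over the identity of $\Omega$ and $H\circ F=\bar A\circ H$, the measure $\bar\mu$ is $\bar A$-invariant with $\pi_*\bar\mu=\pi_*\mu=\nu^\ZZ$. The fiberwise derivative cocycle of $\bar A$ is the locally constant matrix cocycle $\omega\mapsto A_\omega$, so Lemma \ref{lem:nu-exponent} (indeed the very definition of $\lambda^+(p)$) gives that the fiberwise Lyapunov exponents of $\bar\mu$ are those of the i.i.d.\ product of $\{A_i\}$; in particular $\lambda^+(\bar\mu)=\lambda^+(p)$. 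We may assume $\mu$, and hence $\bar\mu$, is ergodic, arguing otherwise with ergodic components, which are again fiberwise SRB measures projecting to $\nu^\ZZ$.

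To obtain the inequality I would use the fiberwise entropy formula. Because $\mu$ is a fiberwise SRB measure of the fiberwise Anosov system $F$, the fiberwise Ledrappier--Young/Pesin formula for random diffeomorphisms (the equality case of the relativized Ruelle inequality, \cite{LQ}) yields $h_\mu(F\mid\sigma)=\lambda^+(\mu)$, where $h_\mu(F\mid\sigma)$ denotes the entropy of $F$ relative to the base factor $(\Omega,\nu^\ZZ,\sigma)$; we work with relative entropy since the full shift on the (typically uncountable) alphabet $\cU$ may have infinite entropy. As $H$ is a Borel isomorphism fibering over the identity, it is an isomorphism of the relative systems, so $h_\mu(F\mid\sigma)=h_{\bar\mu}(\bar A\mid\sigma)$. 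The relativized Ruelle inequality applied to the fiberwise Anosov system $\bar A$ gives $h_{\bar\mu}(\bar A\mid\sigma)\le\lambda^+(\bar\mu)=\lambda^+(p)$, and chaining these relations produces $\lambda^+(\mu)\le\lambda^+(p)$.

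Now suppose $\lambda^+(\mu)=\lambda^+(p)$. Both the fiberwise expanding foliation $\cF^u$ of $F$ and the fiberwise expanding foliation $\cL^u$ of $\bar A$ are one-dimensional fiberwise expanding foliations in the sense of Definition \ref{def:expanding}, $\mu$ is a $u$-Gibbs state along $\cF^u$, and $H(\cF^u)=\cL^u$; thus Proposition \ref{prop:u-gibbs} applies with $(F,\cF,\mu_F)=(F,\cF^u,\mu)$ and $(G,\cG,\mu_G)=(\bar A,\cL^u,\bar\mu)$. In dimension two the exponent sums of item (4) of that proposition are single exponents, $\sum\lambda(F,\mu,\cF^u)=\lambda^+(\mu)$ and $\sum\lambda(\bar A,\bar\mu,\cL^u)=\lambda^+(\bar\mu)=\lambda^+(p)$, so item (4) holds and therefore all of its equivalent statements hold. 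By item (5) (using $\dim\cF^u=1$), $H$ is uniformly $C^{1+}$ smooth along the leaves of $\cF^u$, and by item (1), $\bar\mu$ is a $u$-Gibbs state of $\bar A$ along $\cL^u$, i.e.\ $\bar\mu$ is fiberwise SRB for $\bar A$.

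It remains to identify $\bar\mu$. The fiber maps of $\bar A$ are the toral automorphisms $A_i$, which preserve $\Leb_{\TT^2}$; hence $\nu^\ZZ\times\Leb_{\TT^2}$ is $\bar A$-invariant, and its conditionals along the linear foliation $\cL^u$ are Lebesgue, so it is a fiberwise SRB measure for $\bar A$ with base projection $\nu^\ZZ$. Since $\{A_i\}$ is a cone hyperbolic family of transitive (toral) Anosov diffeomorphisms and $\bar A$ sits over the transitive full shift $\sigma\colon\Omega\to\Omega$, the fiberwise SRB measure with base projection $\nu^\ZZ$ is unique (Theorem \ref{thm:SRB}, \cite{LQ}); therefore $\bar\mu=\nu^\ZZ\times\Leb_{\TT^2}$, i.e.\ $H_*\mu=\nu^\ZZ\times\Leb_{\TT^2}$. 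As $H$ is a homeomorphism fibering over the identity and $\Leb_{\TT^2}$ has full support, $\supp\mu=H^{-1}(\supp\bar\mu)=\supp(\nu)^\ZZ\times\TT^2$, completing the proof. The step I expect to be the main obstacle is the first one: making sure the fiberwise Pesin/Ruelle machinery applies to our measure $\mu$, which is only assumed fiberwise SRB and need not be the natural extension of a stationary measure, and that the relative entropy is genuinely invariant under the bi-H\"older fibered conjugacy $H$; once this is secured, the equality case reduces to an application of Proposition \ref{prop:u-gibbs} together with uniqueness of fiberwise SRB measures.
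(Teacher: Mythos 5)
Your proof follows essentially the same route as the paper's: transport the measure by the fibered conjugacy $H$, compare $\lambda^+(\mu)$ and $\lambda^+(p)$ via the fiberwise Ledrappier--Young formula, the invariance of relative (fiber) entropy under $H$, and the relativized Ruelle inequality, then in the equality case feed the matched exponents into Proposition \ref{prop:u-gibbs} to get $C^{1+}$-smoothness of $H$ along $\cF^u$ and the $u$-Gibbs property of $H_*\mu$ for $\bar A$.

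The one place you diverge, and where your argument is slightly imprecise, is the final identification $H_*\mu=\nu^\ZZ\times\Leb_{\TT^2}$. You justify this by appealing to the uniqueness in Theorem \ref{thm:SRB}, but that theorem asserts uniqueness of the \emph{stationary} SRB measure (equivalently, of the canonical $F$-invariant measure obtained from a stationary measure as in Proposition \ref{prop:measure}). The measure $H_*\mu$ here need not be of that form --- the paper explicitly stresses that the fiberwise SRB measures in this section are not assumed to arise from stationary measures --- so uniqueness of fiberwise SRB measures projecting to $\nu^\ZZ$ is not literally what Theorem \ref{thm:SRB} gives. The paper instead argues directly: since $H_*\mu$ is $u$-Gibbs for $\bar A$ along $\cL^u$, its fiber conditionals are absolutely continuous along the linear expanding foliation $\cL^u_\omega$; because $\cL^u_\omega$ is a fixed irrational linear foliation on $\TT^2$, unique ergodicity of the corresponding linear flow forces the fiber conditional on $\TT^2_\omega$ to be Lebesgue, giving $H_*\mu=\nu^\ZZ\times\Leb_{\TT^2}$. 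Your conclusion is correct, but you should either replace the appeal to Theorem \ref{thm:SRB} with this direct argument, or note separately why every $\bar A$-invariant $u$-Gibbs measure over $\nu^\ZZ$ must have Lebesgue fiber conditionals (which is what the irrationality argument provides). Your own flag at the end --- about whether the fiberwise Pesin/Ruelle machinery applies to a fiberwise SRB measure not coming from a stationary measure --- is apt; the paper glosses over this too and implicitly relies on the skew-product Ledrappier--Young theory from \cite{LQ} applying to the ergodic $F$-invariant measure $\mu$ directly.
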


\begin{proof}
	Let  $\mu_0:=H_*(\mu)$.  Then $\mu_0$ is $\bar{A}$-invariant, ergodic, and satisfies $\pi_*(\mu_0)=\nu^\ZZ$.  
	The inequality $\lambda^+(\mu)\le \lambda^+(p)$ follows immediately since the conjugacy $H$ preserves fiberwise metric entropy.  Indeed, letting $h_\mu^{\mathrm{fib}}$ denote the fiber entropy of a skew system, by a fiberwise Ledrappier-Young formula and Ruelle inequality, we have 
    \begin{align*}
	    \lambda^+(\mu)&=h_\mu^{\mathrm{fib}}(F)	=h_{\mu_0}^{\mathrm{fib}}(\bar A)\le  \lambda^+(p).
    \end{align*}

	 Since, $\lambda^+(\mu)=\lambda^+(p)$, Lemma \ref{lem:nu-exponent} implies 	$$
	\lambda^+(\mu_0)=\lambda^+(p)=\lambda^+(\mu).
	$$
	We apply Proposition \ref{prop:u-gibbs}. Since the unstable foliation $\cF^u$ for $F$ has dimension one, $\lambda^+(\mu_0)=\lambda^+(\mu)$ implies $H$ is uniformly $C^{1+}$-smooth along leaves of $\cF^u$ and $\mu_0$ is a $u$-Gibbs measure of $\bar{A}$ along the linear expanding foliation $\cL^u$ for $\bar A$.  The unique fiberwise $u$-Gibbs state for $\bar A$ has conditional measures along leaves of $\cL^u_{\omega}$ that coincide (up to choice of normalization) with the Lebesgue measure of the leaf. Since on each fiber $\TT^2_{\omega}$, the unstable foliation $\cL^u_\omega$ is a linear irrational foliation, the conditional $u$-Gibbs measure of $\bar{A}$ along $\cL^u_{\omega}$ is the Lebesgue measure $\Leb_{\TT^2}$ on $\TT^2_{\omega}$. This proves $\mu_0=\nu^\ZZ\times\Leb_{\TT^2}$.
\end{proof}

We now consider the case of a random perturbations and arbitrary Anosov diffeomorphism $f\in\diff^2(\TT^2)$. Denote by $A=f_*\in{GL}(2,\ZZ)$ the linearization of $f$.  Since $f$ is topologically conjugate to $A$, by the structural stability of $f$, the perturbed skew system $F$ will be fiberwise conjugate to a direct product system $\bar A =(\sigma\times A)\colon \Omega\times\TT^2\to\Omega\times\TT^2$.


\begin{Lemma}
	There exists a neighborhood $\cU\subseteq\diff^2(\TT^2)$ of $f$ such that for $\Omega=\cU^\ZZ$, the corresponding skew product system 
	$$
	F:\Omega\times\TT^2\to\Omega\times\TT^2, \qquad
	F(\omega,x)=\left(\sigma(x),f_\omega(x)\right)
	$$ 
	is fiberwise Anosov. Moreover, if we denote $\bar{A}=\sigma\times A:\Omega\times\TT^2\to\Omega\times\TT^2$ the product system,  there exists a topological conjugacy $H:\Omega\times\TT^2\to\Omega\times\TT^2$ with
	$$
	H(\omega,x)=\left(\sigma(\omega),H_\omega(x)\right), 
	\qquad \text{such~that} \qquad
	H\circ F=\bar{A}\circ H,
	$$
    and $H$ maps the fiberwise expanding foliation $\cF^u$ of $F$ to the fiberwise linear expanding foliation $\cL^u$ of $A$.
\end{Lemma}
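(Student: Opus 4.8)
The plan is to combine cone hyperbolicity (which gives the fiberwise Anosov property) with the classical structural stability of a single Anosov diffeomorphism and the Franks--Manning classification (which together give the conjugacy to the product system $\bar A$).

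First I would establish the fiberwise Anosov property. By Remark~\ref{rem:cone hyperbolicity}, cone hyperbolicity is a $C^1$-open condition and in particular holds for the singleton $\{f\}$; hence there is a $C^2$-neighborhood $\cU\subseteq\diff^2(\TT^2)$ of $f$ which is cone hyperbolic, and after shrinking $\cU$ every $g\in\cU$ is Anosov, is $C^0$-close to $f$, and is therefore homotopic to $f$, so $g_*=f_*=A$. Lemma~\ref{lem:cone-Anosov} then shows that for $\Omega=\cU^\ZZ$ the skew product $F\colon\Omega\times\TT^2\to\Omega\times\TT^2$ is fiberwise Anosov, and Lemma~\ref{lem:stable-mfd} produces its fiberwise stable and unstable foliations; in particular the fiberwise expanding foliation $\cF^u=\{\cF^u_\omega\}_{\omega\in\Omega}$ is defined.

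Next I would build the conjugacy in two stages. Since $f$ is Anosov on $\TT^2$ with linear part $A$, the Franks--Manning theorem \cite{Fr,Mn} gives a homeomorphism $h_0\colon\TT^2\to\TT^2$, homotopic to the identity, with $h_0\circ f=A\circ h_0$, and this $h_0$ carries the unstable foliation of $f$ onto the linear unstable foliation $\cL^u$ of $A$ (images of $f$-unstable manifolds are $A$-unstable manifolds, i.e.\ translates of the expanding eigendirection of $A$). On the other hand, the constant-fiber skew product $\sigma\times f\colon\Omega\times\TT^2\to\Omega\times\TT^2$, $(\omega,x)\mapsto(\sigma(\omega),f(x))$, is fiberwise Anosov, and---after shrinking $\cU$ so that all fiber maps $f_\omega$ of $F$ are $C^2$-close to $f$ with uniformly bounded $C^2$-norms---$F$ is a fiberwise $C^2$-small perturbation of $\sigma\times f$. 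The structural stability of random perturbations of a single Anosov diffeomorphism \cite{Liu} (this being precisely the uniform-in-$\omega$ adaptation of the classical Anosov structural-stability argument, exactly as in the proof of Proposition~\ref{prop:structure-stable}) then yields a homeomorphism $H_1\colon\Omega\times\TT^2\to\Omega\times\TT^2$ of the form $H_1(\omega,x)=(\omega,H_{1,\omega}(x))$, with each $H_{1,\omega}$ a homeomorphism of $\TT^2$ uniformly $C^0$-close to the identity and depending continuously on $\omega$, such that $H_1\circ F=(\sigma\times f)\circ H_1$ and $H_1$ takes $\cF^u$ onto the fiberwise unstable foliation of $\sigma\times f$. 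Finally I would set $H_0(\omega,x)=(\omega,h_0(x))$ and $H:=H_0\circ H_1$, so that $H(\omega,x)=(\omega,h_0(H_{1,\omega}(x)))$ has the required form with $H_\omega:=h_0\circ H_{1,\omega}$ a homeomorphism of $\TT^2$; then $H\circ F=H_0\circ(\sigma\times f)\circ H_1=\bar A\circ H_0\circ H_1=\bar A\circ H$, and $H(\cF^u)=H_0\bigl(H_1(\cF^u)\bigr)=H_0\bigl(\text{unstable foliation of }\sigma\times f\bigr)=\cL^u$.

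The only point requiring genuine work is the fiberwise structural stability used in the middle stage: one must check that the classical graph-transform/shadowing construction applied to $\sigma\times f$ produces a conjugacy that fibers over the identity on $\Omega$, is jointly continuous in $(\omega,x)$, respects the fiberwise invariant foliations, and has all estimates uniform in $\omega$. This is exactly the content of \cite{Liu} and of the adaptation already invoked for Proposition~\ref{prop:structure-stable}; the observation that makes the classical proof go through verbatim, fiber by fiber, is that the family $\{f_\omega\}_{\omega\in\Omega}$ is $C^2$-close to $f$ with uniformly bounded $C^2$-norms.
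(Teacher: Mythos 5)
Your proposal is correct and takes essentially the same route as the paper: establish fiberwise Anosov from cone hyperbolicity, then obtain $H$ by composing a fiberwise structural-stability conjugacy from $F$ to $\sigma\times f$ with the (base-independent) conjugacy from $\sigma\times f$ to $\sigma\times A$. The only difference is presentational: you invoke the Franks--Manning theorem explicitly for the $f$-to-$A$ conjugacy, whereas the paper phrases both steps as instances of fiberwise structural stability (which, in the constant-fiber case, amounts to the same thing). Your version is if anything a bit more careful, since the $f$-to-$A$ step is global (not perturbative) and so really does rest on Franks--Manning rather than on a graph-transform argument for nearby systems.
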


\begin{proof}
	Since $\{f\}$ is cone hyperbolic, the skew product $F$ is fiberwise Anosov.  As in the proof of Proposition \ref{prop:structure-stable}, fiberwise Anosov systems are fiberwise structurally stable.  Thus 
there are  topological conjugacies fibering over the identity between $F$ and $(\sigma\times f)\colon \Omega \times \TT^2\to \Omega \times \TT^2$ and between  $(\sigma\times f)\colon \Omega \times \TT^2\to \Omega \times \TT^2$ and $\bar{A}=(\sigma\times A)\colon \Omega \times \TT^2\to \Omega \times \TT^2$.	 Composing these gives the topological conjugacy $H$ in the lemma.
%
\end{proof}

In this setting, we also have the following proposition, whose proof is  identical to the proof of Proposition \ref{prop:2-H-smooth}.  
\begin{Proposition}\label{cor:2-H-smooth}
	Let $\nu$ be a probability supported on the neighborhood $\cU$ of $f$ in $\diff^2(\TT^2)$. Let $\mu$ be the fiberwise SRB-measure of the skew product $F$ satisfying $\pi_*(\mu)=\nu^\ZZ$. 
	
	If the positive Lyapunov exponent $\lambda^+(\mu)$ of $\mu$ is equal to the positive Lyapunov exponent $\lambda^+(A)$ of $A$, then $H$ is $C^{1+}$ smooth along leaves of fiberwise unstable foliation $\cF^u$ on the support of $\mu$:
	$$
	\supp(\mu)=\supp(\nu)^\ZZ\times\TT^2\subseteq\Omega\times\TT^2 \qquad \text{and} \qquad
	H_*(\mu)=\nu^\ZZ\times\Leb_{\TT^2}.
	$$
\end{Proposition}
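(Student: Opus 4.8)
The plan is to run the proof of Proposition~\ref{prop:2-H-smooth} essentially verbatim, the simplification being that the fiber maps of $\bar A=\sigma\times A$ are all equal to the single hyperbolic matrix $A$, so every $\bar A$-invariant measure projecting to $\nu^\ZZ$ has fiberwise Lyapunov exponents exactly $\lambda^{\pm}(A)$, and on every fiber the fiberwise expanding foliation $\cL^u$ of $\bar A$ is the linear unstable foliation of $A$, which is minimal since $A$ is hyperbolic with irrational eigendirections.

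First I would set $\mu_0:=H_*(\mu)$. Since $H$ is a homeomorphism fibering over the identity with $H\circ F=\bar A\circ H$, the measure $\mu_0$ is $\bar A$-invariant, ergodic, and satisfies $\pi_*(\mu_0)=\nu^\ZZ$; as the fiberwise derivative of $\bar A$ is the constant matrix $A$, we get $\lambda^{\pm}(\mu_0)=\lambda^{\pm}(A)$. Because a topological conjugacy preserves fiberwise metric entropy, the fiberwise Ledrappier--Young formula applied to the fiberwise SRB measure $\mu$ together with the fiberwise Ruelle inequality applied to $\mu_0$ gives
$$
\lambda^+(\mu)=h_\mu^{\mathrm{fib}}(F)=h_{\mu_0}^{\mathrm{fib}}(\bar A)\le\lambda^+(\mu_0)=\lambda^+(A).
$$
Assuming now $\lambda^+(\mu)=\lambda^+(A)$, this chain forces $h_{\mu_0}^{\mathrm{fib}}(\bar A)=\lambda^+(\mu_0)$, and since $\cF^u$ and $\cL^u$ are both one-dimensional,
$$
\sum\lambda(F,\mu,\cF^u)=\lambda^+(\mu)=\lambda^+(\mu_0)=\sum\lambda(\bar A,\mu_0,\cL^u).
$$

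Next I would invoke Proposition~\ref{prop:u-gibbs} with $G=\bar A$, $\cF=\cF^u$, $\cG=\cL^u$, $\mu_F=\mu$ (which is a $u$-Gibbs state along $\cF^u$ since it is fiberwise SRB), $\mu_G=\mu_0=H_*(\mu)$, and with $H$ the fiberwise conjugacy carrying $\cF^u$ onto $\cL^u$ supplied by the preceding lemma. The displayed equality of exponent sums is precisely item~(4) of that proposition, so all its equivalent conclusions hold: item~(1) says $\mu_0$ is a $u$-Gibbs state of $\bar A$ along $\cL^u$, and item~(5), applicable since $\dim\cF^u=1$, says $H$ is uniformly $C^{1+}$ smooth along the leaves of $\cF^u$ on $\supp(\mu)$. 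It then remains to identify $\mu_0$: the fiberwise $u$-Gibbs state of $\bar A=\sigma\times A$ along $\cL^u$ is unique, and on each fiber $\TT^2_\omega$ its conditional measures along the leaves of $\cL^u_\omega$ are, up to normalization, the Lebesgue measures of the leaves; since $\cL^u_\omega$ is the linear irrational unstable foliation of $A$, hence minimal, a Lebesgue conditional along a dense leaf forces $(\mu_0)_\omega=\Leb_{\TT^2}$ for $\nu^\ZZ$-a.e.\ $\omega$, so $\mu_0=\nu^\ZZ\times\Leb_{\TT^2}$. Its support is $\supp(\nu)^\ZZ\times\TT^2$, and since $H$ is a fiberwise homeomorphism close to the identity, $\supp(\mu)=H^{-1}(\supp(\mu_0))=\supp(\nu)^\ZZ\times\TT^2$ and $H_*(\mu)=\nu^\ZZ\times\Leb_{\TT^2}$, as claimed.

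The one nontrivial ingredient is Proposition~\ref{prop:u-gibbs} (Theorem~G of~\cite{SY}): the implication that equality of the Lyapunov exponent sums along the expanding foliations upgrades the a priori merely bi-H\"older conjugacy $H$ to one that is absolutely continuous --- and, in the one-dimensional fiber case here, uniformly $C^{1+}$ --- along those foliations. Everything else is routine: one only needs that $\cF^u$ and $\cL^u$ are fiberwise expanding foliations, that $\mu$ is fiberwise SRB, and that $H$ conjugates $F$ to $\bar A$ while mapping $\cF^u$ onto $\cL^u$, all of which come from the preceding lemma together with Proposition~\ref{prop:structure-stable}.
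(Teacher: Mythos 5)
Your proof is correct and is essentially the paper's own argument: the paper explicitly states that Proposition~\ref{cor:2-H-smooth} ``is identical to the proof of Proposition~\ref{prop:2-H-smooth},'' and you have carried out precisely that adaptation, with the observation that the fiber maps of $\bar A=\sigma\times A$ all equal the constant matrix $A$, so $\lambda^+(\mu_0)=\lambda^+(A)$ for any $\bar A$-invariant measure projecting to $\nu^\ZZ$. The entropy inequality, the invocation of Proposition~\ref{prop:u-gibbs} in the one-dimensional case, and the identification of $\mu_0$ with $\nu^\ZZ\times\Leb_{\TT^2}$ via minimality of $\cL^u_\omega$ all match the paper's argument.
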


We remark that by taking $\nu$ to be an atomic measure $\nu=\delta_g$ for some $g\in\cU$ conjugate $f$, then Proposition \ref{cor:2-H-smooth} is precisely \cite[Theorem G]{SY}. 

\subsection{Positive spectrum rigidity for generic automorphisms on $\TT^d$}

In this subsection, we consider the case that $\cA=\{A_i\}_{i=1}^m\subseteq{GL}(d,\ZZ)$ be a family of commuting Anosov automorphisms admitting the same finest dominated splitting
$$
T\TT^d=L^s_1\oplus\cdots\oplus L^s_l\oplus L^u_1\oplus\cdots\oplus L^u_k.
$$ 
Moreover, we require that at least one of $A_i$ satisfies the generic assumption, i.e. $A_i$ is hyperbolic, irreducible and the dimension of each $L^{s/u}_j$ is 1 or 2, where the eigenvalues along each 2-dimensional subspace $L^{s/u}_j$ correspond  to a pair of complex conjugate eigenvalues of $A_i$ that are not  real multiples of $i$.  Following the classification of centralizers of such $A_i$ \cite[Proposition 3.7]{KKS}, every $A_{i'}\in\cA$ also satisfies the generic assumption.

Now we take a family of neighborhoods $\cU_i$ of $A_i$ in $\diff^2(\TT^d)$, denote $\cU=\cup\cU_i$ and $\Omega=\cU^\ZZ$, such that the corresponding skew product
$$
F:\Omega\times\TT^d\to\Omega\times\TT^d, \qquad
F(\omega,x)=\left(\sigma(\omega),f_\omega(x)\right)
$$
is fiberwise Anosov, and satisfies the conclusion of Proposition \ref{prop:structure-stable} and Proposition \ref{prop:leaf-conjugacy}, then $F$ admits the fiberwise finest dominated splitting 
$$
T\TT^d_\omega=
E^s_{1,\omega}\oplus\cdots E^s_{l,\omega}\oplus E^u_{1,\omega}\oplus\cdots E^u_{k,\omega}, 
\qquad \text{with} \qquad
{\rm dim}E^{s/u}_{j,\omega}={\rm dim}L^{s/u}_j,\quad \forall j.
$$

Let $H:\Omega\times\TT^d\to\Omega\times\TT^d$ be the topological conjugacy defined in  Proposition \ref{prop:structure-stable} and satisfying Proposition \ref{prop:leaf-conjugacy}. 

The following proposition is the main result of this subsection.

\begin{Proposition}\label{prop:d-H-smooth}
	Let $\nu$ be a probability on $\cU=\cup\cU_i$ and define a probability measure  $p=\{p_i\}_{1\le i\le m}$  on $\{A_i\}_{i=1}^m\subseteq{GL}(d,\ZZ)$ so that  $$p(A_i)=p_i := \nu(\cU_i).$$	
	 Let $\mu$ be the fiberwise SRB-measure of the skew product $F:\Omega\times\TT^d\to\Omega\times\TT^d$ satisfying $\pi_*(\mu)=\nu^\ZZ$. For every $1\leq j\leq k$, denote 
	\begin{itemize}
\item $\lambda(\{p_i\},L^u_j)$ the fiberwise Lyapunov exponent of any ergodic meausre of $\bar{A}$ projected to $\nu^\ZZ$ as Lemma \ref{lem:nu-exponent}, where
		$$
		\lambda(\{p_i\},L^u_j)=\sum_{i=1}^m p_i\cdot\lambda(A_i,L^u_j).
		$$
		\item $\lambda(\mu,E^u_j)$ (or $\lambda_{\min}(\mu,E^u_j)\leq\lambda_{\max}(\mu,E^u_j)$) the fiberwise Lyapunov exponents of $\mu$ along $E^u_j$ if ~${\rm dim}E^u_j=1$ (or ~${\rm dim}E^u_j=2$).
	\end{itemize}

	Suppose for every $1\leq j\leq k$, the corresponding Lyapunov exponents of $F$ and $\bar{A}$ are all equal: 
	\begin{itemize}
		\item either ~${\rm dim}E^u_j=1$ and ~$\lambda(\{p_i\},L^u_j)=\lambda(\mu,E^u_j)$;
		\item or ~${\rm dim}E^u_j=2$ and ~$\lambda(\{p_i\},L^u_j)=\lambda_{\min}(\mu,E^u_j)=\lambda_{\max}(\mu,E^u_j)
		    :=\lambda(\mu,E^u_j)$.
	\end{itemize}
	Then $H_*(\mu)=\nu^\ZZ\times\Leb_{\TT^d}$ and on the support of $\mu$
	$$
	\supp(\mu)=\supp(\nu)^\ZZ\times\TT^d\subseteq\Omega\times\TT^d,
	$$
	 the topological conjugacy $H$ satisfies
	 \begin{enumerate}
	 	\item $H$ intertwines intermediate unstable invariant foliations: 
	 	$$
	 	H_{\omega}\left(\cF^u_{j,\omega}\right)=\cL^u_{j,\omega},
	 	\qquad 
	 	\forall j=1,\cdots,k,\quad \forall \omega\in\supp(\nu)^\ZZ.
	 	$$ 
	 	In particular, this implies the bundle $E^s_{\omega}\oplus E^u_{{k},\omega}$ integrates to a foliation $\cF^s_{\omega}\oplus\cF^u_{{k},\omega}$ on $\TT^d_{\omega}$ for every $\omega\in\supp(\nu)^\ZZ$. 
	    \item For every $j=1,\cdots,k$, $H$ is $C^{1+}$-smooth along leaves of fiberwise invariant foliation $\cF^u_j$, and the derivative $DH|_{E^u_j}$ is H\"older continuous with  H\"older constants uniform over $(\omega,x)\in\supp(\mu)$.
	 \end{enumerate}
	 In particular, $H$ is $C^{1+}$-smooth along leaves of the fiberwise unstable foliation $\cF^u$ on $\supp(\mu)$.
\end{Proposition}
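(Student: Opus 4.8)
The plan is to transport the fiberwise SRB measure $\mu$ to the linear model $\bar A$ via the conjugacy $H$ and then apply Proposition \ref{prop:u-gibbs} one block of the finest dominated splitting at a time, assembling the blocks at the end with Journ\'e's theorem \cite{J}. Write $\mu_0:=H_*\mu$; since $H$ fibers over the identity and $\pi_*\mu=\nu^\ZZ$, the measure $\mu_0$ is $\bar A$-invariant with $\pi_*\mu_0=\nu^\ZZ$, so by Lemma \ref{lem:nu-exponent} its fiberwise Lyapunov exponent along $L^u_j$ equals $\lambda(\{p_i\},L^u_j)$ with multiplicity $\dim L^u_j$. First I would apply Proposition \ref{prop:u-gibbs} to $F$ with the fiberwise expanding foliation $\cF^u=\cF^u_{(1,k)}$ and to $\bar A$ with $\cL^u=\cL^u_{(1,k)}$, conjugated by $H$ (which carries $\cF^u$ to $\cL^u$ by Propositions \ref{prop:structure-stable} and \ref{prop:leaf-conjugacy}), using that $\mu$ is $u$-Gibbs along $\cF^u$ by the fiberwise SRB property. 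The sum of fiberwise exponents of $\mu$ along $\cF^u$ is $\sum_j\dim(E^u_j)\,\lambda(\mu,E^u_j)$, which by the equality hypothesis and $\dim E^u_j=\dim L^u_j$ equals $\sum_j\dim(L^u_j)\,\lambda(\{p_i\},L^u_j)$, the sum of fiberwise exponents of $\mu_0$ along $\cL^u$; hence item 4 of Proposition \ref{prop:u-gibbs} holds, and with it items 1--3: $\mu_0$ is a fiberwise $u$-Gibbs state of $\bar A$ along $\cL^u$, and $H$ is absolutely continuous along $\cF^u$-leaves over $\supp\mu$ with continuous Jacobian bounded away from $0$ and $\infty$. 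Since $\bar A$ is an affine fiberwise Anosov skew product over the transitive full shift $\supp(\nu)^\ZZ$ whose fiberwise $u$-Gibbs state is unique and equals $\nu^\ZZ\times\Leb_{\TT^d}$ (Theorem \ref{thm:SRB}, the conditionals along the linear foliation $\cL^u_\omega$ being leafwise Lebesgue; here irreducibility of the generic automorphism forces $\cL^u_\omega$ minimal so that $\Leb_{\TT^d}$ is the leafwise measure), we get $\mu_0=\nu^\ZZ\times\Leb_{\TT^d}$, $\supp\mu_0=\supp(\nu)^\ZZ\times\TT^d$, and therefore $\supp\mu=H^{-1}(\supp\mu_0)=\supp(\nu)^\ZZ\times\TT^d$.

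Next I would record that $\mu$ is $u$-Gibbs along every intermediate block and that $H$ intertwines the corresponding foliations. Each fiberwise $\cF^u$-leaf is a uniformly $C^{1+}$ submanifold carrying the Lebesgue class, and it is subfoliated by the uniformly $C^{1+}$ leaves of $\cF^u_{(1,j)}$ and of $\cF^u_{(j,j)}$ (Proposition \ref{prop:leaf-conjugacy}); by the standard leafwise Fubini argument the conditionals of $\mu$ along each of these subfoliations are absolutely continuous, so $\mu$ is a fiberwise $u$-Gibbs state of $F$ along $\cF^u_{(1,j)}$ and along $\cF^u_{(j,j)}$ for every $j$, and likewise $\mu_0=\nu^\ZZ\times\Leb$ is $u$-Gibbs of $\bar A$ along $\cL^u_{(1,j)}$ and $\cL^u_{(j,j)}$. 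Moreover $H$ carries $\cF^u_{(1,j)}$ to $\cL^u_{(1,j)}$ by Proposition \ref{prop:leaf-conjugacy}(4), and it carries $\cF^u_{(j,k)}$ to $\cL^u_{(j,k)}$ by the bi-H\"older estimate on $H$ together with the domination between $E^u_{(1,j-1)}$ and $E^u_{(j,k)}$, reasoning exactly as in Claim \ref{clm:growth} (shrinking the neighbourhoods $\cU_i$ so that the H\"older exponent of $H$ beats the domination ratio); intersecting, $H$ carries $\cF^u_{(j,j)}=\cF^u_{(1,j)}\cap\cF^u_{(j,k)}$ to $\cL^u_{(j,j)}$. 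This is item 1 of the proposition, and, combined with $H_\omega(\cF^s_\omega)=\cL^s_\omega$ from Proposition \ref{prop:structure-stable}(3), it yields the integrability of $E^s_\omega\oplus E^u_{k,\omega}$ as the $H$-preimage of the linear foliation tangent to $L^s\oplus L^u_k$.

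The core of the argument is upgrading absolute continuity to $C^{1+}$ smoothness one block at a time. Fix $j$ with $\dim E^u_j=1$: I apply Proposition \ref{prop:u-gibbs} to $(F,\cF^u_{(j,j)})$ and $(\bar A,\cL^u_{(j,j)})$ conjugated by $H$; since $\mu_0=H_*\mu$ is $u$-Gibbs along $\cL^u_{(j,j)}$ (item 1 of that proposition holds by the previous paragraph) and $\dim\cF^u_{(j,j)}=1$, item 5 gives that $H$ is uniformly $C^{1+}$ along leaves of $\cF^u_{(j,j)}$, with uniformly H\"older derivative on $\supp\mu$. Once smoothness along every block (including the two-dimensional ones, treated below) is established, I would run Journ\'e's theorem \cite{J} inductively in $j$: within each $\cF^u_{(1,j)}$-leaf the subfoliations $\cF^u_{(1,j-1)}$ and $\cF^u_{(j,j)}$ are topologically transverse of complementary dimension with uniformly $C^{1+}$ leaves (their tangent bundles are H\"older), so $C^{1+}$-regularity of $H$ along $\cF^u_{(1,j-1)}$ and along $\cF^u_{(j,j)}$ yields $C^{1+}$-regularity along $\cF^u_{(1,j)}$; this proves item 2, its $j=k$ case being the final assertion that $H$ is $C^{1+}$ along $\cF^u$, with the mild loss of regularity coming from Journ\'e's theorem.

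The hardest point, requiring genuinely new input, is the two-dimensional blocks: when $\dim E^u_j=2$ the foliation $\cF^u_{(j,j)}$ is two-dimensional, so Proposition \ref{prop:u-gibbs}(5) does not apply, and I have only that $H$ is absolutely continuous along these leaves (items 1--3), that the restricted cocycle $DF|_{E^u_{(j,j)}}$ has both Lyapunov exponents equal to $\lambda(\mu,E^u_j)$ (the hypothesis), and that the linear model acts conformally on $L^u_j$. The route I would take is: (i) for the neighbourhoods $\cU_i$ small, $DF|_{E^u_{(j,j)}}$ is H\"older and fiber-bunched, so equality of its two exponents forces, by the Kalinin--Sadovskaya criterion, that it is H\"older-cohomologous to a cocycle taking values in the group of conformal linear maps of $\RR^2$; (ii) this conformal reduction, combined with the $u$-Gibbs property of $\mu$ along $\cF^u_{(j,j)}$, lets one linearize the leafwise conformal structure, reducing $H$ along a $\cF^u_{(j,j)}$-leaf to a conjugacy between two complex-affine expanding systems of one complex variable; (iii) a complex-one-variable analogue of the one-dimensional rigidity underlying Proposition \ref{prop:u-gibbs}(5) --- a Shub--Sullivan-type bootstrap fed the absolute continuity of $H$ --- then produces uniform $C^{1+}$ regularity of $H$ along $\cF^u_{(j,j)}$. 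The $A^4$-irreducibility clause in the definition of generic, via the centralizer classification \cite{KKS}, is what is needed in step (ii) to guarantee that the leafwise conformal structure is canonical and compatible with the conjugacy. With this block handled, the inductive assembly by Journ\'e's theorem from the previous paragraph completes the proof, and I expect step (iii) to be the principal technical obstacle, everything else being bookkeeping with Propositions \ref{prop:u-gibbs}, \ref{prop:leaf-conjugacy} and Journ\'e's theorem.
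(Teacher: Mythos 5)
Your first step---transporting $\mu$ to $\mu_0=H_*\mu$ and applying Proposition~\ref{prop:u-gibbs} to get $\mu_0=\nu^\ZZ\times\Leb_{\TT^d}$ and $\supp(\mu)=\supp(\nu)^\ZZ\times\TT^d$---matches the paper's Lemma~\ref{lem:mu=leb}. However, there is a genuine gap in the next step, where you claim item~1 (that $H$ intertwines \emph{all} intermediate unstable foliations) can be established directly, before any smoothness, by ``the bi-H\"older estimate on $H$ together with the domination between $E^u_{(1,j-1)}$ and $E^u_{(j,k)}$, reasoning exactly as in Claim~\ref{clm:growth}.'' This does not work. Claim~\ref{clm:growth} and the boundedness of $\tilde H_\omega-\Id$ characterize the \emph{flag} (weak) foliations $\cF^u_{(1,j)}$ via sub-exponential forward growth rates, which \emph{are} preserved by an $O(1)$ perturbation of the identity. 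The \emph{strong} intermediate foliations $\cF^u_{(j,k)}$ are characterized by exponential \emph{decay} rates under backward iteration inside an unstable leaf, and a map that is merely bounded distance from (or bi-H\"older close to) the identity gives no control on rates of decay of distances tending to zero. Shrinking $\cU_i$ to push the H\"older exponent of $H$ toward $1$ does not fix this: the relevant comparison is between the leafwise H\"older modulus of $H$ on unstable leaves and the (fixed) domination gap of $A$, and in any case a near-$1$ H\"older exponent still cannot distinguish decay rates $e^{-an}$ from $e^{-(a\pm\delta)n}$ without a Lipschitz (or $C^1$) bound. This is precisely the phenomenon behind de~la~Llave's non-Lipschitz-conjugacy examples, and it is exactly why the paper needs Lemmas~\ref{lem:j+1} and~\ref{lem:j+1'}: matching $H(\cF^u_{(j+1,k)})=\cL^u_{(j+1,k)}$ is deduced \emph{after} $H$ is shown to be $C^{1+}$ along $\cF^u_j$, by proving that the holonomies of $H(\cF^u_{(j+1,k)})$ between $\cL^u_j$-leaves are $C^1$ with derivative the identity (Lemma~\ref{lem:j+1-linear}) and then ruling out linear deviations (Lemma~\ref{lem:j+1'}). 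So items~1 and~2 of the Proposition must be proved by an intertwined induction---smoothness along $\cF^u_{(1,j)}$ yields the matching of $\cF^u_{j+1}$, which together with the exponent equality yields smoothness along $\cF^u_{j+1}$, and Journ\'e reassembles---and cannot be decoupled as you have them.

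Your treatment of the two-dimensional blocks is also a different route from the paper and is left as a sketch. You propose to reduce to a conformal cocycle via Kalinin--Sadovskaya, linearize the leafwise conformal structure, and then run a ``complex-one-variable Shub--Sullivan bootstrap.'' The paper does use the amenable-reduction conformality criterion (Proposition~\ref{prop:reduction}, Corollary~\ref{coro:conformal}), but it then proves Lipschitz regularity of $H$ along $\cF^u_j$ directly (Lemma~\ref{lem:H-Lip}) by a Liv\v{s}ic cohomology argument comparing Jacobians and a uniform bound on $DH_n^{-1}$ obtained from the cohomologous-Jacobian identity; H\"older continuity of the derivative then follows because $DH|_{E^u_j}$ is the measurable transfer function between two bounded H\"older conformal cocycles (\cite[Theorem~2.7]{Sa15}). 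This is a linear-cocycle argument and avoids the complex-analytic bootstrap you invoke but do not justify; step~(iii) of your outline (a two-real-dimensional analogue of Proposition~\ref{prop:u-gibbs}(5)) is precisely the nontrivial content and would need an actual proof. Note also that the role of the $A^4$-irreducibility clause in the paper is to guarantee, via the fixed-point/eigenvalue argument in the proof of Corollary~\ref{coro:conformal}, that the cocycle has no continuous invariant line or pair of transverse lines---not to ``make the leafwise conformal structure canonical and compatible with the conjugacy'' as you assert.

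In summary: the opening computation is correct and matches the paper, but the proposed derivation of item~1 fails because strong intermediate foliations are not topologically characterized by growth rates that survive a bounded or bi-H\"older conjugacy, and the two-dimensional-block argument is an unproven sketch that departs substantially from the paper's cohomological proof.
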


\begin{Remark}
	When all $L^u_j$ are 1-dimensional, this proposition is basically the skew product version of \cite[Theorem B]{SY}.
	For generic automorphisms which has some $L^u_j$ is 2-dimensional, this was proved by \cite[Theorem 1.1]{GKS1}. The key point to handle the 2-dimensional bundles is applying the ``Two-dimensional Continuous Amenable Reduction'', see \cite[Proposition 3.1]{GKS1}.
\end{Remark}

We prove Proposition \ref{prop:d-H-smooth} at the end of this subsection. The proof consists of following lemmas and induction steps.
 First, we have the following observation.

\begin{Lemma}\label{lem:mu=leb}
	Assume the skew product system $F$ and fiberwise SRB-measure $\mu$ satisfies the assumption of Proposition \ref{prop:d-H-smooth}, then
	$$
	H_*(\mu)=\nu^\ZZ\times\Leb_{\TT^d}
	\qquad \text{and} \qquad
	\supp(\mu)=\supp(\nu)^\ZZ\times\TT^d.
	$$
\end{Lemma}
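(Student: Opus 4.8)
The plan is to follow the strategy of the proof of Proposition~\ref{prop:2-H-smooth}, upgraded to the higher-dimensional setting (including possible two-dimensional conformal blocks $L^u_j$), with the hypothesis that \emph{all} unstable Lyapunov exponents of $\mu$ agree with those of $\bar A$ playing the role that the single positive exponent plays on $\TT^2$. First I would set $\mu_0:=H_*(\mu)$; since $H$ fibers over the identity on $\Omega$ and conjugates $F$ to $\bar A$, the measure $\mu_0$ is $\bar A$-invariant with $\pi_*(\mu_0)=\nu^\ZZ$, and by Lemma~\ref{lem:nu-exponent} its fiberwise Lyapunov exponent along each $L^u_j$ equals $\lambda(\{p_i\},L^u_j)$. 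Since $\mu$ is fiberwise SRB it is, in particular, a $u$-Gibbs state of $F$ along the full fiberwise unstable foliation $\cF^u=\cF^u_{(1,k)}$, and by Proposition~\ref{prop:leaf-conjugacy}(4) we have $H(\cF^u)=\cL^u$.

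The first real step is to check the equality of the sums of fiberwise unstable Lyapunov exponents. On the $F$-side this sum is $\sum_{j=1}^k(\dim E^u_j)\,\lambda(\mu,E^u_j)$, where for a two-dimensional block the hypothesis $\lambda_{\min}(\mu,E^u_j)=\lambda_{\max}(\mu,E^u_j)=\lambda(\mu,E^u_j)$ is used; on the $\bar A$-side, since $\bar A$ is fiberwise affine,
\[
\sum\lambda(\bar A,\mu_0,\cL^u)=\int\log\bigl|\det(D\bar A|_{T\cL^u})\bigr|\,d\mu_0=\sum_{i=1}^m p_i\log\bigl|\det(A_i|_{L^u})\bigr|=\sum_{j=1}^k(\dim L^u_j)\,\lambda(\{p_i\},L^u_j),
\]
and the hypothesis of Proposition~\ref{prop:d-H-smooth} equates these expressions term by term, hence in total. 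With this equality in hand I would invoke Proposition~\ref{prop:u-gibbs} — the implication from item~(4) to item~(1), applied with the systems $F$ and $\bar A$, the fiberwise expanding foliations $\cF^u$ and $\cL^u$, the measure $\mu$, and the conjugacy $H$ — to conclude that $\mu_0$ is a $u$-Gibbs state of $\bar A$ along the linear expanding foliation $\cL^u$.

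It then remains to identify $\mu_0$. Disintegrating $\mu_0=\int_\Omega\mu_{0,\omega}\,d\nu^\ZZ(\omega)$, the $u$-Gibbs property passes to the fiber conditionals, so each $\mu_{0,\omega}$ has conditionals along the leaves of $\cL^u$ absolutely continuous with respect to leafwise Lebesgue; because the fiber maps of $\bar A$ are affine, the $u$-Gibbs density is constant along each leaf, and these conditionals are in fact normalized leafwise Lebesgue, exactly as used in the proof of Proposition~\ref{prop:2-H-smooth}. Since $A_i$ is irreducible over $\QQ$, the subspace $L^u$ contains no nonzero integer vector, so $\cL^u$ is a minimal linear foliation of $\TT^d$; a probability measure on $\TT^d$ with normalized leafwise-Lebesgue conditionals along a minimal linear foliation $\cL$ is invariant under every translation by $E_{\cL}$, and the group of translations of $\TT^d$ preserving it, being closed and containing the dense image of $L^u$, must be all of $\TT^d$, forcing $\mu_{0,\omega}=\Leb_{\TT^d}$ for $\nu^\ZZ$-a.e.\ $\omega$. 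Hence $\mu_0=\nu^\ZZ\times\Leb_{\TT^d}$, and since $H$ is a fiberwise homeomorphism, $\supp(\mu)=H^{-1}(\supp(\mu_0))=\supp(\nu)^\ZZ\times\TT^d$.

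I expect the main obstacle to be the careful bookkeeping of the fiberwise Lyapunov-exponent sums through Proposition~\ref{prop:u-gibbs}: verifying that each two-dimensional conformal block $L^u_j$ contributes $(\dim L^u_j)\lambda(\{p_i\},L^u_j)$ to both sides, and that the stated equality of exponents is exactly what triggers the equivalence of $u$-Gibbs states. By contrast, identifying the affine $u$-Gibbs state with Lebesgue and the translation-invariance argument are routine and already appear, in the $\TT^2$ case, inside the proof of Proposition~\ref{prop:2-H-smooth}.
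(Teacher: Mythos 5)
Your argument matches the paper's proof step by step: set $\mu_0=H_*(\mu)$, sum the hypothesized exponent equalities to get $\sum\lambda(F,\mu,\cF^u)=\sum\lambda(\bar A,\mu_0,\cL^u)$, invoke Proposition~\ref{prop:u-gibbs} to conclude $\mu_0$ is $u$-Gibbs along $\cL^u$, and then identify that $u$-Gibbs state with $\nu^\ZZ\times\Leb_{\TT^d}$ via the minimality/equidistribution of $\cL^u$. The only difference is that you spell out the final translation-invariance argument more explicitly than the paper's one-line appeal to equidistribution, which is a harmless (indeed clarifying) expansion.
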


\begin{proof}
	From the assumption, the corresponding fiberwise Lyapunov exponents of $(F,\mu,E^u_j)$ and $(\bar{A},H_*(\mu),L^u_j)$ are all equal for $j=1,\cdots,k$. By taking the sum, we have
	$$
	\sum_{j=1}^k{\rm dim}E^u_j\cdot\lambda(\mu,E^u_j)~=~
	\sum_{j=1}^k{\rm dim}L^u_j\cdot\lambda(\{p_i\},L^u_j)~=~
	\sum_{j=1}^k{\rm dim}L^u_j\cdot\lambda(H_*(\mu),L^u_j).
	$$
	Since $\mu$ is fiberwise SRB-measure, which is also a $u$-Gibbs measure along $\cF^u$, Proposition \ref{prop:u-gibbs} shows that $H_*(\mu)$ is a $u$-Gibbs measure of $\bar{A}$ along $\cL^u$. Since the fiberwise dynamics of $\bar{A}$ are compositions of $\{A_i\}_{i=1}^m$, so the conditional measures of a $u$-Gibbs measure of $\bar{A}$ along $\cL^u$-leaves must be Lebesgue measure on $\cL^u$-leaves. The fiberwise conditional measure of $H_*(\mu)$ along $\TT^d$-fiber is Lebesgue measure on $\TT^d$. From the equidistribution of $\cL^u$ in $\TT^d$, this implies $H_*(\mu)=\nu^\ZZ\times\Leb_{\TT^d}$ and $\supp(\mu)=\supp(\nu)^\ZZ\times\TT^d$.
\end{proof}

The following three lemmas allow us to prove Proposition \ref{prop:d-H-smooth} inductively. These steps mainly follow \cite[Section 2]{GKS} and \cite[Section 3]{GKS1} for smooth conjugacy of the perturbation of a single generic Anosov automorphism. The key fact is that in our setting, even the space $\Omega\times\TT^d$ is not compact, but the skew product system $F:\Omega\times\TT^d\to\Omega\times\TT^d$ still satisfies the following properties:
\begin{itemize}
	\item the fiberwise invariant bundles of $F$ are uniformly H\"older continuous and their angles are uniformly bounded;
	\item the fiberwise dynamics of $F$ has bounded norm and the derivatives along invariant bundles are H\"older continuous with uniform H\"older constants;
	\item $F$ admits uniformly contracting and expanding rate on both the base dynamics $\sigma:\Omega\to\Omega$ and the fiberwise dynamics on $\TT^d$-fibers.
\end{itemize} 
These properties allowed us to replay the proof in \cite{GKS,GKS1}.

\begin{Lemma}\label{lem:j-smooth}
	For $1\leq j\leq k$, if $H(\cF^u_j)=\cL^u_j$ on $\supp(\mu)$ and fiberwsie Lyapunov exponents of $(F,\mu)$ along $E^u_j$ are all equal to exponents of $(\bar{A},H_*(\mu))$, then 
	\begin{itemize}
		\item $\mu$ is a $u$-Gibbs state of $F$ along $\cF^u_j$; ~and
		\item $H$ is $C^{1+}$ smooth along leaves of $\cF^u_j$ on $\supp(\mu)$. Moreover, the derivatives $(DH|_{E^u_j})$ is H\"older continuous with respect to both $\omega$ and $x$ for every $(\omega,x)\in\supp(\mu)$ with  H\"older constants  uniformly bounded.
	\end{itemize}
\end{Lemma}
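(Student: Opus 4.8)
The plan is to read both conclusions off Proposition \ref{prop:u-gibbs}, applied to the pair of skew products $\bar A$ and $F$ over the restricted base $\sigma\colon\supp(\nu)^\ZZ\to\supp(\nu)^\ZZ$; the only genuinely new ingredient beyond that proposition is, in the two-dimensional case, the two-dimensional continuous amenable reduction of \cite{GKS1}.

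\textbf{The $u$-Gibbs property.} First I would record that, by Lemma \ref{lem:mu=leb}, $\supp\mu=\supp(\nu)^\ZZ\times\TT^d$ and $\mu_0:=H_*\mu=\nu^\ZZ\times\Leb_{\TT^d}$. Since Lebesgue measure on $\TT^d$ disintegrates along any linear foliation into leafwise Lebesgue measures, $\mu_0$ is a $u$-Gibbs state of $\bar A$ along the linear expanding foliation $\cL^u_j$ (which is a fiberwise expanding foliation of $\bar A$ by Lemma \ref{lem:linear-foliation}). Now I would apply Proposition \ref{prop:u-gibbs} \emph{in reverse}: take the first skew product there to be $\bar A$, the second to be $F$, the expanding foliations to be $\cL^u_j$ and $\cF^u_j$ (the latter a fiberwise expanding foliation of $F$ by Proposition \ref{prop:leaf-conjugacy}(3)), the measure to be $\mu_0$, and the conjugacy to be $H^{-1}$; indeed $H^{-1}\circ\bar A=F\circ H^{-1}$, and the hypothesis $H(\cF^u_j)=\cL^u_j$ on $\supp\mu$ gives $H^{-1}(\cL^u_j)=\cF^u_j$ over $\supp(\nu)^\ZZ$. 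Item (4) of Proposition \ref{prop:u-gibbs} holds: along $\cL^u_j$ the fiberwise cocycle of $\bar A$ is conformal, so $\sum\lambda(\bar A,\mu_0,\cL^u_j)=\dim(L^u_j)\cdot\lambda(\{p_i\},L^u_j)$, whereas $\sum\lambda(F,\mu,\cF^u_j)$ equals $\lambda(\mu,E^u_j)$ if $\dim E^u_j=1$ and $\lambda_{\min}(\mu,E^u_j)+\lambda_{\max}(\mu,E^u_j)$ if $\dim E^u_j=2$; by the hypothesis that all fiberwise exponents of $(F,\mu)$ along $E^u_j$ agree with those of $(\bar A,\mu_0)$ along $L^u_j$, these two sums coincide. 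Hence all of (1)--(5) hold for $(H^{-1})_*\mu_0=\mu$, and item (1) is precisely the first conclusion: $\mu$ is a $u$-Gibbs state of $F$ along $\cF^u_j$.

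\textbf{Leafwise regularity of $H$.} If $\dim E^u_j=1$, I would now apply Proposition \ref{prop:u-gibbs} in the forward direction (first system $F$ with the $u$-Gibbs measure $\mu$ just obtained, second system $\bar A$, foliations $\cF^u_j$ and $\cL^u_j$, conjugacy $H$): its item (5) gives directly that $H$ is uniformly $C^{1+}$ along the leaves of $\cF^u_j$. If $\dim E^u_j=2$, items (2)--(3) already give that $H$ is absolutely continuous along $\cF^u_j$-leaves with Jacobian continuous on $\supp\mu$ and bounded away from $0$ and $\infty$; to upgrade this to $C^{1+}$ I would run the argument of \cite[Section 3]{GKS1}: since the two fiberwise Lyapunov exponents of the H\"older cocycle $(\omega,x)\mapsto D_xf_\omega|_{E^u_j}$ over $F|_{\supp\mu}$ coincide, the two-dimensional continuous amenable reduction \cite[Proposition 3.1]{GKS1} makes it H\"older-cohomologous to a cocycle into the conformal group of $\RR^2$; comparing this, through $H$, with the already conformal fiberwise action of $\bar A$ on $\cL^u_j$, the absolute continuity with bounded Jacobian forces the leafwise conjugacy to be quasiconformal with trivial distortion, hence conformal, hence $C^{1+}$. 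This transcription is legitimate because, although $\supp\mu=\supp(\nu)^\ZZ\times\TT^d$ is not compact, $F$ restricted to it is a fiberwise Anosov system whose invariant bundles are uniformly H\"older with uniformly bounded angles, whose fiberwise maps have uniformly bounded $C^2$-norm with uniformly H\"older derivatives along the invariant bundles, and whose fiberwise contraction/expansion rates are uniform --- the three properties highlighted just before the lemma --- so every estimate in \cite{GKS,GKS1} runs with constants independent of $\omega$. The same uniform data yield the last assertion: the leafwise derivative $DH|_{E^u_j}$ produced by either argument depends H\"older-continuously on $(\omega,x)\in\supp\mu$ with uniformly bounded H\"older constants, being built as a uniformly convergent limit (a non-stationary linearization / Liv\v{s}ic-type series) of the uniformly H\"older fiberwise derivatives.

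The step I expect to be the main obstacle is the two-dimensional case: one must carry the continuous amenable reduction and the subsequent quasiconformal rigidity argument of \cite{GKS1} over to the non-compact skew product $F\colon\supp(\nu)^\ZZ\times\TT^d\to\supp(\nu)^\ZZ\times\TT^d$ and check that each step can be made uniform over the base $\supp(\nu)^\ZZ$ --- which is exactly what the three uniform properties listed before the lemma are designed to supply.
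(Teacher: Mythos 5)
Your overall plan matches the paper's: the first bullet and the one-dimensional case are read off Proposition \ref{prop:u-gibbs} exactly as you describe (the paper's Lemma \ref{lem:mu=u-gibbs} uses Lemma \ref{lem:mu=leb} to identify $H_*\mu=\nu^\ZZ\times\Leb_{\TT^d}$, notes this is $u$-Gibbs for $\bar A$ along $\cL^u_j$, and then applies the equivalence of items in Proposition \ref{prop:u-gibbs}), and the two-dimensional case is handled by transplanting GKS--GKS1 to the skew product with uniform constants, which you correctly identify as the main burden.

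The one place your sketch diverges from what the paper actually does, and where your shorthand is inaccurate, is the upgrade from absolute continuity to $C^{1+}$ in the $\dim E^u_j=2$ case. ``Absolute continuity with bounded Jacobian forces quasiconformality with trivial distortion'' is not a valid step: bounded Jacobian controls volume but not distortion. The paper's route (Corollary \ref{coro:conformal}, Lemma \ref{lem:H-Lip}, and the end of the proof of Lemma \ref{lem:j-smooth}) is more structured: (a) the amenable reduction gives a H\"older Riemannian norm making $DF|_{E^u_j}$ conformal over $\supp\mu$; (b) equality of Jacobians over periodic orbits (which is item (3) of Proposition \ref{prop:u-gibbs}) together with the Liv\v{s}ic Theorem (Lemma \ref{lem:livsic}) produces a bounded H\"older coboundary between the conformal factors of $DF|_{E^u_j}$ and $D\bar A|_{L^u_j}$; (c) this coboundary is then used to show that the graph-transform approximants $H_n^{-1}=F^{-n}\circ H_0^{-1}\circ\bar A^n$ have uniformly bounded leafwise derivatives, hence $H$ (and $H^{-1}$) is \emph{Lipschitz} along $\cF^u_j$ -- this is the step your ``quasiconformal with trivial distortion'' elides and is where the real work happens; (d) one then differentiates $H\circ F=\bar A\circ H$ almost everywhere, obtaining a measurable cocycle $DH|_{E^u_j}$ intertwining two bounded H\"older conformal cocycles, and invokes Sadovskaya's measurable-to-H\"older cocycle rigidity \cite{Sa15} (rather than a quasiconformal rigidity argument) to get H\"older continuity of $DH|_{E^u_j}$ with uniform constants. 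So your proposal has the right ingredients and architecture, but the middle of the two-dimensional argument needs to be replaced by the Liv\v{s}ic-plus-Lipschitz-plus-cocycle-rigidity chain rather than a distortion argument.
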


\begin{Remark}
	When ${\rm dim}E^u_{j,\omega}=1$, Lemma \ref{lem:j-smooth} is a direct consequence of Proposition \ref{prop:u-gibbs}.  
	When ${\rm dim}E^u_{j,\omega}=2$, we need to apply the conformal structure and two dimensional continuous amenable reduction following \cite{GKS1}.
\end{Remark}

\begin{Lemma}\label{lem:j+1}
	For every $1\leq j<k$, if the topological conjugacy $H$ satisfies
	\begin{itemize}
		\item $H(\cF^u_i)=\cL^u_i$ for $1\leq i\leq j$ on $\supp(\mu)$;
		\item $H$ is $C^{1+}$-smooth along $\cF^u_{(1,j)}$ on $\supp(\mu)$,
	\end{itemize}
	then we have $H(\cF^u_{j+1})=\cL^u_{j+1}$ on $\supp(\mu)$.
\end{Lemma}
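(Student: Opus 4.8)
The plan is to carry out the inductive ``$j\to j+1$'' step of the Gogolev--Kalinin--Sadovskaya scheme \cite{GKS,GKS1} in the fiberwise setting, reducing the claim to a linearity statement for a single foliation and then invoking Lemma \ref{lem:linear-Td}.

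First I would set up the reduction. By Proposition \ref{prop:leaf-conjugacy}(4) the conjugacy already satisfies $H_\omega(\cF^u_{(1,i),\omega})=\cL^u_{(1,i)}$ for every $1\le i\le k$; in particular $H$ intertwines $\cF^u_{(1,j)}$ with $\cL^u_{(1,j)}$ and $\cF^u_{(1,j+1)}$ with $\cL^u_{(1,j+1)}$. Since $\cF^u_{j+1}$ is $F$-invariant and $H\circ F=\bar A\circ H$, the foliation $\widehat\cF:=H(\cF^u_{j+1})$ is $\bar A$-invariant, its leaves lie in the leaves of $\cL^u_{(1,j+1)}$, and inside each such leaf $\widehat\cF$ is topologically transverse to $\cL^u_{(1,j)}$ of complementary dimension (use $H_\omega^{-1}$ as a chart, where $\widehat\cF$ and $\cL^u_{(1,j)}$ pull back to the transverse $C^1$ subfoliations $\cF^u_{j+1}$ and $\cF^u_{(1,j)}$ of $\cF^u_{(1,j+1)}$). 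Because $A$ is generic, $\cL^u_{(1,j)}$ and $\cL^u_{(1,j+1)}$ are minimal linear foliations on $\TT^d$. Hence, if I can show that for $y\in\widehat\cF(x)$ the holonomy $\Hol^{\widehat\cF}_{x,y}\colon\cL^u_{(1,j)}(x)\to\cL^u_{(1,j)}(y)$ is $C^1$ with derivative the identity, then Lemma \ref{lem:linear-Td}, applied with $\cL=\cL^u_{(1,j)}$ and $\cG=\cL^u_{(1,j+1)}$, yields that $\widehat\cF$ is a linear foliation tangent to a fixed subspace $W$ with $W\oplus L^u_{(1,j)}=L^u_{(1,j+1)}$. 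Then $W$ is invariant under every $A_i$; since the generic automorphism among them acts conformally on each $L^u_i$ with rates obeying $\lambda^u_j<\lambda^u_{j+1}$, the spectra on $L^u_{(1,j)}$ and on $L^u_{j+1}$ are disjoint, so the invariant complement is unique, forcing $W=L^u_{j+1}$ and $\widehat\cF=\cL^u_{j+1}$.

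Next I would reformulate the target in coordinates. Fix $(\omega,x)\in\supp(\mu)$ and work inside a leaf $\Lambda$ of $\cF^u_{(1,j+1),\omega}$, using the product coordinates $(u,v)$ supplied by its two transverse $C^1$ subfoliations $\cF^u_{(1,j)}$ and $\cF^u_{j+1}$. Since both subfoliations are $F$-invariant, $f^n_\omega$ restricted to $\Lambda$ is a product map $(u,v)\mapsto(F_1^n(u),F_2^n(v))$; in particular $Df^n_\omega|_{E^u_{(1,j)}}$ along $\Lambda$ depends only on $u$, \emph{not} on $v$ --- equivalently, the $\cF^u_{j+1}$-holonomy conjugates the $\cF^u_{(1,j)}$-leafwise dynamics on one $\cF^u_{(1,j)}$-leaf to that on another. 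Since $H$ maps $\cF^u_{(1,j)}$-leaves to $\cL^u_{(1,j)}$-leaves, $H$ has the form $(u,v)\mapsto(\phi_\omega(u,v),\psi_\omega(v))$, and the $H$-conjugate of the $\cF^u_{j+1}$-holonomy between the $\cF^u_{(1,j)}$-leaves $\{v=v_1\}$ and $\{v=v_2\}$ is $\Psi=\phi_{\omega,v_2}\circ\phi_{\omega,v_1}^{-1}$, where $\phi_{\omega,v}:=\phi_\omega(\cdot,v)$. By hypothesis each $\phi_{\omega,v}$ is uniformly $C^{1+\theta}$, so $\Psi$ is $C^{1+}$, and $D\Psi\equiv\mathrm{Id}$ is exactly the statement that the leafwise derivative $D_u\phi_\omega$ is unchanged under the $\cF^u_{j+1}$-holonomy.

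The main point --- and the chief obstacle --- is to prove this $v$-invariance of $D_u\phi_\omega$. The idea is that along each $\cF^u_{(1,j)}$-leaf the derivative $D_u\phi_\omega$ is produced by the Livšic/bounded-Jacobian machinery (exactly as in the proof that $H$ is $C^{1+}$ along $\cF^u_{(1,j)}$) purely from the cocycle data $Df^n_\omega|_{E^u_{(1,j)}}$ and the linear cocycle $A^n_\omega|_{L^u_{(1,j)}}$: since $H$ also intertwines each $\cF^u_i$ with $\cL^u_i$ for $i\le j$, the derivative $D_u\phi_\omega$ is block diagonal for the flag $L^u_1\subset L^u_1\oplus L^u_2\subset\cdots$, each diagonal block conjugates $Df^n_\omega|_{E^u_i}$ to the conformal cocycle $A^n_\omega|_{L^u_i}$, and on each such block the ratio $Df^n_\omega|_{E^u_i}\big/A^n_\omega|_{L^u_i}$ has bounded distortion, so the usual product/limit formula for the derivative of a leafwise linearization applies; in the case $\dim E^u_i=2$ one first puts $Df_\omega|_{E^u_i}$ into conformal form by the two-dimensional continuous amenable reduction of \cite[Proposition 3.1]{GKS1}. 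Since the input $Df^n_\omega|_{E^u_{(1,j)}}$ is $v$-independent along $\Lambda$ and $A^n_\omega|_{L^u_{(1,j)}}$ is $v$-independent, the resulting $D_u\phi_\omega$, transported by the $\cF^u_{j+1}$-holonomy, is $v$-independent; hence $D\Psi\equiv\mathrm{Id}$, which closes the argument via the first paragraph. The work --- and the main difficulty relative to \cite{GKS,GKS1} --- lies in making these Livšic, amenable-reduction and bounded-distortion arguments uniform over the noncompact base $\Omega$; this is possible thanks to the uniform H\"older bounds on the invariant bundles, the uniform bounds on the fiberwise $C^2$-norms and the uniform hyperbolicity rates recorded before Lemma \ref{lem:j-smooth}, together with $\supp(\mu)=\supp(\nu)^\ZZ\times\TT^d$ (Lemma \ref{lem:mu=leb}), so that the arguments of \cite{GKS,GKS1} transcribe with only cosmetic changes.
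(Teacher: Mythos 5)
Your overall plan—show directly that $\widehat{\cF}:=H(\cF^u_{j+1})$ is a linear foliation by verifying the identity-holonomy criterion of Lemma \ref{lem:linear-Td} with transverse $\cL=\cL^u_{(1,j)}$—is a genuinely different route from the paper. The paper proves the auxiliary Lemma \ref{lem:j+1'}, namely $H(\cF^u_{(j+1,k)})=\cL^u_{(j+1,k)}$ (the full remaining \emph{strong} unstable foliation), and obtains $H(\cF^u_{j+1})$ by intersection with $H(\cF^u_{(1,j+1)})=\cL^u_{(1,j+1)}$. There the transverse foliation is $\cL^u_j$, a single Lyapunov block on which $\bar A$ is conformal; the identity of the holonomy derivative is proved by a dynamical contraction (Lemmas \ref{lem:linear-j+1-smooth}, \ref{lem:j+1-linear}): write $D\Hol^{\cL}_{x,y}=\bar A^n\circ(\id+\Delta_n)\circ\bar A^{-n}$, use the $C^2$-smoothness of a strong foliation inside a dominated band plus the $C^{1+}$-smoothness of $H$ along the single block $\cF^u_j$ to get $\Delta_n\to 0$, and use \emph{conformality} of $A$ on $L^u_j$ so that conjugation by $A^n|_{L^u_j}$ does not amplify $\Delta_n$; then an explicit periodic-point contradiction replaces Lemma \ref{lem:linear-Td}.

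The key step of your proof is not substantiated and, as written, is circular. You reduce to showing $D_u\phi_\omega$ is $v$-independent, and you justify this by asserting ``$Df^n_\omega|_{E^u_{(1,j)}}$ is $v$-independent along $\Lambda$'' because $f^n_\omega|_\Lambda$ is a product map. That statement is only about the product chart. The Liv\v{s}ic/convergent-product machinery that produces $D_u\phi_\omega=DH_\omega|_{E^u_{(1,j)}}$ uses the Riemannian-metric cocycle $Df_\omega|_{E^u_{(1,j)}}$, and for $z'=\Hol^{\cF^u_{j+1}}_{\omega}(z)$ one has the equivariance
\[
Df_\omega|_{E^u_{(1,j)}(z')}\circ D\Hol^{\cF^u_{j+1}}|_{E^u_{(1,j)}(z)}
= D\Hol^{\cF^u_{j+1}}|_{E^u_{(1,j)}(f_\omega z)}\circ Df_\omega|_{E^u_{(1,j)}(z)},
\]
so the cocycle at $z'$ equals the cocycle at $z$ conjugated by the derivative of the $\cF^u_{j+1}$-holonomy. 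Saying the cocycle input is ``$v$-independent'' is therefore precisely the assertion that the $\cF^u_{j+1}$-holonomy derivative intertwines the cocycles trivially—which, after applying $DH$, is exactly $D\Psi\equiv\id$, the statement you set out to prove. Two further problems if one tries to repair this by a dynamical argument in your setup: replacing $\cL^u_j$ by $\cL^u_{(1,j)}$ loses conformality, so conjugation by $A^{\pm n}|_{L^u_{(1,j)}}$ amplifies the off-diagonal error at rate $e^{n(\lambda_j-\lambda_1)}$ and can overwhelm the H\"older rate at which the holonomy derivative approaches $\id$; and the block-diagonal structure you invoke for $D_u\phi_\omega$ is not inherited by the correction term, since it is not known that the $\cF^u_{j+1}$-holonomy preserves the flag $\{\cF^u_{(1,i)}\}_{i\le j}$ (joint integrability of $\cF^u_{(1,i)}\oplus\cF^u_{j+1}$ is not a dominated band and is not established at this point of the induction). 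This is precisely why the paper inducts on the strong complements $\cF^u_{(j+1,k)}$ and the conformal blocks $\cF^u_j$ rather than working directly with $\cF^u_{j+1}$ and $\cL^u_{(1,j)}$.
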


Since $\Omega=\cU^\ZZ$ where all $f\in\cU_i\subseteq\cU$ are close to $A_i$, all invariant bundles $E^u_{(1,j)}$ and $E^u_{j+1}$ are H\"older continuous with uniform H\"older constants and bounded angles. We need the following Journ\'e's theorem \cite{J} to obtain smoothness as we induct.  

\begin{Lemma}[\cite{J}]\label{lem:Journe}
	For every $1\leq j<k$, if the topological conjugacy $H$ satisfies $H(\cF^u_{j+1})=\cL^u_{j+1}$, and $H$ is $C^{1+}$-smooth along both $\cF^u_{(1,j)}$ and $\cF^u_{j+1}$ on $\supp(\mu)$, then $H$ is $C^{1+}$-smooth along $\cF^u_{(1,j+1)}$ on $\supp(\mu)$, and the derivative $(DH|_{E^u_{(1,j+1)}})$ is H\"older continuous with respect to both $\omega$ and $x$ for every $(\omega,x)\in\supp(\mu)$, where H\"older contants are uniformly bounded.
\end{Lemma}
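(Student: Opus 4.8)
The plan is to deduce the lemma from Journé's regularity lemma~\cite{J}, applied fiberwise on each torus $\TT^d_\omega$; the one genuine point is that all the quantitative data entering that lemma must be uniform over the noncompact base $\Omega=\cU^\ZZ$.

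First I would fix $\omega\in\supp(\nu)^\ZZ$ and $x\in\TT^d$ and restrict attention to the single leaf $\cF^u_{(1,j+1),\omega}(x)$. By the third item of Proposition~\ref{prop:leaf-conjugacy} this leaf is simultaneously subfoliated by the leaves of $\cF^u_{(1,j),\omega}$ and by the leaves of $\cF^u_{j+1,\omega}$; and since $E^u_{(1,j+1),\omega}=E^u_{(1,j),\omega}\oplus E^u_{j+1,\omega}$ is a splitting into distinct summands of the fiberwise dominated splitting, these two subfoliations are transverse, of complementary dimension inside $\cF^u_{(1,j+1),\omega}(x)$, with the angle between them bounded below uniformly in $(\omega,x)$. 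Both subfoliations have $C^{1+\theta}$ leaves, with a single $\theta>0$ and uniform bounds, because their tangent bundles are produced by the Hölder section theorem with constants depending only on the size of $\cU$.

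Next I would straighten the two subfoliations inside the leaf: on balls of a uniform radius $r_0$ there are charts of the leaf, with uniformly bounded $C^{1+\theta}$-norms and uniformly bounded inverses, carrying $\cF^u_{(1,j),\omega}$ and $\cF^u_{j+1,\omega}$ to the two coordinate foliations of a product of Euclidean balls; uniformity of $r_0$ and of the chart norms follows from the uniform Hölder bounds on the bundles, the uniform lower bound on the angle, and the uniformly bounded $C^2$-norms of the fiber maps $f_\omega$. Read in these coordinates, $H_\omega$ (viewed locally as a map into $\RR^d$) is, by hypothesis, uniformly $C^{1,\alpha}$ in the first group of variables and uniformly $C^{1,\alpha}$ in the second group, once $\alpha$ is chosen smaller than $\theta$ and than the two given Hölder exponents of $DH$; Journé's lemma then yields that $H_\omega$ is $C^{1,\alpha}$ on the chart, with norm controlled by the two leafwise norms and the chart data alone. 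Transporting this estimate back through the charts shows that $H$ is $C^{1+}$ along $\cF^u_{(1,j+1)}$ and that $DH|_{E^u_{(1,j+1)}}$ is Hölder in $(\omega,x)$, with all constants uniform over $\supp(\mu)=\supp(\nu)^\ZZ\times\TT^d$.

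The step needing real care — and what I regard as the main obstacle — is exactly this uniformity. Because $\Omega=\cU^\ZZ$ is noncompact one cannot extract uniform chart radii or uniform Journé constants by a compactness argument, so one must check directly that every constant in the construction (the Hölder data of the invariant bundles, the lower bound on the angle between $E^u_{(1,j)}$ and $E^u_{j+1}$, the $C^2$-bounds on the $f_\omega$, and the hypothesized uniform Hölder bounds on $DH$ along the two subfoliations) depends only on the neighborhood $\cU$ and not on $\omega$. This is guaranteed by the $C^2$-closeness of $\cU$ to the commuting family $\cA$, and once it is in place the conclusion follows at once from the quantitative form of Journé's lemma.
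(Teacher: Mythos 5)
Your proposal is correct and matches the paper's intended route: the paper itself states Lemma~\ref{lem:Journe} as a direct citation of Journ\'e's regularity lemma, with the only substantive remark (in the sentence preceding the lemma) being exactly the point you emphasize, namely that the bundles $E^u_{(1,j)}$, $E^u_{j+1}$ have uniform H\"older constants and bounded angles over the noncompact base $\Omega=\cU^\ZZ$, so the quantitative constants in Journ\'e's lemma can be chosen uniformly. The one step worth a sentence of polish is the asserted H\"older dependence of $DH|_{E^u_{(1,j+1)}}$ on $(\omega,x)$: once Journ\'e gives existence of the leafwise derivative, this follows most directly from the block decomposition of $DH$ along the H\"older splitting $E^u_{(1,j+1)}=E^u_{(1,j)}\oplus E^u_{j+1}$ and the hypothesized H\"older regularity of the two blocks, rather than from Journ\'e's lemma itself.
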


We prove Lemma \ref{lem:j-smooth} and Lemma \ref{lem:j+1} in the next two subsections.
Now we can prove Proposition \ref{prop:d-H-smooth}, assuming  Lemmas \ref{lem:j-smooth} and  \ref{lem:j+1}.

\begin{proof}[Proof of Proposition \ref{prop:d-H-smooth}]
    From the fourth item of Proposition \ref{prop:leaf-conjugacy}, we know that $H(\cF^u_1)=\cL^u_1$. By applying Lemma \ref{lem:j-smooth}, we know that $H$ is $C^{1+}$-smooth along $\cF^u_1$. 
    
    Assume $H(\cF^u_i)=\cL^u_i$ for $1\leq i\leq j$ and $H$ is $C^{1+}$-smooth on $\cF^u_{(1,j)}$ with uniformly bounded derivatives on $\supp(\mu)$, then Lemma \ref{lem:j+1} implies $H(\cF^u_{j+1})=\cL^u_{j+1}$ on $\supp(\mu)$. 
    
    As we assumed fiberwsie Lyapunov exponents of $(F,\mu)$ along $E^u_{j+1}$ are all equal to Lyapunov exponents of $(\bar{A},H_*(\mu))$ along $L^u_{j+1}$, Lemma \ref{lem:j-smooth} implies $H$ is $C^{1+}$-smooth along $\cF^u_{j+1}$. 
    Applying Lemma \ref{lem:Journe}, we have $H$ is $C^{1+}$-smooth along $\cF^u_{(1,j+1)}$, and the derivative $(DH|_{E^u_{(1,j+1)}})$ is H\"older continuous with respect to both $\omega$ and $x$ for every $(\omega,x)\in\supp(\mu)$, where H\"older contants are uniformly bounded.

    By induction and Lemma \ref{lem:mu=leb} which shows that $H_*(\mu)=\nu^\ZZ\times\Leb_{\TT^d}$, $H(\cF^u_j)=\cL^u_j$ and $H$ is $C^{1+}$-smooth along $\cF^u_j$ for every $j=1,\cdots,k$, where the derivative $(DH|_{E^u_j})$ is H\"older continuous with respect to both $\omega$ and $x$ for every $(\omega,x)\in\supp(\mu)$, where H\"older contants are uniformly bounded. 
    The same holds for $H$ on the whole expanding foliation $\cF^u$ on $\supp(\mu)$.
    This finishes the proof of Proposition \ref{prop:d-H-smooth}.
\end{proof}

\subsection{Smoothness along 2-dimensional conformal bundles}

In this subsection, we prove Lemma \ref{lem:j-smooth}. The proof mainly follows \cite[Proposition 2.4]{GKS1} but in the skew product systems. In this subsection, $F$ and $\bar{A}$ satisfy all assumptions in Lemma \ref{lem:j-smooth}.
We first prove the first item.

\begin{Lemma}\label{lem:mu=u-gibbs}
		For $1\leq j\leq k$, if $H(\cF^u_j)=\cL^u_j$ on $\supp(\mu)$ and of the fiberwsie Lyapunov exponents of $(F,\mu)$ along $E^u_j$ are all equal to  the Lyapunov exponents of $(\bar{A},H_0)$ along $\cL^u_j$ (i.e. $\lambda(\{p_i\},L^u_j)$), then $\mu$ is a $u$-Gibbs state of $F$ along $\cF^u_j$.
\end{Lemma}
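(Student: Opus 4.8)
The plan is to reduce the statement to a property of the conditional measures of $\mu$ and then extract that property from the coincidence of exponents. Since $\mu$ is the fiberwise SRB measure, Theorem~\ref{thm:SRB} already gives that $\mu$ is $u$-Gibbs along the full fiberwise unstable foliation $\cF^u$ and that its fiberwise entropy saturates the Ruelle inequality, $h_\mu^{\mathrm{fib}}(F)=\sum_{i=1}^k (\dim E^u_i)\,\lambda(\mu,E^u_i)$. By Definition~\ref{def:gibbs}, what remains is to show that the conditional measures of $\mu$ on leaves of the single intermediate foliation $\cF^u_j$ are absolutely continuous with respect to leafwise Lebesgue measure. The exponent hypothesis enters in two ways: it forces $E^u_j$ to carry a single fiberwise Lyapunov exponent (no finer Oseledets refinement inside $E^u_j$), and it makes the ``$j$-th block'' of the unstable Jacobian cocycle of $F$ and of $\bar A$ have equal $\mu$- and $H_*(\mu)$-averages, both equal to $\lambda(\{p_i\},L^u_j)$.

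I would first run the fiberwise analogue of the Ledrappier--Young partial-entropy formalism (\cite{LQ}) along the $F$-invariant flag $\cF^u_{(1,1)}\subset\cdots\subset\cF^u_{(1,k)}=\cF^u$ supplied by Proposition~\ref{prop:leaf-conjugacy}, getting partial fiberwise entropies $0=h_0\le h_1\le\cdots\le h_k=h_\mu^{\mathrm{fib}}(F)$ with $h_i-h_{i-1}\le(\dim E^u_i)\,\lambda(\mu,E^u_i)$ at every level $i$ for which $E^u_i$ is a single Lyapunov block; the saturation above then forces equality at every level, in particular $h_j-h_{j-1}=(\dim E^u_j)\,\lambda(\mu,E^u_j)$. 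Because $E^u_j$ has no finer splitting, this equality is exactly the assertion that within each $\cF^u_{(1,j)}$-leaf the conditionals of $\mu$ disintegrate absolutely continuously along the $\cF^u_j$-direction, which yields absolute continuity of the $\mu$-conditionals along $\cF^u_j$ itself. When $\dim E^u_j=1$ this is essentially the content of Proposition~\ref{prop:u-gibbs}. When $\dim E^u_j=2$ I would argue as in \cite[Proposition~2.4]{GKS1}: using that $H$ carries $\cF^u_j$ to the linear foliation $\cL^u_j$ and that the two exponents along $E^u_j$ coincide, apply the two-dimensional continuous amenable reduction of \cite[Proposition~3.1]{GKS1} to $DF|_{E^u_j}$ to obtain a measurable conformal structure, and combine this with the saturation of the Ruelle inequality for $\mu$ along $\cF^u$ to conclude that the $\mu$-conditionals along $\cF^u_j$ are equivalent to leafwise Lebesgue.

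The main obstacle is that $\Omega\times\TT^d$ is non-compact, so the statements of \cite{SY,GKS1} cannot be quoted verbatim and the distortion, holonomy, and amenable-reduction estimates must be re-derived in the skew-product setting. This is possible precisely because, as recorded before Lemma~\ref{lem:j-smooth}, for our $F$ the fiberwise invariant bundles are uniformly H\"older with uniformly bounded angles, the fiber maps have uniformly bounded $C^2$-norms, and $F$ contracts and expands at uniform rates in both the base and the fiber directions --- exactly the inputs that make all the relevant constants independent of $\omega\in\Omega$. A secondary technical point is to check that the flag $\{\cF^u_{(1,i)}\}$ coincides $\mu$-a.e.\ with the (block-coarsened) fiberwise unstable Oseledets flag of $(F,\mu)$ at level $j$, so that the partial-entropy formalism and its equality case apply there; this again uses the no-further-splitting consequence of the exponent hypothesis.
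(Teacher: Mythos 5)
Your approach is genuinely different from the paper's, and it has a gap in the key step. The paper's proof of this lemma is very short and proceeds entirely on the linear side: Lemma~\ref{lem:mu=leb} gives $H_*(\mu)=\nu^\ZZ\times\Leb_{\TT^d}$; the disintegration of $\Leb_{\TT^d}$ along the \emph{linear} foliation $\cL^u_j$ is trivially leafwise Lebesgue, so $H_*(\mu)$ is a $u$-Gibbs state for $\bar A$ along $\cL^u_j$; then Proposition~\ref{prop:u-gibbs} (the equivalence of items~(1) and~(4), applied with the roles of $F$ and $\bar A$ swapped and $H^{-1}$ as the conjugacy) pulls this back to say $\mu$ is $u$-Gibbs along $\cF^u_j$. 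No partial-entropy formalism and no amenable reduction enter at all; the exponent matching is fed directly into item~(4) of Proposition~\ref{prop:u-gibbs}.

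Your route instead works on the nonlinear side via a fiberwise Ledrappier--Young partial-entropy decomposition along the weak unstable flag $\cF^u_{(1,1)}\subset\cdots\subset\cF^u_{(1,k)}$. The critical unsupported step is the assertion that saturation of $h_j-h_{j-1}=(\dim E^u_j)\lambda(\mu,E^u_j)$ \emph{is exactly} absolute continuity of the $\mu$-conditionals along the single intermediate foliation $\cF^u_j$. Ledrappier--Young's equality case yields absolute continuity of conditionals along the \emph{fast} unstable flag $\cF^u_{(k,k)}\subset\cdots\subset\cF^u_{(1,k)}$ (and even that requires care in the random setting); it does not by itself give a disintegration statement along the band foliations $\cF^u_j=\cF^u_{(1,j)}\cap\cF^u_{(j,k)}$, whose complementary slices inside an unstable leaf generally do not have absolutely continuous holonomy. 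That passage needs a separate argument, which you do not supply. You also invoke the two-dimensional amenable reduction of \cite[Proposition~3.1]{GKS1}, but that tool is used in the paper for the \emph{second} part of Lemma~\ref{lem:j-smooth} (the $C^{1+}$ smoothness of $H$ along $\cF^u_j$ when $\dim E^u_j=2$), not for the $u$-Gibbs statement of Lemma~\ref{lem:mu=u-gibbs}, and importing it here does not plug the gap above. Finally, you restrict your use of Proposition~\ref{prop:u-gibbs} to the case $\dim E^u_j=1$, but the equivalence (1)$\Leftrightarrow$(4) there has no dimension restriction and is precisely what the paper applies in both cases; only item~(5) of that proposition is one-dimensional.
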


\begin{proof}
	From Lemma \ref{lem:mu=leb}, we know that $H_*(\mu)=\nu^\ZZ\times\Leb_{\TT^d}$. Since $\cL^u_j$ is the linear foliation on $\TT^d$, the disintegration of Lebesgue measure $\Leb_{\TT^d}$ along $\cL^u_j$ is also leafwise Lebesgue measure along $\cL^u_j$-leaves. So $H_*(\mu)$ is a $u$-Gibbs of $\bar{A}$ along $\cL^u_j$. 
	
	From the assumption that fiberwise Lyapunov exponents of $(F,\mu)$ along $E^u_j$ are all equal to fiberwise Lyapunov exponents of $(\bar{A},H_*(\mu))$ along $L^u_j$, Proposition \ref{prop:u-gibbs} shows that $\mu=(H^{-1})_*\circ H_*(\mu)$ is a $u$-Gibbs state of $F$ along $\cF^u_j$. This proves the lemma.
\end{proof}

The proof of second item of Lemma \ref{lem:j-smooth} basically follows \cite[Proposition 2.4]{GKS1}, but in skew product systems.
If ${\rm dim}E^u_j=1$, then it is a direct consequence of Proposition \ref{prop:u-gibbs}.
For the case that ${\rm dim}E^u_j=2$, we need the following two dimensional continuous amenable reduction theorem which has been proved in \cite[Proposition 3.1]{GKS1} and \cite[Theorem 7.4 \& Corollary 7.6]{Sa}.

\begin{Proposition}\label{prop:reduction}
	Let $f:X\to X$ be a hyperbolic system which satisfies Anosov Closing Lemma. Let $E$ be a vector bundle over $X$ with two-dimensional fibers, and $\mu$ be an ergodic $f$-invariant probability measure with full support and local product structure.
	Let $\Phi:E\to E$ be a H\"older continuous fiber bunched cocycle over $f$ with one
	Lyapunov exponent with respect to $\mu$. Then at least one of the following holds:
	\begin{enumerate}
		\item $\Phi$ is conformal with respect to a H\"older continuous Riemannian norm
		on $E$;
		\item $\Phi$ preserves a H\"older continuous one dimensional sub-bundle;
		\item $\Phi$ preserves a H\"older continuous field of two transverse lines.
	\end{enumerate}
\end{Proposition}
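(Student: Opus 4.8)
The plan is to combine three ingredients: the holonomies attached to fiber-bunched cocycles, a periodic-approximation argument à la Kalinin, and the classification of amenable closed subgroups of $\GL(2,\RR)$ together with the structure theory of algebraic hulls of fiber-bunched cocycles. Throughout, $\lambda$ denotes the common value of the two Lyapunov exponents of $\Phi$ with respect to $\mu$.

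First I would set up the holonomies and normalize. Since $\Phi$ is fiber bunched, by the Kalinin--Sadovskaya theory it admits $\Phi$-equivariant stable and unstable holonomies $H^{s/u}_{x,y}\colon E_x\to E_y$, defined when $y$ lies on the local stable/unstable manifold of $x$, which are uniformly H\"older in $(x,y)$. Dividing $\Phi$ by $|\det\Phi|^{1/2}$ --- a normalization that changes neither the projectivized action nor the holonomy structure --- I may assume $\Phi$ takes values in $\SL^{\pm}(2,\RR)$, so that ``one Lyapunov exponent'' becomes ``both Lyapunov exponents with respect to $\mu$ vanish''. The role of the holonomies is that they let me produce any candidate invariant object (a H\"older Riemannian norm, a H\"older line field, or a H\"older field of two transverse lines) on a single fiber and then transport it along stable and unstable manifolds; the transported family is automatically uniformly H\"older because the holonomies are, so the only remaining issue is consistency of the transport.

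Next I would analyze the periodic data. Because $\mu$ has full support and local product structure, the Anosov Closing Lemma produces periodic orbits that are dense and whose Lyapunov data approximates that of $\mu$ (Kalinin's periodic approximation theorem). Since both exponents of the normalized cocycle vanish, the return matrix $\Phi^n_p$ at a periodic point $p$ of period $n$ must have both eigenvalues of modulus one; hence, up to conjugacy in $\GL(2,\RR)$, the normalized return matrix is either a rotation (elliptic), a nontrivial unipotent up to sign (parabolic), or $\operatorname{diag}(1,-1)$ (real reversing). Equivalently, the algebraic hull cannot act strongly irreducibly on $\RR^2$ while containing a proximal element, since that would force a dominated splitting of $E$ and hence a gap in the Lyapunov spectrum. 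Therefore the algebraic hull of the normalized cocycle projects into an amenable closed subgroup of $\mathrm{PGL}(2,\RR)$, and the classification of these (up to conjugacy and finite index: a subgroup of $\SO(2)$, the stabilizer of a point of $\partial\mathbb{H}^2$, or the stabilizer of a $2$-point subset of $\partial\mathbb{H}^2$) is exactly the source of the trichotomy.

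Finally I would invoke the Kalinin--Sadovskaya theorem that, for a fiber-bunched cocycle, the smallest algebraic hull is attained by a H\"older-continuous reduction $E\cong X\times\RR^2$, and read off the three cases: the $\SO(2)$-structure yields a H\"older Riemannian norm in which $\Phi$ is conformal, giving (1); the unique fixed line in the parabolic case yields a $\Phi$-invariant H\"older one-dimensional sub-bundle, giving (2); and the two eigenlines in the diagonal case yield a $\Phi$-invariant H\"older field of two transverse lines, giving (3). The hard part will be this last step --- promoting an a priori only measurable invariant structure (coming from Oseledec regularity, or from the measurable section realizing the hull) to a genuinely H\"older one over all of $X$ --- and this is carried out as in \cite[Theorem~7.4 and Corollary~7.6]{Sa}: one fixes a Lyapunov-regular point $x_0$, reads off the invariant structure on $E_{x_0}$, spreads it over $W^s_{\loc}(x_0)$ and $W^u_{\loc}(x_0)$ by the H\"older holonomies, checks well-definedness and uniform H\"older dependence using the matching of periodic data (the Liv\v{s}ic theorem is available since $f$ is hyperbolic), and then propagates to all of $X$ by density of the union of the unstable manifolds through $W^s_{\loc}(x_0)$, which is guaranteed by the local product structure of $\mu$.
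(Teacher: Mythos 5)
The paper does not prove Proposition~\ref{prop:reduction}; it is stated as a recalled result and credited to \cite[Proposition~3.1]{GKS1} and \cite[Theorem~7.4 \& Corollary~7.6]{Sa}. So there is no internal proof to compare against, and your sketch should be judged as a summary of the arguments in those references, which it follows faithfully in outline: holonomies from fiber bunching, normalization to $\SL^{\pm}(2,\RR)$, use of periodic approximation and full support / local product structure, the trichotomy of (conjugates of) amenable closed subgroups of $\mathrm{PGL}(2,\RR)$, and the bootstrap from a measurable to a H\"older invariant structure along stable/unstable manifolds.

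One caveat: the step ``one Lyapunov exponent $\Rightarrow$ the (normalized) cocycle cannot be strongly irreducible with a proximal element, hence the hull is (virtually) amenable'' is stated as if it were immediate, but it is really the crux. What is needed is the converse to a simplicity criterion for fiber-bunched cocycles over hyperbolic bases (Bonatti--Viana / Avila--Viana type: pinching and twisting imply a dominated splitting and hence a spectral gap), applied to the algebraic hull. This is true, and full support with local product structure is used exactly so that the twisting/pinching dichotomy is clean, but it is a nontrivial theorem rather than a consequence of elementary linear algebra; your write-up should cite it explicitly rather than treating it as obvious. Similarly, ``the algebraic hull is attained by a H\"older reduction'' is itself a theorem that is proved case-by-case in Sadovskaya's work, not a black box one can invoke uniformly. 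Neither of these is a fatal gap, but they are exactly the places where a verification would need to consult the literature rather than the ideas in your sketch.
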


\begin{Corollary}\label{coro:conformal}
	 Assume $F$ satisfies Proposition \ref{prop:d-H-smooth}.  For every subbundle with  ${\rm dim}E^u_j=2$ with both Lyapunov exponents of $(F,\mu)$ are equal, the cocycle 
	 $DF$ is conformal when restricted on $\supp(\mu)=\supp(\nu)^\ZZ\times\TT^d$ with respect to a H\"older continuous Riemannian norm on $E^u_j$.
\end{Corollary}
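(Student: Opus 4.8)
The plan is to deduce the corollary from the two-dimensional continuous amenable reduction, Proposition~\ref{prop:reduction}, applied to the linear cocycle $\Phi:=DF|_{E^u_j}$ over the restriction of $F$ to $\supp(\mu)$, and then to exclude the two non-conformal alternatives produced by that theorem. So alternative $(1)$ of Proposition~\ref{prop:reduction} will be forced, which is exactly the assertion.

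First I would verify the hypotheses of Proposition~\ref{prop:reduction}. By Lemma~\ref{lem:mu=leb} we have $\supp(\mu)=\supp(\nu)^\ZZ\times\TT^d$, and $F$ restricted to this set is fiberwise Anosov over the full shift on $\supp(\nu)^\ZZ$; hence it has local product structure, it satisfies the Anosov Closing Lemma (Lemma~\ref{lem:livsic}), and its periodic orbits are dense in $\supp(\mu)$. The measure $\mu$ is ergodic with full support in $\supp(\mu)$; it has local product structure because, again by Lemma~\ref{lem:mu=leb}, $H_*\mu=\nu^\ZZ\times\Leb_{\TT^d}$ decomposes as a product along the linear foliations $\cL^s$ and $\cL^u$, and $H$ is a bi-H\"older homeomorphism carrying $\cF^{s/u}$ to $\cL^{s/u}$, so this structure is inherited by $\mu$. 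The cocycle $\Phi=DF|_{E^u_j}$ is H\"older continuous with constants uniform over $\supp(\mu)$ (as $E^u_j$ is a uniformly H\"older piece of the finest dominated splitting and the fiber dynamics have uniformly bounded $C^2$-norm), and it is fiber bunched because $E^u_j$ is a narrow band around the conformal linear bundle $L^u_j$: the one-step fiber maps are uniformly $C^1$-close to the conformal maps $A_i|_{L^u_j}$ and the domination gaps between $E^u_j$ and $E^u_{j\pm1}$ are uniform, so choosing the H\"older exponent of the invariant bundles small enough gives the fiber-bunching inequality, exactly as in the setup of \cite{GKS1}. Finally, $\Phi$ has a single Lyapunov exponent relative to $\mu$ by the hypothesis of the corollary. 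Thus one of the alternatives $(1)$--$(3)$ of Proposition~\ref{prop:reduction} holds.

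It remains to rule out alternatives $(2)$ and $(3)$, which I expect to be the main obstacle. Suppose $\Phi$ admits an invariant H\"older continuous one-dimensional sub-bundle $V\subset E^u_j$, or an invariant H\"older continuous field of two transverse lines, over $\supp(\mu)$. At any periodic point $p$ of $F$ of period $n$ the return map $DF^n_p|_{E^u_j}$ then has $V_p$ as an eigendirection (respectively preserves a pair of transverse real lines); on the other hand $H$ carries the $F$-orbit of $p$ to a periodic $\bar A$-orbit through $H(p)$ of the same period, whose fiberwise return map on $L^u_j$ is the linear map $A_{i_{n-1}}\cdots A_{i_0}|_{L^u_j}$, i.e.\ multiplication by the complex number $z_{i_{n-1}}\cdots z_{i_0}$, where $z_i$ is the non-real complex eigenvalue of $A_i$ on $L^u_j$. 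Using that $\mu$ is a $u$-Gibbs state along $\cF^u_j$ with $H_*\mu$ a $u$-Gibbs state along $\cL^u_j$ (Lemma~\ref{lem:mu=u-gibbs}), that $H$ is absolutely continuous along $\cF^u_j$-leaves with continuous Jacobian bounded away from $0$ and $\infty$ (Proposition~\ref{prop:u-gibbs}), and that both cocycles have a single Lyapunov exponent, one matches the periodic data of $\Phi$ and of $D\bar A|_{L^u_j}$ through $H$; but since $\{A_i\}$ is generic (Definition~\ref{def:generic}), in particular $A_i$ and $A_i^4$ are irreducible, so $z_i/\bar z_i$ is not a low-order root of unity and for a dense set of periodic words the product $z_{i_{n-1}}\cdots z_{i_0}$ is non-real, whence the corresponding linear return map is a genuine rotation--scaling with no real eigenvalue and no invariant pair of real lines. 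This contradiction excludes alternatives $(2)$ and $(3)$, leaving alternative $(1)$: $\Phi=DF|_{E^u_j}$ is conformal with respect to a H\"older continuous Riemannian norm on $E^u_j$ over $\supp(\mu)$. The hard part is precisely this transfer of the invariant structure through the merely H\"older conjugacy $H$, and then the number-theoretic input from genericity; this is carried out as in \cite[Proof of Proposition~2.4]{GKS1}, the only difference being that the ambient space $\supp(\mu)=\supp(\nu)^\ZZ\times\TT^d$ is non-compact, which causes no trouble because all the relevant H\"older, angle, bunching, and hyperbolicity constants are uniform in the base variable $\omega$, as recorded in the bullet list preceding Lemma~\ref{lem:j-smooth}.
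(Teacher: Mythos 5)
Your verification of the hypotheses of Proposition~\ref{prop:reduction} (local product structure of $\mu$ via Lemma~\ref{lem:mu=leb} and the bi-H\"older conjugacy $H$, H\"older continuity and fiber bunching of $\Phi=DF|_{E^u_j}$, single Lyapunov exponent) is essentially the paper's and is fine.

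The gap is in the elimination of alternatives $(2)$ and $(3)$. You want to argue that if $\Phi$ had an invariant line field or an invariant pair of transverse line fields, then the periodic return maps of $\Phi$ would have real spectral data, and you propose to contradict this by ``matching the periodic data of $\Phi$ and of $D\bar A|_{L^u_j}$ through $H$,'' since the periodic return maps of $D\bar A|_{L^u_j}$ are genuine rotation--scalings. But $H$ is only H\"older continuous, and the $u$-Gibbs/absolute-continuity information you invoke (Lemma~\ref{lem:mu=u-gibbs}, Proposition~\ref{prop:u-gibbs}) controls only Jacobians (determinants) along $\cF^u_j$, not the full $2\times 2$ return maps. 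A H\"older conjugacy preserves Lyapunov exponents and, via the Liv\v{s}ic argument, periodic determinants, but it does \emph{not} transport eigenvalue arguments or invariant sub-line structures of the derivative cocycle from one system to the other. You flag this yourself as ``the hard part,'' but the proposal never actually carries it out, and I do not see how it could go through as stated.

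The paper's proof avoids $H$ entirely at this step. Since $f_i\in\cU_i$ is $C^2$-close to $A_i$, it has a fixed point $p$ near $0$, and $Df_i(p)|_{E^u_j(p)}$ is a small perturbation of $A_i|_{L^u_j}$. By genericity of $A_i$ (irreducibility of $A_i$ and $A_i^4$), the eigenvalues of $A_i|_{L^u_j}$ are a complex-conjugate pair that is not a pair of real multiples of $i$; this is an open condition, so $Df_i(p)|_{E^u_j(p)}$ also has such eigenvalues. Taking the constant word $\omega=(\dots,f_i,f_i,\dots)$, the point $(\omega,p)$ is a fixed point of $F$ and the one-step return map of the cocycle $\Phi$ there has no real eigenvector and no invariant pair of transverse real lines (since its square has no real eigenvalue), which directly contradicts alternatives $(2)$ and $(3)$. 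This local, perturbative computation at a fixed point of $f_i$ is the step your proposal is missing; no transfer through $H$ is needed.
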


\begin{proof}
By Lemma \ref{lem:mu=leb}, we know that $\supp(\mu)=\supp(\nu)^\ZZ\times\TT^d$ and $$H_*\mu= \nu^\ZZ\times\Leb_{\TT^d}.$$ 
In particular, $H_*\mu$ has local product structure.  Since the topological conjugacy $H$ intertwines total stable and unstable sets (including combinatorial components) over $\supp(\mu)$ for the skew systems $F$ and $\bar A$, it follows that $\mu$ also has local  product structure.  Thus we can apply Proposition \ref{prop:reduction} to  the restriction of $DF$ to the subbundle  $E^u_j$ over  $\supp(\mu)$. We need to eliminate possibilities 2 and 3.
	
	Recall we assume the collection $\cA=\{A_i\}_{i=1}^m$ is generic.  This implies when we take the neighborhood $\cU_i$ of each $A_i$ sufficiently small enough, every $f_i\in\cU_i$ has fixed point such that the derivative $DF_{E^u_j}$ has a pair of complex conjugate eigenvalues which are not real multiples $i$.  This implies the cocycle $DF_{E^u_j}$  has neither a continuous invariant one-dimensional sub-bundle nor a continuous invariant field of two transverse lines.  This eliminates possibilities 2 and 3 in Proposition \ref{prop:reduction} for the cocycle  $DF_{E^u_j}$.
	\end{proof}

 From the conformal structure, we can show that the conjugacy $H$ is Lipschitz along $\cF^u_j$. Recall that for the extension system $\bar{A}:\Omega\times\TT^d\to\Omega\times\TT^d$, the fiberwise dynamics are compositions of $\{A_i\}_{i=1}^m$ which are all linear, which implies for any $x\in\TT^d$
 $$
 D\bar{A}_{\omega}(x)=\bar A_{\omega}
 \qquad
 \text{where}~A_{\omega}=A_i
 \quad \text{if} \quad \Upsilon(f_{\omega_0}) = A_i. 
 $$
 
\begin{Lemma}\label{lem:H-Lip}
	Assume ${\rm dim}E^u_j=2$ and both Lyapunov exponents of $(F,\mu)$ along $E^u_j$ are equal to the Lyapunov exponent of $(\bar{A},H_*(\mu))$ along $L^u_j$.  Then:
	\begin{itemize}
		\item The Jacobian of $F$ along $\cF^u_j$ is cohomologous to the Jacobian of $\bar{A}$ along $\cL^u_j$ on $\supp(\mu)$, i.e. there exists a H\"older continuous function $u:\supp(\mu)=\supp(\nu)^\ZZ\times\TT^d\to\RR$, which is uniformly bounded on $\supp(\mu)$, such that for every $(\omega,x)\in\supp(\mu)$,
		\begin{align*}
			\log\left|\det\left(DF|_{E^u_{j,\omega}(x)}\right)\right| &=
			\log\left|\det\left(D\bar{A}|_{L^u_{j,\omega}(H_{\omega}(x))}\right)\right|
			+u\circ\bar{A}(\omega,x)-u(\omega,x) \\
			&=\log\left|\det\left(D\bar{A}_\omega|_{L^u_j}\right)\right|
			+u\circ\bar{A}(\omega,x)-u(\omega,x).
		\end{align*}
		\item The topological conjugacy $H$ is Lipschitz along $\cF^u_j$ on $\supp(\mu)=\supp(\nu)^\ZZ$ and $DH|_{\cF^u_j}$ exists and is invertible almost everywhere with respect to $\mu$.
	\end{itemize}
\end{Lemma}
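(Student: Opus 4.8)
The plan is to establish the two items in order, the first (cohomology of the leafwise Jacobians) being the quantitative engine for the second (Lipschitz regularity of $H$ along $\cF^u_j$).

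For the first item I would start from what is already known: by Lemma~\ref{lem:mu=u-gibbs} the measure $\mu$ is a $u$-Gibbs state of $F$ along $\cF^u_j$, and by Lemma~\ref{lem:mu=leb} $H_*\mu=\nu^\ZZ\times\Leb_{\TT^d}$; since $\cL^u_j$ is a linear (translation-invariant) foliation of each fiber $\TT^d_\omega$, the disintegration of $\Leb_{\TT^d}$ along $\cL^u_j$ is leafwise Lebesgue, so $H_*\mu$ is a $u$-Gibbs state of $\bar A$ along $\cL^u_j$. I then apply Proposition~\ref{prop:u-gibbs} with $F,\bar A,\cF^u_j,\cL^u_j,\mu,H$ in the respective roles. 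Since item~(1) of that proposition holds, so do items~(2) and~(3): $H$ is absolutely continuous along the leaves of $\cF^u_j$ on $\supp(\mu)$ with Jacobian $J_H$ continuous and bounded away from $0$ and $\infty$, and there is $K>1$ with
$$
\frac{1}{K}\le
\frac{|\det(Df_\omega^n|_{E^u_{j,\omega}(x)})|}{|\det(\bar A_\omega^n|_{L^u_j})|}\le K
\qquad\text{for all }(\omega,x)\in\supp(\mu),\ n\ge 0 .
$$
Write $\varphi(\omega,x)=\log|\det(Df_\omega|_{E^u_{j,\omega}(x)})|$ and $\psi(\omega)=\log|\det(\bar A_\omega|_{L^u_j})|$; the latter is locally constant in $\omega$ (it takes only the finitely many values $\log|\det(A_i|_{L^u_j})|$), and $\varphi$ is bounded and H\"older on $\supp(\mu)$ because $E^u_j$ is uniformly H\"older and the $f_\omega$ have uniformly bounded $C^2$-norm. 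The displayed inequality says precisely that the Birkhoff sums $\sum_{i=0}^{n-1}(\varphi-\psi\circ H)\circ F^i$ are uniformly bounded on $\supp(\mu)$. Hence for any periodic point $(\omega,p)\in\supp(\mu)$ with $F^l(\omega,p)=(\omega,p)$ the sum over $kl$ steps equals $k$ times the sum over one period, and boundedness in $k$ forces $\sum_{i=0}^{l-1}(\varphi-\psi\circ H)(F^i(\omega,p))=0$. Since $F$ restricted to $\supp(\mu)=\supp(\nu)^\ZZ\times\TT^d$ is a fiberwise Anosov skew product over the topologically transitive full shift on the Polish alphabet $\supp(\nu)$, the Liv\v{s}ic theorem (Lemma~\ref{lem:livsic}) yields a H\"older continuous $u$ on $\supp(\mu)$ with $\varphi=\psi\circ H+u\circ F-u$, which is the asserted cohomology. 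Boundedness of $u$ comes for free: by the chain rule for the absolutely continuous conjugacy $H$, $-\log J_H$ is also a transfer function (a priori $\mu$-a.e.), so $u$ coincides up to an additive constant with $-\log J_H$, and $\log J_H$ is bounded by item~(2) of Proposition~\ref{prop:u-gibbs}.

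For the second item I would combine this cohomology with the conformal structure. By Corollary~\ref{coro:conformal}, $DF|_{E^u_j}$ is conformal on $\supp(\mu)$ with respect to a H\"older Riemannian norm, and $\bar A_\omega|_{L^u_j}$ is conformal by genericity; so $|\det(Df_\omega^n|_{E^u_j})|$ and $|\det(\bar A_\omega^n|_{L^u_j})|$ are, up to uniform multiplicative constants, the squares of the leafwise expansion factors $c_n^F(\omega,x):=\|Df_\omega^n|_{E^u_{j,\omega}(x)}\|$ and $c_n^{\bar A}(\omega):=\|\bar A_\omega^n|_{L^u_j}\|$. Together with the bounded-ratio inequality above this gives $c_n^F(\omega,x)\asymp c_n^{\bar A}(\omega)$ uniformly in $n$ and $(\omega,x)\in\supp(\mu)$. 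Now I run the standard expansion–bootstrapping argument: given $y,z$ on a short plaque of $\cF^u_{j,\omega}$ with $\delta:=d_{\cF^u_j}(y,z)$ small, bounded distortion along the expanding leaves lets me choose $n$ so that $d_{\cF^u_j}(f_\omega^n y,f_\omega^n z)$ has a fixed size and $c_n^F(\omega,x)$ is comparable to $1/\delta$; since $H$ is uniformly $C^0$-close to the identity (Proposition~\ref{prop:structure-stable}) and $H\circ F=\bar A\circ H$, the points $\bar A_\omega^n(H_\omega y)=H_{\sigma^n\omega}(f_\omega^n y)$ and $\bar A_\omega^n(H_\omega z)$ remain within a fixed distance, and as $\bar A_\omega^n$ expands the flat leaf $\cL^u_{j,\omega}$ through $H_\omega x$ by exactly $c_n^{\bar A}(\omega)$, I conclude $d_{\cL^u_j}(H_\omega y,H_\omega z)\le L\,\delta$ for a uniform $L$. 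Thus $H_\omega$ is uniformly Lipschitz from $\cF^u_j$-plaques onto $\cL^u_j$-leaves; the symmetric argument for $H^{-1}$ (using that $f_\omega^n$ and $\bar A_\omega^n$ both expand along their unstable foliations and that $H^{-1}$ is also $C^0$-close to the identity) shows $H_\omega^{-1}$ is likewise Lipschitz, so $H_\omega|_{\cF^u_j}$ is uniformly bi-Lipschitz onto its image. By Rademacher's theorem $DH_\omega|_{\cF^u_{j,\omega}(x)}$ then exists at leafwise-Lebesgue-a.e. point and, since $H^{-1}$ is Lipschitz as well, is invertible a.e.; because $\mu$ is $u$-Gibbs along $\cF^u_j$ its conditionals on leaves are equivalent to leafwise Lebesgue, so $DH|_{\cF^u_j}$ exists and is invertible $\mu$-a.e.

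I expect the main obstacle to be exactly the Lipschitz conclusion in the second item. Absolute continuity of $H$ along the leaves with a bounded Jacobian, which Proposition~\ref{prop:u-gibbs} delivers directly, is not by itself enough to force Lipschitz regularity once $\dim E^u_j=2$ (quasiconformal maps are a counterexample), so the conformality from Corollary~\ref{coro:conformal} — which makes the \emph{leafwise expansion factors} of $F$ and $\bar A$ comparable, not merely their Jacobians — must be used essentially in the bootstrapping step. A secondary point needing care throughout is that $\supp(\mu)$ fibers over the non-compact base $\Omega$, so all H\"older and boundedness estimates (for the invariant bundles, their angles, the fiber derivatives, and the distortion along expanding leaves) have to be quantified uniformly over the fibers, as recorded in the standing hypotheses preceding Lemma~\ref{lem:j-smooth}.
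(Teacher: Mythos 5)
Your argument for the first item is essentially the paper's: Proposition~\ref{prop:u-gibbs}(3) supplies the uniformly bounded Birkhoff sums, which forces matching periodic averages, and then Lemma~\ref{lem:livsic} produces the H\"older transfer function~$u$. Your extra remark that $u$ agrees (up to a constant) with $-\log J_H$ and inherits its boundedness is a clean way to settle the boundedness claim that the paper leaves implicit.

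For the second item you take a genuinely different route. The paper follows GKS1 closely: it builds an auxiliary map $H^{-1}_0$ (by sliding $H^{-1}$ along local leaves to preserve $\cL^u_j$ and the complementary linear foliation), forms $H^{-1}_n = F^{-n}\circ H^{-1}_0\circ\bar A^n$, and bounds $\|DH^{-1}_n|_{\cL^u_j}\|$ uniformly by pairing the conformal norm with the identity $\|(DF^n|_{E^u_j})^{-1}\|^F\cdot\|D\bar A^n|_{L^u_j}\| = \phi/(\phi\circ\bar A^n)$ coming from the cohomology of item~1; Lipschitz regularity of $H^{-1}$ (and then of $H$) is then the limit of these derivative bounds. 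Your bootstrap argument instead works at a fixed scale: using conformality to convert the bounded Jacobian ratio into $c_n^F\asymp c_n^{\bar A}$, you iterate a leafwise pair $y,z$ forward to unit scale, use the uniform $C^0$-closeness of $H$ to the identity plus the fact that lifted $\cL^u_j$-leaves are affine (so ambient and leafwise distance agree there), and divide by the exactly known linear expansion of $\bar A^n_\omega|_{L^u_j}$. Both proofs hinge on the same two ingredients — conformality on the two-dimensional bundle and the boundedness of the transfer function — but yours avoids introducing the auxiliary maps $H^{-1}_0$, $H^{-1}_n$ and the pointwise derivative calculus, at the cost of having to be slightly careful about the ``stopping time'' $n$ and the uniformity over the non-compact base (which you correctly flag). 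Either route is valid; the paper's is closer in form to the GKS1 source, while yours is arguably more self-contained.
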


\begin{proof}
	From the assumption that both Lyapunov exponents of $(F,\mu)$ are equal to the Lyapunov exponent of $(\bar{A},H_*(\mu))$ along $L^u_j$, the third item of Proposition \ref{prop:u-gibbs} implies for every periodic point $(\omega,x)=F^l(\omega,x)\in\supp(\mu)$, 
	we have
	$$
	\sum_{i=0}^{l-1}\log\left|\det\left(DF|_{E^u_{j,\sigma^i(\omega)}(f_\omega^ix)}\right)\right|
	=
	\sum_{i=0}^{l-1}\log\left|\det\left(D\bar{A}|_{L^u_{j,\sigma^i(\omega)}(H_{\sigma^i(\omega)}\circ f_\omega^ix)}\right)\right|.
	$$
	Since $\supp(\mu)=\supp(\nu)^\ZZ\times\TT^d$, following Lemma \ref{lem:livsic}, 
	there exists a H\"older continuous function $u:\supp(\mu)\to\RR$ such that for every $(\omega,x)\in\supp(\mu)$,
	\begin{align*}
		\log\left|\det\left(DF|_{E^u_{j,\omega}(x)}\right)\right| &=
		\log\left|\det\left(D\bar{A}|_{L^u_{j,\omega}(H_{\omega}(x))}\right)\right|
		+u\circ\bar{A}(\omega,x)-u(\omega,x) \\
		&=\log\left|\det\left(D\bar{A}_\omega|_{L^u_j}\right)\right|
		+u\circ\bar{A}(\omega,x)-u(\omega,x).
	\end{align*}
	This proves the first item.
	
	\vskip3mm
	
	The proof of second item mainly follows \cite[Section 3.3]{GKS1}. We just sketch the proof. Notice our conjugacy $H$ maps the nonlinear foliation $\cF^u_j$ to linear one $\cL^u_j$, which corresponds to $h^{-1}$ in \cite[Section 3.3]{GKS1}. Let $\bar{\cL}$ be the linear integral foliation of
	$$
	L^s_1\oplus\cdots\oplus L^s_l\oplus L^u_1\oplus\cdots\oplus L^u_{j-1}\oplus L^u_{j+1}\oplus\cdots\oplus L^u_k.
	$$
	
	For every $(\omega,x)\in\supp(\mu)=\supp(\nu)^\ZZ\times\TT^d$, define $H^{-1}_{0,\omega}(x)$ as the intersection of local leaves
	$$
	H^{-1}_{0,\omega}(x)=\cF^u_{j,\omega,loc}(H^{-1}_{\omega}(x))\cap\bar{\cL}_{\omega,loc}(x).
	$$
	then $H^{-1}_0:\supp(\nu)^\ZZ\times\TT^d\to\supp(\nu)^\ZZ\times\TT^d$ is well defined, close to $H^{-1}$ and satisfies:
	\begin{enumerate}
		\item $H^{-1}_{0,\omega}(\cL^u_{j,\omega})=\cF^u_{j,\omega}$ and 
		$H^{-1}_{0,\omega}(\cL^u_{j,\omega}(x))=\cF^u_{j,\omega}(H^{-1}_{\omega}(x))$ for every $x\in\TT^d_{\omega}$;
		\item $\sup_{(\omega,x)\in\supp(\mu)}d_{\cF^u_j}(H^{-1}_{0,\omega}(x),H^{-1}_{\omega}(x))<+\infty$, where $d_{\cF^u_j}(\cdot,\cdot)$ is the distance along the leaf of $\cF^u_j$;
		\item $H^{-1}_{0,\omega}$ is a $C^{1+}$ diffeomorphisms along the leaves of $\cL^u_j$ with uniformly bounded derivatives on $\supp(\mu)$;
		\item $H^{-1}=\lim_{n\to\infty}H^{-1}_n$ uniformly on $\supp(\nu)^\ZZ\times\TT^d$ 
		  where $H^{-1}_n=F^{-n}\circ H^{-1}_0\circ\bar{A}^n$.
	\end{enumerate}
	
	Now we show that the derivatives $DH^{-1}_n$ along $\cL^u_j$ are uniformly bounded.
	\begin{align}\label{equ:H_n}
		&\quad\left\|DH^{-1}_n|_{L^u_{j,\omega}(x)}\right\| \notag \\
		&\leq
		\left\|\left(DF^n|_{E^u_{j,\omega}(F^{-n}_{\sigma^n(\omega)} \circ H^{-1}_{0,\sigma^n(\omega)}\circ\bar{A}^n_{\omega}(x))}\right)^{-1}\right\|
		\cdot
		\left\|DH^{-1}_{0,\sigma^n(\omega)}
		   |_{L^u_{j,\sigma^n(\omega)}(\bar{A}^n_{\omega}(x))}\right\|
		\cdot\left\|D\bar{A}^n_{\omega}|_{L^u_{j,\omega}(x)}\right\| \notag \\
		&\leq
		\left\|\left(DF^n|_{E^u_{j,\omega}(H^{-1}_{n,\omega}(x))}\right)^{-1}\right\|
		\cdot \left\|D\bar{A}^n_{\omega}|_{L^u_{j,\omega}(x)}\right\|
		\cdot
		\sup\left\{~\left\|DH^{-1}_{0,\omega}|_{L^u_{j,\omega}(x)}\right\|:
		~(\omega,x)\in\supp(\mu)~\right\}
	\end{align}
	Since the last term is finite, we only need to show the product of first and second terms are uniformly bounded.
	
	We can take the norm $\|\cdot\|$ such that $D\bar{A}|_{L^u_j}$ is conformal, then we have
	$$
	\|D\bar{A}|_{L^u_{j,\omega}(x)}\|
	=\sqrt{\det\left(D\bar{A}|_{L^u_{j,\omega}}(x)\right)}
	=\sqrt{\det\left(A_{\omega}|_{L^u_j}\right)}:=b(\omega).
	$$
	From Corollary \ref{coro:conformal}, there exists a H\"older continuous Riemannian norm $\|\cdot\|^F$ on $E^u_j|_{\supp(\mu)}$, such that 
	$$
	\|DF(v)\|^F=a(\omega,x)\cdot\|v\|^F,
	\qquad \forall(\omega,x)\in\supp(\mu),\quad
	\forall v\in E^u_{j,\omega}(x),
	$$
	where
	$$
	a(\omega,x)=\sqrt{\det\left(DF|_{E^u_{j,\omega}}(x)\right)}.
	$$
	This implies $\|(DF|_{E^u_{j,\omega}(x)})^{-1}\|^F=a(\omega,x)^{-1}$.
	
	From the first item that the Jacobian of $DF$ along $E^u_j$ is H\"older cohomologous to the Jacobian of $\bar{A}$ along $L^u_j$, there exists some H\"older continuous function 
	$$
	\phi(\omega,x)=\exp\left(\frac{1}{2}\cdot u(\omega,x)\right):~\supp(\mu)\to\RR^+,
	$$ 
	where $u(\omega,x)$ is defined by the first item of this lemma, such that
	$$
	\frac{b(\omega)}{a(\omega,x)}=\frac{\phi(\omega,x)}{\phi\circ\bar{A}(\omega,x)}.
	$$
	
	Since $u(\omega,x)$ is uniformly bounded on $\supp(\mu)$, $\phi(\omega,x)$ is uniformly bounded from $0$ and $+\infty$ on $\supp(\mu)$.
	This implies
	\begin{align*}
		\left\|\left(DF^n|_{E^u_{j,\omega}(H^{-1}_{n,\omega}(x))}\right)^{-1}\right\|^F
		\cdot \left\|D\bar{A}^n_{\omega}|_{L^u_{j,\omega}(x)}\right\| &=
		\frac{b(\sigma^{n-1}(\omega))\cdot\cdots\cdot b(\sigma(\omega))\cdot b(\omega)}{
			 a\circ F^{n-1}(\omega,x)\cdot\cdots\cdot a\circ F(\omega,x)\cdot a(\omega,x)} \\
		&=\frac{\phi(\omega,x)}{\phi\circ\bar{A}^n(\omega,x)},
	\end{align*}
	which is uniformly bounded on $\supp(\mu)$.
	
	Since the norm $\|\cdot\|^F$ is equivalent to $\|\cdot\|$, from the equation \ref{equ:H_n}, we have $\left\|DH^{-1}_n|_{L^u_{j,\omega}(x)}\right\|$ is uniformly bounded in $(\omega,x)$ and $n$, this implies $H^{-1}=\lim_{n\to\infty}H_n^{-1}$ is Lipschitz along $\cL^u_j$ on $\supp(\mu)$.
	
	A simliar argument shows that the topological conjugacy $H$ is Lipschitz along $\cF^u_j$ on $\supp(\mu)$. In particular, since $\mu$ is a $u$-Gibbs state along $\cF^u_j$, $DH|_{\cF^u_j}$ exists and invertible almost everywhere with respect to $\mu$.
\end{proof}

Now we can finish the proof of Lemma \ref{lem:j-smooth}.

\begin{proof}[Proof of Lemma \ref{lem:j-smooth}]
	The first item was proved in Lemma \ref{lem:mu=u-gibbs}. Now we prove the second item which says that the conjugacy $H$ is smooth along $\cF^u_j$ with uniformly H\"older continous derivative along $\cF^u_j$ on $\supp(\mu)$.

	Firstly, we assume ${\rm dim}E^u_j=1$, then the fiberwise Lyapunov exponent of $(F,\mu)$ along $E^u_j$ is equal to $\lambda(\{p_i\},L^u_j)$ implies the topological conjugacy $H$ is $C^{1+}$-smooth along $\cF^u_j$ by Proposition \ref{prop:u-gibbs} again. Moverover, taking the derivative of $H\circ F=\bar{A}\circ H$ along $\cF^u_j$, we have
	$$
	\log\|D\bar{A}|_{L^u_j}\|=\log\|DF|_{E^u_j}\|+\log\|DH|_{E^u_j}\|\circ F-\log\|DH|_{E^u_j}\|.
	$$
	That is $\log\|DH|_{E^u_j}\|$ is the coboundary between $\log\|D\bar{A}|_{L^u_j}\|$ and $\log\|DF|_{E^u_j}\|$.
	Lemma \ref{lem:livsic} shows that $\log\|DH|_{E^u_j}\|$ is H\"older continuous.
	
	\vskip2mm
	
	This proves Lemma \ref{lem:j-smooth} when ${\rm dim}E^u_j=1$.
	Now we assume ${\rm dim}E^u_j=2$.

	From second item of Lemma \ref{lem:H-Lip}, we can differentiate $H\circ F=\bar{A}\circ H$ along $\cF^u_j$ on a set of full Lebesgue and obtain
	$$
	\left(DH|_{E^u_{j,\sigma(\omega)}(f_\omega(x))}\right)\circ
	\left(DF|_{E^u_{j,\omega}(x)}\right) =
	\left(D\bar{A}|_{L^u_{j,\omega}(H_\omega(x))}\right)\circ
	\left(DH|_{E^u_{j,\omega}(x)}\right).
	$$
	This implies $DF|_{E^u_j}$ and $D\bar{A}|_{L^u_j}$ are two conformal cocycles  which are cohomologous by a measurable cocycle $DH|_{E^u_j}$. Since both cocylces $DF|_{E^u_j}$ and $D\bar{A}|_{L^u_j}$ are bounded and H\"older continuous, then \cite[Theorem 2.7]{Sa15} shows that $DH|_{E^u_j}$ is H\"older continuous, see also \cite[Theorem 4.14]{Sa}.
	
	This finishes the proof of Lemma \ref{lem:j-smooth}.
\end{proof}

\subsection{Matching the strong unstable foliations}

In this subsection, we prove Lemma \ref{lem:j+1}. The proof mainly follows \cite[Proposition 2.2]{GKS} but in the skew product systems. In this subsection, $F$ and $\bar{A}$ satisfy all assumptions in Lemma \ref{lem:j+1}.

The proof of Lemma \ref{lem:j+1} is based on the following lemma.

\begin{Lemma}\label{lem:j+1'}
	For every $1\leq j<k$, assume that $H(\cF^u_{(j,k)})=\cL^u_{(j,k)}$, $H(\cF^u_j)=\cL^u_j$ and $H$ is $C^{1+}$-smooth along $\cF^u_j$ on $\supp(\mu)$, then $H(\cF^u_{(j+1,k)})=\cL^u_{(j+1,k)}$.
\end{Lemma}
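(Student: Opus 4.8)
The plan is to transport the proof of \cite[Proposition 2.2]{GKS} — matching of strong unstable sub-foliations — to the fiberwise setting, the only new difficulty being that every estimate must be made \emph{uniform over the non-compact base} $\Omega$, which is exactly what the three bulleted properties of $F$ recorded just before Lemma \ref{lem:j-smooth} provide (uniformly H\"older invariant bundles with uniformly bounded angles; fiberwise maps of bounded $C^2$-norm with uniformly H\"older derivatives along the invariant bundles; and uniform fiberwise contraction/expansion rates from Proposition \ref{prop:leaf-conjugacy}(2) and Lemma \ref{lem:A-dominated}).

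\textbf{Step 1: a candidate foliation.} First I would record, via Lemma \ref{lem:mu=leb}, that $\supp(\mu)=\supp(\nu)^\ZZ\times\TT^d$ and $H_*\mu=\nu^\ZZ\times\Leb_{\TT^d}$, and then set $\cD_\omega:=H_\omega^{-1}\bigl(\cL^u_{(j+1,k)}\bigr)$ for $\omega\in\supp(\nu)^\ZZ$. Since each fiberwise map $\bar A_\omega$ preserves the linear splitting $L^u_j\oplus L^u_{(j+1,k)}$ of $L^u_{(j,k)}$, the relation $H\circ F=\bar A\circ H$ makes $\cD=\{\cD_\omega\}$ an $F$-invariant foliation, $f_\omega(\cD_\omega)=\cD_{\sigma(\omega)}$; from the hypotheses $H(\cF^u_{(j,k)})=\cL^u_{(j,k)}$ and $H(\cF^u_j)=\cL^u_j$, every leaf $\cD_\omega(y)$ lies inside $\cF^u_{(j,k),\omega}(y)$ and is topologically transverse there to $\cF^u_{j,\omega}$. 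The claim is then equivalent to $\cD_\omega=\cF^u_{(j+1,k),\omega}$ for every such $\omega$.

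\textbf{Step 2: contracting the deviation.} For $z\in\cD_\omega(y)$ I would let $z_*$ be the point of $\cF^u_{(j+1,k),\omega}(y)$ on the same local $\cF^u_{j,\omega}$-leaf as $z$ inside $\cF^u_{(j,k),\omega}(y)$, and measure $\rho_\omega(y,z):=d_{\cF^u_{j,\omega}}(z,z_*)$, the $\cF^u_j$-holonomy displacement between the two foliations at $z$. On one hand, because $H$ is uniformly $C^0$-close to the identity (in the lifts, $\sup_\omega\|\tilde H_\omega-\Id\|\le C$) and $\cL^u_{(j+1,k)}$ is linear, the leaves of $\cD$ stay within uniformly bounded distance of affine translates of $L^u_{(j+1,k)}$, and — using that $H$ is $C^{1+}$ with invertible derivative along $\cF^u_j$ — this yields a uniform upper bound for $\rho_\omega(y,z)$ that is preserved along forward orbits (the bound is $F$-invariant since $\bar A_\omega$ preserves $L^u_{(j+1,k)}$). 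On the other hand, iterating forward and using equivariance of the $\cF^u_j$- and $\cF^u_{(j+1,k)}$-holonomies together with Proposition \ref{prop:leaf-conjugacy}(2), Lemma \ref{lem:A-dominated}, and — when $\dim E^u_j=2$ — Corollary \ref{coro:conformal},
$$d_{\cF^u_{j,\sigma^n\omega}}\bigl(f^n_\omega z,\,f^n_\omega z_*\bigr)\ \ge\ \exp\bigl[\lambda^u_{j,\omega}(n)-n\epsilon\bigr]\,\rho_\omega(y,z),$$
with $\lambda^u_{j,\omega}(n)\ge cn$ for a uniform $c>0$. Letting $n\to\infty$ forces $\rho_\omega(y,z)=0$, hence $\cD_\omega(y)\subseteq\cF^u_{(j+1,k),\omega}(y)$; as both are $F$-invariant foliations of $\cF^u_{(j,k),\omega}(y)$ transverse to $\cF^u_{j,\omega}$ and of equal dimension, they coincide, which is $H(\cF^u_{(j+1,k)})=\cL^u_{(j+1,k)}$ on $\supp(\mu)$.

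\textbf{The hard part.} The delicate point will be the uniform a priori bound on $\rho_\omega(y,z)$ at all scales along a forward orbit: since $H$ is only bi-H\"older transverse to $\cF^u_j$, the images $H_\omega(\cF^u_{(j+1,k),\omega}(y))$ need not be Lipschitz graphs, so one must work with $\cD=H^{-1}(\cL^u_{(j+1,k)})$ — whose leaves genuinely are uniformly close to affine subspaces — and exploit the uniform bi-Lipschitz control of $H$ along $\cF^u_j$. If this direct route turns out to be awkward, the alternative is to first prove $\cD_\omega=\cF^u_{(j+1,k),\omega}$ over periodic words, where the return maps have genuine hyperbolic fixed points carrying a dominated splitting, so that a bounded-deviation invariant graph through the fixed point must be the strong unstable one, using the Anosov Closing Lemma and Liv\v{s}ic Theorem for $F$ (Lemma \ref{lem:livsic}), and then to propagate to all of $\supp(\mu)$ using topological transitivity of $F$ on $\supp(\nu)^\ZZ\times\TT^d$. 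A secondary technical point, already handled in the proof of Proposition \ref{prop:leaf-conjugacy}, is that all leafwise distances and holonomies behave uniformly in $\omega$ despite $\Omega$ being non-compact.
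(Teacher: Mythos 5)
Your Step~2 has a genuine gap precisely at the point you flag as the hard part, and I believe the issue is not merely technical but essentially circular. You want a uniform, scale-independent upper bound on the $\cF^u_j$-holonomy displacement $\rho_\omega(y,z)$ between $\cD_\omega(y)=H_\omega^{-1}\bigl(\cL^u_{(j+1,k)}\bigr)(y)$ and $\cF^u_{(j+1,k),\omega}(y)$. Your argument for this bound is that lifted leaves of $\cD$ lie within a uniformly bounded neighborhood of affine translates of $L^u_{(j+1,k)}$; that is true, since $\tilde H_\omega$ is uniformly $C^0$-close to the identity. But to conclude that $\rho$ is bounded you also need lifted leaves of $\cF^u_{(j+1,k),\omega}$ to lie within a uniformly bounded neighborhood of affine translates of $L^u_{(j+1,k)}$, and that is \emph{exactly} the content of the lemma: it is equivalent to saying that $\cL:=H(\cF^u_{(j+1,k)})$ lies within bounded distance of $\cL^u_{(j+1,k)}$. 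A priori, $\cL$ is just a continuous $\bar A$-invariant subfoliation of $\cL^u_{(j,k)}$ transverse to $\cL^u_j$, and such a foliation can drift away from $\cL^u_{(j+1,k)}$ linearly along its leaves (in the lift, a leaf through $y$ can be a graph over $L^u_{(j+1,k)}$ with unbounded $L^u_j$-component); the bi-Lipschitz control of $H$ along $\cF^u_j$ cannot close this gap, because after applying $H$ it becomes the same unresolved question on the linear side.

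The paper avoids this by introducing a rigid structure you do not use: it shows (Lemmas~\ref{lem:linear-j+1-smooth} and \ref{lem:j+1-linear}) that the holonomy of $\cL$ between leaves of $\cL^u_j$ is a \emph{parallel translation}. The $C^{1+}$-smoothness of $H$ along $\cF^u_j$ gives that this holonomy is differentiable with derivative tending to the identity as the endpoints come together, and conformality of $\bar A$ along $\cL^u_j$ (which is where the generic assumption enters) lets one conjugate by $\bar A^n$ and push this local estimate to the identity globally. With parallel translation in hand, the paper then argues at a periodic point $(\omega,x)$: a single deviation $z_1\in B\setminus\cL_\omega(x)$ is replicated along a dense $\cL^u_j$-leaf to produce $x_n\in\cL_\omega(x)$ with linearly growing $\cL^u_{(j+1,k)}$- and $\cL^u_j$-displacements from $x$, and since $\bar A^{-\kappa}$ contracts $\cL^u_{(j+1,k)}$ strictly faster than $\cL^u_j$, the iterates $\bar A^{-\kappa N(n)}(z_n)$ return to the compact unit ball $B$ with $\cL^u_j$-displacement tending to infinity --- contradicting only the \emph{local} finiteness $\max_{z\in B}d_{\cL^u_j}(z,\cL^u_j(z)\cap\cL_\omega(x))<\infty$, which holds automatically by continuity and compactness. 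No global a priori bound of the kind you ask for is ever needed. Your fallback sketch via periodic words is closer in spirit to the paper, but as stated it does not identify the parallel-translation property that turns ``$\cD$ is an invariant foliation transverse to $\cF^u_j$'' into a contradiction-generating mechanism; the assertion that a bounded-deviation invariant graph through the fixed point must be $\cF^u_{(j+1,k)}$ again presupposes the very bound that is to be proved.
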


We can prove Lemma \ref{lem:j+1} by assuming Lemma \ref{lem:j+1'}.

\begin{proof}[Proof of Lemma \ref{lem:j+1}]
	For every fixed $1\leq j<k$, we prove Lemma \ref{lem:j+1} by induction. Since $H(\cF^u_{(1,k)})=\cL^u_{(1,k)}$ and $H$ is $C^{1+}$-smooth along $\cF^u_1$ with uniformly bounded derivatives, then Lemma \ref{lem:j+1'} implies $H(\cF^u_{(2,k)})=\cL^u_{(2,k)}$. The fourth item of Proposition \ref{prop:leaf-conjugacy} shows that 
	$$
	H(\cF^u_{(1,2)})=\cL^u_{(1,2)}.
	$$ 
	By taking the intersection, we have
	$$
	H\left(\cF^u_2\right)=H\left(\cF^u_{(1,2)}\cap\cF^u_{(2,k)}\right)
	=\cL^u_{(1,2)}\cap\cL^u_{(2,k)}=\cL^u_2
	\quad \text{on}\quad \supp(\mu).
	$$ 
	From the assumption that $H$ is $C^{1+}$-smooth along $\cF^u_{(1,j)}$, it is also $C^{1+}$-smooth along $\cF^u_2$. 
	Then we can apply Lemma\ref{lem:j+1'} again, which shows that $H(\cF^u_{(3,k)})=\cL^u_{(3,k)}$ and consequently $H(\cF^u_3)=\cL^u_3$. Repeat this argument, we have $H(\cF^u_{j+1,k})=\cL^u_{(j+1,k)}$ and thus
	$$
	H\left(\cF^u_{j+1}\right)=H\left(\cF^u_{(1,j+1)}\cap\cF^u_{(j+1,k)}\right)
	=\cL^u_{(1,j+1)}\cap\cL^u_{(j+1,k)}=\cL^u_{j+1}
	\quad \text{on}\quad \supp(\mu).
	$$
\end{proof}

The proof of Lemma \ref{lem:j+1'} mainly follows \cite[Proposition 2.3]{GKS}. Since we have $\cF^u_j,~\cF^u_{(j+1,k)}$ are two uniformly transversal foliations in each leaf of $\cF^u_{(j,k)}$, and
$$
H\left(\cF^u_{(j,k)}\right)=\cL^u_{(j,k)},
\qquad
H\left(\cF^u_j\right)=\cL^u_j,
$$
the foliation $H(\cF^u_{(j+1,k)})$ is an $\bar{A}$-invariant foliation which sub-foliates $\cL^u_{(j,k)}$ and transverses to $\cL^u_{(j+1,k)}$ in each leaf of $\cL^u_{(j,k)}$.

 \begin{Lemma}\label{lem:linear-j+1-smooth}
	Denote by $\cL:=H(\cF^u_{(j+1,k)})$.  Then, the holonomy maps along leaves of $\cL$ between leaves of $\cL^u_j$ are uniformly $C^{1+}$-smooth; that is,  for every $\omega\in\supp(\nu)^\ZZ$ and $y\in\cL(x)\subseteq\TT^d_\omega$, if we  denote by 
	$$
	\Hol^{\cL}_{x,y}:~\cL^u_j(x)\to\cL^u_j(y)
	$$ 
	the holonomy map induced by $\cL$ between leaves of $\cL^u_j$, then $\Hol^{\cL}_{x,y}$ is uniformly $C^{1+}$-smooth and the derivatives $D\Hol^{\cL}_{x,y}$ uniformly converges to identity map $\Id$ on $L^u_j$ as $y\to x$.
\end{Lemma}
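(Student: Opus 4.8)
The plan is to transport the holonomy from the linear side of the conjugacy to the nonlinear side, where $\cL=H(\cF^u_{(j+1,k)})$ becomes the genuine $F$-invariant strong unstable sub-foliation $\cF^u_{(j+1,k)}$ and $\cL^u_j$ becomes $\cF^u_j$, and then to invoke the classical smoothness of strong unstable holonomies. Concretely, fix $\omega\in\supp(\nu)^\ZZ$, $x\in\TT^d_\omega$ and $y\in\cL(x)$, and put $x'=H_\omega^{-1}(x)$, $y'=H_\omega^{-1}(y)$. Under the standing hypotheses (those of Lemma~\ref{lem:j+1'}) we have $H_\omega(\cF^u_{j,\omega})=\cL^u_{j,\omega}$, and since $\cL=H(\cF^u_{(j+1,k)})$ we also have $y'\in\cF^u_{(j+1,k),\omega}(x')$; a direct check then gives
\[
\Hol^{\cL}_{x,y}
= \bigl(H_\omega|_{\cF^u_{j,\omega}(y')}\bigr)\circ
  \Hol^{\cF^u_{(j+1,k)}}_{x',y'}\circ
  \bigl(H_\omega|_{\cF^u_{j,\omega}(x')}\bigr)^{-1},
\]
where $\Hol^{\cF^u_{(j+1,k)}}_{x',y'}\colon\cF^u_{j,\omega}(x')\to\cF^u_{j,\omega}(y')$ is the holonomy of $\cF^u_{(j+1,k),\omega}$ between $\cF^u_j$-leaves inside $\cF^u_{(j,k),\omega}(x')$. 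Since $H_\omega$ restricted to $\cF^u_j$-leaves is $C^{1+}$ with constants uniform in $(\omega,x')$ (hypothesis of Lemma~\ref{lem:j+1'}, i.e.\ conclusion of Lemma~\ref{lem:j-smooth}) and $H_\omega,H_\omega^{-1}$ are uniformly $C^0$-close to the identity, it suffices to show that $\Hol^{\cF^u_{(j+1,k)}}_{x',y'}$ is uniformly $C^{1+}$ and that its derivative tends to the identity, uniformly in $\omega$, as $y'\to x'$.

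For this, work inside a leaf $\cF^u_{(j,k),\omega}(x')$: it is a $C^{1+}$ immersed submanifold on which $F$ acts fiberwise by $C^{1+}$ maps of uniformly bounded norm, carrying the two transverse, $F$-invariant, complementary-dimensional sub-foliations $\cF^u_j$ and $\cF^u_{(j+1,k)}$ tangent to the uniformly H\"older bundles $E^u_j$, $E^u_{(j+1,k)}$ of uniformly bounded angle, which by item~2 of Proposition~\ref{prop:leaf-conjugacy} form a fiberwise dominated splitting with $E^u_{(j+1,k)}$ dominating $E^u_j$. Passing to $F^{-1}$ restricted to $\cF^u_{(j,k)}$-leaves turns this picture into a fiberwise contraction with a dominated splitting in which $\cF^u_{(j+1,k)}$ plays the role of the strong stable foliation and $\cF^u_j$ of the weak one. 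The $C^{1+}$-smoothness of the strong stable holonomy between weak stable leaves is then obtained from the $C^r$-section theorem applied to the bundle of first-order holonomy jets, the fiber contraction being supplied exactly by the domination; because all the relevant data (H\"older exponents and constants of $E^u_\bullet$, the angles between them, the fiberwise $C^2$-norms, and the hyperbolicity/domination rates) are uniform over $\Omega=\cU^\ZZ$ — the three bulleted properties preceding Lemma~\ref{lem:j-smooth} — the classical argument runs with uniform constants. This produces a holonomy that is $C^{1+\beta}$ for some $\beta>0$ with derivative H\"older of uniformly bounded norm and seminorm, depending continuously in the $C^1$-topology on the endpoints; since $\Hol^{\cF^u_{(j+1,k)}}_{x',x'}=\Id$, we conclude $D\Hol^{\cF^u_{(j+1,k)}}_{x',y'}\to\Id$ uniformly as $y'\to x'$.

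Plugging this into the transfer formula and using the uniform $C^{1+}$-control on $H_\omega|_{\cF^u_j}$ shows $\Hol^{\cL}_{x,y}$ is uniformly $C^{1+}$, and letting $y\to x$ (hence $y'\to x'$) gives $D\Hol^{\cL}_{x,y}|_x\to DH_\omega|_{x'}\circ\Id\circ(DH_\omega|_{x'})^{-1}=\Id$ under the canonical identification $T_x\cL^u_j(x)\simeq L^u_j$. The main obstacle is the middle step: on a compact manifold the $C^{1+}$-regularity of strong stable holonomy between weak stable leaves is classical, but here one must check that the $C^r$-section / invariant-cone estimates go through with constants \emph{uniform} over the non-compact base $\Omega$ — precisely where the uniform H\"older bounds on the invariant bundles, the uniform fiberwise $C^2$-bounds, and the uniform domination rates enter — and one must also record the auxiliary fact that $f_\omega$ restricted to the $C^1$-leaves of $\cF^u_{(j,k)}$ is $C^{1+\alpha}$ with uniform bounds, which follows from $C^2$-boundedness of $f_\omega$ together with uniform H\"older continuity of the leafwise tangent bundle.
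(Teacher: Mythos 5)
Your proof is correct and follows essentially the same route as the paper: the identical conjugacy transfer formula for $\Hol^{\cL}_{x,y}$, followed by the uniform $C^{1+}$-smoothness of the strong unstable holonomy $\Hol^{\cF^u_{(j+1,k)}}$ inside $\cF^u_{(j,k)}$-leaves and the uniform $C^{1+}$-control on $H_\omega|_{\cF^u_j}$ from Lemma \ref{lem:j-smooth}. The only difference is that the paper obtains the intermediate holonomy smoothness by citing \cite[Proposition 3.9]{KS07}, whereas you re-derive it via the $C^r$-section theorem with explicit attention to the uniformity of the constants over the non-compact base $\Omega$ — a legitimate elaboration of the same step.
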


\begin{proof}
	Since $\cF^u_{(j+1,k)}$ are strong foliations sub-foliating $\cF^u_{(j,k)}$, it is $C^2$-smooth inside each leaf of $\cF^u_{(j,k)}$, see \cite[Proposition 3.9]{KS07}).
	The rest of the lemma comes from the following conjugacy:
	$$
	\Hol^{\cL}_{x,y}=
	\left(H_{\omega}|_{\cF^u_j(y)}\right)
	\circ\left(\Hol^{\cF^u_{(j+1,k)}}_{H_{\omega}^{-1}(x),H_{\omega}^{-1}(y)}\right)
	\circ\left(H_{\omega}^{-1}|_{\cL^u_j(x)}\right).
	$$
	Actually, $y\to x$ implies $H_{\omega}^{-1}(y)\to H_{\omega}^{-1}(x)$. Thus the derivatives of holonomy map induced by $\cF^u_{(j+1,k)}$ uniformly converges to identity map. From the continuity of derivatives of $H$ along $\cF^u_j$, we have $D\Hol^{\cL}_{x,y}$ uniformly converges to identity maps on $\cL^u_j(x)$.
\end{proof}

Using the smoothness of holonomy maps of $\cL$, we show that $\cL$ is a parallel translation inside each leaf of $\cL^u_{(j,k)}$, which we will used to show  that $\cL=\cL^u_{(j+1,k)}$.

\begin{Lemma}\label{lem:j+1-linear}
	For every $\omega\in\supp(\nu)^\ZZ$ and $y\in\cL(x)\subseteq\TT^d_\omega$, the holonomy map
	$$
	\Hol^{\cL}_{x,y}:~\cL^u_j(x)\to\cL^u_j(y)
	$$ 
	is a parallel translation inside $\cL^u_{(j,k),\omega}(x)$.
\end{Lemma}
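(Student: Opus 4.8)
The plan is to promote the asymptotic statement of Lemma \ref{lem:linear-j+1-smooth} (the holonomy derivative tends to the identity as the two base points come together) to an exact statement by a renormalization argument based on the $\bar A$-invariance of $\cL$ and the conformality of $\bar A$ on $L^u_j$. Once $D\Hol^{\cL}_{x,y}\equiv\Id$ is known, the conclusion is immediate: a $C^1$ map with constant derivative $\Id$ between two connected affine subspaces of a linear leaf is a parallel translation.

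First I would write down the exact conjugacy identity for holonomies. Since $\cL=H(\cF^u_{(j+1,k)})$ is $\bar A$-invariant and $\bar A$ acts fibrewise by an affine map (hence sends leaves of $\cL$ to leaves of $\cL$ and leaves of $\cL^u_j$ to leaves of $\cL^u_j$), one gets for every $n\geq 1$
$$
\Hol^{\cL}_{x,y}=\bar A^{\,n}\circ\Hol^{\cL}_{\bar A^{-n}x,\,\bar A^{-n}y}\circ\bar A^{-n}\colon \cL^u_j(x)\to\cL^u_j(y),
$$
where the iterates are taken along $\omega$. Differentiating along the linear leaves of $\cL^u_j$ (whose tangent spaces are the fixed subspace $L^u_j$) and using that the fibrewise differential of $\bar A$ is the constant linear map $A_\omega$, the derivative of $\Hol^{\cL}_{x,y}$ at any point $z$ equals the derivative of $\Hol^{\cL}_{\bar A^{-n}x,\bar A^{-n}y}$ at $\bar A^{-n}z$ \emph{conjugated} by the linear map $\bar A^{\,n}|_{L^u_j}$. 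Because every $A_i$ is generic, $A_i|_{L^u_j}$ is conformal, hence $\bar A^{\,n}|_{L^u_j}$ is conformal, i.e.\ a positive scalar times an isometry $O_n$ of $L^u_j$; the scalar cancels in the conjugation, so $D\Hol^{\cL}_{x,y}\big|_z=O_n\circ D\Hol^{\cL}_{\bar A^{-n}x,\bar A^{-n}y}\big|_{\bar A^{-n}z}\circ O_n^{-1}$ with $O_n$ a uniformly bounded isometry.

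Next I would check that the inner holonomy derivatives tend to $\Id$. The tangent bundle of $\cL$ lies in $L^u_{(j,k)}$ and is uniformly transverse to $L^u_j$, so by the domination estimate of Lemma \ref{lem:A-dominated} it is contained in a cone around $L^u_{(j+1,k)}$ that is $\bar A^{-1}$-invariant and on which $\bar A^{-1}$ contracts uniformly; hence $d_{\cL}(\bar A^{-n}x,\bar A^{-n}y)\to 0$ (alternatively, via uniform bi-H\"older continuity of $H$ together with the backward contraction of $\cF^u_{(j+1,k)}$ for $F$). Lemma \ref{lem:linear-j+1-smooth} then gives $D\Hol^{\cL}_{\bar A^{-n}x,\bar A^{-n}y}\to\Id$ on $L^u_j$ uniformly. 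Conjugating something converging to $\Id$ by bounded isometries still converges to $\Id$, while the left-hand side above is independent of $n$; therefore $D\Hol^{\cL}_{x,y}\big|_z=\Id_{L^u_j}$ for every $z\in\cL^u_j(x)$. Since $\cL^u_j(x)=x+L^u_j$ and $\cL^u_j(y)=y+L^u_j$ are connected affine subspaces of the linear leaf $\cL^u_{(j,k),\omega}(x)=x+L^u_{(j,k)}$, the map $z\mapsto\Hol^{\cL}_{x,y}(z)-z$ is constant, equal to some $v\in L^u_{(j,k)}$, and $\Hol^{\cL}_{x,y}$ is the parallel translation by $v$ inside $\cL^u_{(j,k),\omega}(x)$.

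I expect the main obstacle to be the bookkeeping: making the differentiation of the holonomy-conjugacy identity precise inside the skew product, and verifying that the uniformity in Lemma \ref{lem:linear-j+1-smooth} (derivative $\to\Id$ as $y\to x$) genuinely applies when both base points drift under $\bar A^{-n}$ — i.e.\ that the convergence is uniform over $\omega$ and over the evaluation point along the leaf — together with the uniform decay $d_{\cL}(\bar A^{-n}x,\bar A^{-n}y)\to 0$.
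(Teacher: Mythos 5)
Your proposal is correct and is essentially the paper's own proof: conjugate the holonomy across $n$ steps of $\bar A$, let the inner derivative tend to $\Id$ by Lemma~\ref{lem:linear-j+1-smooth}, and use conformality of $\bar A|_{L^u_j}$ so the conjugating linear maps do not spoil the limit. The only difference is cosmetic: you phrase the conformality as ``scalar times isometry, scalars cancel'' while the paper bounds the conjugated operator norm by a constant times $\|\Delta_n\|$; and you spell out the argument that $d(\bar A^{-n}x,\bar A^{-n}y)\to 0$ (which the paper merely asserts). Incidentally, you correctly pull back by $\bar A^{-n}$ to make the points approach each other; the paper's displayed definition of $x_n,y_n$ has a sign typo (forward iterates instead of backward), although its conjugacy formula and conclusion are the ones you use.
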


\begin{proof}
	To prove that $\Hol^{\cL}_{x,y}$ is a parallel translation, we only need to show that  $D\Hol^{\cL}_{x,y}=\Id$. 
	Denote $x_n=\bar{A}^n_\omega(x)$ and $y_n=\bar{A}^n_\omega(y)$ with $y_n\in\cL(x_n)$ and $\lim_{n\to+\infty}d(x_n,y_n)=0$. Then we have
	$$
	\Hol^{\cL}_{x,y}=\bar{A}^n_{\sigma^{-n}(\omega)}\circ \Hol^{\cL}_{x_n,y_n}\circ \bar{A}^{-n}_\omega,
	$$
	where $\Hol^{\cL}_{x_n,y_n}:\cL^u_j(x_n)\to\cL^u_j(y_n)$ is also the holonomy map induced by $\cL$. 
	
	Differentiating and denoting $D\Hol^{\cL}_{x_n,y_n}=\Id+\Delta_n$, we obtain
	\begin{align*}
		D\Hol^{\cL}_{x,y} &=
		\left(\bar{A}^n_{\sigma^{-n}(\omega)}|_{L^u_j}\right) \circ
		D\Hol^{\cL}_{x_n,y_n} \circ
		\left(\bar{A}^{-n}_\omega|_{L^u_j}\right) \\
		&=\left(\bar{A}^n_{\sigma^{-n}(\omega)}|_{L^u_j}\right) \circ
		\left(\Id+\Delta_n\right) \circ
		\left(\bar{A}^{-n}_\omega|_{L^u_j}\right) \\
		&=\Id+\left(\bar{A}^n_{\sigma^{-n}(\omega)}|_{L^u_j}\right) \circ
		\Delta_n \circ
		\left(\bar{A}^{-n}_\omega|_{L^u_j}\right).
	\end{align*}
	From Lemma \ref{lem:linear-j+1-smooth} we know that $D\Hol^{\cL}_{x_n,y_n}\to\Id$ as $n\to+\infty$, i.e.  $\Delta_n\to0$ as $n\to+\infty$. 
	
	Since $\bar{A}$ is conformal along $\cL^u_j$, there exists some constant $C>0$ such that
	$$
	\left\|\left(\bar{A}^n_{\sigma^{-n}(\omega)}|_{L^u_j}\right) \circ
	\Delta_n \circ
	\left(\bar{A}^{-n}_\omega|_{L^u_j}\right)\right\|\leq C\cdot\|\Delta_n\|\to0
	\qquad \text{as} \quad n\to+\infty.
	$$
	This implies $D\Hol^{\cL}_{x,y}=\Id$ and $\Hol^{\cL}_{x,y}$ is a parallel translation.
\end{proof}

Now we can prove Lemma \ref{lem:j+1'}. The key point is that if $\cL\neq\cL^u_{(j+1,k)}$ on $\supp(\mu)$, then it will  have linear deviations with $\cL^u_{(j+1,k)}$ which leads to a contradiction. The proof   follows \cite[Proposition 2.2]{GKS}; we include it for completeness.

\begin{proof}[Proof of Lemma \ref{lem:j+1'}]
	Recall $\cL_\omega(H_\omega(x))=H_\omega(F^u_{(j+1,k),\omega})(x)).$
		Since periodic points of $\bar{A}$ are dense in $\supp(\mu)=\supp(\nu)^\ZZ\times\TT^d$ and since $\cL_\omega(x)$ is continuous in $\omega$ and $x$, we only need to show
	that $\cL_\omega(x)=\cL^u_{(j+1,k),\omega}(x)$ for every $(\omega,x)\in\Per(\bar{A})\cap\supp(\mu)$.
	
 Fix $(\omega,x)\in\Per(\bar{A})\cap\supp(H_*(\mu))$ with period $\kappa\in\NN$.
	Denote by $B$ the unit ball centered at $x$ in $\cL^u_{(j+1,k),\omega}(x)$.  If $B\subset\cL_\omega(x)$, then the invariance of $\cL$ by $\bar{A}$ implies $\cL_\omega(x)=\cL^u_{(j+1,k),\omega}(x)$.
	Suppose there is $z_1\in B$ with $z_1\notin\cL_\omega(x)$. Denote 
	$$
	x_1=\cL_\omega(x)\cap\cL^u_{j,\omega}(z_1).
	$$
	
	Since $\cL^u_{j,\omega}(x)$ has dense leaves in $\TT^d_\omega$, there exists 
	$$
	\{b_n:~n\geq0\}\subseteq \cL^u_{j,\omega}(x)
	\qquad \text{such that} \qquad
	b_n\to x_1~~\text{as}~~n\to\infty.
	$$
	Let 
	$$
y_n=\Hol^{\cL}_{x,x_1}(b_n).  
	$$
By, Lemma \ref{lem:j+1-linear}, $y_n = x_1 + b_n$; since $x_1\in \cL(x)$, there is $x_2\in\cL(x) $ such that $y_n\to x_2$ {as} $n\to\infty.$
%
Moreover, $\{x_1,x_2\}$ is a parallel translation of $\{x,x_1\}$ inside $\cL(x) $.

	We continue this procedure recursively and obtain $\{x_n:~n\geq 1\}\subseteq\cL_\omega(x)$. Let
	$$
	z_n=\cL^u_{j,\omega}(x_n)\cap\cL^u_{(j+1,k),\omega}(x).
	$$
Since all foliations are linear, we have
	\begin{align}\label{equ:linear}
		d_{\cL^u_{(j+1,k)}}(z_n,x)=n\cdot d_{\cL^u_{(j+1,k)}}(z_1,x)
		\qquad \text{and} \qquad
		d_{\cL^u_j}(x_n,z_n)=n\cdot d_{\cL^u_j}(x_1,z_1).
	\end{align}
	
	For every $n$, denote $N(n)$ the smallest integer such that $\bar{A}_\omega^{-\kappa\cdot N(n)}(z_n)\in B$. Since $\bar{A}^{-\kappa}$ contracts $\cL^u_{(j+1,k)}$ exponentially faster than $\cL^u_j$, equation (\ref{equ:linear}) implies
	$$
	d_{\cL^u_j}\left(\bar{A}_\omega^{-\kappa\cdot N(n)}(z_n),~\bar{A}_\omega^{-\kappa\cdot N(n)}(x_n)\right)
	\to\infty
	\qquad \text{as} \qquad
	n\to\infty.
	$$
	This contradicts to the compactness of $B$ where
	$$
	\max_{z\in B}d_{\cL^u_j}\left(z, \cL^u_{j,\omega}(z)\cap\cL_\omega(x)\right)<\infty.
	$$
	Thus we have $H(\cF^u_{(j+1,k)})=\cL=\cL^u_{(j+1,k)}$ in $\supp(\mu)$.
\end{proof}

\section{Non-randomness of stable bundles}

In this section, we show that if the topological conjugacy is $C^{1+}$-smooth along fiberwise unstable foliations, then stable bundles of perturbing systems are non-random.

Let $\cU=\cU_i$ be the neighborhood of $\cA$ in $\diff^2(\TT^d)$ and $\Omega=\cU^\ZZ$ as Proposition \ref{prop:structure-stable} such that the associated skew product 
$$
F:\Omega\times\TT^d\to\Omega\times\TT^d,
\qquad
F(\omega,x)=\left(\sigma(\omega),~f_\omega(x)\right)
$$  
is fiberwise Anosov. Moreover, for the extension system $\bar{A}:\Omega\times\TT^d\to\Omega\times\TT^d$ in Definition \ref{def:extension} 
there exists a topological conjugacy
$H:\Omega\times\TT^d\to\Omega\times\TT^d$ with
$$
H(\omega,x)=\left(\omega, H_{\omega}(x)\right),
\qquad \text{such~that} \qquad
H\circ F=\bar{A}\circ H.
$$
If we denote $\cL^u$ and $\cF^u$ the fiberwise expanding foliations of $\bar{A}$ and $F$ on $\Omega\times\TT^d$ respectively, then $H(\cF^u)=\cL^u$.

\subsection{Non-randomness of stable bundles on $\TT^2$}

In this subsection, we consider the case that
$\cA=\{A_i\}_{i=1}^m\subseteq{GL}(2,\ZZ)$ is a  cone hyperbolic family of Anosov automorphisms.  Recall the two possibilities in Lemma \ref{claim:ZdenseorZ}.  

Let $\cU$ be a sufficiently small neighborhood of $\cA$ in $\diff^2(\TT^2)$.   
Let $\nu$ be a probability supported on $\cU$.   
Recall from Proposition \ref{prop:2-H-smooth}, if the positive Lyapunov exponent of the unique $\nu$-stationary SRB measure $\mu$ of $F$ coincides with the positive exponent of the linear random walk (or the skew system $\bar{A}$), then the conjugacy $H$ is $C^{1+}$ along fiberwise unstable foliation $\cF^u$ over $\supp(\mu)=\supp(\nu)^\ZZ\times\TT^2$.

We fix some notation for the remainder. 
\begin{Notation}\
	\begin{itemize}
		\item Let $\cK\subseteq\cU=\cup\cU_i$ be a subset and $\Sigma=\cK^\ZZ$. Let $f\in\cK\cap\cU_i$ which is $C^2$ close to $A_i\in\cU_i$ thus Anosov. Denote $\cF^s_f$ and $\cF^u_f$ the stable and unstable foliations of $f$ respectively. Let $A_f= A_i$ if $f\in \cU_i$.  
		\item 
		   Denote $A_f=\Upsilon (f) $ which is the linearization of $f$ in ${GL}(2,\ZZ)$. Denote $\cL^s_f$ and $\cL^u_f$ the linear stable and unstable foliations of $A_f$ respectively.
		\item Let $H_f:\TT^2\to\TT^2$ be the topological conjugacy such that $H_f\circ f=A_f\circ H_f$ and $C^0$-close to identity $\id_{\TT^2}$, then
		$$
		H_f(\cF^s_f)=\cL^s_f
		\qquad \text{and} \qquad
		H_f(\cF^u_f)=\cL^u_f.
		$$
	\end{itemize}
\end{Notation}

\begin{Lemma}\label{lem:fixed-point}
	Let  $\omega=(\cdots,f,f,f,\cdots)\in\Sigma$ be a fixed point of the shift $\sigma:\sigma(\omega)=\omega$. Then we have
	$$
	\Upsilon(\omega)=(\cdots,A_f,A_f,A_f,\cdots)
	\qquad \text{and} \qquad 
	\bar{A}_{\omega}=A_f:\TT^2\to\TT^2.
	$$
	Moreover, for the topological conjugacy $H_\omega:\TT^2_\omega\to\TT^2_\omega$ in Proposition \ref{prop:structure-stable}, we have 
	$$
	H_\omega=H_f.
	$$
\end{Lemma}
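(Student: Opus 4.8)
The plan is to track how the structural-stability conjugacy $H$ of Proposition~\ref{prop:structure-stable} restricts to a fixed point of the shift. First I would unwind the definitions: for $\omega=(\dots,f,f,f,\dots)$, by definition of $\Upsilon$ on $\Omega$ (see \eqref{eq:basemap}) we have $\Upsilon(\omega)=(\dots,A_f,A_f,A_f,\dots)$ since $\Upsilon(f)=A_f$ by the Notation above; consequently the fiber map of $\bar A$ over $\omega$ is $\bar A_\omega=A_{\Upsilon(\omega)}=\Upsilon(f_{\omega_0})=A_f$, acting on $\TT^2_\omega$. This part is purely formal. The content of the lemma is the identification $H_\omega=H_f$.

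For that, note that since $\sigma(\omega)=\omega$, the conjugacy equation from Proposition~\ref{prop:structure-stable}(2),
$$
H_{\sigma(\omega)}\circ f_\omega = \bar A_\omega \circ H_\omega,
$$
becomes $H_\omega\circ f = A_f\circ H_\omega$, so $H_\omega$ is itself a topological conjugacy on $\TT^2$ from $f$ to its linearization $A_f$. Moreover, by Proposition~\ref{prop:structure-stable}(3), $H_\omega$ is uniformly $C^0$-close to the identity (the closeness is uniform in $\omega\in\Omega$, hence holds for this particular $\omega$), and the same is true of $H_f$ by its defining property in the Notation. The key step is then a uniqueness statement: \emph{the topological conjugacy between an Anosov diffeomorphism $f\in\diff^2(\TT^2)$ and its linearization $A_f$ that is $C^0$-close to $\id_{\TT^2}$ is unique.} This is the standard rigidity of the semiconjugacy in Franks–Manning theory: any two such conjugacies $H_\omega, H_f$ satisfy $H_\omega\circ H_f^{-1}\circ A_f = A_f\circ H_\omega\circ H_f^{-1}$, so $g:=H_\omega\circ H_f^{-1}$ commutes with $A_f$ and lifts to a map $\tilde g$ of $\RR^2$ with $\tilde g - \id$ bounded and $\tilde g\circ \tilde A_f=\tilde A_f\circ\tilde g$; writing $\tilde g=\id+\varphi$ with $\varphi$ bounded gives $\tilde A_f\varphi = \varphi\circ\tilde A_f$, and boundedness together with hyperbolicity of $A_f$ forces $\varphi\equiv 0$, whence $g=\id$ and $H_\omega=H_f$. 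Once $H_\omega=H_f$, the intertwining of foliations $H_\omega(\cF^s_\omega)=\cF^s_{\Upsilon(\omega)}=\cL^s_f$ and $H_\omega(\cF^u_\omega)=\cL^u_f$ from Proposition~\ref{prop:structure-stable}(3) matches the corresponding statement for $H_f$, completing the identification.

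I expect the main obstacle to be purely bookkeeping: making sure the "$C^0$-close to the identity" normalization used to single out $H_f$ in the Notation is \emph{literally the same} normalization delivered by Proposition~\ref{prop:structure-stable}(3) for $H_\omega$, so that the uniqueness argument applies without an a priori integer-translation ambiguity. The hyperbolicity input (no eigenvalue of $A_f$ on the unit circle, guaranteed since $f$ is a $C^2$-small perturbation of the Anosov automorphism $A_i$) is what kills the bounded solution $\varphi$ of $\tilde A_f\varphi=\varphi\circ\tilde A_f$; this is a routine fixed-point/contraction estimate, but I would state it carefully because it is the only place hyperbolicity is genuinely used. Everything else — the computation of $\Upsilon(\omega)$ and $\bar A_\omega$ — is immediate from Definition~\ref{def:extension}.
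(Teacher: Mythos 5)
Your proof is correct; the paper's own proof is a one-liner (``direct consequence derived from the definitions of $\bar A$ and topological conjugacy $H$''), and the uniqueness argument you supply is precisely what makes that one-liner rigorous, since the structural-stability construction produces the unique conjugacy $C^0$-close to the identity. The only refinement worth making is in the lifting step: use that $\varphi = \tilde g - \id$ is \emph{small} (not merely bounded) to conclude the exact commutation $\tilde g\circ\tilde A_f=\tilde A_f\circ\tilde g$ on $\RR^2$ rather than commutation up to a $\ZZ^2$-translation — this is exactly the integer-ambiguity you flag, and smallness (rather than boundedness alone) is what kills it, after which the graph-transform argument for $\varphi\equiv0$ goes through as you describe.
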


\begin{proof}
	This lemma is a direct consequence derived from the definitions of $\bar{A}$ and topological conjugacy $H$.
\end{proof}

We will take $\cK=\supp(\nu)$. In this subsection, all our results are independent of $\nu$. It works for any subset $\cK\subseteq\cU$.  
The following proposition is the main theorem in this subsection. 

\begin{Proposition}\label{prop:s-nonrandom-T2}
	Assume the topological conjugacy $H:\Sigma\times\TT^2\to\Sigma\times\TT^2$ is $C^{1+}$-smooth along fiberwise unstable foliations with bounded derivatives. For every pair $f,g\in\cK$, let $H_f,H_g:\TT^2\to\TT^2$ be the corresponding topoplogical conjugacies with
	$H_f\circ f=A_f\circ f$ and 
	$H_g\circ g=A_g\circ g$. 
	Then they satisfy
	$$
	H_f(\cF^s_g)=\cL^s_g
	\qquad \text{and} \qquad
	H_g(\cF^s_f)=\cL^s_f.
	$$
	Here $\cF^s_f$ and $\cF^s_g$ are stable foliations of $f$ and $g$ respectively, $\cL^s_f$ and $\cL^s_g$ are linear stable foliations of $A_f$ and $A_g$ respectively.
\end{Proposition}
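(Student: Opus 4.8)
The plan is to show that for the ``mixed'' word $\omega\in\Sigma$ defined by $\omega_j=f$ for $j<0$ and $\omega_j=g$ for $j\ge 0$, the fiberwise conjugacy $H_\omega$ differs from $H_f$ by a \emph{global translation} of $\TT^2$; since $H_\omega(\cF^s_g)=\cL^s_g$ by structural stability, this forces $H_f(\cF^s_g)=\cL^s_g$, and the symmetric choice (the word $g$ in the past, $f$ in the future) gives $H_g(\cF^s_f)=\cL^s_f$. First I would record, from Proposition \ref{prop:structure-stable} and the Smale-bracket identities $\cF^s_\omega=\cF^s_{[\xi,\omega]}$, $\cF^u_\omega=\cF^u_{[\omega,\xi]}$, that $\cF^s_\omega$ and $\cL^s_{\Upsilon(\omega)}$ depend only on the future $(\omega_j)_{j\ge 0}$ while $\cF^u_\omega$, $E^u_\omega$ and $\cL^u_{\Upsilon(\omega)}$ depend only on the past $(\omega_j)_{j<0}$, and that $H_\omega(\cF^s_\omega)=\cL^s_{\Upsilon(\omega)}$, $H_\omega(\cF^u_\omega)=\cL^u_{\Upsilon(\omega)}$. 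With $\omega_f=(\dots,f,f,\dots)$ (so $H_{\omega_f}=H_f$ by Lemma \ref{lem:fixed-point}) and $\omega$ the mixed word above, this gives $\cF^u_\omega=\cF^u_f$, $E^u_\omega=E^u_f$, $H_\omega(\cF^u_f)=\cL^u_f=H_f(\cF^u_f)$, and $\cF^s_\omega=\cF^s_g$, $H_\omega(\cF^s_g)=\cL^s_g$.

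Since $\omega$ and $\omega_f$ have the same past, $f^{-n}_{\omega}=f^{-n}_{\omega_f}=f^{-n}$ for all $n$ and $d_\Sigma(\sigma^{-n}\omega,\sigma^{-n}\omega_f)\to 0$ exponentially, so the pairs $(\omega,x)$ and $(\omega_f,x)$ are backward asymptotic for $F$. Applying the conjugacy $H$ (which is uniformly continuous) and using that $\bar A$ is fiberwise linear with fiberwise unstable subspace $L^u_f$ over both words, I would conclude that $H_\omega(x)$ lies on the $\cL^u_f$-leaf through $H_f(x)$ for every $x$; hence $\Psi:=H_\omega\circ H_f^{-1}$ is $C^0$-close to the identity and maps every leaf of $\cL^u_f$ to itself. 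Moreover $\Psi$ is $C^{1+}$ along $\cL^u_f$ because $H_f^{-1}$ is $C^{1+}$ along $\cL^u_f$ and $H_\omega$ is $C^{1+}$ along $\cF^u_\omega=\cF^u_f$, both with bounded, non-vanishing leafwise derivatives by hypothesis.

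The heart of the argument is to show $D\Psi|_{L^u_f}\equiv\Id$. For this I would consider $u(\eta,y):=\log\|DH_\eta|_{E^u_\eta(y)}\|$, a bounded continuous function on $\Sigma\times\TT^2$ by the hypothesis (``$C^{1+}$ along $\cF^u$ with bounded derivatives''), which satisfies the coboundary equation $\phi=u-u\circ F$ with $\phi(\eta,y):=\log\|Df_{\eta_0}|_{E^u_\eta(y)}\|-\log\|\bar A_{\eta}|_{L^u_{\Upsilon(\eta)}}\|$. Telescoping this identity along backward orbits gives $u(\omega_f,x)-u(\omega,x)=\lim_{n}\big(u(F^{-n}(\omega_f,x))-u(F^{-n}(\omega,x))\big)-\sum_{k\ge 1}\big(\phi(F^{-k}(\omega_f,x))-\phi(F^{-k}(\omega,x))\big)$; the first term vanishes by backward asymptoticity and continuity of $u$, and for each $k\ge1$ one has $F^{-k}(\omega_f,x)=(\omega_f,f^{-k}x)$ and $F^{-k}(\omega,x)=(\sigma^{-k}\omega,f^{-k}x)$ with $\omega_f$ and $\sigma^{-k}\omega$ having the same past (all $f$) and the same $0$-th coordinate ($f$), so $\phi$ agrees at these points (both equal $\log\|Df|_{E^u_f(f^{-k}x)}\|-\log\|A_f|_{L^u_f}\|$). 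Hence $u(\omega_f,x)=u(\omega,x)$, i.e.\ $\|DH_f|_{E^u_f(x)}\|=\|DH_\omega|_{E^u_f(x)}\|$ for all $x$; as these are orientation-preserving isomorphisms $E^u_f(x)\to L^u_f$ of one-dimensional spaces, they coincide, giving $D\Psi|_{L^u_f}\equiv\Id$. Being a $C^{1+}$ self-map of each leaf of $\cL^u_f$ with derivative $\Id$, $\Psi$ translates each leaf by a vector in $L^u_f$; this displacement is constant along each leaf and depends continuously on the point, so minimality of the irrational linear foliation $\cL^u_f$ ($A_f$ being hyperbolic in $\GL(2,\ZZ)$) forces it to be globally constant. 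Thus $\Psi$ is a global translation of $\TT^2$.

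A global translation preserves every linear foliation, so $\Psi^{-1}(\cL^s_g)=\cL^s_g$, and since $H_\omega=\Psi\circ H_f$ and $H_\omega(\cF^s_g)=\cL^s_g$ we obtain $H_f(\cF^s_g)=\Psi^{-1}(\cL^s_g)=\cL^s_g$; the symmetric choice of mixed word yields $H_g(\cF^s_f)=\cL^s_f$. I expect the main obstacle to be the step $D\Psi|_{L^u_f}\equiv\Id$: it requires knowing that the leafwise unstable Jacobian $u$ of the fiberwise conjugacy depends only on the past of the word, and this is exactly where the hypothesis that $H$ is $C^{1+}$ along $\cF^u$ is used — it is what makes $u$ a genuine H\"older coboundary for $\phi$, so that the Liv\v{s}ic/telescoping argument applies; this is the only place smoothness enters the proof.
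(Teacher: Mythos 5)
Your proof is correct, and it takes a genuinely different route from the paper's, although it shares the same key technical input.

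The paper works directly with the foliation $\cF := H_f(\cF^s_g)$, shows (via a backward pullback of holonomy maps through $\bar A^{-n}$, expressed using $H_\omega$ and $H_{\sigma^{-n}\tau}$, and the continuity of $DH|_{\cF^u}$) that the holonomy of $\cF$ between leaves of $\cL^u_f$ has derivative the identity, then invokes Lemma~\ref{lem:linear-Td} to conclude $\cF$ is linear, and finally identifies it with $\cL^s_g$ by a boundedness argument in the universal cover. You instead target the map $\Psi=H_\tau\circ H_f^{-1}$, where $\tau$ is the mixed word: by comparing backward orbits of $(\tau,x)$ and $(\omega_f,x)$ you get that $\Psi$ preserves every $\cL^u_f$-leaf, and by telescoping the coboundary relation for $u=\log\|DH|_{E^u}\|$ you get $D\Psi|_{L^u_f}\equiv\Id$ (here the vanishing of the $\phi$-differences at $F^{-k}$, $k\geq1$, is a pleasant exact cancellation, so only the boundary term needs continuity of $DH$ in $\omega$). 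Minimality of $\cL^u_f$ then forces $\Psi$ to be a global translation, and $H_f(\cF^s_g)=\Psi^{-1}(\cL^s_g)=\cL^s_g$ follows at once, with no appeal to Lemma~\ref{lem:linear-Td} or to the boundedness argument.

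Both arguments use exactly the same hypothesis at the same place (uniform H\"older continuity of $DH|_{\cF^u}$ in both $\omega$ and $x$, applied to the convergence $\sigma^{-n}\tau\to\omega_f$), so neither is logically weaker. Your version is cleaner in dimension $2$ and actually yields a slightly stronger intermediate statement, namely that the fiberwise conjugacies $H_\tau$ and $H_f$ differ by a single rigid translation of $\TT^2$. The trade-off is that your recovery of $D\Psi$ from the Jacobian $u$ relies crucially on $\dim E^u=1$ (a norm plus an orientation determines a 1D linear map), whereas the paper's holonomy formulation is the one that survives in the higher-dimensional Proposition~\ref{prop:s-nonrandom-Td}, where the strong unstable bundle may be $2$-dimensional and one must replace the 1D trick with conformality of $A_f$ on $\cL^u_k$. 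One small point worth making explicit when writing this up: the claim that $DH_f|_{E^u_f(x)}$ and $DH_\omega|_{E^u_f(x)}$ are both orientation-preserving should be justified (for instance from the fact that each $H_\eta$ is a homeomorphism $C^0$-close to the identity between oriented $1$-dimensional leaves, so its leafwise derivative, being continuous and nonvanishing, has a constant sign which must be $+1$).
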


\begin{proof}
	We prove $H_f(\cF^s_g)=\cL^s_g$ as the proof that $H_g(\cF^s_f)=\cL^s_f$ is  symmetric.  
	Since $H_g(\cF^s_g)=\cL^s_g$ is a irrational linear foliation on $\TT^2$ and since both $H_g,H_f$ are $C^0$-close to $\id_{\TT^2}$, by lifting all these foliations in the universal cover $\RR^2$, we have
	\begin{itemize}
		\item each leaf of $\cF^s_g$ is contained in a  {bounded} neighborhood of a leaf of $\cL^s_g$ in $\RR^2$;
		\item each leaf of $H_f(\cF^s_g)$ is contained in a  {bounded} neighborhood of a leaf of $\cF^s_g$ in $\RR^2$.
	\end{itemize}
	This implies $H_f(\cF^s_g)$ is contained in a  {bounded}  neighborhood of a leaf of $\cL^s_g$ in $\RR^2$.
	It suffices to  prove that $H_f(\cF^s_g)$ is a linear foliation. Indeed, if $H_f(\cF^s_g)$ is a linear foliation with each leaf bounded distance from a leaf of $\cL^s_g$, then $H_f(\cF^s_g)$  and $\cL^s_g$ coincide. 
	
%
	
	\vskip3mm
	
	Denote $\omega=(\cdots,f,f,f,\cdots)$, $\xi=(\cdots,g,g,g,\cdots)$, and {$\tau=[\omega,\xi]=(\cdots,f,g,g,\cdots)$. } That is, $\tau_i=f,$ for all $ i<0$ and 	$\tau_j=g,$  for all $ j\geq0.$
%
	Since $f,g\in\cK$, we have $\omega,\xi,\tau\in\Sigma=\cK^\ZZ$, and $\tau$ is a heteroclinic orbit from $\omega$ to $\tau$ for the shift map $\sigma:\Sigma\to\Sigma$.
	
	For the fiberwise dynamics of the skew product  $F^n(\tau,x)=\left(\sigma^n(\tau),f^n_{\tau}(x)\right)$, we have
	\begin{align*}
	 	f^n_{\tau}(x)&=g^n(x):
		  ~\TT^2_{\tau}\to\TT^2_{\sigma^n(\tau)}, \qquad \forall n\geq 0;\\
		f^n_\tau(x)&=f^n(x):
		  ~\TT^2_{\tau}\to\TT^2_{\sigma^n(\tau)}, \qquad \forall n<0.
	\end{align*}
	This implies the fiberwise stable and unstable foliations of $F$ on the fiber $\TT^2_{\tau}$ are
	$$
	\cF^s_{\tau}=\cF^s_g
	\qquad \text{and} \qquad
	\cF^u_{\tau}=\cF^u_f.
	$$
	
	Similarly, the fiberwise dynamics of skew product system 
	$\bar{A}^n(\tau,x)=\left(\sigma^n(\tau),\bar{A}^n_{\tau}(x)\right)$ are
	\begin{align*}
		\bar{A}^n_{\tau}(x)&=A_g^n(x):
		~\TT^2_{\tau}\to\TT^2_{\sigma^n(\tau)}, \qquad \forall n\geq 0;\\
		\bar{A}^n_\tau(x)&=A_f^n(x):
		~\TT^2_{\tau}\to\TT^2_{\sigma^n(\tau)}, \qquad \forall n<0.
	\end{align*}
	 The fiberwise stable and unstable  linear foliations of $\bar{A}$ on the fiber $\TT^2_{\tau}$ are
	$$
	\cL^s_{\tau}=\cL^s_g
	\qquad \text{and} \qquad
	\cL^u_{\tau}=\cL^u_f.
	$$
	
	Moreover, the fiberwise topological conjugacy $H_{\tau}:\TT^2_{\tau}\to\TT^2_{\tau}$ satisfies
	$$
	H_{\tau}(\cF^s_g)=H_{\tau}(\cF^s_{\tau})=\cL^s_{\tau}=\cL^s_g
	\qquad \text{and} \qquad
	H_{\tau}(\cF^u_f)=H_{\tau}(\cF^u_{\tau})=\cL^u_{\tau}=\cL^u_f.
	$$
	Since we have $H_{\omega}=H_f:\TT^2\to\TT^2$ satisfies 
	$$
	H_{\omega}\circ f=H_{\omega}\circ f_{\omega}=A_f\circ H_{\omega},
	$$
	we only need to show $H_f(\cF^s_g)=H_{\omega}(\cF^s_{\tau})=H_f(\cF^s_{\tau})$ is a linear foliation, which will follow from the following claim.  	
	
	\begin{Claim}
		 Let $\cF=H_{\omega}(\cF^s_{\tau})$.  For every $x\in\TT^2$ and every $y\in\cF(x)$, the holonomy map of $\cF$ between leaves of $H_{\omega}(\cF^u_{\tau})=H_{\omega}(\cF^u_f)=\cL^u_f$:
		 $$
		 \Hol^{\cF}_{x,y}:~\cL^u_f(x)\to\cL^u_f(y)
		 $$ 
		 is $C^{1+}$-smooth and the derivative satisfies $D\Hol^{\cF}_{x,y}=\id$. 
	\end{Claim}
	
	\begin{proof}[Proof of Claim]
		For every $x\in\TT^2$, $y\in\cF(x)$ and $n>0$, we denote 
		$$
		x_n=\bar{A}_{\tau}^{-n}(x)=A_f^{-n}(x)\in\TT^2_{\sigma^{-n}(\tau)}
		\qquad \text{and} \qquad
		y_n=\bar{A}_{\tau}^{-n}(y)=A_f^{-n}(y)\in\TT^2_{\sigma^{-n}(\tau)}.
		$$
		Denote $\cF_n=\bar{A}_{\tau}^{-n}(\cF)=A_f^{-n}(\cF)$, then $y_n\in\cF_n(x)$.
		
		Since $\bar{A}^{-n}_{\tau}(\cL^u_f)=A_f^{-n}(\cL^u_f)=\cL^u_f$, denoting by 
		$\Hol^{\cF_n}_{x_n,y_n}:\cL^u_f(x_n)\to\cL^u_f(y_n)$ the holonomy map induced by $\cF_n$ between $\cL^u_f(x_n)$ and $\cL^u_f(y_n)$, we have
		\begin{align}\label{eq:Hol}
			\Hol^{\cF}_{x,y}=
			\bar{A}_{\sigma^n(\tau)}\circ
			\Hol^{\cF_n}_{x_n,y_n} \circ
			\bar{A}_{\tau}^{-n} 
			=A_f^n\circ \Hol^{\cF_n}_{x_n,y_n}\circ A_f^{-n}.
		\end{align}
		From properties of the topological conjugacy, we have
		\begin{align*}
			\cF_n=\bar{A}^{-n}_{\tau}(\cF)
			&=\bar{A}^{-n}_{\tau}\circ H_{\omega}(\cF^s_{\tau})
			=A_f^{-n}\circ H_{\omega}(\cF^s_{\tau})\\
			&=H_{\omega}\circ f^{-n}(\cF^s_{\tau}) 
			=H_{\omega}\circ f_{\tau}^{-n}(\cF^s_{\tau}) \\
			&=H_{\omega}(\cF^s_{\sigma^{-n}(\tau)}).
		\end{align*}

		Since $\cF^u_{\sigma^{-n}(\tau)}=\cF^u_{\tau}=\cF^u_f$, both conjugacies $H_{\omega}$ and $H_{\sigma^{-n}(\tau)}$ satisfy 
		$$
		\cL^u_f=H_{\omega}(\cF^u_{\sigma^{-n}(\tau)})=H_{\sigma^{-n}(\tau)}(\cF^u_{\sigma^{-n}(\tau)}).
		$$
		Denote
		\begin{itemize}
			\item $x_n'=H_{\omega}^{-1}(x_n)$ and $y_n'=H_{\omega}^{-1}(y_n)\in\cF^s_{\sigma^{-n}(\tau)}(x_n')$;
			\item $\Hol^{\cF^s_{\sigma^{-n}(\tau)}}_{x_n',y_n'}:\cF^u_f(x_n')\to\cF^u_f(y_n')$ the holonomy map induced by $\cF^s_{\sigma^{-n}(\tau)}=H_{\omega}^{-1}(\cF_n)$;
			\item $x_n''=H_{\sigma^{-n}(\tau)}(x_n')$ and $y_n''=H_{\sigma^{-n}(\tau)}(y_n')\in H_{\sigma^{-n}(\tau)}(\cF^s_{\sigma^{-n}(\tau)}(x_n'))=\cL^s_{\sigma^{-n}(\tau)}(x_n'')$;
			\item $\Hol^{\cL^s_{\sigma^{-n}(\tau)}}_{x_n'',y_n''}:\cL^u_f(x_n'')\to\cL^u_f(y_n'')$ the holonomy map induced by $\cL^s_{\sigma^{-n}(\tau)}$ which is a parallel translation between $\cL^u_f(x_n'')$ and $\cL^u_f(y_n'')$.
		\end{itemize}
		
		The holonomy map
		$\Hol^{\cF_n}_{x_n,y_n}:\cL^u_f(x_n)\to\cL^u_f(y_n)$ satisfies
		\begin{align}
			\Hol^{\cF_n}_{x_n,y_n} &=
			\left(H_{\omega}|_{\cF^u_f(y_n')}\right)
			\circ\left(\Hol^{\cF^s_{\sigma^{-n}(\tau)}}_{x_n',y_n'}\right)\circ 
			\left(H_{\omega}^{-1}|_{\cL^u_f(x_n)}\right)  \notag \\
			&=
			\left(H_{\omega}|_{\cF^u_f(y_n')}\right)
			\circ\left(H_{\sigma^{-n}(\tau)}^{-1}|_{\cL^u_f(y_n'')} \right)
			\circ\left(\Hol^{\cL^s_{\sigma^{-n}(\tau)}}_{x_n'',y_n''}\right)
			\circ\left(H_{\sigma^{-n}(\tau)}|_{\cF^u_f(x_n')} \right)
			\circ\left(H_{\omega}^{-1}|_{\cL^u_f(x_n)}\right).  \label{eq:convergence}
		\end{align}
		Here $\Hol^{\cL^s_{\sigma^{-n}(\tau)}}_{x_n'',y_n''}$ is a parallel translation whose derivative is the identity $\id$.
		
		Since both conjugacies $H_{\omega}$ and $H_{\sigma^{-n}(\tau)}$ are $C^{1+}$-smooth along $\cF^u_{\sigma^{-n}(\tau)}=\cF^u_f$ with H\"older continuous and bounded derivatives, the holonomy map $\Hol^{\cF_n}_{x_n,y_n}$ is differentiable. 
		
		Moreover, since $\sigma^{-n}(\tau)\to\omega$ as $n\to+\infty$, the continuity of $DH|_{\cF^u}$ implies
	    \begin{align*}
	    	\left(DH_{\sigma^{-n}(\tau)}|_{\cF^u_f(x_n')} \right)
	    	\circ\left(DH_{\omega}^{-1}|_{\cL^u_f(x_n)}\right)
	    	=\id+\Delta_n \to \id 
	    	\qquad \text{as}\quad n\to+\infty; \\
	    	\left(DH_{\omega}|_{\cF^u_f(y_n')}\right)
	    	\circ\left(DH_{\sigma^{-n}(\tau)}^{-1}|_{\cL^u_f(y_n'')} \right)
	    	=\id+\Delta_n' \to \id 
	    	\qquad \text{as}\quad n\to+\infty.
	    \end{align*}
{Above, $\Delta_n\to 0$ uniformly (over $x$) as a linear operator from $\cL^u_f(x_n)$ to $\cL^u_f(x_n'')$ (viewed as linear spaces with origins at $x_n$ and $x_n''$, respectively)}. 
	   From Equation \ref{eq:Hol} and \ref{eq:convergence}, the holonomy map $\Hol^{\cF}_{x,y}:\cL^u_f(x_n)\to\cL^u_f(y_n)$ is differentiable and satisfies
	   \begin{align*}
	   	  D\Hol^{\cF}_{x,y}
	   	      &=A_f^n\circ\left(D\Hol^{\cF_n}_{x_n,y_n}\right)\circ A_f^{-n} \\
	   	      &=A_f^n\circ(\id+\Delta_n')\circ(\id+\Delta_n)\circ A_f^{-n} .
	   \end{align*}
	  {Since the restriction of $A_f^n$ to leaves of $ \cL^u_f$ is conformal with derivative independent of base point, we have $$	   	      A_f^n\circ(\id+\Delta_n')\circ(\id+\Delta_n)\circ A_f^{-n} \to \id$$ as $n\to \infty$.  }
	    This proves the claim.
	\end{proof}
	
    Since $H_{\omega}(\cF^u_{\tau})=H_{\omega}(\cF^u_f)=\cL^u_f$ is a minimal linear foliation on $\TT^2$ and transverse to $\cF$,  Lemma \ref{lem:linear-Td} shows that $\cF$ is a linear foliation. Thus we have 
    $$
    H_f(\cF^s_g)=H_{\omega}(\cF^s_{\tau})=\cF=\cL^s_g.
    $$
    Symmetrically, we have $H_g(\cF^s_f)=\cL^s_f$. 
    This finishes the proof of the proposition.
\end{proof}

\subsection{Non-randomness of stable bundles on $\TT^d$}

In this subsection, we consider the case that $\cA=\{A_i\}_{i=1}^m\subseteq{GL}(d,\ZZ)$ be a family of commuting Anosov automorphisms admitting the same finest dominated splitting
$$
T\TT^d=L^s_1\oplus\cdots\oplus L^s_l\oplus L^u_1\oplus\cdots\oplus L^u_k.
$$ 
Moreover, each $A_i$ satisfies the generic assumption, i.e. $A_i$ is hyperbolic, irreducible and $A_i$ is conformal in every $L^{s/u}_j$ with $\dim L^{s/u}_j\leq 2$. Since all $\{A_i\}_{i=1}^m$ have the same invariant foliations, we denote $\cL^s,\cL^u,\cL^u_j$ the corresponding linear foliations tangent to $L^s,L^u,L^u_j$ for $j=1,\cdots,k$ respectively.

Let $\cU_i$ be a neighborhood of $A_i$ in $\diff^2(\TT^d)$, denote $\cU=\cup\cU_i$ and $\Omega=\cU^\ZZ$, such that the corresponding skew product
$$
F:\Omega\times\TT^d\to\Omega\times\TT^d, \qquad
F(\omega,x)=\left(\sigma(\omega),f_\omega(x)\right)
$$
is fiberwise Anosov, and satisfies the conclusion of Proposition \ref{prop:structure-stable} and Proposition \ref{prop:leaf-conjugacy}, then $F$ admits the fiberwise finest dominated splitting 
$$
T\TT^d_\omega=
E^s_{1,\omega}\oplus\cdots E^s_{l,\omega}\oplus E^u_{1,\omega}\oplus\cdots E^u_{k,\omega}, 
\qquad \text{with} \qquad
{\rm dim}E^{s/u}_{j,\omega}={\rm dim}L^{s/u}_j,\quad \forall j.
$$

Let $H:\Omega\times\TT^d\to\Omega\times\TT^d$ be the topological conjugacy defined in  Proposition \ref{prop:structure-stable} and satisfies Proposition \ref{prop:leaf-conjugacy}, which satisfies $H\circ F=\bar{A}\circ H$ where $\bar{A}$ is the extension skew product defined in Definition \ref{def:extension}.

Let $\nu$ be a probability with $\supp(\nu)\subseteq\cU$ where the corresponding fiberwise SRB-measure of $F$ with $\pi_*(\mu)=\nu^\ZZ$ admits positive Lyapunov exponents rigidity, i.e. $\nu$ satisfies the assumption of Proposition \ref{prop:d-H-smooth}.

Let $\cK=\supp(\nu)$ and $\Sigma=\cK^\ZZ$. For the restriction skew product systems $F:\Sigma\times\TT^d\to\Sigma\times\TT^d$ and $\bar{A}:\Sigma\times\TT^d\to\Sigma\times\TT^d$, Proposition \ref{prop:d-H-smooth} shows that the corresponding conjugacy $H:\Sigma\times\TT^d\to\Sigma\times\TT^d$ satisfies
\begin{enumerate}
	\item $H$ intertwines all intermediate unstable  foliations: 
	   $$
	   H_{\omega}\left(\cF^u_{j,\omega}\right)=\cL^u_{j,\omega},
	   \qquad 
	   \forall j=1,\cdots,k,\quad \forall \omega\in\Sigma.
	   $$
	   In particular, the bundle $E^s_{\omega}\oplus E^u_{k,\omega}$ is jointly integrable to a foliation $\cF^s_{\omega}\oplus\cF^u_{k,\omega}$.
	\item $H$ is $C^{1+}$-smooth along leaves of $\cF^u_j$ with bounded derivatives for $j=1,\cdots,k$ on $\Sigma\times\TT^d$.  
\end{enumerate}

For every $f\in\cK\cap\cU_i$, denote $\cF^s_f, \cF^u_f$ the stable, unstable foliations of $f$, and $\cF^u_{j,f}$ the $f$-invariant foliations tangent to $E^u_{j,f}$ for $j=1,\cdots,k$.
Denote $A_f=A_i\in{GL}(d,\ZZ)$ be the linearization of $f$. 
Let $H_f:\TT^d\to\TT^d$ be the topological conjugacy $H_f\circ f=A_f\circ H_f$, then Proposition \ref{prop:d-H-smooth} shows that $H_f(\cF^u_{j,f})=\cL^u_j$ and $H_f$ is $C^{1+}$-smooth along $\cF^u_j$ for $j=1,\cdots,k$. 

Denote $\omega=(\cdots,f,f,f,\cdots)\in\Sigma$, then we have 
$$
\bar{A}_{\omega}=A_f:~\TT^d_{\omega}\to\TT^d_{\sigma(\omega)}=\TT^d_{\omega},
\qquad \text{and} \qquad
H_{\omega}=H_f:~\TT^d_{\omega}\to\TT^d_{\omega}.
$$
This following proposition is the main theorem in this subsection, which is a high dimensional version of Proposition \ref{prop:s-nonrandom-T2}.

\begin{Proposition}\label{prop:s-nonrandom-Td} 
	Assume the topological conjugacy $H:\Sigma\times\TT^d\to\Sigma\times\TT^d$ intertwines the strongest expanding foliation $H(\cF^u_k)=\cL^u_k$ and  is $C^{1+}$-smooth along $\cF^u_k$ with bounded derivatives. For every pair $f,g\in\cK$, let $H_f,H_g:\TT^d\to\TT^d$ be the corresponding topological conjugacies.  
	Then we have 
	$$
	H_f(\cF^s_g)=\cL^s
	\qquad \text{and} \qquad
	H_g(\cF^s_f)=\cL^s.
	$$
\end{Proposition}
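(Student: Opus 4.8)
The plan is to imitate the proof of Proposition \ref{prop:s-nonrandom-T2} almost verbatim, replacing the single-line expanding foliation of $\TT^2$ by the strongest expanding subfoliation $\cF^u_k$ (equivalently $\cL^u_k$) on $\TT^d$. By symmetry it suffices to prove $H_f(\cF^s_g)=\cL^s$. As in the $\TT^2$ case, I would first observe that $H_g(\cF^s_g)=\cL^s$ is a minimal linear foliation on $\TT^d$, so that after lifting to $\RR^d$ each leaf of $H_f(\cF^s_g)$ lies within a bounded neighborhood of a leaf of $\cL^s$; hence it is enough to show $H_f(\cF^s_g)$ is linear, since a linear foliation bounded distance from $\cL^s$ must coincide with $\cL^s$. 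To invoke Lemma \ref{lem:linear-Td} I would use $\cL:=\cL^u_k$ as the ambient minimal linear foliation: it sub-saturates the relevant picture, it is transverse to $\cF:=H_f(\cF^s_g)$ inside each leaf of the (linear, minimal) foliation $\cG$ tangent to $L^s\oplus L^u_k$, and the holonomy of $\cF$ between leaves of $\cL^u_k$ is what must be checked to have derivative the identity.

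Concretely, set $\omega=(\dots,f,f,f,\dots)$, $\xi=(\dots,g,g,g,\dots)$ and the heteroclinic word $\tau=[\omega,\xi]$, so $\tau_i=f$ for $i<0$ and $\tau_j=g$ for $j\geq 0$. Then on the fiber $\TT^d_\tau$ the skew product $F$ has fiberwise stable/strong-unstable foliations $\cF^s_\tau=\cF^s_g$ and $\cF^u_{k,\tau}=\cF^u_{k,f}$, while $\bar A$ has $\cL^s_\tau=\cL^s$ and $\cL^u_{k,\tau}=\cL^u_k$ (all the $A_i$ share these linear foliations). Since $H_\omega=H_f$ we reduce to showing $\cF:=H_\omega(\cF^s_\tau)=H_f(\cF^s_g)$ is linear. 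The key claim, proven exactly as the Claim inside Proposition \ref{prop:s-nonrandom-T2}, is that for $y\in\cF(x)$ the holonomy $\Hol^{\cF}_{x,y}\colon\cL^u_k(x)\to\cL^u_k(y)$ is $C^{1+}$ with $D\Hol^{\cF}_{x,y}=\id$. For this I iterate backwards by $\bar A_\tau^{-n}=A_f^{-n}$, write $\cF_n=A_f^{-n}(\cF)=H_\omega(\cF^s_{\sigma^{-n}(\tau)})$, and decompose $\Hol^{\cF_n}_{x_n,y_n}$ through the linear conjugacies $H_\omega$ and $H_{\sigma^{-n}(\tau)}$ and the parallel-translation holonomy $\Hol^{\cL^s_{\sigma^{-n}(\tau)}}_{x_n'',y_n''}$ exactly as in \eqref{eq:convergence}; since $\sigma^{-n}(\tau)\to\omega$ and $DH|_{\cF^u_k}$ is continuous and bounded, $D\Hol^{\cF_n}_{x_n,y_n}=\id+\Delta_n'+\Delta_n\to\id$ uniformly. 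The conjugacy relation $\Hol^{\cF}_{x,y}=A_f^n\circ\Hol^{\cF_n}_{x_n,y_n}\circ A_f^{-n}$ together with conformality of $A_f$ on leaves of $\cL^u_k$ (guaranteed by the generic assumption) then forces $D\Hol^{\cF}_{x,y}=\id$.

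Having the claim, I would feed it into Lemma \ref{lem:linear-Td}: take $\cL=\cL^u_k$ (minimal linear, since each $A_i$ is generic hence irreducible, so $L^u_k$ projects to a minimal linear foliation), $\cF=H_f(\cF^s_g)$, and $\cG$ the minimal linear foliation tangent to $L^s\oplus L^u_k$ — note $\cG$ is saturated by both $\cL^u_k$-leaves and $\cF$-leaves because $E^s_f\oplus E^u_{k,f}$ integrates to $\cF^s_f\oplus\cF^u_{k,f}$ which $H_f$ maps into the linear foliation $\cL^s\oplus\cL^u_k=\cG$ (this uses the first conclusion of Proposition \ref{prop:d-H-smooth} on joint integrability, applied to $f$), and restricted to each leaf of $\cG$ the foliations $\cF$ and $\cL^u_k$ are topologically transverse of complementary dimension. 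Lemma \ref{lem:linear-Td} then yields that $\cF$ is linear, and combined with the bounded-distance observation we conclude $H_f(\cF^s_g)=\cL^s$. The main obstacle — the only place the higher-dimensional setting does more than the $\TT^2$ case — is verifying the transversality/saturation hypotheses of Lemma \ref{lem:linear-Td}: one must check that $H_f$ really carries the (a priori only $C^0$) foliations $\cF^s_g$ and $\cF^u_{k,f}$ into a common minimal linear foliation and that they are topologically transverse inside its leaves; this is where the joint integrability of $E^s\oplus E^u_k$ from Proposition \ref{prop:d-H-smooth}, and the genericity (irreducibility giving minimality of $\cL^u_k$ and conformality of $A_f$ on $L^u_k$), are essential.
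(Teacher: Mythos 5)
Your approach is structurally identical to the paper's, and the core mechanism (the holonomy claim proved by iterating $A_f^{-n}$, using conformality on $\cL^u_k$ and continuity of $DH|_{\cF^u_k}$, then Lemma \ref{lem:linear-Td}) is exactly right. However, there is a concrete flaw in your verification that $\cG=\cL^s\oplus\cL^u_k$ is saturated by leaves of $\cF=H_f(\cF^s_g)$. You argue that $E^s_f\oplus E^u_{k,f}$ integrates to $\cF^s_f\oplus\cF^u_{k,f}$ which $H_f$ carries into $\cL^s\oplus\cL^u_k$. That controls $H_f(\cF^s_f)$, not $H_f(\cF^s_g)$: the foliation you actually need to place inside $\cG$ involves the stable foliation of $g$, and $\cF^s_g$ is not subfoliated by $\cF^s_f\oplus\cF^u_{k,f}$ in general. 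What is required is joint integrability of $\cF^s_g$ with $\cF^u_{k,f}$, which ``applying Proposition \ref{prop:d-H-smooth} to $f$'' does not give.

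The paper extracts exactly the right integrability from the heteroclinic word $\tau$ itself: on the fiber $\TT^d_\tau$ the fiberwise stable foliation is $\cF^s_\tau=\cF^s_g$ and the fiberwise strongest unstable foliation is $\cF^u_{k,\tau}=\cF^u_{k,f}$ (since it depends only on the past of $\tau$, which agrees with $\omega$), and $H_\tau$ sends these to $\cL^s$ and $\cL^u_k$, which are jointly integrable; hence $\cF^s_g\oplus\cF^u_{k,f}$ is a genuine foliation. Pushing forward by $H_\omega=H_f$ (and using $H_\omega(\cF^u_{k,\tau})=\cL^u_k$) yields a $C^0$ foliation $\cF\oplus\cL^u_k$ subfoliated by the minimal linear $\cL^u_k$; Lemma \ref{lem:joint-minimal} then forces $\cF\oplus\cL^u_k$ to be minimal linear, and since $H_\omega\circ H_\tau^{-1}$ is homotopic to the identity and carries $\cL^s\oplus\cL^u_k$ onto $\cF\oplus\cL^u_k$, the two linear foliations must coincide. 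Only after this identification can you legitimately invoke Lemma \ref{lem:linear-Td} with $\cG=\cL^s\oplus\cL^u_k$. So replace ``apply Proposition \ref{prop:d-H-smooth} at the constant word $f$'' with ``apply the joint integrability at the heteroclinic word $\tau$, then use Lemma \ref{lem:joint-minimal} and a homotopy argument to pin down $\cG$,'' and the rest of your write-up goes through.
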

The proof is similar to Proposition \ref{prop:s-nonrandom-T2}.  We include it for completeness.  
\begin{proof}
We only need to show that $H_f(\cF^s_g)$ is a linear foliation.  Then, as in the proof of Proposition \ref{prop:s-nonrandom-T2}, we have $H_g(\cF^s_g)=\cL^s$. 
	
	For $f,g\in\cK$, denote $\omega=(\cdots,f,f,f,\cdots)$, $\xi=(\cdots,g,g,g,\cdots)$, and
	$$
	\tau=(\cdots,f,g,g,\cdots), 
	\qquad \text{that~is}\quad
	\tau_i=f, ~~\forall i<0; 
	\quad \text{and} \quad
	\tau_j=g, ~~\forall j\geq0.
	$$
	Then $\omega,\xi,\tau\in\Sigma$ and $\lim_{n\to+\infty}\sigma^{-n}(\tau)=\omega$.
	
	For the fiberwise dynamics of skew product system $F^n(\tau,x)=\left(\sigma^n(\tau),f^n_{\tau}(x)\right)$, we have
	\begin{align*}
		f^n_{\tau}(x)&=g^n(x):
		~\TT^d_{\tau}\to\TT^d_{\sigma^n(\tau)}, \qquad \forall n\geq 0;\\
		f^n_\tau(x)&=f^n(x):
		~\TT^d_{\tau}\to\TT^d_{\sigma^n(\tau)}, \qquad \forall n<0.
	\end{align*}
	This implies the fiberwise invariant foliations of $F$ on the fiber $\TT^d_{\tau}$ are
	$$
	\cF^s_{\tau}=\cF^s_g
	\qquad \text{and} \qquad
	\cF^u_{\tau}=\cF^u_f.
	$$
	In particular, the strongest expanding foliation $\cF^u_{k,\tau}$ depends on negative iterations, thus we have 
	$$
	\cF^u_{k,\tau}=\cF^u_{k,f}=\cF^u_{k,\omega}.
	$$
	
    For the fiberwise dynamics of skew product system 
	$\bar{A}^n(\tau,x)=\left(\sigma^n(\tau),\bar{A}^n_{\tau}(x)\right)$ are
	\begin{align*}
		\bar{A}^n_{\tau}(x)&=A_g^n(x):
		~\TT^d_{\tau}\to\TT^d_{\sigma^n(\tau)}, \qquad \forall n\geq 0;\\
		\bar{A}^n_\tau(x)&=A_f^n(x):
		~\TT^d_{\tau}\to\TT^d_{\sigma^n(\tau)}, \qquad \forall n<0.
	\end{align*}
	The corresponding linear foliations on $\TT^d_{\tau}$ are 
	$$
	\cL^s_{\tau}=\cL^s, \qquad
	\cL^u_{\tau}=\cL^u, \qquad
	\text{and} \qquad
	\cL^u_{k,\tau}=\cL^u_k.
	$$
	
	Moreover, the fiberwise topological conjugacy $H_{\tau}:\TT^d_{\tau}\to\TT^d_{\tau}$ satisfies
	$$
	H_{\tau}(\cF^s_g)=H_{\tau}(\cF^s_{\tau})=\cL^s
	\qquad \text{and} \qquad
	H_{\tau}(\cF^u_f)=H_{\tau}(\cF^u_{\tau})=\cL^u.
	$$
	Since we have $H_{\omega}=H_f:\TT^2\to\TT^2$ satisfies 
	$$
	H_{\omega}\circ f=H_{\omega}\circ f_{\omega}=A_f\circ H_{\omega},
	$$
	we only need to show $H_{\omega}(\cF^s_{\tau})=H_f(\cF^s_{\tau})$ is a linear foliation. 
	
	Since $H_{\tau}(\cF^u_{k,\tau})=\cL^u_k$ which is jointly integrable with $H_{\tau}(\cF^s_{\tau})=\cL^s$ and $\cF^u_{k,\tau}=\cF^u_{k,\omega}$, we have
	\begin{itemize}
		\item the foliation $\cF^u_{k,\tau}$ is jointly integrable with $\cF^s_{\tau}$;
		\item $H_{\omega}(\cF^u_{k,\tau})=\cL^u_k$.
	\end{itemize} 
	Denote $\cF=H_{\omega}(\cF^s_\tau)$ which is jointly integrable with $\cL^u_k$. Since $A_i$ is irreducible, we have that  $\cL^u_k$ is minimal and Lemma \ref{lem:joint-minimal} shows that $\cF\oplus\cL^u_k$ is a minimal linear foliation on $\TT^d$. Since both $H_{\omega}$ and $H_{\tau}$ are homotopic to $\id_{\TT^d}$, the fact that 
	$$
	H_{\tau}\circ H_{\omega}^{-1}\left(\cF\oplus\cL^u_k\right)
	=H_{\tau}\left(\cF^s_{\tau}\oplus\cF^u_{k,\tau}\right)
	=\cL^s\oplus\cL^u_k
	$$
	implies 
	$$
	\cF\oplus\cL^u_k=\cL^s\oplus\cL^u_k.
	$$
	
	The following claim implies $\cF$ is a linear foliation.
	
	\begin{Claim}
		For every $x\in\TT^2$ and every $y\in\cF(x)$, the holonomy map of $\cF$ between leaves of $\cL^u_k$ inside the leaf $\cF\oplus\cL^u_k(x)$:
		$$
		\Hol^{\cF}_{x,y}:~\cL^u_k(x)\to\cL^u_k(y)
		$$ 
		is $C^{1+}$-smooth and the derivative $D\Hol^{\cF}_{x,y}=\id$. 
	\end{Claim}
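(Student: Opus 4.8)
The plan is to reproduce, almost verbatim, the argument proving the corresponding Claim inside the proof of Proposition~\ref{prop:s-nonrandom-T2}, replacing the one-dimensional linear unstable foliation $\cL^u_f$ there by the strongest fiberwise linear unstable foliation $\cL^u_k$, and replacing ``conformal because one-dimensional'' by ``conformal on $L^u_k$'', which is valid because $A_f$ is generic. Throughout I use the following, all supplied by Proposition~\ref{prop:d-H-smooth} applied over $\Sigma=\cK^\ZZ$: on every fiber $H_{\eta}$ carries $\cF^u_{k,\eta}$ to $\cL^u_k$; $H$ is $C^{1+}$ along $\cF^u_k$ with derivative H\"older in $(\eta,x)$ with uniform constants (hence $DH_\cdot|_{\cF^u_k}$ depends uniformly continuously on the base point); the bundle $E^s_{\eta}\oplus E^u_{k,\eta}$ integrates to a foliation $\cF^s_{\eta}\oplus\cF^u_{k,\eta}$; and $H_{\eta},H_{\eta}^{-1}$ are $C^0$-close to the identity, uniformly in $\eta$.

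First I fix $x\in\TT^d$ and $y\in\cF(x)=H_\omega(\cF^s_\tau)(x)$, and for $n>0$ push everything backward by $A_f^{-n}$, which on the $\tau$-fiber coincides with the fiberwise iterate $\bar A^{-n}_\tau$ of the extension system: set $x_n=A_f^{-n}(x)$, $y_n=A_f^{-n}(y)$, $\cF_n=A_f^{-n}(\cF)$. Since $A_f^{-n}(\cL^u_k)=\cL^u_k$ and $A_f^{-n}(\cF\oplus\cL^u_k)=\cF_n\oplus\cL^u_k=\cL^s\oplus\cL^u_k$ (the last identity being already established before the Claim), conjugating holonomies gives $\Hol^{\cF}_{x,y}=A_f^n\circ\Hol^{\cF_n}_{x_n,y_n}\circ A_f^{-n}$ as maps $\cL^u_k(x)\to\cL^u_k(y)$ inside that common linear leaf. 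Using $A_f^{-n}\circ H_\omega=H_\omega\circ f^{-n}$ together with the $F$-invariance of the fiberwise stable foliation and of $\cF^u_{k,\cdot}$, I record $\cF_n=H_\omega(\cF^s_{\sigma^{-n}(\tau)})$ and $\cF^u_{k,\sigma^{-n}(\tau)}=\cF^u_{k,\omega}$ (the latter bundle depending only on the negative coordinates, all equal to $f$ for $\sigma^{-n}(\tau)$).

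Next I factor $\Hol^{\cF_n}_{x_n,y_n}\colon\cL^u_k(x_n)\to\cL^u_k(y_n)$ twice. With $x_n'=H_\omega^{-1}(x_n)$ and $y_n'=H_\omega^{-1}(y_n)\in\cF^s_{\sigma^{-n}(\tau)}(x_n')$, one has $\Hol^{\cF_n}_{x_n,y_n}=H_\omega\circ\Hol^{\cF^s_{\sigma^{-n}(\tau)}}_{x_n',y_n'}\circ H_\omega^{-1}$, the middle holonomy taken between leaves of $\cF^u_{k,\omega}$ inside the integrable leaf $\cF^s_{\sigma^{-n}(\tau)}\oplus\cF^u_{k,\omega}$; conjugating that middle holonomy by $H_{\sigma^{-n}(\tau)}$ (with $x_n''=H_{\sigma^{-n}(\tau)}(x_n')$, $y_n''=H_{\sigma^{-n}(\tau)}(y_n')$) turns it into $H_{\sigma^{-n}(\tau)}^{-1}\circ\Hol^{\cL^s}_{x_n'',y_n''}\circ H_{\sigma^{-n}(\tau)}$, and $\Hol^{\cL^s}_{x_n'',y_n''}$ is a parallel translation because $\cL^s$ and $\cL^u_k$ are linear. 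Every factor is $C^{1+}$ along the relevant unstable leaf and the parallel translation has derivative $\id$, so $\Hol^{\cF_n}_{x_n,y_n}$ is $C^1$ and, with all derivatives restricted to the appropriate $\cF^u_k$- and $\cL^u_k$-leaves at $x_n,x_n',x_n'',y_n,y_n',y_n''$,
$$D\Hol^{\cF_n}_{x_n,y_n}=\bigl(DH_\omega\circ DH_{\sigma^{-n}(\tau)}^{-1}\bigr)\circ\bigl(DH_{\sigma^{-n}(\tau)}\circ DH_\omega^{-1}\bigr)=:(\id+\Delta_n')(\id+\Delta_n).$$
Since $\sigma^{-n}(\tau)\to\omega$ (the two sequences agree in every negative coordinate) and $DH_\cdot|_{\cF^u_k}$ is uniformly continuous in the base point, $\Delta_n,\Delta_n'\to0$ uniformly in $x$. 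Therefore
$$D\Hol^{\cF}_{x,y}=A_f^n\circ D\Hol^{\cF_n}_{x_n,y_n}\circ A_f^{-n}=\id+A_f^n\circ\bigl(\Delta_n+\Delta_n'+\Delta_n'\Delta_n\bigr)\circ A_f^{-n},$$
and genericity of $A_f$ makes $A_f|_{L^u_k}$ conformal, so conjugation by $A_f^n$ preserves operator norm, the error tends to $0$, and $D\Hol^{\cF}_{x,y}=\id$. A map with constant derivative $\id$ is a translation, so in particular $\Hol^{\cF}_{x,y}$ is $C^{1+}$, which proves the Claim. (Lemma~\ref{lem:joint-minimal} and Lemma~\ref{lem:linear-Td} then force $\cF$ to be linear, whence $\cF=\cL^s$ and $H_f(\cF^s_g)=\cL^s$, and $H_g(\cF^s_f)=\cL^s$ by symmetry.)

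The step I expect to be the main obstacle is the bookkeeping in this double factorization: checking that the error operators $\Delta_n,\Delta_n'$, which are linear maps between the tangent spaces of $\cL^u_k$ at the nearby but distinct points $x_n,x_n',x_n''$ (and their $y$-counterparts), are well defined and converge to $0$ uniformly. This rests precisely on $H_\eta,H_\eta^{-1}$ being $C^0$-close to the identity on every fiber and on the uniform H\"older dependence of $DH_\eta|_{\cF^u_k}$ on $\eta$. One should also note---though it is ultimately immaterial, since the limiting derivative is $\id$---that conjugation by $A_f^n$ does not spoil the H\"older regularity of the derivative: because $A_f$ expands $L^u_k$, the H\"older seminorm in fact contracts under that conjugation.
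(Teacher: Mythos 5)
Your proposal reproduces the paper's argument essentially verbatim: the same backward iteration by $A_f^{-n}$ along the fiber over $\tau$, the identical double factorization of $\Hol^{\cF_n}_{x_n,y_n}$ through $H_\omega$ and $H_{\sigma^{-n}(\tau)}$ with the linear holonomy $\Hol^{\cL^s}_{x''_n,y''_n}$ in the middle, the same use of uniform continuity of $DH_\cdot|_{\cF^u_k}$ as $\sigma^{-n}(\tau)\to\omega$ to get $\Delta_n,\Delta_n'\to0$, and the same invocation of conformality of $A_f|_{L^u_k}$ so that conjugation by $A_f^n$ cannot amplify the error. Your closing observation that a $C^1$ map with constant derivative $\id$ is a translation (hence trivially $C^{1+}$) is a small clarification the paper leaves implicit, and your worry about identifying tangent spaces at the nearby points $x_n,x'_n,x''_n$ along the linear leaf is exactly the point the paper flags in its remark after the analogous $\TT^2$ claim; the paper's $\TT^d$ proof handles it the same way you do.
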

	
	\begin{proof}[Proof of Claim]
		For every $x\in\TT^d$, $y\in\cF(x)$ and $n>0$, we denote 
		$$
		x_n=\bar{A}_{\tau}^{-n}(x)=A_f^{-n}(x)\in\TT^d_{\sigma^{-n}(\tau)}
		\qquad \text{and} \qquad
		y_n=\bar{A}_{\tau}^{-n}(y)=A_f^{-n}(y)\in\TT^d_{\sigma^{-n}(\tau)}.
		$$
		Denote $\cF_n=\bar{A}_{\tau}^{-n}(\cF)=A_f^{-n}(\cF)$, then $y_n\in\cF_n(x)$.
		
		Since $\bar{A}^{-n}_{\tau}(\cL^u_k)=A_f^{-n}(\cL^u_k)=\cL^u_k$, denote 
		$\Hol^{\cF_n}_{x_n,y_n}:\cL^u_k(x_n)\to\cL^u_k(y_n)$ the holonomy map induced by $\cF_n$ between $\cL^u_k(x_n)$ and $\cL^u_k(y_n)$ inside the leaf $\cF\oplus\cL^u_k(x_n)$, we have
		\begin{align}\label{eq:Hol-d}
			\Hol^{\cF}_{x,y}=
			\bar{A}_{\sigma^n(\tau)}\circ
			\Hol^{\cF_n}_{x_n,y_n} \circ
			\bar{A}_{\tau}^{-n} 
			=A_f^n\circ \Hol^{\cF_n}_{x_n,y_n}\circ A_f^{-n}.
		\end{align}
		In particular, $A_f$ is conformal along $\cL^u_k$.
		From the topological conjugacy, we have
		\begin{align*}
			\cF_n=\bar{A}^{-n}_{\tau}(\cF)
			&=\bar{A}^{-n}_{\tau}\circ H_{\omega}(\cF^s_{\tau})
			=A_f^{-n}\circ H_{\omega}(\cF^s_{\tau})\\
			&=H_{\omega}\circ f^{-n}(\cF^s_{\tau}) 
			=H_{\omega}\circ f_{\tau}^{-n}(\cF^s_{\tau}) \\
			&=H_{\omega}(\cF^s_{\sigma^{-n}(\tau)}).
		\end{align*}

		Since $\cF^u_{k,\sigma^{-n}(\tau)}=\cF^u_{k,\tau}=\cF^u_{k,f}$, both conjugacies $H_{\omega}$ and $H_{\sigma^{-n}(\tau)}$ satisfy 
		$$
		\cL^u_k=H_{\omega}(\cF^u_{k,\sigma^{-n}(\tau)})=H_{\sigma^{-n}(\tau)}(\cF^u_{k,\sigma^{-n}(\tau)}).
		$$
		Denote
		\begin{itemize}
			\item $x_n'=H_{\omega}^{-1}(x_n)$ and $y_n'=H_{\omega}^{-1}(y_n)\in\cF^s_{\sigma^{-n}(\tau)}(x_n')$;
			\item $\Hol^{\cF^s_{\sigma^{-n}(\tau)}}_{x_n',y_n'}:\cF^u_{k,f}(x_n')\to\cF^u_{k,f}(y_n')$ the holonomy map induced by $\cF^s_{\sigma^{-n}(\tau)}=H_{\omega}^{-1}(\cF_n)$ inside the leaf $\cF^s_{\sigma^{-n}(\tau)}\oplus\cF^u_{k,\sigma^{-n}(\tau)}(x_n')$;
			\item $x_n''=H_{\sigma^{-n}(\tau)}(x_n')$ and $y_n''=H_{\sigma^{-n}(\tau)}(y_n')\in H_{\sigma^{-n}(\tau)}(\cF^s_{\sigma^{-n}(\tau)}(x_n'))=\cL^s_{\sigma^{-n}(\tau)}(x_n'')$;
			\item $\Hol^{\cL^s_{\sigma^{-n}(\tau)}}_{x_n'',y_n''}:\cL^u_k(x_n'')\to\cL^u_k(y_n'')$ the holonomy map induced by $\cL^s_{\sigma^{-n}(\tau)}$ inside the leaf $\cL^s\oplus\cL^u_k(x_n'')$, which is a parallel translation between $\cL^u_k(x_n'')$ and $\cL^u_k(y_n'')$.
		\end{itemize}
		
		The holonomy map
		$\Hol^{\cF_n}_{x_n,y_n}:\cL^u_k(x_n)\to\cL^u_k(y_n)$ satisfies
		\begin{align}
			\Hol^{\cF_n}_{x_n,y_n} &=
			\left(H_{\omega}|_{\cF^u_{k,f}(y_n')}\right)
			\circ\left(\Hol^{\cF^s_{\sigma^{-n}(\tau)}}_{x_n',y_n'}\right)\circ 
			\left(H_{\omega}^{-1}|_{\cL^u_k(x_n)}\right)  \notag \\
			&=
			\left(H_{\omega}|_{\cF^u_{k,f}(y_n')}\right)
			\circ\left(H_{\sigma^{-n}(\tau)}^{-1}|_{\cL^u_k(y_n'')} \right)
			\circ\left(\Hol^{\cL^s_{\sigma^{-n}(\tau)}}_{x_n'',y_n''}\right)
			\circ\left(H_{\sigma^{-n}(\tau)}|_{\cF^u_{k,f}(x_n')} \right)
			\circ\left(H_{\omega}^{-1}|_{\cL^u_k(x_n)}\right)  \label{eq:convergence-d}
		\end{align}
		Here $\Hol^{\cL^s_{\sigma^{-n}(\tau)}}_{x_n'',y_n''}$ is a parallel translation where the derivative is identity $\id$.
		
		Since both conjugacies $H_{\omega}$ and $H_{\sigma^{-n}(\tau)}$ are $C^{1+}$-smooth along $\cF^u_{k,\sigma^{-n}(\tau)}=\cF^u_{k,f}$ with H\"older continuous and bounded derivatives, the holonomy map $\Hol^{\cF_n}_{x_n,y_n}$ is differentiable. 
		
		Moreover, since $\sigma^{-n}(\tau)\to\omega$ as $n\to+\infty$, the continuity of $DH|_{\cF^u_k}$ implies
		\begin{align*}
			\left(DH_{\sigma^{-n}(\tau)}|_{\cF^u_{k,f}(x_n')} \right)
			\circ\left(DH_{\omega}^{-1}|_{\cL^u_k(x_n)}\right)
			=\id+\Delta_n \to \id 
			\qquad \text{as}\quad n\to+\infty; \\
			\left(DH_{\omega}|_{\cF^u_{k,f}(y_n')}\right)
			\circ\left(DH_{\sigma^{-n}(\tau)}^{-1}|_{\cL^u_k(y_n'')} \right)
			=\id+\Delta_n' \to \id 
			\qquad \text{as}\quad n\to+\infty.
		\end{align*}
		From Equation \ref{eq:Hol-d} and \ref{eq:convergence-d}, the holonomy map $\Hol^{\cF}_{x,y}:\cL^u_f(x_n)\to\cL^u_f(y_n)$ is differentiable and satisfies
		\begin{align*}
			D\Hol^{\cF}_{x,y}
			&=A_f^n\circ\left(D\Hol^{\cF_n}_{x_n,y_n}\right)\circ A_f^{-n} \\
			&=A_f^n\circ(\id+\Delta_n')\circ(\id+\Delta_n)\circ A_f^{-n} \qquad (n\to+\infty)\\
			&=\id.
		\end{align*}
		The last equality use the fact that $A_f$ is conformal along $\cL^u_k$.
		This proves the claim.
	\end{proof}
	
	Since $H_{\omega}(\cF^u_{k,\tau})=H_{\omega}(\cF^u_{k,f})=\cL^u_k$ is a minimal linear foliation on $\TT^d$ and transverse to $\cF$,  Lemma \ref{lem:linear-Td} shows that $\cF$ is a linear foliation. Thus we have 
	$$
	H_f(\cF^s_g)=H_{\omega}(\cF^s_{\tau})=\cF=\cL^s.
	$$
	Symmetrically, we have $H_g(\cF^s_f)=\cL^s$. 
	This finishes the proof of the proposition.
\end{proof}

\section{Simultaneously linearization of Anosov actions}

In this section, we show that non-randomness of stable and unstable foliations of random Anosov diffeomorphisms implies simultaneously linearization and prove main theorems of this paper.

We begin with the following elementary lemma.

\begin{Lemma}\label{lem:translation}
	Let $\cL_1$ and $\cL_2$ be two minimal transversal linear foliations on $\TT^d$ where
	$$
	\dim\cL_1+\dim\cL_2=d.
	$$ 
	If a homeomorphism  $H:\TT^d\to\TT^d$ is homotopic to $\id_{\TT^d}$ and preserves $\cL_1,\cL_2$, then $H$ is a translation, i.e. there exists $v\in\TT^d$ such that 
	$$
	H(x)=x+v, \qquad \forall x\in\TT^d.
	$$
\end{Lemma}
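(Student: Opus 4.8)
The plan is to lift $H$ to the universal cover $\RR^d$ and show the lift differs from the identity by a constant vector. Write $\RR^d = E_1 \oplus E_2$, where $E_i \subset \RR^d$ is the subspace with $\cL_i(x) = x + E_i$; transversality of $\cL_1,\cL_2$ together with $\dim\cL_1 + \dim\cL_2 = d$ guarantees this is a direct sum decomposition. Since $H$ is homotopic to $\id_{\TT^d}$, it lifts to a homeomorphism $\tilde H = \mathrm{Id} + \phi$ of $\RR^d$ with $\phi$ continuous and $\ZZ^d$-periodic, hence bounded. Denote by $\pi_i \colon \RR^d \to E_i$ the projection onto $E_i$ along $E_{3-i}$.

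First I would record that the lift respects the lifted foliations: because $H$ sends leaves of $\cL_i$ to leaves of $\cL_i$, the map $\tilde H$ carries each affine plane $x + E_i$ onto some translate of $E_i$, and since $\tilde H(x)$ lies in the image, necessarily $\tilde H(x + E_i) = \tilde H(x) + E_i$ for every $x$. Applying $\pi_2$ to the relation for $i=1$ shows that $\pi_2 \circ \tilde H$ does not depend on the $E_1$-coordinate; symmetrically $\pi_1 \circ \tilde H$ does not depend on the $E_2$-coordinate. Writing $x = x_1 + x_2$ with $x_i \in E_i$, this forces $\tilde H(x) = g_1(x_1) + g_2(x_2)$ for continuous maps $g_i \colon E_i \to E_i$. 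Setting $\psi_i(x_i) := g_i(x_i) - x_i \in E_i$, one gets $\phi(x) = \psi_1(x_1) + \psi_2(x_2)$, and restricting $\phi$ to $x_2 = 0$, resp. $x_1 = 0$, shows $\psi_1$ and $\psi_2$ are bounded.

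Next I would exploit $\ZZ^d$-periodicity of $\phi$. For $n \in \ZZ^d$, write $n = n_1 + n_2$ with $n_i = \pi_i(n)$; periodicity gives $\psi_1(x_1 + n_1) - \psi_1(x_1) = \psi_2(x_2) - \psi_2(x_2 + n_2)$, and since the left side lies in $E_1$ and the right side in $E_2$, both vanish. Hence $\psi_1$ is invariant under translation by the subgroup $\pi_1(\ZZ^d) \subset E_1$. This is where minimality enters: a leaf $x + E_2$ of $\cL_2$ is dense in $\TT^d$ if and only if $E_2 + \ZZ^d$ is dense in $\RR^d$, which holds if and only if $\pi_1(\ZZ^d)$ is dense in $E_1$. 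So $\psi_1$ is invariant under a dense subgroup of $E_1$, hence by continuity constant, $\psi_1 \equiv v_1 \in E_1$; symmetrically minimality of $\cL_1$ gives $\psi_2 \equiv v_2 \in E_2$. Therefore $\phi \equiv v := v_1 + v_2$, i.e.\ $\tilde H(x) = x + v$, and passing to $\TT^d$ yields $H(x) = x + v$.

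The lifting statement and the linear-algebra bookkeeping are routine; the one genuinely load-bearing observation is the equivalence between minimality of $\cL_{3-i}$ and density of $\pi_i(\ZZ^d)$ in $E_i$, since that is exactly what upgrades the $\pi_1(\ZZ^d)$-periodicity of $\psi_1$ (and $\pi_2(\ZZ^d)$-periodicity of $\psi_2$) to full subspace-invariance. I do not expect a serious obstacle here; the only point requiring care is keeping track of which of $\cL_1$, $\cL_2$ supplies the density used to kill each $\psi_i$.
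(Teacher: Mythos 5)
Your proof is correct. It takes a slightly different route from the paper, so a brief comparison is worthwhile.

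The paper's argument also passes to a lift $\tilde H$, but instead of decomposing the displacement map $\phi = \tilde H - \mathrm{Id}$ along $E_1\oplus E_2$, it uses the global product structure of the lifted foliations: setting $x_0 = \tilde H(0)$, one has $\tilde H(n) = n + x_0$ for all $n\in\ZZ^d$, hence $\tilde H$ translates the leaf $\tilde\cL_1(m)$ by $x_0$ and the leaf $\tilde\cL_2(n)$ by $x_0$; therefore $\tilde H$ fixes-modulo-$x_0$ every intersection point $z_{m,n} = \tilde\cL_1(m)\cap\tilde\cL_2(n)$, and minimality of both foliations makes $\Gamma=\{z_{m,n}\}$ dense in $\RR^d$, forcing $\tilde H=\mathrm{Id}+x_0$. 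In coordinates $z_{m,n}=\pi_1(n)+\pi_2(m)$, so the paper is in effect using density of $\pi_1(\ZZ^d)+\pi_2(\ZZ^d)$, whereas you separate this into density of $\pi_1(\ZZ^d)$ in $E_1$ and of $\pi_2(\ZZ^d)$ in $E_2$ and handle the two summands $\psi_1,\psi_2$ of $\phi$ independently. The two arguments are thus dual presentations of the same mechanism; the paper's is a bit shorter (no need to prove $\phi$ splits as $\psi_1(x_1)+\psi_2(x_2)$), while yours isolates the role of each foliation's minimality more explicitly. One small remark: the boundedness of $\psi_1,\psi_2$ that you record is not actually needed — continuity plus invariance under a dense subgroup already forces constancy.
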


\begin{proof}
	Denote by $\tilde{H}:\RR^d\to\RR^d$ a lift of $H$.  We only need to show that $\tilde{H}$ is a translation on $\RR^d$. Since $H$ is homotopic to $\id_{\TT^d}$, we have
	$$
	\tilde{H}(x+n)=\tilde{H}(x)+n,
	\qquad \forall x\in\RR^d,~\forall n\in\ZZ^d.
	$$
	Assume $\tilde{H}(0)=x_0$, then $\tilde{H}(n)=x_0+n$ for every $n\in\ZZ^d$. Denote by $\tilde{\cL}_1$ and $\tilde{\cL}_2$ lifts of the linear foliations  $\cL_1$ and $\cL_2$ to $\RR^d$, respectively. 
	
	Since $H(\cL_i)=\cL_i$ for $i=1,2$, we also have
	$$
	\tilde{H}(\tilde{\cL}_i)=\tilde{\cL}_i,
	\qquad i=1,2.
	$$
	Moreover, $\tilde{\cL}_1$ and $\tilde{\cL}_2$ are two linear transversal foliations satisfiying
	$$
	\dim\tilde\cL_1+\dim\tilde\cL_2=d.
	$$
	This implies $\tilde{\cL}_1$ and $\tilde{\cL}_2$ admit global product structure: for every pair of points $x,y\in\RR^d$, the intersection of  $\tilde{\cL}_1(x)$  with $\tilde{\cL}_2(y)$ is a singleton. 
	
	Now for every pair of points $m,n\in\ZZ^d$, we have 
	$$
	\tilde{H}(m)=x_0+m \qquad \text{and} \qquad \tilde{H}(n)=x_0+n.
	$$
	This implies
	$$
	\tilde{H}\left(\tilde{\cL}_1(m)\right)=\tilde{\cL}_1(m+x_0)=\tilde{\cL}_1(m)+x_0,
	\qquad  
	\tilde{H}\left(\tilde{\cL}_2(n)\right)=\tilde{\cL}_2(n+x_0)=\tilde{\cL}_2(n)+x_0.
	$$
	Thus for the intersecting point $z_{m,n}=\tilde{\cL}_1(m)\cap\tilde{\cL}_2(n)$, we have
	$$
	\tilde{H}(z_{m,n})~=~
	\tilde{H}\left(\tilde{\cL}_1(m)\right)\cap\tilde{H}\left(\tilde{\cL}_2(n)\right)
	~=~
	\left(\tilde{\cL}_1(m)+x_0\right)\cap\left(\tilde{\cL}_2(n)+x_0\right)
	~=~
	z_{m,n}+x_0.
	$$
	
	This proves that for the set 
	$$
	\Gamma=\left\{z_{m,n}:~m,n\in\ZZ^d\right\},
	\qquad \text{we~have} \qquad
	\tilde{H}|_{\Gamma}=\id_{\RR^d}+x_0.
	$$
	Since both foliations $\cL_1$ and $\cL_2$ are minimal on $\TT^d$, 
	the set $\Gamma$ is dense in $\RR^d$, we have 
	$$
	\tilde{H}=\id_{\RR^d}+x_0:~\RR^d\to\RR^d.
	$$
	This finishes the proof of the lemma.
\end{proof}

\subsection{Random perturbations of commuting automorphisms}

Now we can prove  Theorems \ref{thm:2-dim} and  \ref{thm:commuting}.  

\begin{proof}[Proof of Theorem \ref{thm:2-dim}]
	Let $\cU_f$ be a neighborhood of $f$ in $\diff^r(\TT^2)$ such that every $g\in\cU_f$ admits a common stable and unstable cone-fields, and $\Omega=\cU_f^\ZZ$. This implies the associated skew product system 
	$$
	F(\omega,x)=\left(\sigma(\omega),f_{\omega}(x)\right):~\Omega\times\TT^2\to\Omega\times\TT^2
	$$ 
	is fiberwise Anosov (Lemma \ref{lem:robust-Anosov}). 
	
	Let $\bar{A}=\sigma\times A:\Omega\times\TT^2\to\Omega\times\TT^2$ be the corresponding extension system (seeDefinition \ref{def:extension}), and let  $H:\Omega\times\TT^2\to\Omega\times\TT^2$  be the conjugacy satisfying $H\circ F=\bar{A}\circ H$ defined in Proposition \ref{prop:structure-stable}.
	
	Let $\nu$ be a probability supported on $\cU_f$ and let $\mu_{\rm SRB}$ be the unique $\nu$-stationary SRB-measure. Denoteby $\mu$ the fiberwise SRB-measure associated to $\mu_{\rm SRB}$ defined as Proposition \ref{prop:measure}. 
	
	As we assumed that $\lambda^+(\mu_{\rm SRB})=\lambda^+(A)$, the positive fiberwise Lyapunov exponent  $\lambda^+(\mu)=\lambda^+(A)$. Proposition \ref{prop:2-H-smooth} shows that 
	$$
	H_*(\mu)=\nu^\ZZ\times\Leb_{\TT^2},
	$$
	and $H$ is $C^r$-smooth along fiberwise unstable foilations on  $\supp(\mu)=\supp(\nu)^\ZZ\times\TT^2$. We fix $g\in\supp(\nu)$ and $H_g$ be the topological conjugacy
	$$
	H_g\circ g=A\circ H_g
	\qquad \text{and} \qquad
	H_g(\cF^s_g)=\cL^s_A,
	$$
	where $\cF^s_g$ and $\cL^s_A$ are stable foliations of $g$ and $A$ respectively.
	
	Proposition \ref{prop:s-nonrandom-T2} shows that
	$H_g(\cF^s_{g'})=\cL^s_A$ for every $g'\in\supp(\nu)$.
	This implies 
	$$
	\cF^s_g=H_g^{-1}(\cL^s_A)=\cF^s_g,
	\qquad \forall g'\in\supp(\nu).
	$$
	This proves the first item of Theorem \ref{thm:2-dim}.
	
	If we further assume that $\lambda^-(\mu_{\rm SRB})=\lambda^-(A)$, then we also have $\lambda^-(\mu)=\lambda^-(A)$.
We consider the inverse skew product system $F^{-1}:\Omega\times\TT^2\to\Omega\times\TT^2$.  Then $\mu$ is also an $F^{-1}$-invariant ergodic measure for $F^{-1}$. Moreover, we have
	\begin{itemize}
		\item $\cF^s$ is the fiberwise expanding foliation for $F^{-1}$;
		\item $\cF^u$ is the fiberwise stable foliation for $F^{-1}$;
		\item $H\circ F^{-1}=\bar{A}^{-1}\circ H$.
		\item $H_*\mu$ is fiberwise u-Gibbs state along $\cL^s_A$ for $\bar A^{-1}$.
	\end{itemize}
	{By Proposition \ref{prop:u-gibbs},}  $\mu$ is also a $u$-Gibbs measure along $\cF^s$ with respect to $F^{-1}$. Then we can apply Proposition \ref{prop:2-H-smooth} and Proposition \ref{prop:s-nonrandom-T2} again which shows that $H$ is $C^r$-smooth along $\cF^s$ on $\supp(\mu)$, and the $F^{-1}$-stable foliation $\cF^u$ is non-random:
	 $$
	 \cF^u_g=H_g^{-1}(\cL^u_A)=\cF^u_{g'},
	 \qquad \forall g'\in\supp(\nu).
	 $$
	 
	 For any fixed $g\in\supp(\nu)$, denote by $h=H_g:\TT^2\to\TT^2$ where $h\circ g=A\circ h$. Since $h$ is $C^r$-smooth along both stable and unstable foliations of $g$, Jouren\'e's theorem (Lemma \ref{lem:Journe}) implies $h$ is $C^{r-}$-smooth.
	 
	 For every $g'\in\TT^2$, we consider a homeomorphism
	 $$
	 h\circ g'\circ h^{-1}\circ A^{-1}:~\TT^2\to\TT^2,
	 $$
	 which satisfies
	 \begin{itemize}
	 	\item $h\circ g'\circ h^{-1}\circ A^{-1}$ is homotopic to $\id_{\TT^2}$;
	 	\item $h\circ g'\circ h^{-1}\circ A^{-1}$ preserves both stable and unstable foliations of $A$:
	 	   \begin{align*}
	 	   	     h\circ g'\circ h^{-1}\circ A^{-1}\left(\cL^{s/u}_A\right)
	 	   	    &=h\circ g'\circ h^{-1}\left(\cL^{s/u}_A\right)  \\
	 	   	    &=h\circ g'\left(\cF^{s/u}_{g'}\right)
	 	   	    =h\left(\cF^{s/u}_{g'}\right) \\
	 	   	    &=\cL^{s/u}_A.
	 	   \end{align*}
	 \end{itemize}
	 
	 We apply Lemma \ref{lem:translation} to $h\circ g'\circ h^{-1}\circ A^{-1}$, there exists $v_{g'}\in\TT^2$ such that 
	 $$
	 h\circ g'\circ h^{-1}\circ A^{-1}=\id_{\TT^2}+v_{g'}.
	 $$
	 This implies
	 $$
	 h\circ g'\circ h^{-1}=A+v_{g'},
	 \qquad \forall g'\in\supp(\nu).
	 $$
\end{proof}

Now we can prove high dimension theorem. Notice that Theorem \ref{thm:d-dim} is a special case of Theorem \ref{thm:commuting}.  
The proof is similar to Theorem \ref{thm:2-dim}.  We include it for completeness.  

\begin{proof}[Proof of Theorem \ref{thm:commuting}]
	Let $\{A_i\}_{i=1}^m\subseteq{GL}(d,\ZZ)$ be a family of commuting automorphisms which every $A_i$ satisfies the generic assumption and admits the same finest dominated splitting.
	Denote $\cL^s,\cL^u$ the stable and unstable foliations of $A_1$, which are the same to every $A_j$, $j=2,\cdots,m$.
	
	Let $\cU_i\subseteq\diff^2(\TT^d)$ be the neighborhood of $A_i$ for $i=1,\cdots,m$, such that for $\cU=\cup\cU_i$ and $\Omega=\cU^\ZZ$, the corresponding skew product system 
	$$
	F:\Omega\times\TT^d\to\Omega\times\TT^d, \qquad
	F(\omega,x)=\left(\sigma(\omega),f_\omega(x)\right)
	$$ 
	and its conjugacy $H:\Omega\times\TT^d\to\Omega\times\TT^d$ to the extension system $\bar{A}:\Omega\times\TT^d\to\Omega\times\TT^d$
	satisfy Proposition \ref{prop:leaf-conjugacy}. 
	
	If we assume all positive Lyapunov exponents of $\nu$-stationary SRB-measure $\mu_{\rm SRB}$ are equal to positive Lyapunov exponents of $\nu_0$-stationary measure, then for the corresponding fiberwise SRB-measure $\mu$ of $F$ defined by Proposition \ref{prop:measure}, all fiberwise positive exponents of $(F,\mu)$ are equal to $(\bar{A},H_*(\mu))$. 
	
	We apply Proposition \ref{prop:d-H-smooth}, which shows that $H$ intertwines intermediate  unstable  foliations and is $C^{1+}$-smooth along all these unstable invariant foliations on $\supp(\mu)=\supp(\nu)^\ZZ\times\TT^d$.
	
	We fix $f\in\supp(\nu)$ and $H_f:\TT^d\to\TT^d$ be the topological conjugacy with $H_f\circ f=A_f\circ H_f$, where $A_f=A_i$ if $f\in\cU_i$. For every $g\in\supp(\nu)$, Proposition \ref{prop:s-nonrandom-Td} shows that
	$$
	H_f(\cF^s_g)=\cL^s
	\qquad \text{thus} \qquad
	\cF^s_g=H_f^{-1}(\cL^s)=\cF^s_f.
	$$
	This proves the first item of Theorem \ref{thm:commuting}.
	
	If we further assume all negative Lyapunov exponents of $\mu_{\rm SRB}$ are equal to that of a $\nu_0$-stationary measure, then, as in the proof of Theorem \ref{thm:2-dim}, using  Proposition \ref{prop:u-gibbs}, the corresponding fiberwise SRB-measure $\mu$ is also a  fiberwise SRB-measure for $F^{-1}$.  We repeat the argument again, which shows that $H$ is $C^{1+}$-smooth along $\cF^s$ on $\supp(\mu)$ and 
	$$
	\cF^u_g=H_f^{-1}(\cL^u)=\cF^u_f,
	\qquad \forall g\in\supp(\nu).
	$$
	
	Denote $h=H_f:\TT^d\to\TT^d$, then for every $g\in\supp(\nu)$, the homeomorphism 
	$$
	h\circ g\circ h^{-1}\circ A_g^{-1}:~\TT^d\to\TT^d,
	\qquad \text{where} \qquad
	A_g=A_j~\text{if}~g\in\cU_j,
	$$
	is homotopic to $\id_{\TT^d}$ and preserves $\cL^s,\cL^u$. Lemma \ref{lem:translation} shows that $h\circ g\circ h^{-1}\circ A_g^{-1}$ is a translation on $\TT^d$. 
	This implies
\[
	h\circ g\circ h^{-1}=A_g+v_{g},
	\qquad \forall g\in\supp(\nu). \qedhere
	\]
\end{proof}

\subsection{Random perturbations of non-commuting automorphisms}

Finally, we prove Theorem \ref{thm:non-commuting}, which shows coincidence of the the positive Lyapunov exponents implies simultaneously linearization for non-commuting Anosov automorphisms on $\TT^2$.

\begin{proof}[Proof of Theorem \ref{thm:non-commuting}]
	Let $\{A_i\}_{i=1}^m\subseteq{GL}(2,\ZZ)$ be a  non-commuting, cone hyperbolic family of {Anosov} automorphisms.  
	
	Let $\cU_i$ be a $C^2$-neighborhood of $A_i$ in $\diff^r(\TT^2)$, and denote $\cU=\cup\cU_i$, $\Omega=\cU^\ZZ$, such that skew product system 
	$$
	F(\omega,x)=\left(\sigma(\omega),f_{\omega}(x)\right):~\Omega\times\TT^2\to\Omega\times\TT^2
	$$ 
	is fiberwise Anosov (Lemma \ref{lem:robust-Anosov}). 
	
	Moreover, for every pair $A_i\neq A_j$ which are not commuting, their corresponding stable foliations $\cL^s_i$ and $\cL^s_j$ transversal. The same holds for their unstable foliations $\cL^u_i$ and $\cL^u_j$.  
	We shrink each $\cU_i$ for $i=1,\cdots,m$ such that for every pair $A_i,A_j$ that are not commuting, every pair $f\in\cU_i$ and $g\in\cU_j$ satisfy
	\begin{itemize}
		\item their corresponding stable foliations $\cF^s_f$ and $\cF^s_g$ are transversal everywhere on $\TT^2$;
		\item their corresponding unstable foliations $\cF^u_f$ and $\cF^u_g$ are transversal everywhere on $\TT^2$.
	\end{itemize} 
	
	Let $\bar{A}:\Omega\times\TT^2\to\Omega\times\TT^2$ be the corresponding extension system (Definition \ref{def:extension}), and $H:\Omega\times\TT^2\to\Omega\times\TT^2$  be the conjugacy satisfying $H\circ F=\bar{A}\circ H$ defined in Proposition \ref{prop:structure-stable}.

	Let $\nu$ be a probability supported on $\cU$ with $\nu(\cU_i)=p_i\in(0,1)$ where $\sum_{i=1}^m p_i=1$. Denote  by $p$ be the probability on $\{A_i\}_{i=1}^m$ with $p(A_i)=p_i$. 
	Let $\mu_{\rm SRB}$ be the unique $\nu$-stationary SRB-measure, and $\mu$ be the fiberwise SRB-measure of skew product $F$ associated to $\mu_{\rm SRB}$ defined as Proposition \ref{prop:measure}. 
	
Denote by $\lambda^+(p)$ the positive Lyapunov exponent for the linear random walk 
	As we assumed that $\lambda^+(\mu_{\rm SRB})=\lambda^+(p)$, the positive fiberwise Lyapunov exponents for the induced skew system $F$ with respect to $\mu$ satisfy   $\lambda^+(\mu)=\lambda^+(p)$. Proposition \ref{prop:2-H-smooth} shows that 
	$$
	H_*(\mu)=\nu^\ZZ\times\Leb_{\TT^2},
	$$
	and $H$ is $C^r$-smooth along fiberwise unstable foilations on  $\supp(\mu)=\supp(\nu)^\ZZ\times\TT^2$.
	
	We fix $f,g\in\supp(\nu)$ where their linear part $A_f,A_g\in\{A_i\}_{i=1}^m$ are not commuting. Let $\cL^s_f$ and $\cL^s_g$ be the stable foliations of $A_f$ and $A_g$ respectively. Since $A_f,A_g$ are non-commuting, $\cL^s_f,\cL^s_g$ are two transversal linear minimal foliations on $\TT^2$. Denote $H_f$ be the topological conjugacy
	$$
	H_f\circ f=A_f\circ H_f
	\qquad \text{and} \qquad
	H_f(\cF^s_f)=\cL^s_f,
	$$
	where $\cF^s_f$ is the stable foliation of $f$. 
	The same for the conjugacy $H_g$ and stable foliation $\cF^s_g$ of $g$.
	
	Since the conjugacy $H$ is $C^r$-smooth along fiberwise unstable foliation on $\supp(\mu)=\supp(\nu)^\ZZ\times\TT^2$, 
	Proposition \ref{prop:s-nonrandom-T2} shows that
	$$
	H_f(\cF^s_g)=\cL^s_g
	\qquad \text{and} \qquad
	H_g(\cF^s_f)=\cL^s_f.
	$$
	
	Denote $h=H_f:\TT^2\to\TT^2$ where $h\circ f\circ h^{-1}=A_f$.
	For the homeomorphism $h\circ H_g^{-1}:\TT^2\to\TT^2$, it satisfies
	\begin{itemize}
		\item $h\circ H_g^{-1}$ is homotopic to $\id_\TT^2$;
		\item $h\circ H_g^{-1}$ preserves both stable foliations $\cL^s_f$ and $\cL^s_g$:
		$$
		h\circ H_g^{-1}(\cL^s_f)=h(\cF^s_f)=\cL^s_f
		\qquad \text{and} \qquad
		h\circ H_g^{-1}(\cL^s_g)=h(\cF^s_g)=\cL^s_g.
		$$
	\end{itemize}
	Lemma \ref{lem:translation} shows that there exists $u_g\in\TT^2$ such that
	$$
	h\circ H_g^{-1}=\id_{\TT^2}+u_g
	\qquad \text{that~is} \qquad
	h=H_g+u_g.
	$$
	
	Thus we have
	\begin{align*}
		h\circ g\circ h^{-1}
		&=\left(h\circ H_g^{-1}\right)\circ \left(H_g\circ g\circ H_g^{-1}\right)\circ \left(h\circ H_g^{-1}\right)^{-1} \\
		&=\left(\id_{\TT^2}+u_g\right)\circ A_g\circ\left(\id_{\TT^2}-u_g\right) \\
		&=A_g+v_g,
	\end{align*}
	where $v_g=(u_g-A_gu_g)\in\TT^2$.
	
	The same argument shows that for every $g'\in\supp(\mu)$ where its linear part $A_{g'}$ is not commuting with $A_f$, we have 
	$$
	h\circ g'\circ h^{-1}=A_{g'}+v_{g'}, \qquad v_{g'}\in\TT^2.
	$$
	
	If $A_{g'}$ is commuting with $A_f$, then it is not commuting with $A_g$, the same argument shows that
	$$
	H_g\circ g'\circ H_g^{-1}=A_{g'}+u_{g'}, \qquad u_{g'}\in\TT^2.
	$$
	Thus we have
		\begin{align*}
		h\circ g\circ h^{-1}
		&=\left(h\circ H_g^{-1}\right)\circ \left(H_g\circ g\circ H_g^{-1}\right)\circ \left(h\circ H_g^{-1}\right)^{-1} \\
		&=\left(\id_{\TT^2}+u_g\right)\circ\left(A_{g'}+u_{g'}\right)\circ\left(\id_{\TT^2}-u_g\right) \\
		&=A_{g'}+v_{g'},
	\end{align*}
	where $v_{g'}=(u_g+u_{g'}-A_{g'}u_g)\in\TT^2$. 
	
	Finally, we need to show $h$ is $C^{r-}$-smooth on $\TT^2$. We have 
	$h$ is $C^r$-smooth along $\cF^u_f$, and $H_g$ is $C^r$-smooth along $\cF^u_g$.
	This implies $h=H_g+u_g$ is $C^r$-smooth along $\cF^u_g$. Since $\cF^u_f$ and $\cF^u_g$ are two transversal foliations on $\TT^2$, Journ\'e's Theorem (Lemma \ref{lem:Journe}) shows that $h$ is $C^{r-}$-smooth. This finishes the proof of Theorem \ref{thm:non-commuting}.
\end{proof}

\bibliographystyle{plain}

\vskip5mm

\flushleft{\bf Aaron Brown} \\
Department of Mathematics, Northwestern University, Evanston, IL 60208, USA\\
\textit{E-mail:} \texttt{awb@northwestern.edu}\\

\flushleft{\bf Yi Shi} \\
School of Mathematics, Sichuan University, Chengdu, 610065,  China\\
\textit{E-mail:} \texttt{shiyi@scu.edu.cn}\\

\end{document}